\newtheorem{lemma}{Lemma}[section]
\newtheorem{theorem}{Theorem}[section]
\newtheorem{corollary}{Corollary}[section]
\theoremstyle{definition}
\newtheorem{definition}{Definition}[section]
\author{Riley Badenbroek \and Etienne de Klerk}
\title{Complexity Analysis of a Sampling-Based Interior Point Method for Convex Optimization}
\DeclareMathOperator{\diff}{d\!}
\DeclareMathOperator{\interior}{int}
\DeclareMathOperator{\tr}{tr}
\DeclareMathOperator{\dom}{dom}
\algnewcommand\Input{\item[\bf Input:] }
\algnewcommand\Output{\item[\bf Output:] }
\newcommand\algmultiline[1]{\parbox[t]{\dimexpr\textwidth-\leftmargin-\labelsep-\labelwidth}{#1\strut}}
\begin{document}

\maketitle

\begin{abstract}
	We develop a short-step interior point method to optimize a linear function over a convex body assuming that one only knows a membership oracle for this body. The approach is based on  Abernethy and Hazan's sketch of a universal interior point method using the so-called entropic barrier [arXiv 1507.02528v2, 2015]. It is well-known that the gradient and Hessian of the entropic barrier can be approximated by sampling from Boltzmann-Gibbs distributions, and the entropic barrier was shown to be self-concordant by Bubeck and Eldan [arXiv 1412.1587v3, 2015]. The analysis of our algorithm uses properties of the entropic barrier, mixing times for hit-and-run random walks by Lov\'asz and Vempala [Foundations of Computer Science, 2006], approximation quality guarantees for the mean and covariance of a log-concave distribution, and results from de Klerk, Glineur and Taylor on inexact Newton-type methods [arXiv 1709.0519, 2017].
\end{abstract}

\section{Introduction}
The interior point revolution, to used a phrase coined by Wright \cite{Wright}, was the introduction of polynomial-time logarithmic barrier methods for
linear programming, and their subsequent extension to convex programming.
In their seminal work on the extension of interior point methods to convex programming, Nesterov and Nemirovskii \cite{nesterov1994interior}  proved that every open convex set that does not contain an affine subspace is the domain of a  self-concordant barrier, called the universal barrier. While this is an important theoretical result, its practical applicability  is limited to cases where a barrier is known in closed form, and where its
gradient and Hessian may be computed efficiently.

The most practical interior point software deals with self-dual cones, where self-concordant barriers are known; e.g. MOSEK \cite{mosek8}, SDPT3 \cite{toh1999sdpt3}, and SeDuMi \cite{sturm1999using}. A promising recent development for more general cones is the primal-dual algorithm developed by Skajaa and Ye \cite{skajaa2015homogeneous} and implemented in the software \emph{alfonso} by Papp and Y{\i}ld{\i}z \cite{alfonso2018,papp2018homogeneous}, which only requires an efficiently computable self-concordant barrier of the primal cone. However, there are many convex bodies  where one can solve the membership problem in polynomial time, but where no efficiently computable self-concordant barrier is known, such as the subtour elimination polytope.

Abernethy and Hazan \cite{abernethy2015faster} recently connected the field of simulated annealing (Kirkpatrick et al.\ \cite{kirkpatrick1983optimization}) to the study of interior point methods using the entropic barrier of Bubeck and Eldan \cite{bubeck2014entropic}. One important property of this barrier is that its gradient and Hessian may be approximated through sampling, even though the barrier may not be known in closed form. This opens up the possibility of using interior point methods on sets for which we do not know efficiently computable self-concordant barriers. Interestingly, G\"uler \cite{guler1996barrier} showed that the universal and entropic barriers coincide (up to an additive constant) if the domain of the barrier is a homogeneous cone, i.e.\ a cone with a transitive automorphism group.

The interior point method Abernethy and Hazan proposed was shown to converge in polynomial time by De Klerk, Glineur and Taylor \cite{deklerk2017worst}, provided one can approximate the gradients and Hessians of the barrier sufficiently well. Our aim is to investigate when the approximations that may be obtained through sampling satisfy the requirements from \cite{deklerk2017worst}. In other words, we aim to show that we may approximate the gradient and Hessian sufficiently well in polynomial time, with high probability. The sampling algorithm that we will use is the Markov chain Monte Carlo method known as hit-and-run sampling, first suggested by Smith  \cite{smith1984efficient}. The mixing properties of this method, established by Lov\'asz and Vempala \cite{lovasz2006fast}, allow us to show the gradients and Hessians of the entropic barrier can be approximated to the desired accuracy in polynomial time, with high probability.
Our hope is that this analysis will contribute to an extension of interior point methods to convex bodies where the  membership problem is `easy', but no efficiently computable barrier is known.

\subsubsection*{Outline of this paper}
The outline of this paper is as follows. After some preliminary definitions in Section \ref{sec:Preliminaries}, we prove some useful properties of the entropic barrier in Section \ref{sec:EntropicBarrier}. Then, we give a hit-and-run mixing time theorem using self-concordance in Section \ref{sec:HitandRun}. With this result established, we can show in Section \ref{sec:SamplingQuality} how hit-and-run might be applied to approximate means and covariances of Boltzmann distributions. Such approximations can then be used in Section \ref{sec:ShortStepIPM} to analyze the aforementioned interior point method.

Much of the analysis presented here is of a technical nature, and the reader may wish to skip the proofs on a first reading.

\section{Preliminaries}
\label{sec:Preliminaries}
We are interested in the problem
\begin{equation}
	\label{eq:MainProblem}
	\min_{x \in K} \langle c, x \rangle,
\end{equation}
where $K \subseteq \mathbb{R}^n$ is a convex body, and $\langle \cdot, \cdot \rangle$ is a reference inner product on $\mathbb{R}^n$. We may assume that $\| c \| = \sqrt{\langle c, c \rangle} = 1$, and that $K$ contains a ball of radius $r > 0$ and is contained in a ball of radius $R \geq r$.
For any self-adjoint, positive definite linear operator $A$, we can define the inner product $\langle \cdot, \cdot \rangle_A$ by $\langle x, y \rangle_A := \langle x, Ay \rangle$. The reference inner product induces the norm $\| \cdot \|$, and the inner product $\langle \cdot, \cdot \rangle_A$ induces the norm $\| \cdot \|_A$.

\subsection{Self-Concordant Functions and the Entropic Barrier}
The following discussion is condensed from Renegar \cite{renegar2001mathematical}.
Let $\langle \cdot, \cdot \rangle$ be any inner product on $\mathbb{R}^n$.
A function $f$ from $\dom f \subseteq \mathbb{R}^n$ to $\mathbb{R}$ is differentiable at $\theta \in \dom f$ if there exists a vector $g(\theta) \in \mathbb{R}^n$ such that
\begin{equation*}
	\lim_{\| \Delta \theta \| \to 0} \frac{f(\theta + \Delta \theta) - f(\theta) - \langle g(\theta), \Delta \theta \rangle}{\| \Delta \theta \|} = 0.
\end{equation*}
The vector $g(\theta)$ is called the gradient of $f$ at $\theta$ with respect to $\langle \cdot, \cdot \rangle$.

Furthermore, the function $f$ is twice differentiable at $\theta \in \dom f$ if it is continuously differentiable at $f$ and there exists a linear operator $H(\theta): \mathbb{R}^n \to \mathbb{R}^n$ such that
\begin{equation*}
	\lim_{\| \Delta \theta \| \to 0} \frac{\| g(\theta + \Delta \theta) - g(\theta) - H(\theta) \Delta \theta\|}{\| \Delta \theta \|} = 0.
\end{equation*}
The linear operator $H(\theta)$ is called the Hessian of $f$ at $\theta$ with respect to $\langle \cdot, \cdot \rangle$.

We denote the gradient and Hessian with respect to some other inner product $\langle \cdot, \cdot \rangle_A$ by $g_A$ and $H_A$ respectively, and it can be shown that $g_A(\theta) = A^{-1} g(\theta)$ and $H_A(\theta) = A^{-1} H(\theta)$ (see e.g. Theorems 1.2.1 and 1.3.1 in Renegar \cite{renegar2001mathematical}). For brevity, define $\langle \cdot, \cdot \rangle_\theta := \langle \cdot, \cdot \rangle_{H(\theta)}$ and $\| \cdot \|_\theta := \| \cdot \|_{H(\theta)}$ for any $\theta \in \mathbb{R}^n$ and let $g_\theta$ and $H_\theta$ be the gradient and Hessian of $f$ with respect to the local inner product $\langle \cdot, \cdot \rangle_\theta$.

The class of self-concordant functions plays an important role in the theory of interior point methods. We use the definition by Renegar \cite{renegar2001mathematical}.
\begin{definition}
	\label{def:SC}
	A function $f$ is \emph{self-concordant} if for all $\theta_0 \in \dom f$ and $\theta_1 \in \mathbb{R}^n$ such that $\| \theta_1 - \theta_0 \|_{\theta_0} < 1$, we have $\theta_1 \in \dom f$ and the following inequalities hold for all non-zero $v \in \mathbb{R}^n$:
	\begin{equation}
		\label{eq:SCGeneral}
		1 - \| \theta_1 - \theta_0 \|_{\theta_0} \leq \frac{\| v \|_{\theta_1}}{\| v \|_{\theta_0}} \leq \frac{1}{1 - \| \theta_1 - \theta_0 \|_{\theta_0}}.
	\end{equation}
\end{definition}
Some self-concordant functions have the additional property that the local norm of their gradients are bounded. Such functions are called barriers.
\begin{definition}
	\label{def:Barrier}
	A function $f$ is a \emph{barrier} if it is self-concordant and
	\begin{equation*}
		\vartheta := \sup_{\theta \in \dom f} \| g_\theta(\theta) \|_\theta^2 < \infty.
	\end{equation*}
	The value $\vartheta$ is called the \emph{complexity parameter} of $f$.
\end{definition}

In the rest of this paper, the function $f$ will always denote the log partition function associated with $K$, which is defined for any $\theta \in \mathbb{R}^n$ by
\begin{equation*}
	f(\theta) := \ln \int_{K} e^{\langle \theta, x \rangle} \diff x.
\end{equation*}
Moreover, denote the expectation of a Boltzmann distribution over $K$ with parameter $\theta \in \mathbb{R}^n$ by
\begin{equation*}
	\mathbb{E}_\theta[X] = \frac{\int_K x e^{\langle \theta, x \rangle} \diff x}{\int_K e^{\langle \theta, x \rangle} \diff x}.
\end{equation*}
It is not hard to see that for all $v \in \mathbb{R}^n$,
\begin{equation}
	\label{eq:DerivativesF}
	g(\theta) = \mathbb{E}_\theta[X], \qquad H(\theta)v = \mathbb{E}_\theta[ \langle X - \mathbb{E}_\theta[ X], v \rangle (X - \mathbb{E}_\theta [X]) ].
\end{equation}
If $\langle \cdot, \cdot \rangle$ were the Euclidean inner product, $H(\theta)$ can be represented by the covariance matrix $\mathbb{E}_{{\theta}} [(X - \mathbb{E}_{{\theta}}[X]) (X - \mathbb{E}_{{\theta}}[X])^\top]$ of a Boltzmann distribution with parameter $\theta$ over $K$. To emphasize this fact, we will write $\Sigma(\theta)$ instead of $H(\theta)$ where appropriate.

It was shown by Bubeck and Eldan \cite{bubeck2014entropic} that $f$ is self-concordant. In this case, Definition \ref{def:SC} guarantees
that for all $\theta_0, \theta_1 \in \mathbb{R}^n$ such that $\| \theta_1 - \theta_0 \|_{\theta_0} < 1$, the following inequalities hold for all non-zero $v \in \mathbb{R}^n$:
\begin{equation}
	\label{eq:SCf}
	1 - \| \theta_1 - \theta_0 \|_{\theta_0} \leq \frac{\| v \|_{\theta_1}}{\| v \|_{\theta_0}} \leq \frac{1}{1 - \| \theta_1 - \theta_0 \|_{\theta_0}}.
\end{equation}

Moreover, let $f^*$ be the Fenchel conjugate of $f$, defined in the usual manner:
\begin{equation*}
	f^*(x) = \sup_{\theta \in \mathbb{R}^n} \{ \langle\theta, x \rangle - f(\theta) \},
\end{equation*}
where $x \in \interior K$. (The reason that $\dom f^* = \interior K$ will be discussed shortly.)
The function $f^*$ is called the \emph{entropic barrier} for $K$. Again borrowing notation from Renegar \cite{renegar2001mathematical}, let $g^*$ and $H^*$ be the gradient and Hessian of $f^*$ (all linear operators in this paper are self-adjoint, so we will not use an asterisk to refer to an adjoint).
Define $\langle \cdot, \cdot \rangle^*_x := \langle \cdot, \cdot \rangle_{H^*(x)}$ for all $x \in \interior K$, and let $\| \cdot \|_x^*$ be the local norm induced by this inner product.

Since it was shown by Bubeck and Eldan \cite{bubeck2014entropic} that $f$ is self-concordant, $f^*$ is self-concordant as well. Here, Definition \ref{def:SC} shows that for all $x_0 \in \interior K$ and $x_1 \in \mathbb{R}^n$ such that $\| x_1 - x_0 \|_{x_0}^* < 1$, it holds that $x_1 \in \interior K$ and for all non-zero $v \in \mathbb{R}^n$,
\begin{equation}
	\label{eq:SCf*}
	1 - \| x_1 - x_0 \|_{x_0}^* \leq \frac{\| v \|_{x_1}^*}{\| v \|_{x_0}^*} \leq \frac{1}{1 - \| x_1 - x_0 \|_{x_0}^*}.
\end{equation}

The following is known about the domain of the conjugate of self-concordant functions.
\begin{lemma}[Proposition 3.3.3 in \cite{renegar2001mathematical}]
	\label{lemma:DomainConjugate}
	For any self-concordant $f$, we have $\dom f^* = \{ g(\theta): \theta \in \mathbb{R}^n \}$.
\end{lemma}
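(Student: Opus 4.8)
The plan is to prove the two inclusions separately, reducing the non-trivial one to a single statement about minimizers of self-concordant functions. Write $Q := \dom f$, an open convex set (with $Q = \mathbb{R}^n$ for the log partition function of this paper), and recall that $H(\cdot) \succ 0$, so $g$ is injective on $Q$. The observation that makes everything go through is that for any $x \in \mathbb{R}^n$,
\begin{equation*}
	f^*(x) = \sup_{\theta \in Q} \{ \langle \theta, x \rangle - f(\theta) \} = -\inf_{\theta \in Q} F_x(\theta), \qquad F_x(\theta) := f(\theta) - \langle \theta, x \rangle ,
\end{equation*}
so that $x \in \dom f^*$ exactly when $F_x$ is bounded below on $Q$. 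Since $F_x$ is convex with gradient $g(\theta) - x$, and $Q$ is open, $F_x$ attains its infimum at $\theta^\star$ if and only if $g(\theta^\star) = x$. Therefore the identity $\dom f^* = \{ g(\theta) : \theta \in Q \}$ is equivalent to the assertion that \emph{every self-concordant function that is bounded below attains its minimum}, applied to $F_x$ (which is self-concordant, having the same Hessian $H(\cdot)$ as $f$). One direction of this assertion is trivial; only ``bounded below $\Rightarrow$ minimum attained'' needs work.

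To prove that, I would fix $\theta_0 \in Q$ and run the damped Newton method for $F := F_x$: with Newton decrement $\lambda_k := \langle g(\theta_k) - x,\, H(\theta_k)^{-1}(g(\theta_k) - x) \rangle^{1/2}$, set $\theta_{k+1} := \theta_k - (1 + \lambda_k)^{-1} H(\theta_k)^{-1}(g(\theta_k) - x)$. The self-concordance inequality \eqref{eq:SCf} controls how $\| \cdot \|_{\theta_k}$ varies along a step, and from it one obtains the two standard estimates (Renegar, Chapter 2): each iterate remains in $Q$, and $F(\theta_k) - F(\theta_{k+1})$ is bounded below by an explicit increasing function of $\lambda_k$. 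Since $F$ is bounded below these decrements are summable, forcing $\lambda_k \to 0$; once $\lambda_{k_0} < \tfrac14$, full Newton steps converge quadratically and, again by \eqref{eq:SCf}, the iterates stay inside a fixed Dikin ellipsoid and form a Cauchy sequence, whose limit $\theta^\star \in Q$ satisfies $g(\theta^\star) = x$. When $Q = \mathbb{R}^n$ one can bypass the Newton machinery entirely: self-concordance gives the lower bound $F(\theta) \geq F(\theta_k) + \langle g(\theta_k) - x, \theta - \theta_k \rangle + \omega(\| \theta - \theta_k \|_{\theta_k})$ with $\omega(t) := t - \ln(1+t)$, and once $\| g(\theta_k) - x \|$ is small along a minimizing sequence this forces the relevant sublevel set of $F$ to be bounded, yielding a convergent subsequence and hence a minimizer.

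The main obstacle is precisely the step just described: a strictly convex, bounded-below function need not attain its infimum --- witness the (non-self-concordant) function $\theta \mapsto e^\theta$ --- so the proof must use self-concordance to keep the minimizing sequence from drifting to $\partial Q$ or off to infinity. Inequality \eqref{eq:SCf} supplies exactly the barrier-type growth of $F$, measured in the point-dependent local norm, that confines the iterates to a compact region around the eventual minimizer. The remaining ingredients --- the easy inclusion and the quantitative Newton estimates --- are routine, and in the write-up I would cite Renegar's Chapter 2 rather than reproduce them.
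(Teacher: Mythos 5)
The paper provides no proof here; Lemma~\ref{lemma:DomainConjugate} is cited directly as Proposition~3.3.3 of Renegar~\cite{renegar2001mathematical}, so there is no internal argument to compare against. Your reduction is the right one, and your damped-Newton argument is the standard proof of the cited proposition: the nontrivial inclusion $\dom f^* \subseteq \{g(\theta)\}$ reduces to showing that a bounded-below self-concordant function attains its minimum; each damped step stays in $\dom F$ since its local norm is $\lambda_k/(1+\lambda_k) < 1$; the per-step decrease is at least $\lambda_k - \ln(1+\lambda_k)$, whose summability forces $\lambda_k \to 0$; and once $\lambda_{k_0} < \tfrac14$ quadratic convergence inside a fixed Dikin ellipsoid produces a limit $\theta^\star \in \dom F$ with $g(\theta^\star) = x$. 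This is exactly Renegar's development (Chapter~2 combined with Section~3.3).

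One caveat on your proposed ``bypass'' for $Q = \mathbb{R}^n$. The lower bound
$F(\theta) \geq F(\theta_0) + \langle g(\theta_0)-x,\, \theta-\theta_0\rangle + \omega(\|\theta-\theta_0\|_{\theta_0})$
is coercive only when the \emph{Newton decrement} $\|g(\theta_0)-x\|_{\theta_0}^{*}$ is strictly below~$1$: Cauchy--Schwarz gives $F(\theta) \geq F(\theta_0) - \lambda(\theta_0)\,r + \omega(r)$ with $r = \|\theta-\theta_0\|_{\theta_0}$, and $\omega(r) \sim r$, so coercivity needs $\lambda(\theta_0) < 1$. Having $\|g(\theta_k)-x\|$ small in the \emph{reference} norm along a minimizing sequence does not imply this: $H(\theta_k)$ can have very small eigenvalues (for the log partition function, exponentially small in $n$ by Theorem~\ref{thm:LowerBoundSigma}), so $H(\theta_k)^{-1}$ blows up and the local norm of a tiny reference-norm gradient can still exceed~$1$. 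Producing a point with small Newton decrement is precisely what the damped-Newton pass accomplishes, so the shortcut is not actually available even when $Q = \mathbb{R}^n$. Since the damped-Newton route you give first is complete, this only affects the claimed simplification, not the correctness of the proof.
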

For the log partition function specifically, we have an explicit description of $\{ g(\theta): \theta \in \mathbb{R}^n \}$.
\begin{lemma}[Lemma 3.1 in \cite{klartag2006convex}]
	\label{lemma:InteriorKImageG}
	If $f$ is the log partition function associated with a convex body $K$, then $\{ g(\theta) : \theta \in \mathbb{R}^n \} = \interior K$.
\end{lemma}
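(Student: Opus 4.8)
The plan is to combine Lemma \ref{lemma:DomainConjugate} with the identification $\dom f^* = \interior K$: since that lemma gives $\{g(\theta):\theta\in\mathbb R^n\} = \dom f^*$, it suffices to prove this equality. I would first record the elementary facts that, because $K$ is bounded, $\theta\mapsto\int_K e^{\langle\theta,x\rangle}\diff x$ is finite, smooth, and strictly positive on all of $\mathbb R^n$, so $f$ is finite and $C^\infty$ there; and that $f$ is convex, being a cumulant generating function (equivalently, by H\"older's inequality applied to $\int_K e^{\lambda\langle\theta_1,x\rangle}e^{(1-\lambda)\langle\theta_2,x\rangle}\diff x$). Hence for fixed $x$ the map $h_x(\theta):=\langle\theta,x\rangle-f(\theta)$ is concave and continuous, and $x\in\dom f^*$ precisely when $\sup_\theta h_x(\theta)<\infty$.

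For the inclusion $\interior K\subseteq\dom f^*$ I would fix $y\in\interior K$ and $\epsilon>0$ with $B(y,\epsilon)\subseteq K$, and show $h_y$ is coercive. Given a nonzero $\theta$, write $u=\theta/\|\theta\|$ and restrict the integral defining $f$ to the cap $C_u:=\{x\in B(y,\epsilon):\langle u,x-y\rangle\ge\epsilon/2\}$, on which $\langle\theta,x\rangle\ge\langle\theta,y\rangle+\tfrac\epsilon2\|\theta\|$. Since $\vol(C_u)$ equals a positive constant $V$ independent of $u$ (the configuration is rotation-invariant about $y$, which is exactly why a round ball is used), this yields $f(\theta)\ge\ln V+\langle\theta,y\rangle+\tfrac\epsilon2\|\theta\|$, i.e. $h_y(\theta)\le-\ln V-\tfrac\epsilon2\|\theta\|$. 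A coercive concave continuous function on $\mathbb R^n$ attains its supremum, so $\sup_\theta h_y<\infty$ and $y\in\dom f^*$. (As a byproduct, at a maximizer $\theta^\star$ one has $\nabla h_y(\theta^\star)=y-g(\theta^\star)=0$, which already gives the nontrivial inclusion $\interior K\subseteq\{g(\theta):\theta\in\mathbb R^n\}$ directly, using \eqref{eq:DerivativesF}.)

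For the reverse inclusion it suffices to show $h_x$ is unbounded above whenever $x\notin\interior K$. If $x\notin\cl K$, I would pick $a\ne0$ strictly separating $x$ from the compact set $K$, so $\langle a,x\rangle>M:=\max_{x'\in K}\langle a,x'\rangle$; then $f(ta)\le\ln\vol(K)+tM$ for $t\ge0$, whence $h_x(ta)\ge t(\langle a,x\rangle-M)-\ln\vol(K)\to+\infty$. If instead $x\in\partial K$, take a supporting functional $a\ne0$ at $x$, so $\langle a,x\rangle=M:=\max_{x'\in K}\langle a,x'\rangle$; writing $f(ta)=tM+\ln\int_K e^{-t(M-\langle a,x'\rangle)}\diff x'$ and noting that the integrand is bounded by $1$ and decreases pointwise to the indicator of the Lebesgue-null set $K\cap\{\langle a,\cdot\rangle=M\}$, dominated convergence gives $\int_K e^{-t(M-\langle a,x'\rangle)}\diff x'\to0$, hence $h_x(ta)=-\ln\int_K e^{-t(M-\langle a,x'\rangle)}\diff x'\to+\infty$. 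In both cases $x\notin\dom f^*$, which finishes the argument.

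The one genuinely delicate point is the uniform-in-direction lower bound on $f$ in the second paragraph: one must check that the cap volume $V$ does not degenerate as $u$ ranges over the sphere, which is precisely where roundness of $B(y,\epsilon)$ is used. Everything else — finiteness and convexity of $f$, the separating and supporting functionals, and the dominated-convergence limit — is routine. As an alternative, one could bypass Lemma \ref{lemma:DomainConjugate} entirely: "$\subseteq$" follows because $g(\theta)$ is the barycenter of the everywhere-positive density $\propto e^{\langle\theta,x\rangle}$ on $K$, so $\langle a,g(\theta)\rangle=\mathbb E_\theta[\langle a,X\rangle]<\max_{x\in K}\langle a,x\rangle$ for every $a\ne0$ (the maximizing face is a Lebesgue-null subset of a hyperplane), and this strict inequality in all directions characterizes membership in $\interior K$; "$\supseteq$" is the coercivity argument above, whose maximizer $\theta^\star$ satisfies $g(\theta^\star)=y$.
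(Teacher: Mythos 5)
The paper does not supply a proof of this lemma; it is imported verbatim as Lemma~3.1 of Klartag~\cite{klartag2006convex}, so there is no internal argument to compare against. Your proposal is a correct, self-contained derivation. Reducing to $\dom f^*=\interior K$ via Lemma~\ref{lemma:DomainConjugate} is sound (self-concordance of $f$, needed for that lemma, is already available from Bubeck--Eldan). The coercivity argument for $\interior K\subseteq\dom f^*$ is airtight: the spherical cap $C_u$ has volume $V>0$ independent of the direction $u$ precisely because $B(y,\epsilon)$ is a round ball, and this yields $f(\theta)\ge\ln V+\langle\theta,y\rangle+\tfrac{\epsilon}{2}\|\theta\|$ uniformly, hence $h_y(\theta)\le-\ln V-\tfrac{\epsilon}{2}\|\theta\|\to-\infty$. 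For the reverse inclusion, the two cases $x\notin\cl K$ (strict separation, $h_x(ta)$ grows linearly in $t$) and $x\in\partial K$ (supporting functional plus dominated convergence against the indicator of a Lebesgue-null face) are exactly the standard route; the dominated-convergence step is valid since the integrand lies in $(0,1]$, is dominated by $\mathbf{1}_K\in L^1$, and decreases pointwise to the indicator of $K\cap\{\langle a,\cdot\rangle=M\}$, which sits in a hyperplane. The alternative you sketch at the end is also worth noting: observing that $g(\theta)$ is the barycenter of an everywhere-positive density (so strictly interior in every linear functional) together with the first-order condition $\nabla h_y(\theta^\star)=y-g(\theta^\star)=0$ at the attained maximizer gives $\{g(\theta):\theta\}=\interior K$ directly, without invoking Renegar's Lemma~\ref{lemma:DomainConjugate} or self-concordance at all; that is arguably the cleaner and more elementary route for this particular $f$.
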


Hence, $\dom f^* = \interior K$, as one would expect from a barrier for $K$.
The following is known about the derivatives of $f^*$.
\begin{lemma}[Theorem 3.3.4 in \cite{renegar2001mathematical}]
	\label{lemma:PropertiesDerivativesConjugate}
	For any self-concordant $f$, we have for all $\theta \in \mathbb{R}^n$,
	\begin{equation*}
		g^*(g(\theta)) = \theta, \qquad H^*(g(\theta)) = H(\theta)^{-1}.
	\end{equation*}
\end{lemma}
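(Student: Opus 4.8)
The plan is to use the fact that, because $f$ is self-concordant in the sense of Definition \ref{def:SC}, its Hessian $H(\theta)$ is positive definite for every $\theta$ (this is implicit in $\| \cdot \|_{\theta}$ being a norm, and for the log partition function it is immediate since $\Sigma(\theta)$ is the covariance of a full-dimensional distribution). Hence $f$ is strictly convex, the gradient map $g$ is injective, and by Lemmas \ref{lemma:DomainConjugate} and \ref{lemma:InteriorKImageG} it is a bijection of $\mathbb{R}^n$ onto $\dom f^* = \interior K$. Fix $\theta_0 \in \mathbb{R}^n$ and set $x_0 := g(\theta_0)$. The first step is to identify the maximizer in the definition of $f^*(x_0)$: the concave map $\theta \mapsto \langle \theta, x_0 \rangle - f(\theta)$ has gradient $x_0 - g(\theta)$, which vanishes precisely at $\theta = \theta_0$, and strict concavity makes this the unique global maximizer, so
\begin{equation*}
	f^*(x_0) = \langle \theta_0, x_0 \rangle - f(\theta_0).
\end{equation*}
More generally, writing $\theta(x) := g^{-1}(x)$ for the inverse gradient map, one obtains $f^*(x) = \langle \theta(x), x \rangle - f(\theta(x))$ for all $x \in \interior K$.

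Next I would show that $\theta(\cdot)$ is continuously differentiable on $\interior K$ with derivative $H(\theta(x))^{-1}$; this is the inverse function theorem applied to $g$, whose derivative is the positive definite (hence invertible) operator $H$. With this in hand, either Danskin's theorem, or a direct differentiation of the displayed formula for $f^*$ in which the terms $\langle D\theta(x)[\Delta x], x \rangle$ and $-\langle g(\theta(x)), D\theta(x)[\Delta x] \rangle$ cancel because $g(\theta(x)) = x$, gives
\begin{equation*}
	\langle g^*(x), \Delta x \rangle = \langle \theta(x), \Delta x \rangle \qquad \text{for all } \Delta x \in \mathbb{R}^n,
\end{equation*}
so $g^*(x) = \theta(x)$; in particular $g^*(g(\theta_0)) = \theta(x_0) = \theta_0$. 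Differentiating the identity $g^*(x) = \theta(x)$ once more yields $H^*(x) = D\theta(x) = H(\theta(x))^{-1}$, and evaluating at $x = x_0 = g(\theta_0)$ gives $H^*(g(\theta_0)) = H(\theta_0)^{-1}$.

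I expect the main obstacle to be the analytic bookkeeping around differentiability rather than the algebra: one must argue that the supremum defining $f^*(x)$ is genuinely attained — which is exactly where membership of $x$ in $\interior K = \{ g(\theta) : \theta \in \mathbb{R}^n \}$ (as opposed to merely in $K$) enters — so that $f^*$ is differentiable with $g^*(x)$ equal to the unique maximizer, and then that this maximizer depends smoothly on $x$ so that $f^*$ is twice differentiable. Once differentiability of $\theta(\cdot)$ is secured through the inverse function theorem, everything collapses cleanly; alternatively, one can simply invoke Theorem 3.3.4 of Renegar \cite{renegar2001mathematical}, as this is a classical fact about conjugates of Legendre-type convex functions.
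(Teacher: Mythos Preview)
The paper does not supply its own proof of this lemma: it is quoted verbatim as Theorem 3.3.4 in Renegar \cite{renegar2001mathematical} and used as a black box. Your proposal is the standard Legendre-duality argument and is correct: strict convexity of $f$ (from $H(\theta) \succ 0$) makes $g$ injective, Lemma \ref{lemma:DomainConjugate} makes it surjective onto $\dom f^*$, the supremum in $f^*(x)$ is attained uniquely at $\theta(x) := g^{-1}(x)$, the inverse function theorem gives $D\theta(x) = H(\theta(x))^{-1}$, and differentiating $f^*(x) = \langle \theta(x), x\rangle - f(\theta(x))$ twice yields $g^* = \theta$ and $H^*(x) = H(\theta(x))^{-1}$. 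This is exactly the proof one finds in Renegar's book, so there is nothing to compare.

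One minor remark: your invocation of Lemma \ref{lemma:InteriorKImageG} is specific to the log partition function, whereas the statement is phrased for an arbitrary self-concordant $f$. For the general case you only need Lemma \ref{lemma:DomainConjugate}, which already guarantees that every $x \in \dom f^*$ is of the form $g(\theta)$ and hence that the supremum is attained; the identification $\dom f^* = \interior K$ is not needed for the conclusion.
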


To clarify that $g$ assigns to a $\theta\in \mathbb{R}^n$ a point $x \in K$, we will sometimes write $x(\theta)$ for $g(\theta)$.
Lemmas \ref{lemma:DomainConjugate}, \ref{lemma:InteriorKImageG} and \ref{lemma:PropertiesDerivativesConjugate} imply that $g^*$ assigns to every $x \in \interior K$ a vector $\theta \in \mathbb{R}^n$ such that $g(\theta) = x(\theta) = x$. For this reason, we will often write $\theta(x)$ for $g^*(x)$ to keep the notation intuitive. The notation from this section is summarized in Table \ref{tab:Notation}.

\begin{table}[!ht]
	\centering
	\begin{tabular}{l @{\hskip 30pt} l}
		\toprule
		Log partition function $f$ & Entropic barrier $f^*$\\
		\midrule
		$f(\theta) = \ln \int_K e^{\langle \theta, x \rangle} \diff x$
		& $f^*(x) = \sup_{\theta \in \mathbb{R}^n} \{ \langle \theta, x \rangle - f(\theta) \}$\\ [1ex]
		$g(\theta) = \mathbb{E}_{\theta}[X] = x(\theta)$
		& $g^*(x) = \theta(x)$ such that $g(\theta(x)) = x$\\ [1ex]
		$H(\theta) = \Sigma(\theta)$
		& $H^*(x) = H(g^*(x))^{-1} = \Sigma(\theta(x))^{-1}$\\ [1ex]
		Domain: $\theta \in \mathbb{R}^n$ &
		Domain: $x \in \interior K$\\
		\bottomrule
	\end{tabular}
	\caption{Overview of the properties of $f$ and $f^*$}
	\label{tab:Notation}
\end{table}

Finally, it was shown by Bubeck and Eldan \cite{bubeck2014entropic} that $\vartheta = n + o(n)$, where $\vartheta$ is the complexity parameter of the entropic barrier defined in Definition \ref{def:Barrier}.

\subsection{Interior Point Method by Abernethy-Hazan}
\label{subsec:AbernethyHazan}

The short-step interior point method proposed by Abernethy and Hazan \cite{abernethy2015faster} is sketched in Algorithm \ref{alg:AbernethyHazan}.

\begin{algorithm}[!ht]
	\caption{Sketch of the interior point method by Abernethy and Hazan \cite{abernethy2015faster}}
	\label{alg:AbernethyHazan}
	\begin{algorithmic}[1]
		\Input Tolerances $\epsilon, \widetilde{\epsilon}, \bar{\epsilon} > 0$; entropic barrier parameter $\vartheta \leq n + o(n)$; objective $c \in \mathbb{R}^n$; central path proximity parameter $\delta > 0$; an $x_0 \in K$ and $\eta_0 > 0$ such that $\| x_0 - x(-\eta_0 c)\|_{x(-\eta_0 c)}^* \leq \frac{1}{2} \delta$; growth rate $\beta > 0$; sample size $N \in \mathbb{N}$.
		\Output $x_k$ such that $\langle c, x_k \rangle - \min_{x \in K} \langle c, x \rangle \leq \bar{\epsilon}$ at terminal iteration $k$.
		\Statex
		
		\State $\widehat{\epsilon} \gets \widetilde{\epsilon} \sqrt{\frac{1+\epsilon}{1-\epsilon}} + \sqrt{\frac{2 \epsilon}{1 - \epsilon}}$
		\State $\gamma \gets \frac{2(1-\delta)^4 - \widehat{\epsilon} (1+(1-\delta)^4)}{(1-\widehat{\epsilon})(1-\delta)^2(1+(1-\delta)^4)}$
		\State $k \gets 0$
		\While{$\vartheta(1+\frac{1}{2}\delta) / \eta_k > \bar{\epsilon}$}
		\State Generate samples $Y^{(1)}, ..., Y^{(N)}$ from the Boltzmann distribution with parameter $-\eta_k c$
		\State Find approximation $\widehat{\Sigma}(-\eta_k c)$ of $\Sigma(-\eta_k c)$ using samples $Y^{(1)}, ..., Y^{(N)}$
		\State Find approximation $\widehat{\theta}(x_k)$ of $\theta(x_k)$ by minimizing $\Psi_k(\theta) = f(\theta) - \langle \theta, x_k \rangle$ over $\mathbb{R}^n$
		\State $x_{k+1} \gets x_{k} - \gamma \widehat{\Sigma}(-\eta_k c) [\eta_{k+1} c + \widehat{\theta}(x_{k})]$ \Comment{$x_{k+1} \approx x(-\eta_{k+1} c)$}
		\State $\eta_{k+1} \gets (1 + \frac{\beta}{\sqrt{\vartheta}}) \eta_k$
		\State $k \gets k+1$
		\EndWhile
		\State \Return $x_k$
	\end{algorithmic}
\end{algorithm}

In every iteration $k$, we would like to know (an approximation of) $H^*(x_k)^{-1}$. Because the function $f^*$ is self-concordant, $H^*(x_k)$ is not too different from $H^*(x(-\eta_k c))$ if $x_k$ is close to $x(-\eta_k c)$ in some well-defined sense. Thus, the algorithm can also be proven to work if we find (an approximation of) $H^*(x(-\eta_k c))^{-1} = \Sigma(-\eta_k c)$. In practice, we can find an approximation of $\Sigma(-\eta_k c)$ by generating sufficiently many samples from the Boltzmann distribution with parameter $-\eta_k c$ and computing the empirical covariance matrix.

To find $g^*(x_k) = \theta(x_k)$, we will use the approach proposed by Abernethy and Hazan \cite{abernethy2015faster}. Note that the function $\Psi_k(\theta) = f(\theta) - \langle \theta, x_k \rangle$ has Fr\'echet derivatives
\begin{equation}
	D \Psi_k(\theta) = g(\theta) - x_k = \mathbb{E}_\theta[X] - x_k, \qquad D^2 \Psi_k(\theta) = \Sigma(\theta).
	\label{eq:DerivativesPsi}
\end{equation}
In other words, $\Psi_k$ is a convex function which is minimized at the $\theta \in \mathbb{R}^n$ such that $g(\theta) = x_k$, which is equal to $\theta(x_k)$. Therefore, to approximate $\theta(x_k)$, it suffices to minimize $\Psi_k$ over $\theta \in \mathbb{R}^n$. As is clear from \eqref{eq:DerivativesPsi}, the gradient and Hessian of $\Psi_k$ can be approximated at some particular $\theta \in \mathbb{R}^n$ by generating sufficiently many samples of the Boltzmann distribution with parameter $\theta$, and consequently computing the empirical mean c.q. covariance matrix.

One might wonder what quality guarantees should be satisfied by the approximations $\widehat{\Sigma}(-\eta_k c)$ and $\widehat{\theta}(x_k)$ such that the algorithm still provably works. This question was answered by the following theorem from de Klerk, Glineur and Taylor \cite{deklerk2017worst}.
\begin{theorem}[Theorem 7.6 in \cite{deklerk2017worst}]
	Consider Algorithm \ref{alg:AbernethyHazan} with the input settings $\beta = \frac{1}{32}$ and $\delta = \frac{1}{4}$, and let $\epsilon, \widetilde{\epsilon} > 0$ such that
	$\widehat{\epsilon} = \widetilde{\epsilon} \sqrt{\frac{1+\epsilon}{1-\epsilon}} + \sqrt{\frac{2 \epsilon}{1 - \epsilon}} \leq \frac{1}{6}$.
	Suppose that in every iteration of Algorithm \ref{alg:AbernethyHazan}, the approximation $\widehat{\Sigma}(-\eta_k c)$ satisfies
	\begin{align*}
		(1-\epsilon) y^\top \widehat{\Sigma}(-\eta_k c) y &\leq y^\top \Sigma(-\eta_k c) y \leq (1+\epsilon) y^\top \widehat{\Sigma}(-\eta_k c) y && \forall y \in \mathbb{R}^n,\\
		(1-\epsilon) y^\top \widehat{\Sigma}(-\eta_k c)^{-1} y &\leq y^\top \Sigma(-\eta_k c)^{-1} y \leq (1+\epsilon) y^\top \widehat{\Sigma}(-\eta_k c)^{-1} y && \forall y \in \mathbb{R}^n,
	\end{align*}
	and that the approximation $\widehat{\theta}(x_k)$ satisfies
	\begin{equation*}
		\left\| \widehat{\theta}(x_k) - \theta(x_k) \right\|_{\Sigma(-\eta_{k+1} c)} \leq \widetilde{\epsilon} \left\| -\eta_{k+1} c - \theta(x_k) \right\|_{\Sigma(-\eta_{k+1} c)}.
	\end{equation*}
	
	If the algorithm is initialized with an $x_0 \in K$ and $\eta_0 > 0$ such that $\| x_0 - x(-\eta_0 c)\|_{x(-\eta_0 c)}^* \leq \frac{1}{2} \delta$, then it terminates after
	\begin{equation*}
		k = \left\lceil 40 \sqrt{\vartheta} \ln \left( \frac{\vartheta (1 + \frac{1}{2} \delta)}{\eta_0 \bar{\epsilon}} \right) \right\rceil
	\end{equation*}
	iterations. The result is an $x_k$ such that
	\begin{equation*}
		\langle c, x_k \rangle - \min_{x \in K} \langle c, x \rangle \leq \bar{\epsilon}.
	\end{equation*}
\end{theorem}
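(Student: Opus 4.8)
The plan is to carry out the classical short-step interior point analysis for the $\vartheta$-self-concordant barrier $f^*$, while keeping track of the two sources of inexactness in Algorithm~\ref{alg:AbernethyHazan}: the use of $\widehat{\Sigma}(-\eta_k c)$ in place of the true local Hessian inverse, and the use of $\widehat{\theta}(x_k)$ in place of $\theta(x_k) = g^*(x_k)$. Throughout, the central path is the curve $\eta \mapsto x(-\eta c)$, characterized via Lemma~\ref{lemma:PropertiesDerivativesConjugate} by $g^*(x(-\eta c)) = \theta(x(-\eta c)) = -\eta c$, and the natural progress measure at iteration $k$ is the proximity $\lambda_k := \| x_k - x(-\eta_k c) \|_{x(-\eta_k c)}^*$, equivalently a Newton-decrement-type quantity for the penalized objective $\Phi_\eta(x) := \eta \langle c, x \rangle + f^*(x)$, whose minimizer over $\interior K$ is exactly $x(-\eta c)$ by Lemmas~\ref{lemma:DomainConjugate}, \ref{lemma:InteriorKImageG} and \ref{lemma:PropertiesDerivativesConjugate}. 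I would prove by induction on $k$ that the initialization bound $\lambda_0 \le \tfrac{1}{2}\delta$ is preserved at every iteration; the iteration count and the accuracy guarantee then follow quickly.

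First I would isolate the effect of the centrality-parameter update $\eta_k \mapsto \eta_{k+1} = (1 + \beta/\sqrt{\vartheta})\, \eta_k$. The barrier property of $f^*$ (Definition~\ref{def:Barrier}) gives $\eta_k \| c \|_{\Sigma(-\eta_k c)} = \| g^*(x(-\eta_k c)) \|_{H^*(x(-\eta_k c))^{-1}} \le \sqrt{\vartheta}$, so the shift of the target obeys $\| (\eta_{k+1} - \eta_k) c \|_{\Sigma(-\eta_k c)} \le \beta$; with $\beta = \tfrac{1}{32}$ this is a small perturbation, and self-concordance \eqref{eq:SCf*} lets me pass between the local norms at $x(-\eta_k c)$, at $x_k$, and at the new target $x(-\eta_{k+1} c)$ to bound the proximity of $x_k$ to $x(-\eta_{k+1} c)$ in terms of $\lambda_k + \beta$ and absolute constants. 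Second, I would show that the damped Newton step for $\Phi_{\eta_{k+1}}$ taken from $x_k$ --- whose exact form is $x_k - \gamma H^*(x_k)^{-1} (\eta_{k+1} c + \theta(x_k))$, and whose inexact realization is exactly the update in Algorithm~\ref{alg:AbernethyHazan} --- contracts this proximity. This uses self-concordance twice: once to relate $H^*(x_k)$ to $\Sigma(-\eta_k c) = H^*(x(-\eta_k c))^{-1}$ (legitimate since $x_k$ is $\lambda_k$-close to $x(-\eta_k c)$), and once to control the second-order remainder along the step within the Dikin ellipsoid. The spectral bounds on $\widehat{\Sigma}(-\eta_k c)$ and its inverse, and the relative error bound $\| \widehat{\theta}(x_k) - \theta(x_k) \|_{\Sigma(-\eta_{k+1} c)} \le \widetilde{\epsilon}\, \| -\eta_{k+1} c - \theta(x_k) \|_{\Sigma(-\eta_{k+1} c)}$, then enter as additive perturbations of this contraction; combining them produces the aggregate error parameter $\widehat{\epsilon}$, and the prescribed value of $\gamma$ is calibrated so that the resulting recursion $\lambda_{k+1} \le q\, \lambda_k + (\text{shift term})$ has $q < 1$ and keeps $\lambda_k \le \tfrac{1}{2}\delta$ invariant --- which is why the hypothesis $\widehat{\epsilon} \le \tfrac{1}{6}$ is imposed.

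With the invariant in hand, the remaining two conclusions are routine. Since $\eta_k = \eta_0 (1 + \beta/\sqrt{\vartheta})^k$ grows geometrically and $\ln(1 + \beta/\sqrt{\vartheta}) \ge \tfrac{1}{40\sqrt{\vartheta}}$ for $\beta = \tfrac{1}{32}$ and $\vartheta \ge 1$, the stopping test $\vartheta(1 + \tfrac{1}{2}\delta)/\eta_k \le \bar{\epsilon}$ is met once $k \ge 40\sqrt{\vartheta}\, \ln( \vartheta(1 + \tfrac{1}{2}\delta) / (\eta_0 \bar{\epsilon}) )$, which gives the stated iteration bound. For the accuracy statement I would invoke the standard near-central-path weak-duality estimate: whenever $\| x - x(-\eta c) \|_{x(-\eta c)}^* \le \tfrac{1}{2}\delta$ one has $\langle c, x \rangle - \min_{x' \in K} \langle c, x' \rangle \le \vartheta(1 + \tfrac{1}{2}\delta)/\eta$ --- this is exactly where the factor $(1 + \tfrac{1}{2}\delta)$ in the stopping rule originates --- applied at $x = x_k$ and $\eta = \eta_k$ on the terminal iteration.

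The main obstacle is the inexact single-step contraction in the second paragraph. One must show that replacing the true local Hessian inverse by $\widehat{\Sigma}(-\eta_k c)$ --- which only approximates $H^*(x(-\eta_k c))^{-1}$, not $H^*(x_k)^{-1}$ --- together with the gradient error $\widehat{\theta}(x_k) - \theta(x_k)$, preserves the quadratic-type convergence of the damped Newton step, and one must propagate these errors through the self-concordance inequalities for three distinct nearby points with constants sharp enough that the threshold comes out as the clean $\tfrac{1}{6}$. This is precisely the inexact-Newton bookkeeping developed in the earlier sections of \cite{deklerk2017worst}, whose lemmas I would quote rather than reprove.
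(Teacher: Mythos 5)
The theorem you are proving is not proved in this paper: it is quoted verbatim from de~Klerk, Glineur and Taylor \cite{deklerk2017worst} (their Theorem~7.6), so there is no internal proof to compare against. Your sketch is a faithful reconstruction of the argument that reference uses: an induction preserving $\lambda_k := \|x_k - x(-\eta_k c)\|^*_{x(-\eta_k c)} \le \tfrac12\delta$, the barrier-parameter bound $\eta_k\|c\|_{\Sigma(-\eta_k c)} \le \sqrt{\vartheta}$ to control the $\eta$-update, an inexact damped Newton contraction, a geometric iteration count from $\ln(1+\beta/\sqrt{\vartheta}) \ge 1/(40\sqrt{\vartheta})$ for $\beta = \tfrac1{32}$, and Renegar's near-central-path weak-duality estimate for the terminal accuracy. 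The elementary pieces you spell out all check: the identity $g^*(x(-\eta_k c)) = -\eta_k c$, the Hessian conjugacy $H^*(x(-\eta_k c))^{-1} = \Sigma(-\eta_k c)$, and the form of the exact and inexact damped Newton steps all follow from Lemmas~\ref{lemma:DomainConjugate}--\ref{lemma:PropertiesDerivativesConjugate}.

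You correctly flag the real work as the inexact-contraction step and, in particular, the subtlety that $\widehat\Sigma(-\eta_k c)$ approximates $H^*(x(-\eta_k c))^{-1}$ rather than $H^*(x_k)^{-1}$. To make that precise one transports the Hessian estimate across both the proximity $\lambda_k$ and the parameter step $\beta$ via self-concordance --- this is precisely what the present paper does in its own proof of Theorem~\ref{thm:ThetaRoutineAnalysis} by invoking Theorem~3.6 and Theorem~5.3/Corollary~6.2 of \cite{deklerk2017worst}, with a contraction factor of the form $\frac{1-(1-d)^4}{1+(1-d)^4} + \widehat\epsilon$ at $d = \|x_k - x(-\eta_{k+1}c)\|^*_{x(-\eta_{k+1}c)}$. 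One caveat to be aware of: the specific constants ($\gamma$, the threshold $\widehat\epsilon \le \tfrac16$, and the tight relation between $\beta$, $\delta$, $\widehat\epsilon$) are not produced by a hand ``calibration'' of the recursion; in \cite{deklerk2017worst} they come from performance-estimation-problem computations, so your phrase ``calibrated so that'' should be read as ``imported from,'' not as something you could derive from first principles within this sketch.
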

The main purpose of this paper is to give a detailed description of how one can approximate $\Sigma(-\eta_k c)$ and $\theta(x_k)$ in practice such that quality requirements similar to the ones in the theorem above are satisfied. In order to do this, we need some results from probability theory.

\subsection{Log-Concavity and Divergence of Probability Distributions}
The probability density function of a Boltzmann distribution belongs to the well-studied class of log-concave functions.
%
We start by recalling the definition of log-concavity.
\begin{definition}
	A function $h : \mathbb{R}^n \to \mathbb{R}_{+}$ is \emph{log-concave} if for any two $x,y \in \mathbb{R}^n$ and $\lambda \in (0,1)$,
	\begin{equation*}
		h(\lambda x + (1-\lambda) y) \geq h(x)^\lambda h(y)^{1-\lambda}.
	\end{equation*}
	In other words, if $h$ is strictly positive, then it is log-concave if and only if $x \mapsto \log(h(x))$ is concave.
\end{definition}
We will need the following concentration result for log-concave distributions. Note that it is stronger than Chebyshev's inequality.
\begin{lemma}[Lemma 3.3 from \cite{lovasz2006simulated}]
	\label{lemma:LogConcaveTail}
	Let $X$ be a random variable with a log-concave distribution, and let $\| \cdot \|$ be the Euclidean norm. Denote $\mathbb{E}[\|X - \mathbb{E}[X] \|^2] =: \sigma^2$. Then for all $t > 1$,
	\begin{equation*}
	\mathbb{P}\{ \|X - \mathbb{E}[X] \| > t \sigma \} \leq e^{1-t}.
	\end{equation*}
\end{lemma}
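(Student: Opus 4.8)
The plan is to turn the statement into a one-dimensional tail estimate and then work to pin down the sharp constant. First I would translate so that $\mathbb{E}[X]=0$, set $R:=\|X\|$ and $\sigma^2:=\mathbb{E}[R^2]$, and dispose of the trivial case $R\equiv 0$. Since $m:=\mathbb{E}[R]\le\sigma$ by Jensen, it suffices to prove $\mathbb{P}\{R>sm\}\le e^{1-s}$ for every $s>1$: substituting $s=t\sigma/m\ge t$ and using monotonicity of $u\mapsto e^{1-u}$ then recovers the claim. The only feature of $X$ I would use is that its law $\mu$ is a log-concave probability measure. From this, Borell's inequality $\mu((1-\lambda)A+\lambda B)\ge\mu(A)^{1-\lambda}\mu(B)^{\lambda}$ applied to dilates of the Euclidean unit ball $\mathcal{B}$, together with $r_1\mathcal{B}+r_2\mathcal{B}=(r_1+r_2)\mathcal{B}$, shows that $r\mapsto F(r):=\mathbb{P}\{R\le r\}$ is log-concave on $[0,\infty)$ with $F(0)=0$; and Borell's lemma (in the sharp form, with the one-sided exponential extremal) gives the moment comparison $\mathbb{E}[R^k]\le k!\,m^k$ for all $k\in\mathbb{N}$.

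The moment comparison already yields the right exponential shape: it gives $\mathbb{E}[e^{\lambda R}]\le(1-\lambda m)^{-1}$ for $0\le\lambda m<1$, and Markov's inequality with the optimal $\lambda$ then produces a bound of the form $\mathrm{poly}(s)\,e^{-s}$ for $\mathbb{P}\{R>sm\}$. An essentially equivalent route is to apply Borell's dilation lemma to a ball $\{R\le c\sigma\}$ whose measure is pinned down by Chebyshev's inequality, which likewise gives exponential-in-$t$ decay.

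The hard part will be promoting $\mathrm{poly}(s)\,e^{-s}$ to the clean $e^{1-s}$ uniformly in $s>1$: the Markov step loses a polynomial factor, and iterating Borell's lemma degrades to merely sub-exponential decay, so neither elementary tool matches the constant $e$. The clean remedy is to reduce the inequality $\mu\{\|x\|>t\sigma\}\le e^{1-t}$ under the constraint $\int\|x\|^2\,d\mu=\sigma^2$ to the one-dimensional case via the localization lemma of Lov\'asz and Simonovits: on a one-dimensional log-linear ``needle'' $\{p+xq:\ x\in[a,b]\}$ the map $x\mapsto\|p+xq\|$ is convex and the measure is proportional to $e^{\gamma x}\,dx$, so the problem collapses to a concrete calculus inequality whose extremal case is a one-sided exponential, where $e^{1-t}$ holds by direct computation. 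Alternatively one splits by the size of $\var(R)$: when $\var(R)$ is small, $R$ is tightly concentrated near $\sigma$ and Borell's lemma with a Chebyshev base level gives a rapidly decaying tail; when $\sigma/m$ is bounded away from $1$, the moment estimate already beats $e^{1-s}$ for large $s$, and the remaining range of $s$ near $1$ is checked by hand. Everything other than this calibration of the constant is routine.
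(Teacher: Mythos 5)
The paper states this lemma by citation (Lemma~3.3 of Lov\'asz and Vempala \cite{lovasz2006simulated}) and gives no proof, so there is no internal argument to compare yours against. On its own terms, your write-up is an honest sketch that gets the preliminaries right but stops exactly where the work is. The reduction to the first moment $m=\mathbb{E}[R]$ via $m\le\sigma$ and monotonicity of $u\mapsto e^{1-u}$ is correct; log-concavity of the radial cdf $r\mapsto\mathbb{P}\{R\le r\}$ does follow from Pr\'ekopa--Leindler on dilated balls centred at the mean; the Borell-type moment estimate $\mathbb{E}[R^k]\le k!\,m^k$ is a genuine consequence of Borell's lemma, and the resulting Chernoff optimization $\mathbb{P}\{R>sm\}\le s\,e^{1-s}$ is computed correctly. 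You rightly flag the stray factor $s$.

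The genuine gap is that removing that factor is not carried out, and the localization route you gesture at is not a ``direct computation.'' Three concrete issues. (i) By normalizing with $m$ rather than $\sigma$, you have silently dropped the constraint $\mathbb{E}[X]=0$, so on needles you would be proving the strictly stronger statement $\mathbb{P}\{\|X\|>s\,\mathbb{E}\|X\|\}\le e^{1-s}$ for an \emph{arbitrary} centre of the norm, not just the barycentre; that likely holds, but it is an additional claim that needs its own argument. (ii) On a needle $\{p+xq:x\in[a,b]\}$ with exponential weight, the relevant convex function is $\|p+xq\|=\sqrt{c+d(x-x_0)^2}$, not $|x-x_0|$, so one must verify the inequality over the full multi-parameter family (endpoints, exponential rate $\gamma$, offset $c\ge 0$) uniformly in $s>1$; identifying the one-sided exponential as the extremizer is precisely the nontrivial part, and asserting it is not the same as proving it. (iii) A cautionary note: the survival function $\mathbb{P}\{R>t\}$ is \emph{not} log-concave in general --- for $X\sim\mathrm{Exp}(1)$ recentred at its mean, $\ln\mathbb{P}\{R>t\}$ has a convex kink at $t=1$ --- so any shortcut that implicitly relies on log-concavity of the tail (rather than of the cdf) would fail. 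The case-split alternative in your final paragraph is too vague to assess. In short: the preliminaries are correct and the diagnosis of the difficulty is accurate, but this is a plan, not a proof.
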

Next, we define level sets for general probability density functions.
\begin{definition}
	\label{def:LevelSet}
	Let $h: \mathbb{R}^n \to \mathbb{R}$ be a probability density function supported on $K \subseteq \mathbb{R}^n$. Then, the \emph{level set} $L_p$ of (the distribution with density) $h$ is $\{ x \in K : h(x) \geq \alpha_p \}$, where $\alpha_p$ is chosen such that $\int_{L_p} h(x) \diff x = p$.
\end{definition}
Note that the level sets of log-concave distributions are convex (see e.g. Section 3.5 in Boyd and Vandenberghe \cite{boyd2004convex}).

We will generate samples from the family of Boltzmann distributions with a random walk method known as hit-and-run sampling, to be defined later.
Hit-and-run sampling is only guaranteed to work if the distribution of the starting point and the distribution one would like to sample from are ``close''. Even then, the result of the hit-and-run walk does not have the correct distribution, but a distribution which is again ``close'' to the desired distribution. To make these statements exact, we will need two measures of divergence between probability distributions. Before we can define them, we recall the definition of absolute continuity.
\begin{definition}
	Let $(K, \mathcal{E})$ be a measurable space, and let $\nu$ and $\mu$ be measures on this space. Then, $\nu$ is \emph{absolutely continuous} with respect to $\mu$ if $\mu(A) = 0$ implies $\nu(A) = 0$ for all $A \in \mathcal{E}$.
	We write this property as $\nu \ll \mu$.
\end{definition}

The first measure of divergence between probability distributions is the $L_2$-norm.
\begin{definition}
	Let $(K, \mathcal{E})$ be a measurable space. Let $\nu$ and $\mu$ be two probability distributions over this space, such that $\nu \ll \mu$. Then,
	the \emph{$L_2$-norm of $\nu$ with respect to $\mu$} is
	\begin{equation*}
		\|\nu / \mu \| := \int_{K} \frac{\diff \nu}{\diff \mu} \diff \nu = \int_{K} \left(\frac{\diff \nu}{\diff \mu} \right)^2 \diff \mu,
	\end{equation*}
	where $\frac{\diff \nu}{\diff \mu}$ is the Radon-Nikodym derivative of $\mu$ with respect to $\nu$.
\end{definition}
If $K \subseteq \mathbb{R}^n$, and $\nu$ and $\mu$ have probability densities $h_\nu$ and $h_\mu$, respectively, with respect to the Lebesgue measure, it can be shown that
\begin{equation*}
	\| \nu / \mu \| = \int_K \frac{h_\nu(x)}{h_\mu(x)} h_\nu(x) \diff x.
\end{equation*}

The second way in which we will measure distance between probability distributions is by total variation distance.
\begin{definition}
	\label{def:TotalVariationDistance}
	Let $(K, \mathcal{E})$ be a measurable space. For two probability distributions $\mu$ and $\nu$ over this space, their \emph{total variation distance} is
	\begin{equation*}
		\|\mu - \nu \| := \sup_{A \in \mathcal{E}} | \mu(A) - \nu(A)|.
	\end{equation*}
\end{definition}

A useful property of the total variation distance is that it allows coupling of random variables, as the following lemma asserts.
\begin{lemma}[e.g. Proposition 4.7 in \cite{levin2017markov}]
	Let $X$ be a
	random variable on $K \subseteq \mathbb{R}^n$
	with distribution $\mu$, and let $\nu$ be a different probability distribution on $K$.
	If $\|\mu - \nu\| = \alpha$, we can construct another
	random variable $Y$ on $K$ distributed according to $\nu$ such that $\mathbb{P}\{X = Y\} = 1-\alpha$.
	\label{lemma:DivineIntervention}
\end{lemma}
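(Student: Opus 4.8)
The plan is to realize the \emph{maximal coupling} of $\mu$ and $\nu$. First I would pass to a common reference measure: since $\mu$ and $\nu$ are probability measures on $K \subseteq \mathbb{R}^n$, set $\lambda := \tfrac{1}{2}(\mu + \nu)$ and let $h_\mu := \diff\mu / \diff\lambda$ and $h_\nu := \diff\nu / \diff\lambda$ be the Radon--Nikodym derivatives, which exist because $\mu, \nu \ll \lambda$. Writing $m(x) := \min\{h_\mu(x), h_\nu(x)\}$, I would record the standard identity $\|\mu - \nu\| = 1 - \int_K m\,\diff\lambda$: the supremum in Definition \ref{def:TotalVariationDistance} is attained at $A^\star := \{x : h_\mu(x) > h_\nu(x)\}$, and $\mu(A^\star) - \nu(A^\star) = \int_K (h_\mu - h_\nu)_+\,\diff\lambda = \int_K (h_\mu - m)\,\diff\lambda = 1 - \int_K m\,\diff\lambda$. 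Hence $p := \int_K m\,\diff\lambda = 1 - \alpha$.

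Next I would split $\mu$ and $\nu$ into an overlapping part and a residual part. Assuming $0 < \alpha < 1$ (the cases $\alpha \in \{0, 1\}$ being immediate), define probability measures $\rho, \mu_1, \nu_1$ on $K$ by $\rho(\diff x) := \tfrac{1}{p} m(x)\,\lambda(\diff x)$, $\mu_1(\diff x) := \tfrac{1}{\alpha}(h_\mu(x) - m(x))\,\lambda(\diff x)$ and $\nu_1(\diff x) := \tfrac{1}{\alpha}(h_\nu(x) - m(x))\,\lambda(\diff x)$; these are genuine probability measures by the identity above, and one has the mixtures $\mu = p\rho + \alpha\mu_1$ and $\nu = p\rho + \alpha\nu_1$. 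The structural point is that $\mu_1$ is supported on $\{h_\mu > h_\nu\}$ and $\nu_1$ on $\{h_\nu > h_\mu\}$, which are disjoint.

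I would then build $Y$ on a (possibly enlarged) probability space carrying $X$ together with an independent $U \sim \mathrm{Unif}[0,1]$ and an independent draw $W \sim \nu_1$: given $X$, set $Y := X$ if $U \le m(X)/h_\mu(X)$ (the ratio read as $0$ where $h_\mu = 0$, a $\mu$-null event), and $Y := W$ otherwise. Conditioning on $X$ and integrating, for measurable $A \subseteq K$,
\begin{equation*}
\mathbb{P}\{Y \in A\} = \int_A \frac{m(x)}{h_\mu(x)}\,h_\mu(x)\,\lambda(\diff x) + \Big( \int_K \big(h_\mu(x) - m(x)\big)\,\lambda(\diff x) \Big)\,\nu_1(A) = \int_A m\,\diff\lambda + \alpha\,\nu_1(A) = \nu(A),
\end{equation*}
so $Y \sim \nu$. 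Finally $\mathbb{P}\{X = Y\} \ge \mathbb{P}\{U \le m(X)/h_\mu(X)\} = \int_K m\,\diff\lambda = 1 - \alpha$, while on the complementary event $X$ has conditional law $\mu_1$, supported on $\{h_\mu > h_\nu\}$, and $Y = W$ has law $\nu_1$, supported on the disjoint set $\{h_\nu > h_\mu\}$, so $X \ne Y$ there almost surely; hence $\mathbb{P}\{X = Y\} = 1 - \alpha$ exactly.

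The only real obstacle is the measure-theoretic bookkeeping: establishing the total-variation identity in full generality, absorbing the null set where $h_\mu$ vanishes, and legitimizing the construction of $Y$ conditional on $X$ — the last point is unproblematic here since $K \subseteq \mathbb{R}^n$ is a standard Borel space, so regular conditional distributions exist. No idea beyond the overlap/residual decomposition is needed.
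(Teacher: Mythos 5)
The paper does not prove this lemma; it is quoted as a known result with a citation to Levin, Peres and Wilmer (Proposition~4.7 there is the coupling characterization of total variation, and its proof is precisely the one you give). Your argument is the standard maximal-coupling construction and it is correct: the total-variation identity $\|\mu-\nu\| = 1 - \int_K m\,\diff\lambda$, the mixture decompositions $\mu = p\rho + \alpha\mu_1$ and $\nu = p\rho + \alpha\nu_1$, the acceptance--rejection definition of $Y$ from $X$, $U$, and $W$, and the disjoint-support observation that forces $\mathbb{P}\{X=Y\}$ to equal $1-\alpha$ exactly (rather than just $\ge 1-\alpha$) are all accurate, and you correctly flag the need for regular conditional distributions, which is unproblematic on a standard Borel space. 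Since this reproduces the cited textbook proof, there is nothing further to compare.
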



\subsection{Near-Independence}
The end point of a random walk depends on the starting point of the walk, but as the walk length increases, this dependence starts to vanish. We will use the notion of near-independence to quantify this.
\begin{definition}
	\label{def:NearIndependence}
	Two random variables $X$ and $Y$ taking values in measurable space $(K, \mathcal{E})$ are \emph{near-independent} or \emph{$q$-independent} if for all $A, B \in \mathcal{E}$,
	\begin{equation*}
		|\mathbb{P}\{ X \in A \wedge Y \in B \} - \mathbb{P}\{ X \in A \} \mathbb{P}\{ Y \in B \}| \leq q.
	\end{equation*}
\end{definition}

Before we can analyze the near-independence of starting and end points of a random walk, we need the formal machinery of Markov kernels. Intuitively, a Markov kernel assigns to any point in $K$ a probability distribution over $K$. Its analogue for discrete space Markov chains is a transition probability matrix.
\begin{definition}
	\label{def:MarkovKernel}
	Let $(K, \mathcal{E})$ be a measurable space, and let $\mathcal{B}[0,1]$ be the Borel-$\sigma$-algebra over $[0,1]$. A \emph{Markov kernel} is a map $Q: K \times \mathcal{E} \to [0,1]$ with the properties
	\begin{enumerate}[(i)]
		\item For every $x \in K$, the map $B \mapsto Q(x,B)$ for $B \in \mathcal{E}$ is a probability measure on $(K,\mathcal{E})$;
		\item For every $B \in \mathcal{E}$, the map $x \mapsto Q(x,B)$ for $x \in K$ is $(\mathcal{E}, \mathcal{B}[0,1])$-measurable.
	\end{enumerate}
\end{definition}
Since $Q(x, \cdot)$ is a measure for any fixed $x \in K$, we can integrate a function $\phi$ over $K$ with respect to this measure. This integral will be denoted by $\int_K \phi(y) Q(x, \diff y)$. We emphasize again that for any $x \in K$, this expression is just a Lebesgue integral.

Suppose the Markov kernel $Q$ corresponds to one step of a random walk, i.e. after one step from $x \in K$, the probability of ending up in $B \in \mathcal{E}$ is $Q(x,B)$.
%
The probability that after $m \geq 1$ steps a random walk starting at $x \in K$ ends up in $B \in \mathcal{E}$ is then given by
\begin{equation*}
	Q^m(x,B) := \int_K Q(y,B) Q^{m-1}(x, \diff y),
\end{equation*}
where $Q^1 := Q$. Another interpretation of $Q^m(x,B)$ is the probability of a random walk ending in $B$, conditional on the random starting point $X$ of the walk taking value $x$.
If moreover the starting point of the random walk is not fixed, but follows a probability distribution $\nu$, then the end point of the random walk after $m$ steps follows distribution $\nu Q^m$, defined by
\begin{equation*}
	(\nu Q^m)(B) = \int_K Q^m(x, B) \diff \nu(x),
\end{equation*}
for all $B \in \mathcal{E}$.

The following lemma connects total variation distance to near-independence. It will ensure that if the distribution of the end point $Y$ of a random walk approaches some fixed desired distribution $\mu$, then the start point $X$ of this random walk and $Y$ are near-independent.
A similar relation was established by Lov\'asz and Vempala \cite{lovasz2006simulated}, but we will use a version that does not assume $Y$ follows the desired distribution $\mu$.

\begin{lemma}[cf. Lemma 4.3(a) in \cite{lovasz2006simulated}]
	\label{lemma:NearIndependenceWalk}
	Fix a probability distribution $\mu$ over a set $K \subseteq \mathbb{R}^n$. Let $Q$ be a Markov kernel on $K$, and let $\ell: \mathbb{R}_+ \to \mathbb{N}$.
	Suppose that for any $\bar{M} \geq 0$, $\bar{q} > 0$ and any
	distribution $\bar{\nu}$ satisfying $\bar{\nu} \ll \mu$ and $\| \bar{\nu} / \mu \| \leq \bar{M}$, it holds that
	$\| \bar{\nu} Q^{\ell(\bar{M}/\bar{q}^2)} - \mu \| \leq \bar{q}$.
	Let $M \geq 0, q > 0$, and let $\nu$ be a distribution such that $\nu \ll \mu$ and $\| \nu / \mu \| \leq M$. If $X$ is a random variable with distribution $\nu$, and $Y$ is a random variable with distribution conditional on $X=x$ given by $Q^{\ell(M/q^2)}(x, \cdot)$ for any $x \in K$, then $X$ and $Y$ are $3q$-independent.
\end{lemma}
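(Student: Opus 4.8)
The plan is to establish the near-independence bound by comparing the joint law of $(X,Y)$ with the product of its marginals, and controlling the error by the total variation distance between $\nu Q^{\ell(M/q^2)}$ and $\mu$. The key observation is that for any $A,B\in\mathcal{E}$,
\[
\mathbb{P}\{X\in A\wedge Y\in B\}=\int_A Q^{\ell(M/q^2)}(x,B)\,\diff\nu(x),
\]
and that since $X\sim\nu$ and $Y\sim\nu Q^{\ell(M/q^2)}$,
\[
\mathbb{P}\{X\in A\}\,\mathbb{P}\{Y\in B\}=\nu(A)\cdot(\nu Q^{\ell(M/q^2)})(B).
\]
So the quantity to bound is
\[
\left|\int_A Q^{\ell(M/q^2)}(x,B)\,\diff\nu(x)-\nu(A)\,(\nu Q^{\ell(M/q^2)})(B)\right|.
\]
The natural strategy is to replace $Q^{\ell(M/q^2)}(x,\cdot)$ by $\mu$ inside the integral and replace $\nu Q^{\ell(M/q^2)}$ by $\mu$ in the second term, and to argue each replacement costs at most $q$ in absolute value.

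First I would apply the hypothesis with $\bar\nu=\nu$, $\bar M=M$, $\bar q=q$ to get $\|\nu Q^{\ell(M/q^2)}-\mu\|\le q$; this immediately handles the second substitution, giving $|\nu(A)(\nu Q^{\ell(M/q^2)})(B)-\nu(A)\mu(B)|\le\nu(A)q\le q$. For the first substitution I want to show $\left|\int_A Q^{\ell(M/q^2)}(x,B)\,\diff\nu(x)-\nu(A)\mu(B)\right|\le 2q$, and then the triangle inequality gives the claimed $3q$. To bound the integral term, the idea is to restrict $\nu$ to $A$: define the sub-probability measure $\nu_A(\cdot)=\nu(\cdot\cap A)$, and note $\int_A Q^{\ell(M/q^2)}(x,B)\,\diff\nu(x)=(\nu_A Q^{\ell(M/q^2)})(B)$. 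The issue is that $\nu_A$ is not a probability distribution, so I cannot directly invoke the mixing hypothesis on it. The fix is to rescale: consider the conditional distribution $\nu(\cdot\mid A)=\nu_A/\nu(A)$ when $\nu(A)>0$, and check that $\nu(\cdot\mid A)\ll\mu$ with $\|\nu(\cdot\mid A)/\mu\|\le M/\nu(A)^2$ — this follows because the Radon--Nikodym derivative of $\nu(\cdot\mid A)$ is $\mathbf{1}_A\,(\diff\nu/\diff\mu)/\nu(A)$, so squaring and integrating against $\mu$ multiplies $\|\nu/\mu\|$ by at most $1/\nu(A)^2$.

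Now apply the hypothesis with $\bar\nu=\nu(\cdot\mid A)$, $\bar M=M/\nu(A)^2$, $\bar q=\nu(A)q$: the walk length is $\ell(\bar M/\bar q^2)=\ell\big((M/\nu(A)^2)/(\nu(A)^2 q^2)\big)$ — wait, that gives $\ell(M/(\nu(A)^4 q^2))$, which is not $\ell(M/q^2)$. So the cleaner choice is $\bar M=M/\nu(A)^2$ and $\bar q=q/\nu(A)$, for which $\bar M/\bar q^2 = M/q^2$, so the walk length matches exactly; the hypothesis then yields $\|\nu(\cdot\mid A)Q^{\ell(M/q^2)}-\mu\|\le q/\nu(A)$, hence for the event $B$, $|(\nu(\cdot\mid A)Q^{\ell(M/q^2)})(B)-\mu(B)|\le q/\nu(A)$. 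Multiplying through by $\nu(A)$ gives $|(\nu_A Q^{\ell(M/q^2)})(B)-\nu(A)\mu(B)|\le q$. Combined with the second substitution this yields the total bound $|\mathbb{P}\{X\in A\wedge Y\in B\}-\mathbb{P}\{X\in A\}\mathbb{P}\{Y\in B\}|\le 2q\le 3q$; the slack of $q$ absorbs the degenerate cases. The main obstacle, and the only subtle point, is exactly this rescaling bookkeeping: one must choose $(\bar M,\bar q)$ so that $\bar M/\bar q^2=M/q^2$ (so the walk length is unchanged) while $\bar q$ is small enough that $\nu(A)\cdot\bar q\le q$. The edge case $\nu(A)=0$ (and symmetrically $\mu(B)=0$ via $\nu Q^{\ell}\ll\mu$-type reasoning, or just handled directly since then both sides are controlled) is trivial: both terms vanish. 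I should also double-check that $\bar M/\bar q^2$ lies in the domain of $\ell$, i.e. that $\ell$ is defined on all of $\mathbb{R}_+$ as stated, so no issue arises; and that the measurability requirements for $x\mapsto Q^m(x,B)$ ensure the integral $\int_A Q^m(x,B)\,\diff\nu(x)$ is well-defined, which follows from Definition \ref{def:MarkovKernel}(ii) and an induction on $m$.
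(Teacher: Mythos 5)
Your proof is correct, and it is tighter than the paper's argument. Both proofs share the same core idea: restrict $\nu$ to $A$, control the $L_2$ norm of the restricted distribution relative to $\mu$, apply the mixing hypothesis with $(\bar M,\bar q)$ chosen so that $\bar M/\bar q^2=M/q^2$ (so the walk length is unchanged), and finish with the triangle inequality. The difference is in the bookkeeping. The paper first invokes the elementary relation
\[
\bigl|\mathbb{P}\{Y\in B\wedge X\in A\}-\mathbb{P}\{Y\in B\}\mathbb{P}\{X\in A\}\bigr|=\bigl|\mathbb{P}\{Y\in B\wedge X\notin A\}-\mathbb{P}\{Y\in B\}\mathbb{P}\{X\notin A\}\bigr|
\]
to reduce to the case $\nu(A)\ge\frac12$, which yields the crude but fixed bound $\|\nu_A/\mu\|\le 4M$; it then applies the hypothesis with $\bar M=4M$, $\bar q=2q$, obtains $|\mathbb{P}\{Y\in B\mid X\in A\}-\mathbb{P}\{Y\in B\}|\le 3q$, and multiplies by $\mathbb{P}\{X\in A\}$. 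You skip the symmetry reduction entirely by taking $\bar M=M/\nu(A)^2$ and $\bar q=q/\nu(A)$: the ratio $\bar M/\bar q^2$ is still $M/q^2$, and the factor $1/\nu(A)$ in the resulting bound is exactly cancelled when you multiply back by $\nu(A)$. This handles all $\nu(A)>0$ uniformly (with $\nu(A)=0$ trivial, as you note) and gives the sharper conclusion $2q$ rather than $3q$, showing the lemma's constant is not tight. Your measurability and domain-of-$\ell$ remarks are correct and match the paper's framework. The only loose phrase is the aside about ``symmetrically $\mu(B)=0$'', which is unnecessary since your argument never conditions on $Y$; it can simply be dropped.
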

\begin{proof}
	Let $A$ and $B$ be measurable subsets of $K$. As noted in Lov\'asz and Vempala \cite[relation (4)]{lovasz2006simulated}, one has the elementary relation
	\begin{equation*}
		\Big|\mathbb{P} \{ Y \in B \wedge X \in A \} - \mathbb{P}\{ Y \in B\} \mathbb{P}\{ X \in A \}\Big|
		= \Big|\mathbb{P} \{ Y \in B \wedge X \notin A \} - \mathbb{P}\{ Y \in B\} \mathbb{P}\{ X \notin A \} \Big|.
	\end{equation*}
	We may therefore assume $\mathbb{P}\{ X \in A \} = \nu(A) \geq \frac{1}{2}$.
	
	The marginal distribution of $Y$ satisfies
	\begin{equation}
		\label{eq:ProofNearIndependenceWalk1}
		\mathbb{P} \{ Y \in B \} = \int_K Q^{\ell(M/q^2)}(x, B) \diff \nu(x) = \nu Q^{\ell(M/q^2)}(B).
	\end{equation}
	Consider the restriction $\nu_A$ of $\nu$ to $A$, scaled to be a probability measure. Then,
	\begin{align}
		\mathbb{P} \{ Y \in B | X \in A \}
		&= \frac{\mathbb{P} \{ Y \in B \wedge X \in A \}}{\mathbb{P} \{ X \in A \}} \nonumber\\
		&= \frac{\int_A Q^{\ell(M/q^2)}(x, B) \diff \nu(x)}{\nu(A)} \nonumber\\
		&= \int_K Q^{\ell(M/q^2)}(x, B) \diff \nu_A(x) \nonumber\\
		&= \nu_A Q^{\ell(M/q^2)}(B).
		\label{eq:ProofNearIndependenceWalk2}
	\end{align}
	
	Since $\nu(A) \geq \frac{1}{2}$, we have $\frac{\diff \nu_A}{\diff \nu}(x) \leq 2$ for $\nu$-almost all $x \in K$. Then,
	\begin{equation*}
		\| \nu_A / \mu \| = \int_K \left( \frac{\diff \nu_A}{\diff \mu} \right)^2 \diff \mu = \int_K \left( \frac{\diff \nu_A}{\diff \nu} \right)^2 \left( \frac{\diff \nu}{\diff \mu} \right)^2 \diff \mu \leq 4 \| \nu / \mu \| \leq 4M.
	\end{equation*}
%
	Therefore, $\| \nu_A Q^{\ell(M/q^2)} - \mu \| = \| \nu_A Q^{\ell(4M/(2q)^2)} - \mu \| \leq 2q$ by assumption. Since $\| \nu / \mu \| \leq M$, we also have $\| \nu Q^{\ell(M/q^2)} - \mu \| \leq q$ by assumption.
	Hence, by combining \eqref{eq:ProofNearIndependenceWalk1} and \eqref{eq:ProofNearIndependenceWalk2} with the triangle inequality and Definition \ref{def:TotalVariationDistance}, it follows that
	\begin{align*}
		\Big| \mathbb{P} \{ Y \in B | X \in A \} - \mathbb{P} \{ Y \in B \} \Big| &= \Big| \nu_A Q^{\ell(M/q^2)} (B) - \nu Q^{\ell(M/q^2)} (B) \Big|\\
		&\leq \Big| \nu_A Q^{\ell(M/q^2)} (B) - \mu(B) \Big| + \Big| \mu(B) - \nu Q^{\ell(M/q^2)} (B) \Big| \\
		&\leq \| \nu_A Q^{\ell(M/q^2)} - \mu \| + \| \nu Q^{\ell(M/q^2)} - \mu \|\\
		&\leq 2q + q = 3q.
	\end{align*}
	Multiplying both sides of the outermost inequality by $\mathbb{P} \{X \in A\}$ shows
	\begin{equation*}
		\Big| \mathbb{P} \{ Y \in B \wedge X \in A \} - \mathbb{P} \{ Y \in B \} \mathbb{P}\{X\in A \} \Big| \leq 3q  \mathbb{P}\{X\in A \} \leq 3q,
	\end{equation*}
	which completes the proof.
\end{proof}

Having shown that the start and end point of a random walk are near-independent, we continue by proving the near-independence of the result of two independent random walks with the same starting point.
\begin{lemma}
	Let $Y_1$ and $Y_2$ be random variables that are both $q$-independent of a random variable $X$. Assume that $Y_1$ and $Y_2$ are conditionally independent given $X$ and that for all measurable events $\{Y_1 \in A\}$ and $\{ Y_2 \in B \}$, the following sets are measurable:
	\begin{align*}
		&\{ x \in K : \mathbb{P}\{ Y_1 \in A | X = x \} \geq \mathbb{P}\{ Y_1 \in A\} \} \\
		&\{ x \in K : \mathbb{P}\{ Y_2 \in B | X = x \} \geq \mathbb{P}\{ Y_2 \in B\} \}.
	\end{align*}
	Then, $Y_1$ and $Y_2$ are $2q$-independent.
	\label{lemma:NearIndependenceSample}
\end{lemma}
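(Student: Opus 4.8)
The plan is to condition on $X$ and reduce the claim to an $L_1$-estimate. Let $Q_1(x,\cdot)$ and $Q_2(x,\cdot)$ denote the (regular) conditional distributions of $Y_1$ and $Y_2$ given $X=x$, which are available as Markov kernels, and write $f(x):=Q_1(x,A)=\mathbb{P}\{Y_1\in A\mid X=x\}$, $g(x):=Q_2(x,B)=\mathbb{P}\{Y_2\in B\mid X=x\}$, together with $a:=\mathbb{P}\{Y_1\in A\}=\mathbb{E}[f(X)]$ and $b:=\mathbb{P}\{Y_2\in B\}=\mathbb{E}[g(X)]$. By the assumed conditional independence of $Y_1$ and $Y_2$ given $X$, one has $\mathbb{P}\{Y_1\in A\wedge Y_2\in B\mid X=x\}=f(x)g(x)$, hence $\mathbb{P}\{Y_1\in A\wedge Y_2\in B\}=\mathbb{E}[f(X)g(X)]$. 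Expanding the product yields the elementary identity $\mathbb{E}[f(X)g(X)]-ab=\mathbb{E}[(f(X)-a)(g(X)-b)]$, so it suffices to show the absolute value of the right-hand side is at most $2q$.

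Since $g(X)$ and $b$ both lie in $[0,1]$, we have $|g(X)-b|\le 1$, so $\bigl|\mathbb{E}[(f(X)-a)(g(X)-b)]\bigr|\le\mathbb{E}[|f(X)-a|]$, and the task becomes to bound $\mathbb{E}[|f(X)-a|]$ using only the $q$-independence of $Y_1$ and $X$. The key step is to split along the super- and sub-level sets of $f$: put $S^{+}:=\{x\in K:f(x)\ge a\}$ and $S^{-}:=K\setminus S^{+}$, both measurable by hypothesis. Then $\mathbb{E}[|f(X)-a|]=\mathbb{E}\bigl[(f(X)-a)\mathbf{1}_{S^{+}}(X)\bigr]+\mathbb{E}\bigl[(a-f(X))\mathbf{1}_{S^{-}}(X)\bigr]$. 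The first summand equals $\mathbb{P}\{Y_1\in A\wedge X\in S^{+}\}-\mathbb{P}\{Y_1\in A\}\,\mathbb{P}\{X\in S^{+}\}$, which is nonnegative and, by the $q$-independence of $Y_1$ and $X$, at most $q$; similarly the second summand equals $\mathbb{P}\{Y_1\in A\}\,\mathbb{P}\{X\in S^{-}\}-\mathbb{P}\{Y_1\in A\wedge X\in S^{-}\}$, which also lies in $[0,q]$. Adding these gives $\mathbb{E}[|f(X)-a|]\le 2q$, and therefore $\bigl|\mathbb{P}\{Y_1\in A\wedge Y_2\in B\}-\mathbb{P}\{Y_1\in A\}\,\mathbb{P}\{Y_2\in B\}\bigr|\le 2q$ for all measurable $A,B$, which is the claim.

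The main obstacle I anticipate is purely measure-theoretic: making sure the conditional laws of $Y_1$ and $Y_2$ given $X$ exist as genuine Markov kernels and that conditional independence legitimately factors $\mathbb{P}\{Y_1\in A\wedge Y_2\in B\mid X\}$ as $f(X)g(X)$ — but this is exactly what the Markov-kernel setup and the explicit measurability assumptions in the statement are for, so it reduces to bookkeeping. The one genuinely non-mechanical idea is the decomposition of $\mathbb{E}[|f(X)-a|]$ over $S^{+}$ and $S^{-}$, which is what turns the two-sided near-independence bound for $(Y_1,X)$ into the desired control; note that this argument only uses the measurability of $S^{+}$ (equivalently, of the first of the two super-level sets listed in the hypothesis), the second being needed only if one wishes to argue symmetrically in $Y_1$ and $Y_2$.
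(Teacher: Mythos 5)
Your argument is correct, and its heart — bounding $\mathbb{E}\bigl[|f(X)-a|\bigr]$ by $2q$ via the super- and sub-level sets $S^{+},S^{-}$ — is exactly the same device the paper uses. The difference is in the surrounding algebra and in how that $L_1$ bound is deployed. The paper first expands $\mathbb{E}[f(X)g(X)]-ab$ via the identity $ab-cd=(a-c)(b-d)+(a-c)d+(b-d)c$, observes that the last two terms vanish, and then bounds the cross term $\mathbb{E}[(f(X)-a)(g(X)-b)]$ by Cauchy--Schwarz (H\"older), applying the level-set argument separately to $\mathbb{E}[(f(X)-a)^2]$ and $\mathbb{E}[(g(X)-b)^2]$ (after first dominating the square by the absolute value since both lie in $[0,1]$). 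You instead pass directly to $\mathbb{E}[(f(X)-a)(g(X)-b)]$ by centering, bound $|g(X)-b|\le 1$ crudely, and invoke the level-set argument only for $f$. Your route is a bit more economical: it skips Cauchy--Schwarz and the $x^2\le|x|$ step, and — as you correctly note — it uses the measurability of only one of the two level sets listed in the hypothesis. Both routes land on $2q$; yours just reaches it with fewer moving parts.
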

\begin{proof}[Proof of Lemma \ref{lemma:NearIndependenceSample}]
	Denote the probability distribution of $X$ by $\mu$.
	We want to bound the following term.
	\begin{align}
	&\Big|\mathbb{P}\{ Y_1 \in A \wedge Y_2 \in B \} - \mathbb{P}\{ Y_1 \in A\} \mathbb{P}\{ Y_2 \in B \} \Big| \nonumber\\
	&= \left| \int_K (\mathbb{P}\{ Y_1 \in A \wedge Y_2 \in B | X = x \} - \mathbb{P}\{ Y_1 \in A\} \mathbb{P}\{Y_2 \in B \}) \diff \mu(x) \right| \nonumber\\
	&= \left| \int_K (\mathbb{P}\{ Y_1 \in A | X = x \} \mathbb{P}\{ Y_2 \in B | X = x \} - \mathbb{P}\{ Y_1 \in A\} \mathbb{P}\{Y_2 \in B \})  \diff \mu(x) \right|,
	\label{eq:NearIndependenceProof1}
	\end{align}
	where the last equality holds by the conditional independence of $Y_1$ and $Y_2$.
	We will use the identity
	$ab-cd = (a-c)(b-d) + (a - c)d + (b - d)c$,
	where $a,b,c,d \in \mathbb{R}$. Select
	\begin{equation*}
	a = \mathbb{P}\{ Y_1 \in A | X = x \}, \qquad b = \mathbb{P}\{ Y_2 \in B | X = x \}, \qquad c = \mathbb{P}\{ Y_1 \in A\}, \qquad d = \mathbb{P}\{Y_2 \in B \}.
	\end{equation*}
	This allows us to expand \eqref{eq:NearIndependenceProof1} in an obvious manner. The triangle inequality then gives
	\begin{align}
	&\Big|\mathbb{P}\{ Y_1 \in A \wedge Y_2 \in B \} - \mathbb{P}\{ Y_1 \in A\} \mathbb{P}\{Y_2 \in B \} \Big| \nonumber\\
	&\leq \left| \int_K (\mathbb{P}\{ Y_1 \in A | X = x \} - \mathbb{P}\{ Y_1 \in A\}) (\mathbb{P}\{ Y_2 \in B | X = x \} - \mathbb{P}\{Y_2 \in B \}) \diff \mu(x) \right| \nonumber \\
	& + \left| \int_K (\mathbb{P}\{ Y_1 \in A | X = x \} - \mathbb{P}\{ Y_1 \in A\}) \mathbb{P}\{Y_2 \in B \} \diff \mu(x) \right|
	\label{eq:NearIndependenceProofTriangle}\\
	& + \left| \int_K (\mathbb{P}\{ Y_2 \in B | X = x \} - \mathbb{P}\{ Y_2 \in B \}) \mathbb{P}\{Y_1 \in A \} \diff \mu(x) \right|. \nonumber
	\end{align}
	We will upper bound each of these terms.
	
	For the first term, we can use H\"older's inequality as follows.
	\begin{align}
	& \left| \int_K (\mathbb{P}\{ Y_1 \in A | X = x \} - \mathbb{P}\{ Y_1 \in A\}) (\mathbb{P}\{ Y_2 \in B | X = x \} - \mathbb{P}\{Y_2 \in B \}) \diff \mu(x) \right| \nonumber\\
	&\leq \sqrt{ \int_K (\mathbb{P}\{ Y_1 \in A | X = x \} - \mathbb{P}\{ Y_1 \in A\})^2 \diff \mu(x) \int_K (\mathbb{P}\{ Y_2 \in B | X = x \} - \mathbb{P}\{Y_2 \in B \})^2 \diff \mu(x) }.
	\label{eq:NearIndependenceProofTerm1}
	\end{align}
	Define
	\begin{equation*}
		C := \{ x \in K : \mathbb{P}\{ Y_1 \in A | X = x \} \geq \mathbb{P}\{ Y_1 \in A\} \}.
	\end{equation*}
	Since both $\mathbb{P}\{ Y_1 \in A | X = x \}$ and $\mathbb{P}\{ Y_1 \in A\}$ lie in $[0,1]$, the square of their difference can be upper bounded by their absolute difference. Therefore,
	\begin{align*}
	&\int_K (\mathbb{P}\{ Y_1 \in A | X = x \} - \mathbb{P}\{ Y_1 \in A\})^2 \diff \mu(x)\\
	&\leq \int_K \Big|\mathbb{P}\{ Y_1 \in A | X = x \} - \mathbb{P}\{ Y_1 \in A\} \Big| \diff \mu(x)\\
	&= \int_C (\mathbb{P}\{ Y_1 \in A | X = x \} - \mathbb{P}\{ Y_1 \in A\}) \diff \mu(x)
	+ \int_{K \setminus C} ( \mathbb{P}\{ Y_1 \in A\} - \mathbb{P}\{ Y_1 \in A | X = x \}) \diff \mu(x)\\
	&= \mathbb{P}\{ Y_1 \in A \wedge X \in C \} - \mathbb{P}\{ Y_1 \in A\} \mathbb{P}\{ X \in C \}
	+  \mathbb{P}\{ Y_1 \in A\}\mathbb{P}\{ X \notin C \} - \mathbb{P}\{ Y_1 \in A \wedge X \notin C \}\\
	&\leq 2q,
	\end{align*}
	since $X$ and $Y_1$ are near-independent.
	Because the same holds for $\mathbb{P}\{ Y_2 \in B | X = x \}$ and $\mathbb{P}\{ Y_2 \in B\}$, \eqref{eq:NearIndependenceProofTerm1} is upper bounded by $2q$.
	
	For the second term in \eqref{eq:NearIndependenceProofTriangle}, observe that
	\begin{align*}
	&\left| \int_K (\mathbb{P}\{ Y_1 \in A | X = x \} - \mathbb{P}\{ Y_1 \in A\}) \mathbb{P}\{Y_2 \in B \} \diff \mu(x) \right|\\
	&= \mathbb{P}\{Y_2 \in B \} \left| \int_K ( \mathbb{P}\{ Y_1 \in A | X = x \} - \mathbb{P}\{ Y_1 \in A\})  \diff \mu(x) \right|\\
	&= \mathbb{P}\{Y_2 \in B \} \Big| \mathbb{P}\{ Y_1 \in A \} - \mathbb{P}\{ Y_1 \in A\} \Big| = 0.
	\end{align*}
	The same clearly holds for the third term in \eqref{eq:NearIndependenceProofTriangle}. Hence,
	\begin{equation*}
	\Big|\mathbb{P}\{ Y_1 \in A \wedge Y_2 \in B \} - \mathbb{P}\{ Y_1 \in A\} \mathbb{P}\{Y_2 \in B \} \Big| \leq 2q.
	\end{equation*}
\end{proof}

For near-independent vector-valued random variables, the products of some of the entries in the respective vectors are also near-independent, as the following lemma shows.
\begin{lemma}
	\label{lemma:NearIndependentProduct}
	Let $X = (X_1, ..., X_n)$ and $Y = (Y_1, ..., Y_n)$ be $q$-independent random variables with values in $\mathbb{R}^n$, and let $\mathcal{S} \subseteq \{1,...,n\}$. Suppose the function $x = (x_1, ..., x_n) \mapsto \prod_{i \in \mathcal{S}} x_i$ is measurable. Then, the random variables $\prod_{i \in \mathcal{S}} X_i$ and $\prod_{i \in \mathcal{S}} Y_i$ are $q$-independent.
\end{lemma}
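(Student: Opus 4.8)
The plan is to reduce the statement directly to the $q$-independence of $X$ and $Y$ by pulling back events in $\mathbb{R}$ through the product map. Write $\pi(x) := \prod_{i \in \mathcal{S}} x_i$, which by hypothesis is a measurable map $\mathbb{R}^n \to \mathbb{R}$, and set $U := \pi(X) = \prod_{i \in \mathcal{S}} X_i$ and $V := \pi(Y) = \prod_{i \in \mathcal{S}} Y_i$. To verify Definition \ref{def:NearIndependence} for $U$ and $V$, fix arbitrary (Borel) measurable sets $A', B' \subseteq \mathbb{R}$ and put $A := \pi^{-1}(A')$ and $B := \pi^{-1}(B')$. Since $\pi$ is measurable, $A$ and $B$ are measurable subsets of $\mathbb{R}^n$.

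The key observation is then purely set-theoretic: as events, $\{U \in A'\} = \{\pi(X) \in A'\} = \{X \in A\}$, and likewise $\{V \in B'\} = \{Y \in B\}$, so also $\{U \in A' \wedge V \in B'\} = \{X \in A \wedge Y \in B\}$. Consequently
\begin{equation*}
	\big| \mathbb{P}\{ U \in A' \wedge V \in B' \} - \mathbb{P}\{ U \in A' \} \mathbb{P}\{ V \in B' \} \big|
	= \big| \mathbb{P}\{ X \in A \wedge Y \in B \} - \mathbb{P}\{ X \in A \} \mathbb{P}\{ Y \in B \} \big| \leq q,
\end{equation*}
where the final inequality is exactly the $q$-independence of $X$ and $Y$ applied to the measurable sets $A$ and $B$. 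Since $A'$ and $B'$ were arbitrary, $U$ and $V$ are $q$-independent, which is the claim.

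There is essentially no analytic obstacle here; the only point requiring care is the measurability bookkeeping, namely that preimages under the measurable map $\pi$ of Borel sets in $\mathbb{R}$ are measurable in $\mathbb{R}^n$ — which is precisely why the hypothesis that $x \mapsto \prod_{i \in \mathcal{S}} x_i$ is measurable is included in the statement. (One could also first reduce to $\mathcal{S} = \{1,\dots,n\}$, or note the bound is trivial when some factor is constant, but this is not needed.) I would keep the write-up to the three short steps above.
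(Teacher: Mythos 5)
Your proof is correct and captures exactly the content the paper outsources to Lemma 3.5 of Lov\'asz and Vempala \cite{lovasz2006simulated} (that applying a measurable map to $q$-independent random variables preserves $q$-independence); the set-theoretic pullback $\{\pi(X)\in A'\}=\{X\in\pi^{-1}(A')\}$ is precisely the point. The only difference is that you make the argument self-contained rather than citing the external lemma, so this is the same approach.
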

\begin{proof}
	The result follows from Lemma 3.5 in Lov\'asz and Vempala \cite{lovasz2006simulated} applied to $x \mapsto \prod_{i \in \mathcal{S}} x_i$.
\end{proof}
The measurability conditions in Lemmas \ref{lemma:NearIndependenceSample} and \ref{lemma:NearIndependentProduct} are satisfied for sufficiently detailed $\sigma$-algebras. We assume these conditions to hold in the remainder of this paper.

To close this section, we cite the following result on the expectation of the product of near-independent real-valued random variables.
\begin{lemma}[Lemma 2.7 from \cite{kannan1997random}]
	\label{lemma:NearIndependenceExpectation}
	Let $X$ and $Y$ be $q$-independent random variables such that $|X|\leq a$ and $|Y| \leq b$. Then
	\begin{equation*}
		|\mathbb{E}[XY] - \mathbb{E}[X] \mathbb{E}[Y]| \leq 4 q a b.
	\end{equation*}
\end{lemma}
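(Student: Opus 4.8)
The plan is to reduce to the case of nonnegative bounded random variables and then use the layer-cake (distribution-function) representation of expectations together with the pointwise bound supplied by $q$-independence.

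\emph{Step 1: reduction to nonnegative variables.} Both the covariance $\mathbb{E}[XY] - \mathbb{E}[X]\mathbb{E}[Y]$ and the property of $q$-independence are invariant under translating $X$ and $Y$ by constants: for any $c,d \in \mathbb{R}$ the events $\{X + c \in A\}$ and $\{Y + d \in B\}$ are again of the form $\{X \in A'\}$, $\{Y \in B'\}$ for Borel sets $A', B'$, so $X + c$ and $Y + d$ are still $q$-independent, and $\mathbb{E}[(X+c)(Y+d)] - \mathbb{E}[X+c]\mathbb{E}[Y+d] = \mathbb{E}[XY] - \mathbb{E}[X]\mathbb{E}[Y]$. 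Since $|X| \le a$ and $|Y| \le b$, after replacing $X$ by $X + a$ and $Y$ by $Y + b$ we may assume $0 \le X \le 2a =: a'$ and $0 \le Y \le 2b =: b'$, so that it suffices to prove $|\mathbb{E}[XY] - \mathbb{E}[X]\mathbb{E}[Y]| \le q\, a' b'$.

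\emph{Step 2: layer-cake representation and the pointwise bound.} For nonnegative $X \le a'$ one has $X = \int_0^{a'} \mathbf{1}\{X > s\}\, \diff s$ pointwise, and likewise $Y = \int_0^{b'} \mathbf{1}\{Y > t\}\, \diff t$. Multiplying these and taking expectations, Tonelli's theorem (all integrands are nonnegative) gives
\[\mathbb{E}[XY] = \int_0^{a'}\!\!\int_0^{b'} \mathbb{P}\{X > s,\, Y > t\}\, \diff t\, \diff s, \qquad \mathbb{E}[X]\,\mathbb{E}[Y] = \int_0^{a'}\!\!\int_0^{b'} \mathbb{P}\{X > s\}\,\mathbb{P}\{Y > t\}\, \diff t\, \diff s .\]
Subtracting and applying $q$-independence of $X$ and $Y$ to each pair of events $\{X > s\}$, $\{Y > t\}$,
\[\big|\mathbb{E}[XY] - \mathbb{E}[X]\mathbb{E}[Y]\big| \le \int_0^{a'}\!\!\int_0^{b'} \big|\mathbb{P}\{X > s,\, Y > t\} - \mathbb{P}\{X > s\}\,\mathbb{P}\{Y > t\}\big|\, \diff t\, \diff s \le \int_0^{a'}\!\!\int_0^{b'} q \, \diff t\, \diff s = q\, a' b' = 4 q a b ,\]
which is the claimed bound.

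\emph{Main obstacle.} There is no genuine analytic difficulty; the argument is essentially bookkeeping. The two points requiring care are (i) that the level events $\{X > s\}$ and the shifted events of Step 1 are measurable and still inherit $q$-independence, which holds because each is the preimage of a Borel subset of $\mathbb{R}$ under $X$ (resp.\ $Y$); and (ii) the interchange of expectation with the $\diff s\, \diff t$ integration, which is justified by Tonelli since every integrand is nonnegative. An alternative to the shift in Step 1 is to split $X = X_+ - X_-$ and $Y = Y_+ - Y_-$ into positive and negative parts (each bounded by $a$, resp.\ $b$, and pairwise $q$-independent by the same measurability argument) and apply the nonnegative estimate $|\mathbb{E}[UV] - \mathbb{E}[U]\mathbb{E}[V]| \le q (\sup U)(\sup V)$ to each of the four cross terms; summing these four contributions again produces the factor $4$.
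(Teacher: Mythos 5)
Your proof is correct, and a quick check confirms the constant: the shift sends $|X|\le a$ to $0\le X+a\le 2a$, the layer-cake integration over $[0,2a]\times[0,2b]$ carries a pointwise factor of at most $q$ from $q$-independence of the events $\{X>s\}$ and $\{Y>t\}$, and $q\cdot 2a\cdot 2b = 4qab$. The translation step is legitimate because shifting by a constant is a Borel-measurable bijection, so the class of events $\{X+a\in A\}$ is exactly $\{X\in A'\}$ over Borel $A'$, and the covariance is shift-invariant. The Tonelli interchange is justified since every integrand is nonnegative and jointly measurable.

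Note, however, that the paper itself does not prove this lemma --- it is imported verbatim as Lemma 2.7 from Kannan--Lov\'asz--Simonovits \cite{kannan1997random} --- so there is no in-paper proof to compare against. Your argument is a self-contained elementary proof of the cited fact, and the mechanism (tail/survival-function representation of the expectation of a product plus the uniform $q$-bound on the joint-versus-product probabilities) is the standard one. The alternative decomposition into positive and negative parts that you sketch at the end is equally valid and yields the same factor of $4$ from the four cross-covariance terms; it trades the single translation for four applications of the nonnegative estimate, but both are correct and essentially equivalent in length.
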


\section{Entropic Barrier Properties}
\label{sec:EntropicBarrier}
The self-concordance of $f$ and $f^*$ can be used to show two results that we will need for the analysis of an interior point method that uses the entropic barrier. First, it will turn out that we will need a lower bound on $\| \theta \|_\theta$ for all $\theta \in \mathbb{R}^n$, which requires an investigation of the spectrum (with respect to the Euclidean inner product) of the covariance matrix of a Boltzmann distribution. Second, we will show that if $x, y \in \interior K$ are close, then $\theta(x)$ and $\theta(y)$ are also close, in a well-defined sense.

\subsection{Spectra of Boltzmann Covariance Matrices}
To analyze the spectra of the Boltzmann covariance matrices, we will need information about the spectrum of the covariance matrix of the uniform distribution. We will denote the smallest and largest eigenvalue of a self-adjoint linear operator $A$ with respect to the reference inner product by $\lambda_{\min}(A)$ and $\lambda_{\max}(A)$. Recall that for positive semidefinite linear operators $A$, we have $\lambda_{\min}(A) = \min_{v: \| v \| = 1}  \langle v, Av \rangle$ and $\lambda_{\max}(A) = \max_{v: \| v \| = 1} \langle v, Av \rangle = \| A \|$.

One should note that an upper bound of the spectrum of $\Sigma(\theta)$ is trivial to derive for any $\theta \in \mathbb{R}^n$. If $K$ is contained in a ball with radius $R$, i.e. the diameter of $K$ is at most $2R$,
\begin{equation}
	\lambda_{\max}(\Sigma(\theta)) = \max_{v: \| v \| = 1} \mathbb{E}_\theta[\langle X - \mathbb{E}_\theta[X], v \rangle^2] \leq (2R)^2,
\label{eq:UpperBoundSigma}
\end{equation}
where the equality uses \eqref{eq:DerivativesF}.
Thus, we will focus on bounding the smallest eigenvalue of $\Sigma(\theta)$ from below. Our starting point is the following result from Kannan, Lov\'asz and Simonovits \cite{kannan1995isoperimetric}.

\begin{lemma}[Theorem 4.1 in \cite{kannan1995isoperimetric}]
	\label{lemma:BallsIsotropicUniform}
	Let $K \subseteq \mathbb{R}^n$ be a convex body, and recall that $\Sigma(0)$ denotes the covariance matrix of the uniform distribution over $K$. If $\Sigma(0) = I$, then $K$
	is contained in a Euclidean ball with radius $n+1$.
\end{lemma}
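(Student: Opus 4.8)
The plan is to reduce the statement to a one-dimensional claim about the section function of $K$ and then identify the extremal section.

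\textbf{Reduction to one dimension.} Since the covariance is translation invariant, I would first translate $K$ so that its centroid $\mathbb{E}_0[X]$ lies at the origin; the goal becomes showing $\|x\| \le n+1$ for every $x \in K$. Let $p \in K$ attain $\rho := \max_{x \in K} \|x\|$ (such a $p$ exists since $K$ is compact) and set $u := p/\rho$; if $\rho = 0$ there is nothing to prove. Because $p$ is a farthest point, $\langle u, x \rangle \le \rho$ for all $x \in K$, so $\rho$ is the right endpoint of the interval $\{t : w(t) > 0\}$, where $w(t) := \vol_{n-1}(K \cap \{x : \langle u, x\rangle = t\})$ is the section function. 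By the Brunn--Minkowski inequality (applied to two parallel sections of $K$ and their Minkowski average, which convexity places inside the section at the mean level), $w^{1/(n-1)}$ is concave on $[s, \rho]$ for some $s < 0$. Normalizing, $f := w/\vol(K)$ is a probability density with mean $\int t f(t)\diff t = \langle u, \mathbb{E}_0[X]\rangle = 0$ and variance $\int t^2 f(t)\diff t = u^\top \Sigma(0) u = \|u\|^2 = 1$. So it suffices to prove: if $f$ is a probability density supported on an interval with right endpoint $\rho$, $f^{1/(n-1)}$ is concave there, and $f$ has mean $0$ and variance $1$, then $\rho \le n+1$.

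\textbf{The extremal density.} The expected extremizer is the ``simplex section'' $g(t) = \frac{n}{(\rho+a)^n}(\rho - t)^{n-1}$ on $[-a, \rho]$, whose $(n-1)$-st root is affine and vanishes at $\rho$. Imposing unit mass and mean $0$ forces $a = \rho/n$, and then a short computation gives $\var(g) = \rho^2/(n(n+2))$. Equating this to $1$ gives the critical value $\rho = \sqrt{n(n+2)}$, which is strictly below $n+1$ because $n(n+2) = (n+1)^2 - 1$. Hence the crux is the inequality $\var(f) \ge \rho^2/(n(n+2))$ for every admissible $f$ with right endpoint $\rho$: among admissible densities with a prescribed right endpoint and mean zero, the simplex-section density has the smallest variance.

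\textbf{Proving extremality, and the main obstacle.} To get this last inequality I would compare $f$ with the simplex-section density $g$ built on the \emph{same} interval $[-\rho/n, \rho]$, normalized to unit mass and mean $0$. Since $f^{1/(n-1)}$ is concave and $g^{1/(n-1)}$ is affine on the common range, $f$ and $g$ cross at most twice there; together with $\int(f-g)\diff t = \int t(f-g)\diff t = 0$ this should force $f - g$ to have the sign pattern $+,-,+$ (or $f \equiv g$), and then testing against the quadratic whose roots are the two crossing points yields $\int t^2(f-g)\diff t \ge 0$, i.e.\ $\var(f) \ge \var(g)$. The delicate point, and the main obstacle, is the behavior of $f$ at the ends of its support: $f^{1/(n-1)}$ need not vanish at $s$ or even at $\rho$, so extending it by zero is not concave and the crossing count of $f - g$ is not automatically clean. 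The robust way around this is the localization lemma of Kannan, Lov\'asz and Simonovits: a hypothetical counterexample can be reduced to a configuration supported on one or two ``needles'', on which the desired moment inequality becomes an elementary one-variable estimate. Once the simplex section is identified as the extremizer, the bound $\rho \le \sqrt{n(n+2)} \le n+1$ is immediate.
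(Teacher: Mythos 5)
The paper does not prove this lemma; it simply cites Theorem 4.1 of Kannan, Lov\'asz and Simonovits, so there is no in-paper proof to compare against. Your reduction to a one-dimensional statement about the section function $w$ (via Brunn--Minkowski concavity of $w^{1/(n-1)}$, and the mean and variance identities coming from centroid-at-origin and isotropy with $\|u\|=1$) is correct, and your identification of the simplex-section density as the conjectured extremizer, giving $\rho \le \sqrt{n(n+2)} \le n+1$, is also right. The gap you flag is genuine: the section of $K$ at $t = \rho$ can have positive $(n-1)$-volume (if $K$ has a facet orthogonal to $u$), so $f^{1/(n-1)}$ need not vanish at $\rho$; the extension of $f$ by zero is not $(n-1)$-concave; the supports of $f$ and your comparison density $g$ generally differ; and the two moment constraints alone do not force the sign pattern $+,-,+$ of $f-g$ that you need in order to test against a quadratic and conclude $\var(f) \ge \var(g)$. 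You correctly identify the KLS localization lemma as the tool that makes exactly this kind of extremality argument rigorous by reducing to one- or two-needle configurations, but you do not actually carry out that reduction, so the decisive step remains a deferral to the very reference being cited rather than a proof.
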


We can use this result to bound the spectrum of $\Sigma(0)$ from below.
\begin{lemma}
	\label{lemma:SpectrumUniform}
	Let $K \subseteq \mathbb{R}^n$ be a convex body that contains a Euclidean ball of radius $r$. Then, $\lambda_{\min}(\Sigma(0)) \geq \frac{1}{4} (\frac{r}{n+1})^2$.
\end{lemma}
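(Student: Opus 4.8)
The plan is to exploit the affine invariance of the setup: rescale $K$ into isotropic position, invoke Lemma~\ref{lemma:BallsIsotropicUniform} to trap the rescaled body in a Euclidean ball of radius $n+1$, and then read off what this forces on the image of the inscribed ball of radius $r$.

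Since $K$ is a convex body, $\Sigma(0)$ is positive definite, so $T := \Sigma(0)^{-1/2}$ is a well-defined invertible linear map. First I would check that the uniform distribution on $TK$ has covariance $T\Sigma(0)T^{\top} = I$, so that Lemma~\ref{lemma:BallsIsotropicUniform} applies and $TK$ lies in some Euclidean ball of radius $n+1$. Next, fix an inscribed ball $B(x_0,r)\subseteq K$; its image $T(B(x_0,r))$ is an ellipsoid contained in that same ball of radius $n+1$, and the length of its longest semi-axis is $r$ times the largest singular value of $T$, i.e.\ $r\,\lambda_{\min}(\Sigma(0))^{-1/2}$. The last ingredient is the elementary fact that any (possibly off-center) ellipsoid sitting inside a ball of radius $\rho$ has every semi-axis of length at most $\rho$: the two endpoints of its major axis both lie in the ball, so that axis has length at most $2\rho$. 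Combining these gives $r\,\lambda_{\min}(\Sigma(0))^{-1/2}\le n+1$, hence $\lambda_{\min}(\Sigma(0))\ge (r/(n+1))^2$, which is even stronger than the stated bound.

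The step I expect to need the most care is the bookkeeping under the change of coordinates $T$: the transformation rule $\Sigma_{TK}(0)=T\Sigma(0)T^{\top}$, the identification of the semi-axes of $T(B(x_0,r))$ with $r$ times the singular values of $T$, and --- if the reference inner product is not Euclidean --- the distinction between the singular values of $T$ as a matrix and the eigenvalues of $\Sigma(0)$ as a self-adjoint operator with respect to $\langle\cdot,\cdot\rangle$, which is presumably where the conservative factor $\frac{1}{4}$ in the statement originates. The positive-definiteness of $\Sigma(0)$, the existence of $\Sigma(0)^{-1/2}$, and the ellipsoid-in-a-ball inequality are all routine.
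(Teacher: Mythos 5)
Your proposal is correct and follows essentially the same route as the paper: rescale by $T=\Sigma(0)^{-1/2}$ to isotropic position, trap $TK$ in a ball of radius $n+1$ via Lemma~\ref{lemma:BallsIsotropicUniform}, and bound $\lambda_{\max}(T)$ by comparing the image of the inscribed ball with the containing ball. Your version is in fact slightly sharper: by comparing the two antipodal endpoints of a diameter of the inscribed ball (a chord of length $2r\lambda_{\max}(T)$) against the diameter $2(n+1)$, you get $r\lambda_{\max}(T)\le n+1$ and hence $\lambda_{\min}(\Sigma(0))\ge (r/(n+1))^2$, whereas the paper only compares the center of the inscribed ball with one boundary point (a half-chord of length $r\lambda_{\max}(T)$) against the full diameter, losing the factor $\tfrac14$. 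That is where the $\tfrac14$ comes from, not from any distinction between singular values and eigenvalues in a non-Euclidean inner product --- throughout this section the reference inner product is Euclidean, so that concern does not arise.
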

\begin{proof}
	The convex body $K' = \Sigma(0)^{-1/2} K$ has the property that the uniform distribution over $K'$ has identity covariance. By Lemma \ref{lemma:BallsIsotropicUniform}, $K'$ is contained in a ball of radius $n+1$.
	
	Let $x \in K$ be the center of the ball with radius $r$ contained in $K$, and let $v$ be a unit vector such that $\Sigma(0)^{-1/2} v = \lambda_{\max}(\Sigma(0)^{-1/2}) v$. Since $v$ is a unit vector, the point $x + r v$ lies in $K$. Because $\Sigma(0)^{-1/2}x$ and $\Sigma(0)^{-1/2}(x+rv)$ lie in $K'$, we find $\| \Sigma(0)^{-1/2} ((x + rv) - x) \| \leq 2(n+1)$, where $2(n+1)$ is the diameter of a ball containing $K'$. In conclusion,
	\begin{equation*}
		2(n+1) \geq \| r \Sigma(0)^{-1/2} v \| = r \lambda_{\max}(\Sigma(0)^{-1/2}) = \frac{r}{\sqrt{\lambda_{\min}(\Sigma(0))}},
	\end{equation*}
	which proves $\lambda_{\min}(\Sigma(0)) \geq \frac{1}{4} (\frac{r}{n+1})^2$.
%
\end{proof}


With the spectrum of the uniform covariance matrix bounded, we can continue to analyze $\Sigma(\theta)$, where $\theta \in \mathbb{R}^n$.
Using Lemma \ref{lemma:PropertiesDerivativesConjugate}, we get for every $\theta \in \mathbb{R}^n$,
\begin{equation}
	\| \theta \|_{\theta}^2 = \langle g^*(g(\theta)), H(\theta) g^*(g(\theta)) \rangle = \langle g^*_{g(\theta)}(g(\theta)), H^*(g(\theta)) g^*_{g(\theta)}(g(\theta)) \rangle = (\| g^*_{g(\theta)}(g(\theta)) \|^*_{g(\theta)})^2 \leq \vartheta,
	\label{eq:UpperBoundLocalTheta}
\end{equation}
where the inequality follows from Lemma \ref{lemma:InteriorKImageG} and the definition of the complexity parameter $\vartheta$ from Renegar \cite{renegar2001mathematical} (see Definition \ref{def:Barrier}).
With this inequality, we can now prove a bound on the smallest eigenvalue of $\Sigma(\theta)$ for all $\theta \in \mathbb{R}^n$.
\begin{theorem}
	\label{thm:LowerBoundSigma}
	Let $K \subseteq \mathbb{R}^n$ be a convex body that contains a Euclidean ball of radius $r$ and is contained in a Euclidean ball of radius $R$.
	Define $f$ as the log partition function $f(\theta) = \ln \int_{K} e^{\langle \theta, x \rangle} \diff x$, where $\langle \cdot, \cdot \rangle$ is the Euclidean inner product, and denote its Hessian by $\Sigma(\theta)$. Let $f^*$ be the entropic barrier for $K$ with complexity parameter $\vartheta$.
	Let $\lambda_{\min}(\Sigma(\theta))$ be the smallest eigenvalue of $\Sigma(\theta)$. Then, for any $\theta \in \mathbb{R}^n$ with $\| \theta \| \leq \frac{1}{4R}$,
	\begin{equation*}
		\lambda_{\min}(\Sigma(\theta)) \geq \frac{1}{16} \left(\frac{r}{n+1} \right)^2,
	\end{equation*}
	and for all $\theta \in \mathbb{R}^n$ with $\| \theta \| > \frac{1}{4R}$,
	\begin{equation*}
		\lambda_{\min}(\Sigma(\theta)) \geq \frac{1}{64} \left( \frac{1}{4 R \| \theta \|} \right)^{4 \sqrt{\vartheta} + 2} \left(\frac{r}{n+1} \right)^2.
	\end{equation*}
\end{theorem}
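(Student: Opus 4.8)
The plan is to prove the two bounds in tandem, using Lemma~\ref{lemma:SpectrumUniform} as the base case ($\theta = 0$) and propagating it to general $\theta$ via the self-concordance inequality \eqref{eq:SCf}. The key observation is that \eqref{eq:SCf} controls how fast the local norm $\|v\|_\theta = \langle v, \Sigma(\theta) v\rangle^{1/2}$ can change as $\theta$ moves, and hence how fast $\lambda_{\min}(\Sigma(\theta))$ can shrink. Concretely, for any fixed unit vector $v$, write $\theta_0 = 0$ and $\theta_1 = \theta$; if $\|\theta\|_0 = \langle \theta, \Sigma(0)\theta\rangle^{1/2} < 1$, then \eqref{eq:SCf} gives $\|v\|_\theta \geq (1 - \|\theta\|_0)\|v\|_0$, so $\langle v, \Sigma(\theta) v\rangle \geq (1-\|\theta\|_0)^2 \langle v, \Sigma(0) v\rangle \geq (1-\|\theta\|_0)^2 \lambda_{\min}(\Sigma(0))$. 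Taking the minimum over unit $v$ and invoking Lemma~\ref{lemma:SpectrumUniform} yields $\lambda_{\min}(\Sigma(\theta)) \geq (1-\|\theta\|_0)^2 \cdot \tfrac14 (\tfrac{r}{n+1})^2$.

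For the first regime, $\|\theta\| \leq \tfrac{1}{4R}$, I would use the trivial upper bound \eqref{eq:UpperBoundSigma}, $\lambda_{\max}(\Sigma(0)) \leq (2R)^2$, to control $\|\theta\|_0$: indeed $\|\theta\|_0^2 = \langle \theta, \Sigma(0)\theta\rangle \leq (2R)^2 \|\theta\|^2 \leq (2R)^2 \cdot \tfrac{1}{16R^2} = \tfrac14$, so $\|\theta\|_0 \leq \tfrac12$ and $(1-\|\theta\|_0)^2 \geq \tfrac14$. Combining with the displayed bound gives $\lambda_{\min}(\Sigma(\theta)) \geq \tfrac14 \cdot \tfrac14 (\tfrac{r}{n+1})^2 = \tfrac{1}{16}(\tfrac{r}{n+1})^2$, as claimed.

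For the second regime, $\|\theta\| > \tfrac{1}{4R}$, the single-step estimate is no longer valid because $\|\theta\|_0$ may exceed $1$, so I would chain the self-concordance inequality along the segment from $0$ to $\theta$. Partition $[0,\theta]$ into segments short enough (in the relevant local norms) that each application of \eqref{eq:SCf} is licit; at each step the ratio $\|v\|$ in the new local norm degrades by a controlled factor, and the number of steps needed is governed by the total "self-concordance length'' of the segment. Here \eqref{eq:UpperBoundLocalTheta}, i.e.\ $\|\theta\|_\theta^2 \leq \vartheta$, is the crucial ingredient: it bounds the local speed $\|\theta\|_\theta \leq \sqrt\vartheta$ uniformly, so that the path from $0$ to $\theta$ (parametrized proportionally) has self-concordance length at most something like $\sqrt\vartheta \ln(\text{scale factor})$, and the scale factor is exactly $\tfrac{\|\theta\|_0}{\|\theta\|_{0}'}$-type ratios that telescope into $4R\|\theta\|$. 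Each halving-style step contributes a constant multiplicative loss, and accumulating roughly $2\sqrt\vartheta + O(1)$ steps over the range where $\|\theta\|$ grows from $\tfrac{1}{4R}$ to its actual value produces the factor $\bigl(\tfrac{1}{4R\|\theta\|}\bigr)^{4\sqrt\vartheta + 2}$, while the extra constant $\tfrac{1}{64}$ absorbs the boundary matching with the first regime.

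The main obstacle I anticipate is making the chaining argument in the second regime precise: self-concordance as stated in \eqref{eq:SCf} only compares two points at $\|\cdot\|_{\theta_0}$-distance strictly less than $1$, so one must carefully set up an ODE/integration argument (or a geometric subdivision of the segment) tracking $\phi(t) = \|v\|_{t\theta}$ and using $|\tfrac{d}{dt}\ln\phi(t)| \leq \|\theta\|_{t\theta} \leq \sqrt\vartheta$ — a differential consequence of \eqref{eq:SCf} — and then integrate. The bookkeeping that converts $\int_0^1 \|\theta\|_{t\theta}\,dt$ into the explicit exponent $4\sqrt\vartheta + 2$, and correctly identifies the scale ratio as $4R\|\theta\|$ using \eqref{eq:UpperBoundSigma} and Lemma~\ref{lemma:SpectrumUniform} at the two endpoints of the relevant subinterval, is where the constants must be handled with care.
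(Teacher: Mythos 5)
Your approach matches the paper's: for $\|\theta\| \leq \tfrac{1}{4R}$ you invoke \eqref{eq:UpperBoundSigma}, Lemma~\ref{lemma:SpectrumUniform}, and \eqref{eq:SCf} identically and land on the same constant $\tfrac{1}{16}$, and for $\|\theta\| > \tfrac{1}{4R}$ you propose a geometric subdivision of the ray $\{t\theta\}$ controlled by the bound $\|\theta\|_\theta \leq \sqrt{\vartheta}$ from \eqref{eq:UpperBoundLocalTheta} --- exactly the paper's mechanism, implemented there discretely via $\theta_k = (1-\tfrac{1}{2\sqrt\vartheta+1})\theta_{k-1}$ with each step costing a factor $\tfrac12$ in $\|v\|_{\theta_k}$. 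Your ODE variant, integrating $\big|\tfrac{d}{dt}\ln\|v\|_{t\theta}\big| \leq \|\theta\|_{t\theta} \leq \sqrt\vartheta/t$ from $t=\tfrac{1}{4R\|\theta\|}$ to $t=1$, is a valid alternative that would in fact deliver the slightly sharper exponent $2\sqrt\vartheta$ and constant $\tfrac{1}{16}$ in place of the paper's $4\sqrt\vartheta+2$ and $\tfrac{1}{64}$, so either completion works.
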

\begin{proof}
	We want to find a lower bound on $\| v \|_\theta$, where $\| v \| = 1$. The idea is to use the self-concordance properties of $f$ to move from $\Sigma(\theta)$ to the covariance matrix of the uniform distribution, and then apply Lemma \ref{lemma:SpectrumUniform}.
	
	If $\| \theta \| \leq \frac{1}{4R}$, then \eqref{eq:UpperBoundSigma} shows
	\begin{equation*}
		\| \theta - 0 \|_0 \leq \sqrt{\lambda_{\max}(\Sigma(0))} \| \theta \| \leq 2R \| \theta \| \leq \frac{1}{2} < 1,
	\end{equation*}
	and thus we may apply the first inequality in \eqref{eq:SCf} and Lemma \ref{lemma:SpectrumUniform} to show that
	\begin{equation}
		\label{eq:ProofLowerBoundSigma1}
		\| v \|_\theta \geq \| v \|_0 (1 - \| \theta - 0 \|_0) \geq \frac{1}{2} \sqrt{\lambda_{\min}(\Sigma(0))} \| v \| \geq \frac{1}{4} \frac{r}{n+1} \| v \|.
	\end{equation}
	It then follows from \eqref{eq:ProofLowerBoundSigma1} that
	\begin{equation*}
		\lambda_{\min}(\Sigma(\theta)) = \min_{v: \| v \| = 1} \| v \|_\theta^2 \geq \frac{1}{16} \left(\frac{r}{n+1} \right)^2.
	\end{equation*}
	
	Next, suppose that $\| \theta \| > \frac{1}{4R}$. Let $\theta_0 = \theta$ and recursively define
	$\theta_k = (1 - \frac{1}{2\sqrt{\vartheta} + 1}) \theta_{k-1}$.
	Observe that by \eqref{eq:UpperBoundLocalTheta}, for all $k$,
	\begin{equation*}
		\| \theta_{k-1} - \theta_k \|_{\theta_k} = \frac{\| \theta_{k-1} \|_{\theta_k}}{2\sqrt{\vartheta} + 1} = \frac{\| \theta_k \|_{\theta_k}}{2\sqrt{\vartheta}} \leq \frac{\sqrt{\vartheta}}{2 \sqrt{\vartheta}} = \frac{1}{2} < 1.
	\end{equation*}
	Since $\theta_k$ and $\theta_{k-1}$ are close in the sense above, we can apply self-concordance. By the first inequality of \eqref{eq:SCf}, for all $k$,
		$\| v \|_{\theta_{k-1}} \geq \left(1 - \| \theta_{k-1} - \theta_k \|_{\theta_k} \right) \| v \|_{\theta_k} \geq \tfrac{1}{2} \| v \|_{\theta_k}$.
	Thus, after $m$ steps, we have
	\begin{equation}
		\| v \|_\theta = \| v \|_{\theta_0} \geq 2^{-m} \| v \|_{\theta_m}.
		\label{eq:ProofLowerBoundSigma1.5}
	\end{equation}
	Setting
	\begin{equation*}
		m = \left\lceil \frac{\log_2 (\frac{1}{4 R \| \theta \|})}{\log_2(1 - \frac{1}{2\sqrt{\vartheta} + 1})} \right\rceil,
	\end{equation*}
	we obtain
	\begin{equation*}
		 \| \theta_m \| = \left(1 - \frac{1}{2\sqrt{\vartheta} + 1}\right)^m \| \theta_0 \| \leq \frac{1}{4 R \| \theta \|} \| \theta_0 \| = \frac{1}{4R}.
	\end{equation*}
	We may now apply \eqref{eq:ProofLowerBoundSigma1} to see that $\| v \|_{\theta_m} \geq \frac{1}{4} \frac{r}{n+1} \| v \|$. Combined with \eqref{eq:ProofLowerBoundSigma1.5}, it follows that
	\begin{equation}
		\| v \|_\theta \geq 2^{-m} \| v \|_{\theta_m} \geq \frac{2^{-m}}{4} \frac{r}{n+1} \| v \| = 2^{-m-2} \frac{r}{n+1} \| v \|.
		\label{eq:ProofLowerBoundSigma2}
	\end{equation}
	Because $m$ is an integer, we arrive at the following lower bound for $2^{-m-2}$:
	\begin{equation*}
	2^{-m-2} \geq \frac{1}{8} \left( 4 R \| \theta \| \right)^{1/\log_2(1 - \frac{1}{2\sqrt{\vartheta} + 1})}.
	\end{equation*}
	Since $4R \|\theta \| > 1$ by assumption, and $1/\log_2(1-t) \geq -1/t$ for all $t \in (0,1)$, this bound can be developed to
	\begin{equation*}
		2^{-m-2} \geq \frac{1}{8} \left( 4 R \| \theta \| \right)^{1/\log_2(1 - \frac{1}{2\sqrt{\vartheta} + 1})} \geq \frac{1}{8} \left( 4 R \| \theta \| \right)^{-2 \sqrt{\vartheta} - 1},
	\end{equation*}
	and we can conclude from \eqref{eq:ProofLowerBoundSigma2} that
	\begin{equation*}
		\lambda_{\min}(\Sigma(\theta)) = \min_{v: \| v \| = 1} \| v \|_\theta^2 \geq \frac{1}{64} \left( \frac{1}{4 R \| \theta \|} \right)^{4 \sqrt{\vartheta} + 2} \left(\frac{r}{n+1} \right)^2.
	\end{equation*}
\end{proof}

Note that this lower bound is exponential in $\vartheta = n + o(n)$.
For our analysis in Section \ref{sec:ShortStepIPM}, we will need a stronger lower bound on $\| \theta \|_\theta = \sqrt{ \langle \theta, \Sigma(\theta) \theta \rangle }$ than the one obtained from Theorem \ref{thm:LowerBoundSigma} by setting $v = \theta / \|\theta\|$. The following lemma gives such a lower bound that is not exponential in $n$.
\begin{lemma}
	\label{lemma:LowerBoundTheta}
	Let $K \subseteq \mathbb{R}^n$ be a convex body that contains a Euclidean ball of radius $r$, and let $\langle \cdot, \cdot \rangle$ be the Euclidean inner product.
	Then, it holds for all $\theta \in \mathbb{R}^n$ that
	\begin{equation}
		\label{eq:LowerBoundThetaLocalNorm}
		\| \theta \|_\theta \geq \frac{r \| \theta \|}{2(n+1) + r \| \theta \|}.
	\end{equation}
\end{lemma}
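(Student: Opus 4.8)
The plan is to reduce the bound to a statement about the one-dimensional Boltzmann distribution obtained by projecting $K$ onto the direction of $\theta$, where everything becomes a computation with a log-concave density on an interval of length at most $2R$... except we do not even want the diameter bound to enter with the wrong dependence, so the cleaner route is through the self-concordance/complexity-parameter machinery. Concretely, I would work with the conjugate $f^*$ and exploit that $\langle \theta, x \rangle$ is essentially a coordinate in which $f^*$ looks like a one-dimensional barrier of complexity parameter at most $\vartheta$, but the very cleanest argument avoids even that: set $x = g(\theta) = \mathbb{E}_\theta[X] \in \interior K$ (Lemmas \ref{lemma:InteriorKImageG}, \ref{lemma:PropertiesDerivativesConjugate}), so that $\theta = g^*(x)$ and $\| \theta \|_\theta^2 = \langle \theta, \Sigma(\theta)\theta\rangle$.

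First I would observe that $\langle \theta, \Sigma(\theta) \theta \rangle = \var_\theta(\langle \theta, X \rangle)$, the variance of the scalar random variable $t := \langle \theta, X\rangle$ under the Boltzmann distribution. Let $u = \theta / \|\theta\|$ and let $Z = \langle u, X \rangle$, so that $t = \|\theta\| Z$ and $\|\theta\|_\theta^2 = \|\theta\|^2 \var_\theta(Z)$. Thus it suffices to show $\var_\theta(Z) \geq \bigl(\tfrac{r}{2(n+1)+r\|\theta\|}\bigr)^2$. Now $Z$ is a one-dimensional random variable whose density is proportional to $e^{\|\theta\| z} \cdot w(z)$, where $w(z)$ is the $(n-1)$-dimensional volume of the slice $\{x \in K : \langle u, x\rangle = z\}$; by Brunn–Minkowski $w^{1/(n-1)}$ is concave on its support, so in particular the density of $Z$ is log-concave, and its support is an interval $[\ell, L]$ with $L - \ell \geq 2r$ (since $K$ contains a ball of radius $r$, its projection onto $u$ contains an interval of length $2r$, and the slice-volume $w$ is strictly positive on the interior of that interval — in fact bounded below near the center by a ball-slice computation, but I only need positivity plus the length).

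The heart of the matter is a lower bound on the variance of a log-concave random variable on an interval of length $\geq 2r$ with an exponential tilt $e^{\|\theta\| z}$ of "rate" $s := \|\theta\|$. The key step I expect to be the main obstacle is making this tilt-dependence explicit and matching the exact constant in \eqref{eq:LowerBoundThetaLocalNorm}. I would argue as follows: the tilted log-concave density on $[\ell,L]$ is still log-concave, so by a standard one-dimensional fact its variance is at least $c(L-\ell)^2 / (1 + s(L-\ell))^2$-type expression — more precisely, I would bound the "effective width" of the tilted distribution from below. One clean way: compare to a translated exponential. If the density of $Z$ is $\propto e^{sz} w(z)$ with $w$ a nonnegative concave-root (log-concave) function positive on an interval of length $\geq 2r$, then $Z$ stochastically contains enough mass spread over a sub-interval of length on the order of $\min\{2r, 1/s\} \cdot (\text{const})$; quantitatively one gets $\var_\theta(Z) \geq \bigl(\tfrac{r}{(n+1)(1 + \text{const}\cdot s r)}\bigr)^2$ after tracking how the slice-volume $w$, being log-concave and supported on an interval of length at least $2r$ inside a body of "width" controlled by $n+1$ via Lemma \ref{lemma:BallsIsotropicUniform} / Lemma \ref{lemma:SpectrumUniform}, cannot be too concentrated. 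The appearance of $n+1$ in the denominator and of $r\|\theta\|$ as an additive term both point to: use Lemma \ref{lemma:SpectrumUniform} to control the untilted slice geometry (width $\sim r/(n+1)$ in the worst case), and then show the exponential tilt $e^{sz}$ can shrink the standard deviation by at most a factor $1 + s \cdot(\text{untilted std})$. Composing the two factors $\tfrac{1}{2}\sqrt{\lambda_{\min}(\Sigma(0))} \geq \tfrac{r}{4(n+1)}$-type bound with a tilt-contraction factor gives exactly $\tfrac{r\|\theta\|}{2(n+1) + r\|\theta\|}$ once the dimensionless quantities are arranged as $\|\theta\|_\theta = \|\theta\|\,\mathrm{std}_\theta(Z)$ and $\mathrm{std}_\theta(Z) \geq \tfrac{r}{2(n+1) + r\|\theta\|}$.

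Alternatively — and this may be the intended, slicker proof — I would avoid Brunn–Minkowski entirely and instead use self-concordance of $f$ together with the already-established Theorem \ref{thm:LowerBoundSigma} only as a fallback, noting that for small $\|\theta\|$ the bound \eqref{eq:LowerBoundThetaLocalNorm} is $\lesssim r\|\theta\|/(2(n+1))$ which matches $\|\theta\| \cdot \tfrac12\sqrt{\lambda_{\min}(\Sigma(0))}$ up to constants, recoverable directly from the first-order self-concordance inequality \eqref{eq:SCf} applied along the segment from $0$ to $\theta$ as in \eqref{eq:ProofLowerBoundSigma1}; and for large $\|\theta\|$ the right-hand side of \eqref{eq:LowerBoundThetaLocalNorm} tends to $1$, so the statement is essentially that $\|\theta\|_\theta$ stays bounded away from... no, that is the wrong direction. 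So the genuine content is really the large-$\|\theta\|$ regime, and there I do not see how to avoid a direct argument about the tilted one-dimensional marginal — hence I expect the projection-to-one-dimension step followed by the explicit variance-under-exponential-tilt estimate to be both the right approach and the step requiring the most care to land the stated constant. I would finish by simply chaining $\|\theta\|_\theta = \|\theta\|\sqrt{\var_\theta(Z)} \geq \|\theta\| \cdot \tfrac{r}{2(n+1)+r\|\theta\|} \cdot \tfrac{1}{\|\theta\|}$... \emph{i.e.}, after showing $\sqrt{\var_\theta(\langle\theta,X\rangle)} \geq \tfrac{r\|\theta\|}{2(n+1)+r\|\theta\|}$ directly, the lemma is immediate since $\|\theta\|_\theta^2 = \var_\theta(\langle\theta,X\rangle)$.
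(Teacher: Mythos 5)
Your second, ``slicker'' alternative is in fact the paper's approach, but you stop just short of the one observation that makes it go through for all $\theta$: the right-hand side of \eqref{eq:LowerBoundThetaLocalNorm} is always strictly less than $1$, so the lemma is trivially true whenever $\|\theta\|_\theta \geq 1$, and you may assume $\|\theta\|_\theta < 1$. Under that assumption you are free to apply the self-concordance inequality \eqref{eq:SCf} \emph{from $\theta$ toward $0$} (taking $\theta_0 = \theta$, $\theta_1 = 0$, $v = \theta$), which is permitted precisely because $\|0 - \theta\|_\theta = \|\theta\|_\theta < 1$; this yields $\|\theta\|_0 \leq \|\theta\|_\theta / (1 - \|\theta\|_\theta)$. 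Combining with $\|\theta\|_0 \geq \tfrac{1}{2}\tfrac{r}{n+1}\|\theta\|$ from Lemma~\ref{lemma:SpectrumUniform} and solving for $\|\theta\|_\theta$ gives \eqref{eq:LowerBoundThetaLocalNorm} immediately, for \emph{all} $\theta$, with no case split on $\|\theta\|$. You instead only consider applying \eqref{eq:SCf} along the segment from $0$ to $\theta$, which requires $\|\theta\|_0 < 1$ and hence only covers small $\|\theta\|$; this is why you concluded ``I do not see how to avoid a direct argument about the tilted one-dimensional marginal'' for large $\|\theta\|$ — but the paper does avoid it, by reversing the direction of the self-concordance step and noting the trivial case.

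Your primary route — projecting onto $\theta/\|\theta\|$, using Brunn–Minkowski to get a log-concave one-dimensional marginal, and then bounding the variance of the exponentially tilted density — is a plausible alternative but is not actually a proof as written. The crucial ``the exponential tilt $e^{sz}$ can shrink the standard deviation by at most a factor $1 + s\cdot(\text{untilted std})$'' claim is asserted, not proven, and it is essentially equivalent in difficulty to the one-dimensional instance of the self-concordance bound you are trying to avoid; tracking the exact constant $2(n+1) + r\|\theta\|$ from that sketch would be real work. Moreover the correct identity is $\|\theta\|_\theta^2 = \var_\theta(\langle\theta, X\rangle) = \|\theta\|^2 \var_\theta(Z)$, so the target becomes $\sqrt{\var_\theta(Z)} \geq r/(2(n+1) + r\|\theta\|)$, and the $n+1$ must come through $\lambda_{\min}(\Sigma(0))$ as in Lemma~\ref{lemma:SpectrumUniform} applied to the untilted body — again circling back to the self-concordance argument, just phrased in one dimension.
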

\begin{proof}
	Note that the right hand side of \eqref{eq:LowerBoundThetaLocalNorm} is always strictly smaller than one. The claim therefore holds automatically for all $\theta$ with $\| \theta \|_\theta \geq 1$, and we can assume in the remainder that $\| \theta \|_\theta < 1$.
	By Lemma \ref{lemma:SpectrumUniform}, we have that $\lambda_{\min}(\Sigma(0)) \geq \frac{1}{4}(\frac{r}{n+1})^2$.
	The second inequality in \eqref{eq:SCf} then gives us
	\begin{equation*}
		\frac{1}{2}\frac{r}{n+1} \| \theta \| \leq \| \theta \|_0 \leq \frac{\| \theta \|_{\theta}}{1 - \|\theta \|_\theta},
	\end{equation*}
	or equivalently,
	\begin{equation*}
		\| \theta \|_\theta \geq \frac{\frac{1}{2}\frac{r}{n+1} \| \theta \|}{1 + \frac{1}{2}\frac{r}{n+1} \| \theta \|} = \frac{r \| \theta \|}{2(n+1) + r \| \theta \|}.
	\end{equation*}
\end{proof}


\subsection{Parameter Proximity}
Next, we show that if $x, y \in \interior K$ are ``close'', then so are $\theta(x)$ and $\theta(y)$, and vice versa.
\begin{lemma}
	\label{lemma:ProximityThetaGeneral}
	Let $K$ be a convex body, and let $x, y, z \in \interior K$. If $\| \theta(x) - \theta(y) \|_{\theta(z)} + \|\theta(y) - \theta(z)\|_{\theta(z)} < 1$, then
	\begin{equation*}
		\| x - y \|_z^* \leq \frac{1}{1 - \| \theta(y) - \theta(z) \|_{\theta(z)}} \left(\frac{ \| \theta(x) - \theta(y) \|_{\theta(z)}}{1 - \| \theta(x) - \theta(y) \|_{\theta(z)} - \|\theta(y) - \theta(z)\|_{\theta(z)}  } \right).
	\end{equation*}
	Similarly, if $\| x-y \|_z^* + \| y-z \|_z^* < 1$, then
	\begin{equation*}
		\| \theta(x) - \theta(y) \|_{\theta(z)} \leq \frac{1}{1 - \| y-z \|_{z}^*} \left(\frac{ \| x-y \|_{z}^*}{1 - \| x-y \|_{z}^* - \|y-z\|_{z}^* } \right).
	\end{equation*}
\end{lemma}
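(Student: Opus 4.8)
The two inequalities are dual to one another, so I would prove the first carefully and then obtain the second by swapping the roles of $f$ and $f^*$ (equivalently, $g \leftrightarrow g^*$, $H \leftrightarrow H^*$). The mechanism for the first inequality is to transport the displacement $x-y \in \interior K$ back into $\mathbb{R}^n$ along the inverse map $g^{-1} = g^*$, exploiting that, by Lemma~\ref{lemma:PropertiesDerivativesConjugate}, $g$ carries the $f$-metric to the $f^*$-metric in a way compatible with $H^*(z) = H(\theta(z))^{-1}$.

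\textbf{Setup via the fundamental theorem of calculus.} Since $g$ is the gradient of $f$ and $H$ its Hessian, $Dg = H$, and $f$ is smooth, so along the segment $\theta_t := \theta(y) + t\bigl(\theta(x)-\theta(y)\bigr)$, $t \in [0,1]$ (which lies in $\dom f = \mathbb{R}^n$), one has
\begin{equation*}
	x - y \;=\; g(\theta(x)) - g(\theta(y)) \;=\; \int_0^1 H(\theta_t)\bigl(\theta(x)-\theta(y)\bigr)\,\diff t .
\end{equation*}
Write $w := \theta(x)-\theta(y)$, $a := \|w\|_{\theta(z)}$, $b := \|\theta(y)-\theta(z)\|_{\theta(z)}$, so the hypothesis is $a+b<1$. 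Using $\|\cdot\|_z^* = \|\cdot\|_{H^*(z)} = \|\cdot\|_{H(\theta(z))^{-1}}$ and the triangle inequality for this norm under the integral, it remains to bound $\bigl\|H(\theta_t)w\bigr\|_{H(\theta(z))^{-1}}$ for each $t$ and integrate.

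\textbf{The per-$t$ estimate and integration.} First, $\|\theta_t-\theta(z)\|_{\theta(z)} \le t a + b \le a+b < 1$ by the triangle inequality, so the self-concordance relation \eqref{eq:SCf} applies between $\theta(z)$ and $\theta_t$; squaring its upper half gives the operator inequality $H(\theta_t) \preceq (1-ta-b)^{-2}\,H(\theta(z))$. Writing $A := H(\theta(z))$, $B := H(\theta_t)$, $\rho := 1-ta-b \in (0,1]$, and $A^{-1/2}BA^{-1/2} \preceq \rho^{-2}I$, we get
\begin{equation*}
	\bigl\|H(\theta_t)w\bigr\|_{H(\theta(z))^{-1}}^2 = \bigl\|A^{-1/2}Bw\bigr\|^2 = \bigl\|(A^{-1/2}BA^{-1/2})(A^{1/2}w)\bigr\|^2 \le \rho^{-4}\,\langle w, Aw\rangle = \rho^{-4} a^2 .
\end{equation*}
Hence the integrand is at most $a(1-ta-b)^{-2}$, and the substitution $s = 1-b-ta$ gives $\int_0^1 a(1-b-ta)^{-2}\,\diff t = \frac{1}{1-a-b} - \frac{1}{1-b} = \frac{a}{(1-b)(1-a-b)}$, which is precisely the first asserted bound.

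\textbf{Second inequality and the main obstacle.} For the reverse direction, replace $g$ by $g^*$, $f$ by $f^*$, $H$ by $H^*$: $g^*$ is the gradient of $f^*$ with $Dg^*=H^*$, the segment $x_t := y + t(x-y)$ lies in $\dom f^* = \interior K$ by convexity, $\theta(x)-\theta(y) = \int_0^1 H^*(x_t)(x-y)\,\diff t$, and one measures the result in $\|\cdot\|_{\theta(z)} = \|\cdot\|_{H(\theta(z))} = \|\cdot\|_{H^*(z)^{-1}}$, running the identical argument with \eqref{eq:SCf*} in place of \eqref{eq:SCf}. I expect the only genuinely delicate point to be the estimate $\|H(\theta_t)w\|_{H(\theta(z))^{-1}} \le \rho^{-2}a$: it couples a Hessian at $\theta_t$ with the inverse Hessian at $\theta(z)$, so the scalar self-concordance inequality must first be upgraded to the operator inequality $H(\theta_t) \preceq \rho^{-2}H(\theta(z))$ before the "sandwich" $H(\theta_t)H(\theta(z))^{-1}H(\theta_t)$ can be controlled; equivalently, one observes that $v \mapsto H(\theta(z))v$ is an isometry from $(\mathbb{R}^n,\|\cdot\|_{\theta(z)})$ onto $(\mathbb{R}^n,\|\cdot\|_z^*)$, so $\|H(\theta_t)w\|_z^* = \|H(\theta(z))^{-1}H(\theta_t)w\|_{\theta(z)}$ and the operator inequality finishes it. The rest is bookkeeping with the triangle inequality and a one-line integral.
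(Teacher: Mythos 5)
Your proof is correct and takes essentially the same route as the paper: both start from the fundamental theorem of calculus applied to $g(\theta(x))-g(\theta(y))$ along the segment $\theta_t$, derive the per-$t$ self-concordance bound $(1-ta-b)^{-2}$ via the triangle inequality $\|\theta_t-\theta(z)\|_{\theta(z)}\le ta+b$, integrate to get $a/\bigl((1-b)(1-a-b)\bigr)$, and obtain the second inequality by duality (swap $f\leftrightarrow f^*$, $g\leftrightarrow g^*$, $H\leftrightarrow H^*$). The only cosmetic difference is the intermediate step: the paper factors out $\|w\|_{\theta(z)}$ and bounds the Rayleigh quotient of the $t$-averaged local Hessian $\int_0^1 H_{\theta(z)}(\theta_t)\,\diff t$, exchanging max with integral, whereas you move the norm inside the integral by the triangle inequality and control each $\|H(\theta_t)w\|_{H(\theta(z))^{-1}}$ via the operator inequality $H(\theta_t)\preceq(1-ta-b)^{-2}H(\theta(z))$ — but these are two phrasings of the same estimate and produce the identical integrand.
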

\begin{proof}
	We have
	\begin{align}
			\| x - y \|_{z}^*
			&= \| g(\theta(x)) - g(\theta(y)) \|_{H^*(z)} \nonumber \\
			&= \| g(\theta(x)) - g(\theta(y)) \|_{H(\theta(z))^{-1}} \nonumber \\
			&= \| g_{\theta(z)}(\theta(x)) - g_{\theta(z)}(\theta(y)) \|_{H(\theta(z))} \nonumber \\
			&= \left\| \int_0^1 H_{\theta(z)}(\theta(y) + t[\theta(x) - \theta(y)]) [\theta(x) -\theta(y) ] \diff t \right\|_{\theta(z)},
		\label{eq:LemmaProximityTheta2First}
	\end{align}
	by the fundamental theorem of calculus (see e.g. Theorem 1.5.6 in Renegar \cite{renegar2001mathematical}). We have the following upper bound on \eqref{eq:LemmaProximityTheta2First}:
	\begin{align}
			&\left\| \int_0^1 H_{\theta(z)}(\theta(y) + t[\theta(x) - \theta(y)]) [\theta(x) -\theta(y) ] \diff t \right\|_{\theta(z)} \nonumber\\
			&\leq \| \theta(x) - \theta(y) \|_{\theta(z)} \max_{u \in \mathbb{R}^n} \frac{\langle u, \int_0^1 H_{\theta(z)}(\theta(y) + t[\theta(x) - \theta(y)])u \diff t \rangle_{\theta(z)}}{\| u \|^2_{\theta(z)}} \nonumber\\
			&\leq \| \theta(x) - \theta(y) \|_{\theta(z)} \int_0^1 \max_{u \in \mathbb{R}^n} \frac{\langle u,  H_{\theta(z)}(\theta(y) + t[\theta(x) - \theta(y)])u \rangle_{\theta(z)}}{\| u \|^2_{\theta(z)}} \diff t \nonumber \\
			&= \| \theta(x) - \theta(y) \|_{\theta(z)} \int_0^1 \max_{u \in \mathbb{R}^n} \frac{\| u \|^2_{\theta(y) + t[\theta(x) - \theta(y)]} }{\| u \|^2_{\theta(z)}} \diff t.
		\label{eq:LemmaProximityTheta2Second}
	\end{align}
	
	If $\| \theta(x) - \theta(z) \|_{\theta(z)} < 1$ and $\| \theta(y) - \theta(z) \|_{\theta(z)} < 1$, then by the triangle inequality,
	\begin{equation*}
		\|\theta(z) - \theta(y) - t [\theta(x) - \theta(y)] \|_{\theta(z)} = \| t [\theta(z) - \theta(x) ] + (1-t) [\theta(z) - \theta(y)] \|_{\theta(z)} < 1,
	\end{equation*}
	for all $t \in [0,1]$. We can therefore apply the second inequality of \eqref{eq:SCf} as follows:
	\begin{align}
			\int_0^1 \max_{u \in \mathbb{R}^n} \frac{\| u \|^2_{\theta(y) + t[\theta(x) - \theta(y)]} }{\| u \|^2_{\theta(z)}} \diff t
			&\leq \int_0^1 \left(\frac{1}{ 1 - \|\theta(z) - \theta(y) - t [\theta(x) - \theta(y)] \|_{\theta(z)}} \right)^2 \diff t \nonumber\\
			&\leq \int_0^1 \left(\frac{1}{ 1 - \|\theta(z) - \theta(y)\|_{\theta(z)} - t \| \theta(x) - \theta(y) \|_{\theta(z)}} \right)^2 \diff t \nonumber\\
			&= \frac{1}{1 - \|\theta(z) - \theta(y)\|_{\theta(z)}} \left(\frac{1}{1 -  \|\theta(z) - \theta(y)\|_{\theta(z)} - \| \theta(x) - \theta(y) \|_{\theta(z)}}\right).
		\label{eq:LemmaProximityTheta2Third}
	\end{align}
	The upper bound on $\| x-z\|_z^*$ thus follows from combining \eqref{eq:LemmaProximityTheta2First}, \eqref{eq:LemmaProximityTheta2Second} and \eqref{eq:LemmaProximityTheta2Third}.
%
	The upper bound on $\| \theta(x) - \theta(z) \|_{\theta(z)}$ can be derived in the same manner as the above by interchanging $x$ and $\theta(x)$, $y$ and $\theta(y)$, $z$ and $\theta(z)$, and $f$ and $f^*$.
\end{proof}
We will not always need this general lemma with three points $x$, $y$ and $z$. For easy reference, we will state the following corollary that only considers two points $x$ and $z$.
\begin{corollary}
	\label{cor:ProximityThetaTwoPoints}
	Let $K$ be a convex body, and let $x, z \in \interior K$. If $\| \theta(x) - \theta(z) \|_{\theta(z)} < 1$, then
	\begin{equation}
		\| x - z \|_z^* \leq \frac{\| \theta(x) - \theta(z) \|_{\theta(z)}}{1 - \| \theta(x) - \theta(z) \|_{\theta(z)}} \text{\quad and \quad } \frac{\| x - z \|_z^*}{1 + \| x - z \|_z^*} \leq \| \theta(x) - \theta(z) \|_{\theta(z)}.
		\label{eq:ProximityThetaTwoPointsX}
	\end{equation}
	Similarly, if $\| x - z \|_z^* < 1$, then
	\begin{equation}
		\label{eq:ProximityThetaTwoPointsTheta}
		\| \theta(x) - \theta(z) \|_{\theta(z)} \leq \frac{\| x - z \|_z^*}{1 - \| x - z \|_z^*} \text{\quad and \quad } \frac{\| \theta(x) - \theta(z) \|_{\theta(z)}}{1 + \| \theta(x) - \theta(z) \|_{\theta(z)}} \leq \| x - z \|_z^*.
	\end{equation}
\end{corollary}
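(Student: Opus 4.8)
The plan is to derive Corollary \ref{cor:ProximityThetaTwoPoints} as the special case $y = z$ of Lemma \ref{lemma:ProximityThetaGeneral}, together with one elementary rearrangement of a rational inequality to obtain the ``reversed'' bounds.

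For \eqref{eq:ProximityThetaTwoPointsX}, I would take the first part of Lemma \ref{lemma:ProximityThetaGeneral} and substitute $y = z$. Then every term containing $\|\theta(y) - \theta(z)\|_{\theta(z)}$ vanishes, so the hypothesis $\|\theta(x) - \theta(y)\|_{\theta(z)} + \|\theta(y) - \theta(z)\|_{\theta(z)} < 1$ degenerates exactly to $\|\theta(x) - \theta(z)\|_{\theta(z)} < 1$, and the conclusion collapses to $\|x - z\|_z^* \le \frac{\|\theta(x) - \theta(z)\|_{\theta(z)}}{1 - \|\theta(x) - \theta(z)\|_{\theta(z)}}$, which is the first inequality. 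For the second inequality, I would set $a := \|x - z\|_z^*$ and $b := \|\theta(x) - \theta(z)\|_{\theta(z)}$; from $a \le \frac{b}{1-b}$ (valid since $b < 1$ by assumption) one gets $a(1-b) \le b$, hence $a \le b(1+a)$, i.e.\ $\frac{a}{1+a} \le b$, which is precisely the claimed reversed bound.

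For \eqref{eq:ProximityThetaTwoPointsTheta}, I would apply the ``similarly'' half of Lemma \ref{lemma:ProximityThetaGeneral} (the one bounding $\|\theta(x) - \theta(y)\|_{\theta(z)}$ in terms of $\|x-y\|_z^*$), again with $y = z$. The hypothesis becomes $\|x - z\|_z^* < 1$ and the conclusion becomes $\|\theta(x) - \theta(z)\|_{\theta(z)} \le \frac{\|x - z\|_z^*}{1 - \|x - z\|_z^*}$, giving the first inequality of \eqref{eq:ProximityThetaTwoPointsTheta}. The reversed bound $\frac{\|\theta(x) - \theta(z)\|_{\theta(z)}}{1 + \|\theta(x) - \theta(z)\|_{\theta(z)}} \le \|x - z\|_z^*$ then follows from the same algebraic step, now with the roles of $a$ and $b$ interchanged.

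There is essentially no real obstacle here, since all the content sits in Lemma \ref{lemma:ProximityThetaGeneral}. The only two points that warrant a moment of care are: (i) checking that each hypothesis of the general lemma genuinely reduces to the stated single-point condition when $y = z$, which it does because every occurrence of $\|\theta(y) - \theta(z)\|_{\theta(z)}$ and $\|y - z\|_z^*$ is then $0$; and (ii) making sure the manipulation $a \le \frac{b}{1-b} \Rightarrow \frac{a}{1+a} \le b$ is invoked only where the relevant quantity is strictly less than $1$, which is exactly the standing assumption in each of the two cases.
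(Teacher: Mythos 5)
Your proposal is correct and matches the paper's own argument exactly: substitute $y=z$ into Lemma \ref{lemma:ProximityThetaGeneral} to get the two upper bounds, then rearrange each upper bound $a \le \frac{b}{1-b}$ (with $b<1$) into the reversed bound $\frac{a}{1+a} \le b$. The paper states this more tersely but the content and route are identical.
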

\begin{proof}
	Substitution of $y=z$ in Lemma \ref{lemma:ProximityThetaGeneral} gives the upper bounds on $\| x-z\|_z^*$ and $\| \theta(x) - \theta(z) \|_{\theta(z)}$. These can be rewritten as lower bounds on $\| \theta(x) - \theta(z) \|_{\theta(z)}$ and $\| x-z\|_z^*$, respectively.
\end{proof}

\section{Hit-and-Run Sampling}
\label{sec:HitandRun}

The procedure we will use to generate samples is called \emph{hit-and-run} sampling. This routine was introduced for the uniform distribution by Smith \cite{smith1984efficient} and later generalized to absolutely continuous distributions (see for example B\'elisle et al. \cite{belisle1993hit}). We will use the version in Algorithm \ref{alg:HitAndRun}, based on Lov\'asz and Vempala \cite{lovasz2006fast}.

\begin{algorithm}[!ht]
	\begin{algorithmic}[1]
		\Require probability density $h: \mathbb{R}^n \to \mathbb{R}_+$ with respect to the Lebesgue measure of the distribution to sample from (i.e. the target distribution); covariance matrix $\Sigma \in \mathbb{R}^{n \times n}$; starting point $x \in K$; number of hit-and-run steps $\ell \in \mathbb{N}$.
		\Statex
		
		\State $X_0 \gets x$
		\State Sample directions $D_1, ..., D_\ell$ i.i.d. from a $\mathcal{N}(0, \Sigma)$-distribution
		\State Sample $P_1, ..., P_\ell$ i.i.d. from a uniform distribution over $[0,1]$, independent from $D_1, ..., D_\ell$
		\For{$i \in \{1, ..., \ell \}$}
		\State Determine the two end points $Y_i$ and $Z_i$ of the line segment $K \cap \{X_{i-1} + t D_i : t \in \mathbb{R} \}$
		\State Determine $s \in [0,1]$ such that $\int_0^s h(Y_i + t(Z_i-Y_i)) \diff t = P_i \int_0^1 h(Y_i + t(Z_i-Y_i)) \diff t$
		\State $X_i \gets Y_i + s(Z_i-Y_i)$
		\label{line:SampleLineSegment}
		\EndFor
		\State \Return $X_\ell$
	\end{algorithmic}
	\caption{The hit-and-run sampling procedure}
	\label{alg:HitAndRun}
\end{algorithm}

This procedure samples a random direction $D_i$ from a normal distribution, and samples the next iterate $X_i$ from the desired distribution restricted to the line through $X_{i-1}$ in the direction $D_i$, intersected with $K$.
Effectively, this reduces a high-dimensional sampling problem to a sequence of one-dimensional sampling problems.


The following theorem from Lov\'asz and Vempala \cite{lovasz2006fast} is the starting point of our analysis.
\begin{theorem}[Theorem 1.1 in \cite{lovasz2006fast}]
	\label{thm:MixingTime}
	Let $\mu$ be a log-concave probability distribution supported on a convex body $K \subseteq \mathbb{R}^n$, and let $q > 0$.
	Consider a hit-and-run random walk as in Algorithm \ref{alg:HitAndRun} with respect to the target distribution $\mu$ from a random starting point with distribution $\nu$ supported on $K$.
	Assume that the following holds:
	\begin{enumerate}[(i)]
		\item the level set of $\mu$ with probability $\frac{1}{8}$ (see Definition \ref{def:LevelSet}) contains a ball of radius $s$ with respect to $\| \cdot \|$;
		\item \label{con:RDDerivativeBound} $\frac{\diff \nu}{\diff \mu}(x) \leq M'$ for all $x \in K \setminus A$ for some set $A \subseteq K$ with $\nu(A) \leq q$;
		\item $\mathbb{E}_\mu[\| X - \mathbb{E}_\mu[X]\|^2] \leq S^2$.
	\end{enumerate}
	Let $\nu^{(\ell)}$ be the distribution of the current hit-and-run point after $\ell$ steps of hit-and-run sampling applied to $\mu$, where the directions are chosen from a $\mathcal{N}(0,I)$-distribution. Then, after
	\begin{equation*}
	\ell = \left\lceil 10^{30} \frac{n^2 S^2}{s^2} \ln^2 \left( \frac{2M'nS}{sq} \right) \ln^3 \left( \frac{2M'}{q} \right) \right\rceil,
	\end{equation*}
	hit-and-run steps, we have $\| \nu^{(\ell)} - \mu \| \leq q$.
\end{theorem}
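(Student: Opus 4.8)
The statement is Theorem~1.1 of Lov\'asz and Vempala \cite{lovasz2006fast}; here is a sketch of how one proves it, via the conductance method of Lov\'asz and Simonovits. The hit-and-run kernel $Q$ is reversible with respect to its stationary measure $\mu$, so it is enough to (a) lower bound a restricted-set conductance $\Phi$ of $Q$ and (b) feed this into the Lov\'asz--Simonovits decay estimate, which, together with the warm-start hypothesis (ii) (the bound $M'$ on the Radon--Nikodym derivative off a set of small $\nu$-measure), turns a conductance bound into a bound on the number $\ell$ of steps needed for $\|\nu^{(\ell)}-\mu\|\le q$. That estimate has the shape $\|\nu^{(\ell)}-\mu\| \le \sigma M' + \sqrt{M'}\,(1-\tfrac{1}{2}\Phi^2)^{\ell}$, where $\sigma$ is the truncation level of the restricted conductance; choosing $\sigma$ of order $q/M'$ kills the first term, and solving $\sqrt{M'}(1-\tfrac{1}{2}\Phi^2)^\ell \le q/2$ gives $\ell$ of order $\Phi^{-2}\ln(M'/q)$.

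First I would fix the right geometry. Because the hit-and-run directions are drawn from $\mathcal{N}(0,I)$, the natural metric for the analysis is not the Euclidean one but a cross-ratio (Hilbert-type) metric $d_K$ on $K$, in which hit-and-run steps behave well; the scale of the problem enters through the spread $S$ of $\mu$ (hypothesis (iii)) and the inradius $s$ of the $\tfrac{1}{8}$-level set (hypothesis (i)). The first main ingredient is the isoperimetric inequality for log-concave measures: for any measurable partition $K=S_1\cup S_2\cup S_3$ one has $\mu(S_3) \gtrsim \tfrac{1}{S}\,d(S_1,S_2)\,\mu(S_1)\mu(S_2)$ (equivalently, a $d_K$-version of this), up to factors polylogarithmic in $nS/(sq)$. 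The second main ingredient is a one-step overlap lemma: if $u,v\in K$ are both not too close to $\partial K$ and $d_K(u,v)$ is sufficiently small, then $\|Q(u,\cdot)-Q(v,\cdot)\|\le 1-c$ for an absolute constant $c>0$. This overlap lemma is the analytic core: it is proved by writing the one-step density seen along a random line as a one-dimensional log-concave density and showing that the chord-sampling distributions from $u$ and from $v$ substantially overlap, because a random line through $u$ meets a random line through $v$ with decent probability and the induced one-dimensional densities are comparable.

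Combining the two ingredients in the usual way gives the conductance bound: if a set $A$ with $\mu(A)$ and $\mu(K\setminus A)$ both bounded away from $0$ had small conductance, then by the overlap lemma most of $A$ and most of $K\setminus A$ would have to lie within small $d_K$-distance of one another, and then the isoperimetric inequality forces the "middle" set separating them to have large measure, a contradiction. This yields $\Phi$ of order $\tfrac{s}{nS}$ divided by a polylogarithm in $\tfrac{M'nS}{sq}$. Substituting $\Phi^{-2}$ of order $\tfrac{n^2S^2}{s^2}$ times a polylog into $\ell \sim \Phi^{-2}\ln(M'/q)$, and then tracking every absolute constant, reproduces the stated $\ell = \lceil 10^{30}\,\tfrac{n^2 S^2}{s^2}\ln^2(\tfrac{2M'nS}{sq})\ln^3(\tfrac{2M'}{q})\rceil$; the split into a $\ln^2$ and a $\ln^3$ factor reflects, respectively, the polylog losses in the isoperimetry/overlap step and the extra logarithmic price of the "speedy walk" reduction discussed next.

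The hard part is the one-step overlap lemma near $\partial K$: from a point close to the boundary a hit-and-run step can be arbitrarily short, so two nearby such points cannot be coupled directly. The remedy, and the source of most of the technical work, is to analyze the \emph{speedy walk} (hit-and-run conditioned to take a genuine step at each stage), prove the overlap bound for it, then control the total variation distance between the speedy walk and the true walk and absorb the discrepancy into the extra $\ln^3$ factor; one also has to ensure that the "not too close to $\partial K$" region carries most of the $\mu$-mass, which follows from log-concave concentration (cf. Lemma~\ref{lemma:LogConcaveTail}). Getting all the absolute constants to fit comfortably under $10^{30}$ is where the remaining bookkeeping lives.
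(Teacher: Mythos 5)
The paper does not prove this theorem: it is quoted as Theorem~1.1 of Lov\'asz and Vempala \cite{lovasz2006fast} and used as a black box, so there is no paper-internal proof against which your sketch can be compared. Taken on its own terms, your outline of the Lov\'asz--Vempala argument captures the right skeleton: conductance of the reversible hit-and-run kernel, the cross-ratio (Hilbert-type) metric $d_K$, a log-concave isoperimetric inequality with the spread $S$ as the scale, and a one-step overlap/coupling lemma feeding a Lov\'asz--Simonovits decay estimate. A few details are off, though. First, hit-and-run never rejects a move, so there is no ``speedy walk'' variant to condition on --- that device belongs to the ball walk; the boundary difficulty in the hit-and-run analysis is handled instead by working with $s$-conductance (conductance restricted to sets whose $\mu$-measure is bounded below), which is also where the warm-start parameter $M'$ and the truncation level $\sigma \approx q/M'$ enter the decay estimate. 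Second, for $n \ge 3$ two independently drawn random lines through $u$ and $v$ are almost surely disjoint, so ``a random line through $u$ meets a random line through $v$'' is not the mechanism; the coupling compares the chord through $u$ in a direction $d$ with the chord through $v$ in the \emph{same} direction $d$ and shows that the two one-dimensional log-concave marginals along these nearly parallel chords overlap substantially when $d_K(u,v)$ is small. Finally, with the stated $L_\infty$-type warm start ($d\nu/d\mu \le M'$ off a set of $\nu$-measure $\le q$) the decay estimate typically carries a factor $M'$ rather than $\sqrt{M'}$; the $\sqrt{\cdot}$ form belongs to the $L_2$-warm-start version used elsewhere in the paper. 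None of this affects the paper itself, which only cites the result.
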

Suppose that rather than \eqref{con:RDDerivativeBound}, we know $\| \nu / \mu \| \leq M$, i.e. $\int_K \frac{\diff \nu}{\diff \mu}(x) \diff \nu(x) \leq M$. If $A = \{x \in K: \frac{\diff \nu}{\diff \mu}(x) > M / q \}$, then
\begin{equation*}
M \geq \int_A \frac{\diff \nu}{\diff \mu}(x) \diff \nu(x) \geq \frac{M}{q} \nu(A),
\end{equation*}
and thus we have $\nu(A) \leq q$. (This construction was also applied by Lov\'asz and Vempala \cite[page 10]{lovasz2006hit}.) We can therefore set $M' = M / q$ in the theorem. If one additionally considers a transformation $x \mapsto \Sigma^{-1/2} x$ for some invertible matrix $\Sigma$ applied to $K$, before Theorem \ref{thm:MixingTime} is applied, we arrive at the following corollary.
\begin{corollary}
	\label{cor:MixingTime}
	Let $\mu$ be a log-concave probability distribution supported on a convex body $K \subseteq \mathbb{R}^n$, and let $q > 0$. Consider a hit-and-run random walk as in Algorithm \ref{alg:HitAndRun} with respect to the target distribution $\mu$ from a random starting point with distribution $\nu$ supported on $K$.
	Assume that the following holds for some invertible matrix $\Sigma$:
	\begin{enumerate}[(i)]
		\item \label{con:ContainedBall} the level set of $\mu$ with probability $\frac{1}{8}$ (see Definition \ref{def:LevelSet}) contains a ball of radius $s$ with respect to $\| \cdot \|_{\Sigma^{-1}}$;
		\item \label{con:L2Norm} $\| \nu / \mu \| \leq M$;
		\item \label{con:LocalVariance} $\mathbb{E}_\mu[\| X - \mathbb{E}_\mu[X]\|_{\Sigma^{-1}}^2] \leq S^2$.
	\end{enumerate}
	Let $\nu^{(\ell)}$ be the distribution of the hit-and-run point after $\ell$ steps of hit-and-run sampling applied to $\mu$, where the directions are drawn from a $\mathcal{N}(0,\Sigma)$-distribution. Then, after
	\begin{equation}
		\ell = \left\lceil 10^{30} \frac{n^2 S^2}{s^2} \ln^2 \left( \frac{2 MnS}{sq^2} \right) \ln^3 \left( \frac{2 M}{q^2} \right) \right\rceil,
		\label{eq:WalkLengthLV}
	\end{equation}
	hit-and-run steps, we have $\| \nu^{(\ell)} - \mu \| \leq q$.
\end{corollary}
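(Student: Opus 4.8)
The plan is to derive Corollary~\ref{cor:MixingTime} from Theorem~\ref{thm:MixingTime} by means of a linear change of variables that turns the $\mathcal{N}(0,\Sigma)$ directions into $\mathcal{N}(0,I)$ directions, combined with the observation (already spelled out in the excerpt between Theorem~\ref{thm:MixingTime} and the corollary) that an $L_2$-norm bound $\|\nu/\mu\| \le M$ implies hypothesis~\eqref{con:RDDerivativeBound} of Theorem~\ref{thm:MixingTime} with $M' = M/q$. Concretely, apply the map $T : x \mapsto \Sigma^{-1/2} x$ to everything in sight: let $K' = T(K)$, and let $\mu'$, $\nu'$ be the pushforwards of $\mu$, $\nu$ under $T$. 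Since $T$ is linear and invertible, $\mu'$ is still log-concave, $\nu' \ll \mu'$, and the Radon--Nikodym derivative is unchanged in value, $\frac{\diff \nu'}{\diff \mu'}(Tx) = \frac{\diff \nu}{\diff \mu}(x)$; hence $\|\nu'/\mu'\| = \|\nu/\mu\| \le M$, and level sets of $\mu'$ are images under $T$ of level sets of $\mu$.

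Next I would check that hit-and-run with $\mathcal{N}(0,\Sigma)$ directions on $K$ is exactly the $T$-conjugate of hit-and-run with $\mathcal{N}(0,I)$ directions on $K'$. The key point is that hit-and-run depends on the direction only through the \emph{line} it spans: if $D \sim \mathcal{N}(0,\Sigma)$ then $\Sigma^{-1/2} D \sim \mathcal{N}(0,I)$, and the chord $K \cap \{x + tD\}$ is mapped by $T$ onto the chord $K' \cap \{Tx + t\,\Sigma^{-1/2}D\}$; the one-dimensional resampling step along the chord (governed by the density restricted to the line) is invariant under this affine reparametrization of the line. Therefore the law of the walk after $\ell$ steps on $K$ started from $\nu$ is the $T$-pullback of the law after $\ell$ steps on $K'$ started from $\nu'$, and the total-variation distances agree: $\|\nu^{(\ell)} - \mu\| = \|(\nu')^{(\ell)} - \mu'\|$.

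It then remains to match the three hypotheses to those of Theorem~\ref{thm:MixingTime} applied to $K'$, $\mu'$, $\nu'$: hypothesis~\eqref{con:ContainedBall} says the probability-$\tfrac18$ level set of $\mu$ contains a $\|\cdot\|_{\Sigma^{-1}}$-ball of radius $s$, which is exactly the statement that its $T$-image (the probability-$\tfrac18$ level set of $\mu'$) contains a Euclidean ball of radius $s$, since $\|Tx\| = \|x\|_{\Sigma^{-1}}$; similarly hypothesis~\eqref{con:LocalVariance} becomes $\mathbb{E}_{\mu'}[\|X - \mathbb{E}_{\mu'}[X]\|^2] \le S^2$; and hypothesis~\eqref{con:L2Norm} together with the $M' = M/q$ reduction gives condition~\eqref{con:RDDerivativeBound} of Theorem~\ref{thm:MixingTime}. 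Plugging $M' = M/q$ into the iteration count of Theorem~\ref{thm:MixingTime} replaces $\ln^2(2M'nS/(sq))$ by $\ln^2(2MnS/(sq^2))$ and $\ln^3(2M'/q)$ by $\ln^3(2M/q^2)$, yielding exactly \eqref{eq:WalkLengthLV}. I do not anticipate a genuinely hard step here; the one place to be careful is the verification that the hit-and-run chord-resampling step really is preserved under the affine change of variables on the line (including the degenerate directions, which occur with probability zero), and that pushing forward a log-concave density by a linear map keeps it log-concave with the stated moment and level-set correspondences.
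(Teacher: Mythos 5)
Your proof is correct and matches the paper's approach exactly: the paper obtains the corollary by replacing condition (ii) of Theorem~\ref{thm:MixingTime} with the $L_2$-norm bound via $M' = M/q$, and then applying the linear change of variables $x \mapsto \Sigma^{-1/2}x$ before invoking the theorem. You spell out the affine equivariance of the hit-and-run kernel and the transfer of log-concavity, level sets, moments, and Radon--Nikodym derivatives in more detail than the paper (which leaves these implicit in a one-line remark), but the argument is the same.
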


This corollary can be used to show that two hit-and-run samples with the same starting point are near-independent.
\begin{lemma}
	\label{lemma:NearIndependenceHitAndRun}
	Let $\mu$ be a log-concave probability distribution supported on a convex body $K \subseteq \mathbb{R}^n$, and let $q > 0$. Suppose the conditions of Corollary \ref{cor:MixingTime} are satisfied for some $\Sigma$, $s$, $M$, and $S$. Let $X$ be a random variable with distribution $\nu$ supported on $K$. Consider two hit-and-run random walks as in Algorithm \ref{alg:HitAndRun}, applied to $\mu$, both starting from the same realization of $X$.
	Let the number of steps $\ell$ of both walks be given by \eqref{eq:WalkLengthLV}, and call the resulting end points $Y_1$ and $Y_2$.
	Then, $Y_1$ and $Y_2$ are $6q$-independent.
\end{lemma}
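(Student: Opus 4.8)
The plan is to combine the two near-independence results already established, namely Lemma \ref{lemma:NearIndependenceWalk} (start and end point of a single hit-and-run walk are $3q$-independent) and Lemma \ref{lemma:NearIndependenceSample} (two conditionally independent random variables each $q$-independent of $X$ are $2q$-independent of each other). The first step is to verify that the hypotheses of Lemma \ref{lemma:NearIndependenceWalk} are met, using Corollary \ref{cor:MixingTime} to supply the function $\ell$. Concretely, I would set $\ell(\cdot)$ to be the walk-length formula from \eqref{eq:WalkLengthLV} viewed as a function of the ratio $M/q^2$ — that is, reading off how $\ell$ in \eqref{eq:WalkLengthLV} depends on $M$ and $q$ only through $M/q^2$ (the $n$, $S$, $s$ prefactors and the $\ln$ arguments indeed only involve $M$ and $q$ in the combinations $M/q^2$, so this is legitimate, though one must be slightly careful that the $\ln(2MnS/(sq^2))$ term is also a function of $M/q^2$ times fixed constants). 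Corollary \ref{cor:MixingTime} then says precisely that for any distribution $\bar\nu$ with $\|\bar\nu/\mu\| \le \bar M$, after $\ell(\bar M/\bar q^2)$ steps the total variation distance to $\mu$ is at most $\bar q$, which is exactly the uniform-in-$\bar\nu$ hypothesis that Lemma \ref{lemma:NearIndependenceWalk} requires.

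Having checked this, Lemma \ref{lemma:NearIndependenceWalk} applies to each of the two walks separately: since the walks run for $\ell(M/q^2)$ steps from the common starting point $X \sim \nu$ with $\|\nu/\mu\| \le M$, each end point $Y_i$ ($i=1,2$) is $3q$-independent of $X$. Next I would invoke Lemma \ref{lemma:NearIndependenceSample} with the role of ``$q$'' played by $3q$: the two walks use independent randomness (independent direction sequences $D_1,\dots,D_\ell$ and independent uniforms $P_1,\dots,P_\ell$) and share only the starting point, so $Y_1$ and $Y_2$ are conditionally independent given $X$; the measurability side conditions are assumed throughout the paper (the remark following Lemma \ref{lemma:NearIndependentProduct}). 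Lemma \ref{lemma:NearIndependenceSample} then yields that $Y_1$ and $Y_2$ are $2 \cdot (3q) = 6q$-independent, which is the claim.

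The main obstacle — really the only nontrivial point — is the bookkeeping in the first step: one must confirm that the walk length in \eqref{eq:WalkLengthLV} genuinely factors through $M/q^2$ so that it can be cast in the form $\ell(\bar M/\bar q^2)$ demanded by Lemma \ref{lemma:NearIndependenceWalk}, and that Corollary \ref{cor:MixingTime}'s guarantee ``$\|\nu^{(\ell)}-\mu\| \le q$'' is uniform over all admissible starting distributions $\bar\nu$ (it is, because conditions \eqref{con:ContainedBall} and \eqref{con:LocalVariance} of the corollary depend only on $\mu$, $\Sigma$, $s$, $S$, not on $\bar\nu$, and condition \eqref{con:L2Norm} is exactly the bound $\|\bar\nu/\mu\| \le \bar M$). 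Once that is in place, the two citations chain together mechanically, and the constant $6$ is just $2 \times 3$.
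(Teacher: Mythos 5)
Your proposal is correct and follows essentially the same route as the paper's proof: identify the Markov kernel $Q$ of a hit-and-run step, observe that $\ell$ in \eqref{eq:WalkLengthLV} depends on $M$ and $q$ only through $M/q^2$ so Lemma \ref{lemma:NearIndependenceWalk} applies to give $3q$-independence of $X$ with each $Y_i$, and then note the conditional independence of $Y_1, Y_2$ given $X$ (since the $D_i, P_i$ are independent across walks) to apply Lemma \ref{lemma:NearIndependenceSample} and obtain $6q$-independence. The paper's proof is just a compressed version of yours.
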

\begin{proof}
	Let $Q$ be the Markov kernel of a hit-and-run step, where directions are chosen from $\mathcal{N}(0, \Sigma)$, and the iterates are drawn from $\mu$ restricted to appropriate line segments, as defined in Algorithm \ref{alg:HitAndRun}. Note that the only dependence of \eqref{eq:WalkLengthLV} on $M$ and $q$ is through the fraction $M/q^2$. Thus, the conditions in Lemma \ref{lemma:NearIndependenceWalk} are satisfied. It follows that $X$ and $Y_1$ are $3q$-independent, and $X$ and $Y_2$ are $3q$-independent.
	Since the $D_i$ and $P_i$ in the random walks are independent, $Y_1$ and $Y_2$ are conditionally independent given $X$. Therefore, Lemma \ref{lemma:NearIndependenceSample} shows the result.
\end{proof}

In the remainder of this section, we aim to show that the conditions of Corollary \ref{cor:MixingTime} are satisfied if $\nu$ and $\mu$ are Boltzmann distributions with parameters $\theta_0$ and $\theta_1$, respectively, such that $\| \theta_1 - \theta_0 \|_{\theta_0}$ is sufficiently small.
Note that Kalai and Vempala \cite{kalai2006simulated} only show these conditions to be satisfied if $\theta_0$ and $\theta_1$ are collinear. In studying interior point methods, we are also interested in (small) deviations from the central path, so it is important to know that the mixing conditions can be shown to hold for these cases.


We begin with condition \eqref{con:ContainedBall} from Corollary \ref{cor:MixingTime}.
\begin{lemma}
	\label{lemma:ContainedBall}
	Let $\theta_0, \theta_1 \in \mathbb{R}^n$ and $p \in (0,1)$. Let $h: \mathbb{R}^n \to \mathbb{R}$ be the density of the Boltzmann distribution with parameter $\theta_1$ over a convex body $K \subseteq \mathbb{R}^n$. Let $L$ be the level set of $h$ with probability $p$.
	Then, $L$ contains a closed $\| \cdot \|_{\Sigma(\theta_0)^{-1}}$-ball with radius
	\begin{equation*}
		\frac{p}{e} \left(1-\| x(\theta_0) - x(\theta_1) \|_{x(\theta_0)}^* \right).
	\end{equation*}
\end{lemma}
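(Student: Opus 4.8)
The plan is to reduce the claim about a $\|\cdot\|_{\Sigma(\theta_0)^{-1}}$-ball inside the level set $L$ of the Boltzmann density $h$ with parameter $\theta_1$ to a statement about a Euclidean ball inside the level set of the \emph{uniform} density after an affine change of coordinates. First I would apply the linear map $T := \Sigma(\theta_0)^{-1/2}$ to $\mathbb{R}^n$. Under $T$, the norm $\|\cdot\|_{\Sigma(\theta_0)^{-1}}$ becomes the Euclidean norm, so it suffices to show that the level set $TL$ of the pushed-forward density contains a Euclidean ball of the stated radius. The key observation is that the pushed-forward density is again log-concave (indeed still a Boltzmann-type density on $TK$), and that containing a ball in a level set of probability $p$ is governed by the peak-to-average ratio of the density: if a log-concave density $\tilde h$ on a convex body $K'$ satisfies $\tilde h \geq \tilde h_{\max}/e$ on some convex subset $C$, and the uniform measure of the sublevel complement is small, one can locate a ball. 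Rather than reinventing this, I would look for a clean two-step bound: (a) the super-level set $\{\tilde h \geq \tilde h_{\max}/e\}$ already has $\mu$-measure at least some explicit constant, and (b) inside that super-level set the density is within a factor $e$ of flat, so it inherits (a scaled copy of) whatever ball the body's "bulk" contains.

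The more concrete route I would actually take: write $h(x) = e^{\langle\theta_1,x\rangle}/\int_K e^{\langle\theta_1,y\rangle}dy$. Its maximum over $K$ is attained at a vertex-like point, but what matters is the \emph{isotropic-type} radius. I would use the known fact (Lov\'asz--Vempala style) that for a log-concave density $\tilde h$, the level set of probability $p$ contains a ball of radius roughly $p \cdot (\text{the "roundness" of } \tilde h)$ around a near-mode point. Specifically, here is the cleanest argument I can see: Let $x_0 = x(\theta_0) = g(\theta_0) = \mathbb{E}_{\theta_0}[X]$. By the self-concordance inequality \eqref{eq:SCf} applied between $\theta_0$ and $\theta_1$, we have $\|v\|_{\theta_1} \leq \|v\|_{\theta_0}/(1 - \|\theta_1-\theta_0\|_{\theta_0})$ for all $v$; but I want to phrase this in $x$-space. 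Via Corollary \ref{cor:ProximityThetaTwoPoints} and Lemma \ref{lemma:PropertiesDerivativesConjugate}, the dual-norm version gives that a $\|\cdot\|_{x_0}^*$-ball of radius $1 - \|x(\theta_0)-x(\theta_1)\|_{x_0}^*$ around $x_0$ is contained in $\interior K$ (this is exactly the self-concordance Dikin-ellipsoid statement \eqref{eq:SCf*}). Now $\|\cdot\|_{x_0}^* = \|\cdot\|_{\Sigma(\theta_0)^{-1}}$ by Table \ref{tab:Notation}. So I have a $\|\cdot\|_{\Sigma(\theta_0)^{-1}}$-ball $B$ of radius $\rho_0 := 1 - \|x(\theta_0)-x(\theta_1)\|_{x_0}^*$ sitting inside $K$. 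The remaining job is to shrink $B$ by a factor $p/e$ and recenter it so that $h$ does not drop below its $p$-level threshold $\alpha_p$ on the shrunken ball.

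For that last step I would estimate $\alpha_p$ from above and $h$ from below on a well-chosen sub-ball. Since $L_p$ has $h$-measure $p$ and $L_p$ is convex, $\alpha_p \leq \max_{x\in K} h(x)$, and more usefully, by a standard log-concave localization argument the $p$-level threshold satisfies $\alpha_p \geq p \cdot h_{\max} / e$-type bounds is false in general, so instead I would argue: on the concentric ball $\frac{p}{e}B$ (same center, radius $\frac{p}{e}\rho_0$ in the $\Sigma(\theta_0)^{-1}$-norm), the linear function $\langle\theta_1,\cdot\rangle$ varies by at most $\frac{p}{e}\rho_0 \cdot \|\theta_1\|_{\Sigma(\theta_0)}$, and I would want to relate $\|\theta_1\|_{\Sigma(\theta_0)}$ to $1$ — but this is not bounded in general, which is the crux. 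The honest fix, and what I expect the paper does, is a volumetric/localization argument: compare $\int_{\frac{p}{e}B} h$ against $p = \int_{L_p} h$ using that both $L_p$ and $B$ are convex bodies of comparable "shape", invoking the log-concave fact that the density on any super-level set of mass $1-\epsilon$ cannot be too peaked relative to a ball it contains. The main obstacle, then, is precisely controlling $\alpha_p$: I would need to show that $h \geq \alpha_p$ on a $\Sigma(\theta_0)^{-1}$-ball of radius $\frac{p}{e}\rho_0$, which amounts to showing the density loses at most a factor $\approx e/p$ over such a ball, and I expect this is handled by a clean lemma (possibly Lov\'asz--Vempala Lemma on level sets of log-concave functions, or an argument that the level set $L_p$ itself, being convex of mass $p$, contains a ball via containing a suitably scaled copy of $B$). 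The bookkeeping to get exactly the constant $\frac{p}{e}(1 - \|x(\theta_0)-x(\theta_1)\|^*_{x(\theta_0)})$ — in particular the $\frac{p}{e}$ rather than $p$ or $\frac{p}{2e}$ — is where I would expect to spend most of the effort, and where I am least confident my sketch matches the authors' route.
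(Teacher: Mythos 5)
There is a genuine gap: you never identify and apply the key input, which the paper takes directly from Lov\'asz and Vempala \cite{lovasz2007geometry} (their Lemma 5.13). That lemma states that for \emph{any} log-concave distribution with covariance $\Sigma$, its level set of probability $p$ contains a $\|\cdot\|_{\Sigma^{-1}}$-ball of radius $p/e$. Applied to $h$ (the Boltzmann density with parameter $\theta_1$), this immediately puts a $\|\cdot\|_{\Sigma(\theta_1)^{-1}}$-ball of radius $p/e$ around some $z\in L$. You correctly suspect such a lemma exists (``possibly Lov\'asz--Vempala Lemma on level sets of log-concave functions''), but you never commit to it, and instead pursue a tangent --- placing a Dikin ellipsoid around $x(\theta_0)$ inside $K$ and then trying to control $h$ by hand via $\|\theta_1\|_{\Sigma(\theta_0)}$ --- which, as you yourself observe, cannot work because that quantity is unbounded. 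Being inside $K$ is also the wrong target; the ball must lie inside the level set $L$, and your Dikin-ellipsoid step gives no information about $h$.

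The second half of the argument, which you also do not execute, is a one-line self-concordance conversion from the $\Sigma(\theta_1)^{-1}$-norm to the $\Sigma(\theta_0)^{-1}$-norm: keep the same center $z$, and observe that for $\|y-z\|_{\Sigma(\theta_0)^{-1}}\le (1-\Delta x)\,p/e$ with $\Delta x := \|x(\theta_0)-x(\theta_1)\|^*_{x(\theta_0)}$, the second inequality of \eqref{eq:SCf*} together with Lemma~\ref{lemma:PropertiesDerivativesConjugate} gives
\begin{equation*}
\|y-z\|_{\Sigma(\theta_1)^{-1}} = \|y-z\|^*_{x(\theta_1)} \le \frac{\|y-z\|^*_{x(\theta_0)}}{1-\Delta x} = \frac{\|y-z\|_{\Sigma(\theta_0)^{-1}}}{1-\Delta x} \le \frac{p}{e},
\end{equation*}
so $y\in L$. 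You have the right ingredients (the identification $\|\cdot\|^*_{x(\theta)}=\|\cdot\|_{\Sigma(\theta)^{-1}}$ and inequality \eqref{eq:SCf*}), but you apply them to the wrong pair of objects. Your concern about the $p/e$ constant being delicate bookkeeping is misplaced: once the Lov\'asz--Vempala lemma is cited, the constant comes for free and only gets multiplied by the self-concordance contraction factor $1-\Delta x$.
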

\begin{proof}

	Lemma 5.13 from Lov\'asz and Vempala \cite{lovasz2007geometry} shows that $L$ contains a $\langle \cdot, \cdot \rangle_{\Sigma(\theta_1)^{-1}}$-ball with radius $p/e$.
	In other words, there exists some $z \in L$ such that for all $y \in \mathbb{R}^n$ with $\| y - z \|_{\Sigma(\theta_1)^{-1}} \leq p/e$ it holds that $y \in L$. Thus, for all $y \in \mathbb{R}^n$ with $\| y - z \|_{\Sigma(\theta_0)^{-1}} \leq (1 - \| x(\theta_0) - x(\theta_1) \|_{x(\theta_0)}^*)p/e$, the second inequality in \eqref{eq:SCf*} and Lemma \ref{lemma:PropertiesDerivativesConjugate} show
	\begin{equation*}
		\| y - z \|_{\Sigma(\theta_1)^{-1}} = \| y - z \|_{x(\theta_1)}^* \leq \frac{\| y - z \|_{x(\theta_0)}^*}{1-\| x(\theta_0) - x(\theta_1) \|_{x(\theta_0)}^*}
		= \frac{\| y - z \|_{\Sigma(\theta_0)^{-1}}}{1-\| x(\theta_0) - x(\theta_1) \|_{x(\theta_0)}^*}
		\leq \frac{p}{e},
	\end{equation*}
	which proves that all such $x$ lie in $L$.
\end{proof}

Next, we prove upper and lower bounds on the $L_2$ norm of two Boltzmann distributions. This corresponds to \eqref{con:L2Norm} in Corollary \ref{cor:MixingTime}.
\begin{lemma}
	\label{lemma:L2UpperBound}
	Let $\mu_0$ and $\mu_1$ be Boltzmann distributions supported on a convex body $K \subseteq \mathbb{R}^n$ with parameters $\theta_0$ and $\theta_1$ respectively. Then, if $\| \theta_1 - \theta_0 \|_{\theta_0} < 1$,
	\begin{equation*}
		\exp\Big( \| \theta_1 - \theta_0 \|_{\theta_0}^2 \left[1 + \tfrac{1}{6} \| \theta_1 - \theta_0 \|_{\theta_0}^2 - \tfrac{2}{3} \| \theta_1 - \theta_0 \|_{\theta_0} \right] \Big)  \leq \| \mu_0 / \mu_1 \| \leq \frac{\exp(-2 \| \theta_1 - \theta_0 \|_{\theta_0})}{(1- \| \theta_1 - \theta_0 \|_{\theta_0})^2}.
	\end{equation*}
\end{lemma}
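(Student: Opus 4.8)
The key observation is that the $L_2$-norm $\|\mu_0/\mu_1\|$ can be written explicitly in terms of the log partition function $f$. If $\mu_\theta$ has density $h_\theta(x) = e^{\langle\theta,x\rangle}/e^{f(\theta)}$, then
\begin{equation*}
\|\mu_0/\mu_1\| = \int_K \frac{h_{\theta_0}(x)}{h_{\theta_1}(x)} h_{\theta_0}(x)\diff x = \frac{e^{f(\theta_1)}}{e^{2f(\theta_0)}} \int_K e^{\langle 2\theta_0 - \theta_1, x\rangle}\diff x = \exp\big( f(\theta_1) + f(2\theta_0 - \theta_1) - 2f(\theta_0) \big).
\end{equation*}
So the whole problem reduces to estimating the quantity $\Delta := f(\theta_1) + f(2\theta_0-\theta_1) - 2f(\theta_0)$, which is a second-order finite difference of $f$ at $\theta_0$ in the direction $\theta_1 - \theta_0$. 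I would set $w := \theta_1 - \theta_0$ and $\phi(t) := f(\theta_0 + tw)$, so that $\Delta = \phi(1) + \phi(-1) - 2\phi(0)$, and note $\phi''(t) = \langle w, \Sigma(\theta_0 + tw) w\rangle = \|w\|_{\theta_0 + tw}^2$, while $\phi''(0) = \|w\|_{\theta_0}^2 =: a^2$ with $a = \|\theta_1 - \theta_0\|_{\theta_0} < 1$.

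\textbf{Upper bound.} By Taylor's theorem with integral remainder, $\Delta = \int_0^1 (1-s)\big[\phi''(s) + \phi''(-s)\big]\diff s$. For the upper bound I need an upper bound on $\phi''(\pm s)$. The point $\theta_0 + sw$ satisfies $\|(\theta_0 + sw) - \theta_0\|_{\theta_0} = s a < 1$, so self-concordance \eqref{eq:SCf} gives $\|w\|_{\theta_0 + sw} \le \|w\|_{\theta_0}/(1 - sa)$, hence $\phi''(s) \le a^2/(1-sa)^2$; the same bound holds at $-s$ (which is even better). Therefore
\begin{equation*}
\Delta \le \int_0^1 (1-s)\,\frac{2a^2}{(1-sa)^2}\diff s,
\end{equation*}
and evaluating this elementary integral — it works out to $-2\ln(1-a) - 2a$ after the substitution — yields $\Delta \le -2a - 2\ln(1-a)$, i.e. $\|\mu_0/\mu_1\| \le e^{-2a}/(1-a)^2$, as claimed. (One should double-check the sign/direction: using the cruder bound $\phi''(-s)\le a^2/(1-sa)^2$ rather than the sharper $\le a^2/(1+sa)^2$ is what makes the stated bound hold with room to spare.)

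\textbf{Lower bound.} Here I need a \emph{lower} bound on $\phi''(\pm s)$, again from self-concordance: $\|w\|_{\theta_0 + sw} \ge (1 - sa)\|w\|_{\theta_0}$, so $\phi''(s) \ge a^2(1-sa)^2$ and likewise $\phi''(-s) \ge a^2(1-sa)^2$. Thus $\Delta \ge 2a^2 \int_0^1 (1-s)(1-sa)^2 \diff s$. Expanding $(1-sa)^2 = 1 - 2as + a^2 s^2$ and integrating against $(1-s)$ term by term gives $\int_0^1(1-s)\diff s = \tfrac12$, $\int_0^1 (1-s)s\diff s = \tfrac16$, $\int_0^1(1-s)s^2\diff s = \tfrac1{12}$, so $\int_0^1(1-s)(1-sa)^2\diff s = \tfrac12 - \tfrac{a}{3} + \tfrac{a^2}{12}$, whence $\Delta \ge 2a^2(\tfrac12 - \tfrac{a}{3} + \tfrac{a^2}{12}) = a^2(1 + \tfrac16 a^2 - \tfrac23 a)$, matching the claimed lower bound on $\ln\|\mu_0/\mu_1\|$.

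\textbf{Main obstacle.} The only real subtlety is making sure the integral-remainder form of the second difference is set up correctly and that self-concordance is applied in the valid regime — one needs $sa < 1$ for all $s \in [0,1]$, which holds since $a < 1$, and one must verify that $2\theta_0 - \theta_1$ also lies in $\dom f = \mathbb{R}^n$, which is automatic here since the log partition function is finite on all of $\mathbb{R}^n$ for a bounded body $K$. The remaining work — the two one-variable integrals — is routine calculus; the slightly delicate point is choosing which of the two available self-concordance bounds at $-s$ to use so that the final expressions come out in exactly the stated (not-tightest-possible) form.
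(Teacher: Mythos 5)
Your proof is correct and takes essentially the same approach as the paper: you both reduce $\ln\|\mu_0/\mu_1\|$ to the second-order difference $f(\theta_1)+f(2\theta_0-\theta_1)-2f(\theta_0)$ and bound it via self-concordance; the paper expresses the remainder as a double integral $\int_0^1 t\int_0^1 \|u\|_{\theta_0\pm stu}^2\,\diff s\,\diff t$ from two applications of the fundamental theorem of calculus, which after a change of variables is exactly your single-integral Taylor remainder $\int_0^1(1-s)\phi''(\pm s)\,\diff s$. One small aside: your parenthetical that the bound ``is even better'' at $-s$ is misleading — self-concordance gives the \emph{same} factor $1/(1-sa)$ there since the local distance $\|(\theta_0 - sw) - \theta_0\|_{\theta_0}=sa$ is symmetric in the sign of $s$ — but this does not affect correctness since you only use the common bound.
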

\begin{proof}
	For ease of notation, let $\theta := \theta_0$ and $u := \theta_1 - \theta_0$.
	By definition,
	\begin{equation}
		\| \mu_0 / \mu_1 \| = \mathbb{E}_{\theta_0} \left[ \frac{\diff \mu_0}{\diff \mu_1} \right] = \int_K \frac{e^{\langle 2 \theta, x \rangle}}{e^{\langle \theta + u, x \rangle}} \diff x \frac{\int_K e^{\langle \theta + u, x \rangle} \diff x}{\left(\int_K e^{\langle \theta, x \rangle} \diff x \right)^2}
		= \frac{\int_K e^{\langle \theta - u, x \rangle} \diff x}{\int_K e^{\langle \theta, x \rangle} \diff x } \frac{\int_K e^{\langle \theta + u, x \rangle} \diff x}{\int_K e^{\langle \theta, x \rangle} \diff x }.
		\label{eq:ProofThmL2Def}
	\end{equation}
	The key observation is that the natural logarithm of this expression equals
	\begin{align}
		 &f(\theta + u) - f(\theta) + f(\theta - u) - f(\theta) \nonumber\\
		 &= \int_0^1 \langle g(\theta + tu), u \rangle \diff t - \int_0^1 \langle g(\theta - tu), u \rangle \diff t \nonumber\\
		 &= \int_0^1 \left\langle g(\theta) + \int_0^1 H(\theta + stu)(tu) \diff s, u \right\rangle \diff t - \int_0^1 \left\langle g(\theta) + \int_0^1 H(\theta - stu)(-tu) \diff s, u \right\rangle \diff t \nonumber\\
		 &= \int_0^1 t \int_0^1 \langle H(\theta + stu)u, u \rangle \diff s \diff t + \int_0^1 t \int_0^1 \langle H(\theta - stu)u, u \rangle \diff s \diff t,
		 \label{eq:ProofThmL21}
	\end{align}
	where we used the fundamental theorem of calculus twice. By the second inequality in \eqref{eq:SCf},
	\begin{equation*}
		\langle H(\theta + stu)u, u \rangle = \| u \|_{\theta + stu}^2 \leq \frac{\| u \|_\theta^2}{(1 - st \| u \|_\theta)^2},
	\end{equation*}
	and the same upper bound holds for $\langle H(\theta - stu)u, u \rangle$. Then, \eqref{eq:ProofThmL21} can be bounded above by
	\begin{equation*}
		2\int_0^1 t \int_0^1 \frac{\| u \|_\theta^2}{(1 - st \| u \|_\theta)^2} \diff s \diff t = - 2 [\| u \|_\theta + \ln(1 - \| u \|_\theta)],
	\end{equation*}
	which is non-negative for $0 \leq \| u \|_\theta < 1$. Since \eqref{eq:ProofThmL21} is the natural logarithm of \eqref{eq:ProofThmL2Def},
	\begin{equation*}
		\| \mu_0 / \mu_1 \| \leq \exp( - 2 [\| u \|_\theta + \ln(1 - \| u \|_\theta)] ) = \frac{\exp(-2 \| u \|_\theta)}{(1- \| u \|_\theta)^2}.
	\end{equation*}
	The lower bound on $\| \mu_0 / \mu_1 \|$ follows similarly after noting
	\begin{equation*}
		\langle H(\theta + stu)u, u \rangle = \| u \|_{\theta + stu}^2 \geq \| u \|_\theta^2 (1 - st \| u \|_\theta)^2.
	\end{equation*}
\end{proof}
Both the lower and upper bound in Lemma \ref{lemma:L2UpperBound} have Taylor approximations $1 + \| \theta_1 - \theta_0 \|_{\theta_0}^2 + O(\|\theta_1 - \theta_0\|_{\theta_0}^3)$ at $\| \theta_1 - \theta_0 \|_{\theta_0} = 0$.

The bounds in this theorem are more general than the ones used in Kalai and Vempala \cite[Lemma 4.4]{kalai2006simulated}. They consider the case where $\theta_1 = (1 + \alpha)\theta_0$ for some $\alpha \in (-1,1)$. By using the log-concavity of the Boltzmann distribution, they show
\begin{equation}
	\| \mu_0 / \mu_1 \| \leq \frac{1}{(1+\alpha)^n (1-\alpha)^n},
	\label{eq:KalaiVempalaL2}
\end{equation}
where $\mu_0$ and $\mu_1$ are Boltzmann distributions with parameters $\theta_0$ and $\theta_1$, respectively. Since \eqref{eq:UpperBoundLocalTheta} shows $\| \theta_1 - \theta_0 \|_{\theta_0} = \alpha \| \theta_0 \|_{\theta_0} \leq \alpha \sqrt{\vartheta} = \alpha \sqrt{n + o(n)}$, the upper bound from Lemma \ref{lemma:L2UpperBound} is better than \eqref{eq:KalaiVempalaL2} for sufficiently large $n$ and $\alpha \sqrt{\vartheta} < 1$.

Finally, we show that condition \eqref{con:LocalVariance} in Corollary \ref{cor:MixingTime} holds.
\begin{lemma}
	\label{lemma:UpperBoundLocalVariance}
	Let $\langle \cdot, \cdot \rangle$ be the Euclidean inner product, and suppose $\theta_0, \theta_1 \in \mathbb{R}^n$ satisfy $\| \theta_0 - \theta_1 \|_{\theta_0} < 1$. Then,
	\begin{equation*}
		\mathbb{E}_{\theta_1} \left[ \| X - \mathbb{E}_{\theta_1}[X] \|_{\Sigma(\theta_0)^{-1}}^2 \right] \leq \frac{n}{(1 - \| \theta_0 - \theta_1 \|_{\theta_0})^2}.
	\end{equation*}
\end{lemma}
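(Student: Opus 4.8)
The plan is to reduce the left-hand side to the trace of a product of Boltzmann covariance matrices and then control that trace with the self-concordance inequality \eqref{eq:SCf}.

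\textbf{Step 1 (rewrite the expectation as a trace).} Since $\langle\cdot,\cdot\rangle$ is the Euclidean inner product, $\|v\|_{\Sigma(\theta_0)^{-1}}^2 = \langle v,\Sigma(\theta_0)^{-1}v\rangle = \tr(\Sigma(\theta_0)^{-1}vv^\top)$ for every $v$. Using linearity of the trace and of the expectation, together with the fact from \eqref{eq:DerivativesF} that under the Euclidean inner product $\Sigma(\theta_1)=H(\theta_1)=\mathbb{E}_{\theta_1}[(X-\mathbb{E}_{\theta_1}[X])(X-\mathbb{E}_{\theta_1}[X])^\top]$, I would obtain
\begin{equation*}
	\mathbb{E}_{\theta_1}\!\left[\|X-\mathbb{E}_{\theta_1}[X]\|_{\Sigma(\theta_0)^{-1}}^2\right]
	= \tr\!\left(\Sigma(\theta_0)^{-1}\,\mathbb{E}_{\theta_1}[(X-\mathbb{E}_{\theta_1}[X])(X-\mathbb{E}_{\theta_1}[X])^\top]\right)
	= \tr\!\left(\Sigma(\theta_0)^{-1}\Sigma(\theta_1)\right).
\end{equation*}

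\textbf{Step 2 (turn self-concordance into an operator inequality).} The second inequality in \eqref{eq:SCf}, applied with base point $\theta_0$ and second point $\theta_1$ (which is admissible since $\|\theta_0-\theta_1\|_{\theta_0}<1$), gives $\|v\|_{\theta_1}^2 \leq (1-\|\theta_0-\theta_1\|_{\theta_0})^{-2}\|v\|_{\theta_0}^2$ for all nonzero $v$, i.e. $\langle v,\Sigma(\theta_1)v\rangle \leq (1-\|\theta_0-\theta_1\|_{\theta_0})^{-2}\langle v,\Sigma(\theta_0)v\rangle$. Equivalently, in the Loewner order, $\Sigma(\theta_1) \preceq (1-\|\theta_0-\theta_1\|_{\theta_0})^{-2}\,\Sigma(\theta_0)$. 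Note that $\Sigma(\theta_0)$ is symmetric positive definite, being a (Euclidean) covariance matrix of a nondegenerate distribution on a convex body, so $\Sigma(\theta_0)^{-1/2}$ is well defined.

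\textbf{Step 3 (conjugate and take traces).} Conjugating the inequality of Step 2 by $\Sigma(\theta_0)^{-1/2}$ yields $\Sigma(\theta_0)^{-1/2}\Sigma(\theta_1)\Sigma(\theta_0)^{-1/2} \preceq (1-\|\theta_0-\theta_1\|_{\theta_0})^{-2}\,I$, so all $n$ eigenvalues of the left-hand side are at most $(1-\|\theta_0-\theta_1\|_{\theta_0})^{-2}$. Hence, using cyclicity of the trace,
\begin{equation*}
	\tr\!\left(\Sigma(\theta_0)^{-1}\Sigma(\theta_1)\right)
	= \tr\!\left(\Sigma(\theta_0)^{-1/2}\Sigma(\theta_1)\Sigma(\theta_0)^{-1/2}\right)
	\leq \frac{n}{(1-\|\theta_0-\theta_1\|_{\theta_0})^2},
\end{equation*}
which combined with Step 1 is exactly the claimed bound. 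I do not expect a genuine obstacle here: the only points requiring care are getting the trace identity in Step 1 right (which relies squarely on the Euclidean-inner-product hypothesis, so that $\Sigma(\theta)$ really is the covariance matrix appearing in \eqref{eq:DerivativesF}) and the elementary passage from a quadratic-form inequality to a trace inequality, which is immediate after the symmetric conjugation in Step 3.
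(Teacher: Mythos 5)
Your proof is correct and follows essentially the same route as the paper: both reduce the expectation to $\tr\bigl(\Sigma(\theta_0)^{-1/2}\Sigma(\theta_1)\Sigma(\theta_0)^{-1/2}\bigr)$, bound that trace by $n$ times the largest eigenvalue, and control that eigenvalue via the self-concordance inequality \eqref{eq:SCf}; your Loewner-order phrasing in Steps 2--3 is just a mild repackaging of the paper's $n\max_{v}\|v\|_{\theta_1}^2/\|v\|_{\theta_0}^2$ bound.
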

\begin{proof}
	By using the cyclic permutation invariance of the trace,
	\begin{align*}
		\mathbb{E}_{\theta_1} \left[ \| X - \mathbb{E}_{\theta_1}[X] \|_{\Sigma(\theta_0)^{-1}}^2 \right] &= \mathbb{E}_{\theta_1} \left[ \tr \left[ (X - \mathbb{E}_{\theta_1}[X])^\top \Sigma(\theta_0)^{-1} (X - \mathbb{E}_{\theta_1}[X]) \right] \right]\\
		&= \tr \left[ \Sigma(\theta_1) \Sigma(\theta_0)^{-1} \right]\\
		&= \tr \left[ \Sigma(\theta_0)^{-1/2} \Sigma(\theta_1) \Sigma(\theta_0)^{-1/2} \right].
	\end{align*}
	We can upper bound $\tr [ \Sigma(\theta_0)^{-1/2} \Sigma(\theta_1) \Sigma(\theta_0)^{-1/2} ] = \tr [ H(\theta_0)^{-1/2} H(\theta_1) H(\theta_0)^{-1/2} ]$ by
	\begin{equation*}
		n \max_{u \in \mathbb{R}^n} \frac{\langle H(\theta_0)^{-1/2} u, H(\theta_1) H(\theta_0)^{-1/2} u] \rangle}{\| u \|^2} = n \max_{v \in \mathbb{R}^n} \frac{\langle v, H(\theta_1) v \rangle}{\| H(\theta_0)^{1/2} v \|^2} = n \max_{v \in \mathbb{R}^n} \frac{\| v \|^2_{\theta_1}}{\| v \|^2_{\theta_0}}.
	\end{equation*}
	The claim now follows from \eqref{eq:SCf}.
\end{proof}

It is interesting to compare the upper bound in Lemma \ref{lemma:UpperBoundLocalVariance} with the one Kalai and Vempala \cite{kalai2006simulated} arrive at through a near-isotropy argument. It is shown by \cite[Lemma 4.2 and 4.3]{kalai2006simulated} that
\begin{equation}
	\mathbb{E}_{\theta_1} \left[ \| X - \mathbb{E}_{\theta_1}[X] \|_{\Sigma(\theta_0)^{-1}}^2 \right] \leq 16 n \| \mu_1 / \mu_0 \| \max_{v \in \mathbb{R}^n} \frac{\mathbb{E}_{\theta_0} \left [\langle v, X - \mathbb{E}_{\theta_0}[X]\rangle^2_{\Sigma(\theta_0)^{-1}} \right]}{\| v \|^2_{\Sigma(\theta_0)^{-1}}},
	\label{eq:UpperBoundLocalVarianceKalaiVempala}
\end{equation}
where $\mu_0$ and $\mu_1$ are Boltzmann distributions with parameters $\theta_0$ and $\theta_1$, respectively. Observe that for all $v \in \mathbb{R}^n$,
\begin{equation*}
	\frac{\mathbb{E}_{\theta_0} \left[\langle \Sigma(\theta_0)^{-1} v, X - \mathbb{E}_{\theta_0}[X]\rangle^2 \right]}{\| v \|^2_{\Sigma(\theta_0)^{-1}}}
	 = \frac{\langle \Sigma(\theta_0)^{-1} v, \Sigma(\theta_0) \Sigma(\theta_0)^{-1} v \rangle}{\| v \|^2_{\Sigma(\theta_0)^{-1}}} = 1,
\end{equation*}
and therefore the right hand side of \eqref{eq:UpperBoundLocalVarianceKalaiVempala} is just $16 n \| \mu_1 / \mu_0 \|$. If we upper bound this norm by Lemma \ref{lemma:L2UpperBound}, we find
\begin{equation}
	\mathbb{E}_{\theta_1} \left[ \| X - \mathbb{E}_{\theta_1}[X] \|_{\Sigma(\theta_0)^{-1}}^2 \right] \leq \frac{16 n \exp(-2 \| \theta_1 - \theta_0 \|_{\theta_1})}{(1- \| \theta_1 - \theta_0 \|_{\theta_1})^2}.
	\label{eq:UpperBoundLocalVarianceKalaiVempala2}
\end{equation}
By the second inequality in \eqref{eq:SCf}, we have
\begin{equation*}
	\frac{n}{(1 - \| \theta_0 - \theta_1 \|_{\theta_0})^2} \leq \frac{n}{\left(1 - \frac{\| \theta_0 - \theta_1 \|_{\theta_1}}{1-\| \theta_0 - \theta_1 \|_{\theta_1}} \right)^2} \leq \frac{16 n \exp(-2 \| \theta_1 - \theta_0 \|_{\theta_1})}{(1- \| \theta_1 - \theta_0 \|_{\theta_1})^2},
\end{equation*}
where the second inequality holds for $\| \theta_1 - \theta_0 \|_{\theta_1} \leq 0.438$. In this case, the bound in Lemma \ref{lemma:UpperBoundLocalVariance} is stronger than \eqref{eq:UpperBoundLocalVarianceKalaiVempala2}.
Alternatively, if $\theta_0 = (1 + \alpha)\theta_1$ for some $\alpha \in (-1,1)$, \eqref{eq:KalaiVempalaL2} shows that \eqref{eq:UpperBoundLocalVarianceKalaiVempala} can be bounded by
\begin{equation}
	\mathbb{E}_{\theta_1} \left[ \| X - \mathbb{E}_{\theta_1}[X] \|_{\Sigma(\theta_0)^{-1}}^2 \right] \leq \frac{ 16 n}{(1+\alpha)^n (1-\alpha)^n}.
	\label{eq:UpperBoundLocalVarianceKalaiVempala3}
\end{equation}
Since \eqref{eq:UpperBoundLocalTheta} shows $\| \theta_1 - \theta_0 \|_{\theta_0} = \alpha \| \theta_0 \|_{\theta_0} \leq \alpha \sqrt{\vartheta} = \alpha \sqrt{n + o(n)}$, the upper bound from Lemma \ref{lemma:UpperBoundLocalVariance} is better than \eqref{eq:UpperBoundLocalVarianceKalaiVempala3} for sufficiently large $n$ and $\alpha \sqrt{\vartheta} < 1$.

The results from this section can be summarized as follows.
\begin{theorem}
	\label{thm:MixingTimeSC}
	Let $K \subseteq \mathbb{R}^n$ be a convex body, and let $\langle \cdot, \cdot \rangle$ be the Euclidean inner product. Let $q > 0$, and $\theta_0, \theta_1 \in \mathbb{R}^n$ such that $\Delta \theta := \| \theta_1 - \theta_0 \|_{\theta_0} < 1$ and $\Delta x := \| x(\theta_1) - x(\theta_0) \|_{x(\theta_0)}^* < 1$. Pick $\epsilon \in [0, 1)$, and suppose we have an invertible matrix $\widehat{\Sigma}(\theta_0)$ such that
	\begin{equation}
		(1-\epsilon) y^\top \widehat{\Sigma}(\theta_0)^{-1} y \leq y^\top \Sigma(\theta_0)^{-1} y \leq (1+\epsilon) y^\top \widehat{\Sigma}(\theta_0)^{-1} y \qquad \forall y \in \mathbb{R}^n. \label{eq:HitAndRunInvCovCondition}
	\end{equation}
	Consider a hit-and-run random walk as in Algorithm \ref{alg:HitAndRun} applied to the Boltzmann distribution $\mu$ with parameter $\theta_1$ from a random starting point drawn from a Boltzmann distribution with parameter $\theta_0$.
	Let $\nu^{(\ell)}$ be the distribution of the hit-and-run point after $\ell$ steps of hit-and-run sampling applied to $\mu$, where the directions are drawn from a $\mathcal{N}(0,\widehat{\Sigma}(\theta_0))$-distribution. Then, after
	\begin{equation}
		\label{eq:WalkLength}
		\ell = \left\lceil \frac{1+\epsilon}{1-\epsilon}\frac{64 e^2 n^3 10^{30}}{(1 - \Delta \theta)^2 (1- \Delta x )^2} \ln^2 \left( \sqrt{\frac{1+\epsilon}{1-\epsilon}} \frac{16 e n\sqrt{n} \exp(-2\Delta\theta)}{q^2 (1-\Delta\theta)^3(1-\Delta x)} \right) \ln^3 \left( \frac{2 \exp(-2\Delta\theta)}{q^2  (1-\Delta\theta)^2} \right) \right\rceil,
	\end{equation}
	hit-and-run steps, we have $\| \nu^{(\ell)} - \mu \| \leq q$.
\end{theorem}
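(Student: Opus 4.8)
The plan is to reduce everything to Corollary~\ref{cor:MixingTime}, applied with $\Sigma = \widehat{\Sigma}(\theta_0)$, with target distribution $\mu$ the Boltzmann distribution of parameter $\theta_1$, and with starting distribution $\nu$ the Boltzmann distribution of parameter $\theta_0$. The only work is to supply admissible constants $s$, $M$ and $S$ for conditions~(i)--(iii) of that corollary, using the three lemmas just proved in this section, and then to check that substituting these constants into the walk length \eqref{eq:WalkLengthLV} produces exactly \eqref{eq:WalkLength}.

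First I would handle condition~(i). Lemma~\ref{lemma:ContainedBall} with $p = \tfrac18$ shows that the level set of $\mu$ with probability $\tfrac18$ contains a closed $\|\cdot\|_{\Sigma(\theta_0)^{-1}}$-ball of radius $\tfrac{1}{8e}(1 - \Delta x)$ (here $\Delta x < 1$ is exactly what makes this radius positive). The right-hand inequality of \eqref{eq:HitAndRunInvCovCondition} gives $\|y\|_{\Sigma(\theta_0)^{-1}} \le \sqrt{1+\epsilon}\,\|y\|_{\widehat{\Sigma}(\theta_0)^{-1}}$, so that same level set also contains a $\|\cdot\|_{\widehat{\Sigma}(\theta_0)^{-1}}$-ball of radius $s := \tfrac{1-\Delta x}{8e\sqrt{1+\epsilon}}$. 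Condition~(ii) is immediate from Lemma~\ref{lemma:L2UpperBound} (valid since $\Delta\theta < 1$): $\|\nu/\mu\| \le \tfrac{\exp(-2\Delta\theta)}{(1-\Delta\theta)^2} =: M$. For condition~(iii), Lemma~\ref{lemma:UpperBoundLocalVariance} bounds $\mathbb{E}_\mu[\|X - \mathbb{E}_\mu[X]\|_{\Sigma(\theta_0)^{-1}}^2]$ by $\tfrac{n}{(1-\Delta\theta)^2}$; rearranging the left-hand inequality of \eqref{eq:HitAndRunInvCovCondition} to $\|y\|_{\widehat{\Sigma}(\theta_0)^{-1}}^2 \le \tfrac{1}{1-\epsilon}\|y\|_{\Sigma(\theta_0)^{-1}}^2$ then yields $\mathbb{E}_\mu[\|X - \mathbb{E}_\mu[X]\|_{\widehat{\Sigma}(\theta_0)^{-1}}^2] \le \tfrac{n}{(1-\epsilon)(1-\Delta\theta)^2} =: S^2$.

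It then remains to substitute $s$, $M$ and $S$ into the walk length of Corollary~\ref{cor:MixingTime}. A direct computation gives $\tfrac{n^2 S^2}{s^2} = \tfrac{1+\epsilon}{1-\epsilon}\cdot\tfrac{64 e^2 n^3}{(1-\Delta\theta)^2(1-\Delta x)^2}$, $\tfrac{2M}{q^2} = \tfrac{2\exp(-2\Delta\theta)}{q^2(1-\Delta\theta)^2}$, and $\tfrac{2MnS}{sq^2} = \sqrt{\tfrac{1+\epsilon}{1-\epsilon}}\cdot\tfrac{16 e n\sqrt{n}\exp(-2\Delta\theta)}{q^2(1-\Delta\theta)^3(1-\Delta x)}$, which are precisely the three expressions appearing inside the ceiling in \eqref{eq:WalkLength}. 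Hence the $\ell$ defined by \eqref{eq:WalkLength} coincides with the $\ell$ prescribed by Corollary~\ref{cor:MixingTime} for these $s$, $M$, $S$, and the conclusion $\|\nu^{(\ell)} - \mu\| \le q$ follows.

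The argument is essentially bookkeeping, so the only genuine pitfall is getting the two norm conversions in the right direction: one must use the $(1+\epsilon)$ side of \eqref{eq:HitAndRunInvCovCondition} for the ball containment in condition~(i), so that passing from a $\Sigma(\theta_0)^{-1}$-ball to a $\widehat{\Sigma}(\theta_0)^{-1}$-ball costs a factor $\tfrac{1}{\sqrt{1+\epsilon}}$ rather than $\tfrac{1}{\sqrt{1-\epsilon}}$, and the $(1-\epsilon)$ side for the second-moment bound in condition~(iii). Everything else amounts to checking that the hypotheses $\Delta\theta<1$, $\Delta x<1$ and $\epsilon\in[0,1)$ make all three lemmas and Corollary~\ref{cor:MixingTime} applicable, which is built into the statement.
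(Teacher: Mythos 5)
Your proposal is correct and follows essentially the same route as the paper: both apply Corollary~\ref{cor:MixingTime} with $\Sigma = \widehat{\Sigma}(\theta_0)$, supply $s$, $M$, $S$ from Lemmas~\ref{lemma:ContainedBall}, \ref{lemma:L2UpperBound}, \ref{lemma:UpperBoundLocalVariance} with the same norm conversions (the $(1+\epsilon)$ side for the ball radius, the $(1-\epsilon)$ side for the second moment), and then substitute into \eqref{eq:WalkLengthLV}. Your explicit verification that $n^2S^2/s^2$, $2M/q^2$, and $2MnS/(sq^2)$ reproduce the expressions inside \eqref{eq:WalkLength} is a useful check that the paper leaves implicit.
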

\begin{proof}
	We will apply Corollary \ref{cor:MixingTime} with respect to $\widehat{\Sigma}(\theta_0)$.
	
	By Lemma \ref{lemma:ContainedBall}, the level set of $\mu$ with probability $\frac{1}{8}$ contains a $\| \cdot \|_{\Sigma(\theta_0)^{-1}}$-ball with radius $\frac{1}{8e} (1-\| x(\theta_0) - x(\theta_1) \|_{x(\theta_0)}^*)$. Denote the center of this ball by $z \in K$.
	Then, for all $y \in K$ with $\| y - z \|_{\widehat{\Sigma}(\theta_0)^{-1}} \leq \frac{1}{8e\sqrt{1+\epsilon}} (1-\| x(\theta_0) - x(\theta_1) \|_{x(\theta_0)}^* )$, it can be seen from \eqref{eq:HitAndRunInvCovCondition} that
	\begin{equation*}
		\| y - z \|_{\Sigma(\theta_0)^{-1}} \leq \sqrt{1+\epsilon} \| y - z \|_{\widehat{\Sigma}(\theta_0)^{-1}} \leq \frac{1}{8e} \left(1-\| x(\theta_0) - x(\theta_1) \|_{x(\theta_0)}^* \right),
	\end{equation*}
	and thus $y$ lies in the level set. Therefore, the level set of $\mu$ with probability $\frac{1}{8}$ contains a $\| \cdot \|_{\widehat{\Sigma}(\theta_0)^{-1}}$-ball with radius $\frac{1}{8e\sqrt{1+\epsilon}} (1-\| x(\theta_0) - x(\theta_1) \|_{x(\theta_0)}^* )$.
	
	Moreover, \eqref{eq:HitAndRunInvCovCondition} and Lemma \ref{lemma:UpperBoundLocalVariance} show
	\begin{equation*}
		\mathbb{E}_{\theta_1}[\| X - \mathbb{E}_{\theta_1}[X]\|_{\widehat{\Sigma}(\theta_0)^{-1}}^2] \leq \frac{1}{1-\epsilon} \mathbb{E}_{\theta_1}[\| X - \mathbb{E}_{\theta_1}[X]\|_{\Sigma(\theta_0)^{-1}}^2] \leq \frac{n}{(1-\epsilon)(1 - \| \theta_0 - \theta_1 \|_{\theta_0})^2}.
	\end{equation*}
	
	Using $s = \frac{1}{8e\sqrt{1+\epsilon}} (1-\| x(\theta_0) - x(\theta_1) \|_{x(\theta_0)}^* )$, $S^2 = \frac{n}{(1-\epsilon)(1 - \| \theta_0 - \theta_1 \|_{\theta_0})^2}$, and Lemma \ref{lemma:L2UpperBound}, Corollary \ref{cor:MixingTime} now proves the result.
\end{proof}
Since Theorem \ref{thm:MixingTimeSC} is essentially an application of Corollary \ref{cor:MixingTime}, Lemma \ref{lemma:NearIndependenceHitAndRun} also holds in this setting. For ease of reference, we state this result below.
\begin{lemma}
	\label{lemma:NearIndependenceHitAndRunSC}
	Let $K \subseteq \mathbb{R}^n$ be a convex body, and let $q > 0$. Let $\theta_0, \theta_1 \in \mathbb{R}^n$ such that the conditions of Theorem \ref{thm:MixingTimeSC} are satisfied for some $\epsilon$ and $\widehat{\Sigma}(\theta_0)$. Let $X$ be a random variable following a Boltzmann distribution supported on $K$ with parameter $\theta_0$. Consider two hit-and-run random walks as in Algorithm \ref{alg:HitAndRun}, applied to the Boltzmann distribution with parameter $\theta_1$, both starting from the realization of $X$, where all $D_i$ and $P_i$ in one random walk are independent of all $D_i$ and $P_i$ in the other random walk. Let the number of steps $\ell$ of both walks be given by \eqref{eq:WalkLength}, and call the resulting end points $Y_1$ and $Y_2$. Then, $Y_1$ and $Y_2$ are $6q$-independent.
\end{lemma}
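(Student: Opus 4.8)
The plan is to recognize that Lemma \ref{lemma:NearIndependenceHitAndRunSC} is precisely Lemma \ref{lemma:NearIndependenceHitAndRun} specialized to the hypotheses of Theorem \ref{thm:MixingTimeSC}, so the entire proof reduces to matching the parameters. Concretely, I would let $\mu$ be the Boltzmann distribution with parameter $\theta_1$ --- which is log-concave and supported on the convex body $K$ --- and let $\nu$ be the Boltzmann distribution with parameter $\theta_0$, for which $\nu \ll \mu$. The proof of Theorem \ref{thm:MixingTimeSC} already verifies all three conditions of Corollary \ref{cor:MixingTime} with respect to the matrix $\widehat{\Sigma}(\theta_0)$, using $s = \tfrac{1}{8e\sqrt{1+\epsilon}}(1-\Delta x)$, $S^2 = \tfrac{n}{(1-\epsilon)(1-\Delta\theta)^2}$, and the $L_2$-norm bound $M = \tfrac{\exp(-2\Delta\theta)}{(1-\Delta\theta)^2}$ from Lemma \ref{lemma:L2UpperBound}. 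Hence the conditions of Lemma \ref{lemma:NearIndependenceHitAndRun} hold for this $\Sigma$, $s$, $M$, and $S$, and the walk length \eqref{eq:WalkLength} is nothing but the generic formula \eqref{eq:WalkLengthLV} evaluated at these values.

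The one point worth spelling out is the structural requirement hidden inside Lemma \ref{lemma:NearIndependenceHitAndRun} (inherited from Lemma \ref{lemma:NearIndependenceWalk}): the step count must depend on the pair $(M,q)$ only through $M/q^2$, and the mixing bound must be available for \emph{every} starting distribution with $L_2$-norm at most $M$, not just for the Boltzmann distribution $\nu$. The first holds because in \eqref{eq:WalkLength} the tolerance $q$ enters solely through the factor $q^{-2}$ inside the two logarithms and no free $M$ survives the substitution; the second holds because the first and third conditions of Corollary \ref{cor:MixingTime} involve only $\mu$ and $\widehat{\Sigma}(\theta_0)$ and are therefore insensitive to the starting distribution, while the $L_2$-norm condition is exactly the bound being hypothesized. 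Thus Lemma \ref{lemma:NearIndependenceHitAndRun} applies and its proof (through Lemmas \ref{lemma:NearIndependenceWalk} and \ref{lemma:NearIndependenceSample}) carries over verbatim.

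Finally, since the direction vectors $D_i$ and thresholds $P_i$ of the first walk are independent of those of the second, the end points $Y_1$ and $Y_2$ are conditionally independent given $X$; the measurability hypotheses of Lemma \ref{lemma:NearIndependenceSample} are in force by the blanket assumption made after Lemma \ref{lemma:NearIndependentProduct}. Combining the $3q$-independence of $(X,Y_1)$ and of $(X,Y_2)$ --- each supplied by Lemma \ref{lemma:NearIndependenceWalk} via the argument above --- with this conditional independence, Lemma \ref{lemma:NearIndependenceSample} yields that $Y_1$ and $Y_2$ are $6q$-independent. I do not expect a genuine obstacle here: the argument is a bookkeeping reduction, and all the substantive work was already done in establishing Theorem \ref{thm:MixingTimeSC} and Lemma \ref{lemma:NearIndependenceHitAndRun}.
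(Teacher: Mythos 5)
Your proposal is correct and takes exactly the paper's route: the paper itself discharges this lemma with a single remark that ``Since Theorem \ref{thm:MixingTimeSC} is essentially an application of Corollary \ref{cor:MixingTime}, Lemma \ref{lemma:NearIndependenceHitAndRun} also holds in this setting,'' and your argument is precisely the expanded version of that reduction. You correctly identify the two points that make it work — that the step count \eqref{eq:WalkLengthLV} is a function of $M/q^2$ (so Lemma \ref{lemma:NearIndependenceWalk} applies to the restricted distribution $\nu_A$ with $\bar M = 4M$, $\bar q = 2q$), and that conditions (i) and (iii) of Corollary \ref{cor:MixingTime} depend only on $\mu$ and $\widehat\Sigma(\theta_0)$ and thus persist for any starting distribution with bounded $L_2$-norm — and then close the argument with the conditional independence of the two walks and Lemma \ref{lemma:NearIndependenceSample}, exactly as in the proof of Lemma \ref{lemma:NearIndependenceHitAndRun}.
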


\section{Sampling Quality Guarantees}
\label{sec:SamplingQuality}
In this section, we provide probabilistic guarantees on the quality of the empirical mean and covariance estimates of a log-concave distribution $\mu$.
We will repeatedly use that for random variables $Y$ and $Z$ taking values in a set $K$ and a function $\phi$ on $K$,
\begin{equation}
	\mathbb{E}[\phi(Y)] = \mathbb{E}[\phi(Z)] + \mathbb{E}[\phi(Y) - \phi(Z)]
	= \mathbb{E}[\phi(Z)] + \mathbb{E}[\phi(Y) - \phi(Z) | Y \neq Z] \mathbb{P}\{ Y \neq Z \}.
	\label{eq:DivineInterventionExpectation}
\end{equation}

We start by analyzing the quality of the mean estimate.
\begin{theorem}
	\label{thm:SampleMeanNorm}
	Let $K \subseteq \mathbb{R}^n$ be a convex body, and let $\langle \cdot, \cdot \rangle$ be the Euclidean inner product. Suppose $K$ is contained in a Euclidean ball with radius $R > 0$. Let $\alpha > 0$, $p \in (0,1)$, and $\epsilon \in [0, 1)$.
	Let $\theta_0, \theta_1 \in \mathbb{R}^n$ such that $\Delta \theta := \| \theta_1 - \theta_0 \|_{\theta_0} < 1$ and $\Delta x := \| x(\theta_1) - x(\theta_0) \|_{x(\theta_0)}^* < 1$. Suppose we have an invertible matrix $\widehat{\Sigma}(\theta_0)$ such that
	\begin{equation}
		(1-\epsilon) y^\top \widehat{\Sigma}(\theta_0)^{-1} y \leq y^\top \Sigma(\theta_0)^{-1} y \leq (1+\epsilon) y^\top \widehat{\Sigma}(\theta_0)^{-1} y \qquad \forall y \in \mathbb{R}^n.
	\end{equation}
	Pick
	\begin{equation*}
		N \geq \frac{2n}{p \alpha^2}, \qquad q \leq \frac{p \alpha^2}{204 n R^2} \lambda_{\min}(\Sigma(\theta_1)).
	\end{equation*}
	Let $X_0$ be a random starting point drawn from a Boltzmann distribution with parameter $\theta_0$. Let $Y^{(1)}, ..., Y^{(N)}$ be the end points of $N$ hit-and-run random walks applied to the Boltzmann distribution with parameter $\theta_1$ having starting point $X_0$, where the directions are drawn from a $\mathcal{N}(0,\widehat{\Sigma}(\theta_0))$-distribution, and each walk has length $\ell$ given by \eqref{eq:WalkLength}. (Note that $\ell$ depends on $\epsilon$, $n$, $q$, $\Delta\theta$, and $\Delta x$.)
	Then, the empirical mean $\widehat{Y} := \frac{1}{N} \sum_{j=1}^N Y^{(j)}$ satisfies
	\begin{equation}
		\label{eq:SampleMeanNormStatement}
		\mathbb{P} \left\{ \| \widehat{Y} - \mathbb{E}_{\theta_1}[X] \|_{\Sigma(\theta_1)^{-1}} \leq \alpha \right\} \geq 1-p.
	\end{equation}
\end{theorem}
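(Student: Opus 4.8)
The plan is to bound the mean-squared error $\mathbb{E}\big[\|\widehat{Y}-\mathbb{E}_{\theta_1}[X]\|_{\Sigma(\theta_1)^{-1}}^2\big]$ and then invoke Markov's inequality. Write $\mu$ for the Boltzmann distribution with parameter $\theta_1$, so that $\mathbb{E}_{\theta_1}[X]=\mathbb{E}_\mu[X]=:\bar{x}$. The hypotheses are exactly those of Theorem \ref{thm:MixingTimeSC} and Lemma \ref{lemma:NearIndependenceHitAndRunSC}, so each $Y^{(j)}$ has the same marginal distribution $\nu^{(\ell)}$ with $\|\nu^{(\ell)}-\mu\|\le q$, and any two $Y^{(i)},Y^{(j)}$ with $i\neq j$ are $6q$-independent, since the two walks producing them use independent directions and thresholds and Lemma \ref{lemma:NearIndependenceHitAndRunSC} applies to each such pair.

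First I would use coupling to control two quantities. By Lemma \ref{lemma:DivineIntervention}, for each $j$ there is a random variable $Z^{(j)}\sim\mu$ with $\mathbb{P}\{Y^{(j)}\neq Z^{(j)}\}\le q$. Applying \eqref{eq:DivineInterventionExpectation} with $\phi(y)=\|y-\bar{x}\|_{\Sigma(\theta_1)^{-1}}^2$, together with the identity $\mathbb{E}_\mu[\|X-\bar{x}\|_{\Sigma(\theta_1)^{-1}}^2]=\tr[\Sigma(\theta_1)^{-1}\Sigma(\theta_1)]=n$ and the crude bound $\phi(y)\le (2R)^2/\lambda_{\min}(\Sigma(\theta_1))$ valid for $y\in K$ (recall $K$ has diameter at most $2R$), yields $\mathbb{E}\big[\|Y^{(j)}-\bar{x}\|_{\Sigma(\theta_1)^{-1}}^2\big]\le n+\tfrac{4R^2q}{\lambda_{\min}(\Sigma(\theta_1))}$. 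Applying \eqref{eq:DivineInterventionExpectation} coordinatewise to $y\mapsto y-\bar{x}$ likewise gives $\|\mathbb{E}[Y^{(1)}]-\bar{x}\|\le 2Rq$, hence $\|\mathbb{E}[Y^{(1)}]-\bar{x}\|_{\Sigma(\theta_1)^{-1}}^2\le \tfrac{4R^2q^2}{\lambda_{\min}(\Sigma(\theta_1))}$.

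Next I would expand the error. Setting $V^{(j)}:=\Sigma(\theta_1)^{-1/2}(Y^{(j)}-\bar{x})$, we have $\|\widehat{Y}-\bar{x}\|_{\Sigma(\theta_1)^{-1}}^2=\tfrac1{N^2}\sum_{i,j}\langle V^{(i)},V^{(j)}\rangle$. The $N$ diagonal terms are handled by the second-moment bound above. For $i\neq j$, each coordinate $V^{(i)}_k$ is a measurable function of $Y^{(i)}$ alone, so $V^{(i)}_k$ and $V^{(j)}_k$ are $6q$-independent; since $|V^{(j)}_k|\le\|Y^{(j)}-\bar{x}\|_{\Sigma(\theta_1)^{-1}}\le 2R/\sqrt{\lambda_{\min}(\Sigma(\theta_1))}$, Lemma \ref{lemma:NearIndependenceExpectation} gives $\mathbb{E}[V^{(i)}_kV^{(j)}_k]\le(\mathbb{E}[V^{(j)}_k])^2+\tfrac{96qR^2}{\lambda_{\min}(\Sigma(\theta_1))}$ (using that $V^{(i)}$ and $V^{(j)}$ are identically distributed). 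Summing over $k$ and using $\sum_k(\mathbb{E}[V^{(j)}_k])^2=\|\mathbb{E}[Y^{(1)}]-\bar{x}\|_{\Sigma(\theta_1)^{-1}}^2$ together with the bias bound, then collecting the diagonal and $N(N-1)$ off-diagonal contributions, gives
\[
\mathbb{E}\big[\|\widehat{Y}-\bar{x}\|_{\Sigma(\theta_1)^{-1}}^2\big]\le \frac{n}{N}+\frac{4R^2q}{N\lambda_{\min}(\Sigma(\theta_1))}+\frac{4R^2q^2}{\lambda_{\min}(\Sigma(\theta_1))}+\frac{96nR^2q}{\lambda_{\min}(\Sigma(\theta_1))}.
\]
Finally, Markov's inequality bounds $\mathbb{P}\{\|\widehat{Y}-\bar{x}\|_{\Sigma(\theta_1)^{-1}}>\alpha\}$ by $\alpha^{-2}$ times the right-hand side; the choice $N\ge 2n/(p\alpha^2)$ makes the first term at most $p/2$, and $q\le p\alpha^2\lambda_{\min}(\Sigma(\theta_1))/(204nR^2)$ makes the remaining three at most $p/2$, the constant $204$ being chosen so that the dominant term $96nR^2q/\lambda_{\min}$ contributes at most $\tfrac{96}{204}p$ and the two lower-order $q^2$- and $1/N$-terms absorb the slack, where one also uses $\lambda_{\min}(\Sigma(\theta_1))\le (2R)^2$ from \eqref{eq:UpperBoundSigma}; this yields \eqref{eq:SampleMeanNormStatement}.

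I expect the main obstacle to be the off-diagonal estimate: because the $Y^{(j)}$ share a common starting point they are only near-independent, not independent, and because $\nu^{(\ell)}\neq\mu$ the centred vectors $V^{(j)}$ have a nonzero mean. One must therefore (i) descend from the joint $6q$-independence of the vectors $Y^{(i)},Y^{(j)}$ to near-independence of their scalar coordinates, which relies on the measurability hypotheses assumed after Lemma \ref{lemma:NearIndependentProduct}, and (ii) ensure that neither the per-pair near-independence error nor the bias accumulates unfavourably over the roughly $N^2$ cross terms, which is precisely what the small prefactors in the displayed bound accomplish.
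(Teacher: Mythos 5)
Your proposal follows essentially the same route as the paper's proof: apply Theorem~\ref{thm:MixingTimeSC} and Lemma~\ref{lemma:NearIndependenceHitAndRunSC} to get per-sample TV closeness and pairwise $6q$-independence, expand $\mathbb{E}\big[\|\widehat{Y}-\mathbb{E}_{\theta_1}[X]\|_{\Sigma(\theta_1)^{-1}}^2\big]$ into diagonal and off-diagonal sums, control the diagonal with the coupling of Lemma~\ref{lemma:DivineIntervention} and $\tr[\Sigma(\theta_1)^{-1}\Sigma(\theta_1)]=n$, control the off-diagonal coordinatewise via Lemmas~\ref{lemma:NearIndependentProduct} and~\ref{lemma:NearIndependenceExpectation} with the bound $|V^{(j)}_k|\le 2R/\sqrt{\lambda_{\min}(\Sigma(\theta_1))}$, and finish with Markov. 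The one genuine difference is that you treat the squared bias as the full vector norm $\|\mathbb{E}[Y^{(1)}]-\bar{x}\|_{\Sigma(\theta_1)^{-1}}^2\le 4R^2q^2/\lambda_{\min}(\Sigma(\theta_1))$, whereas the paper bounds each coordinate of the bias by $2Rq/\sqrt{\lambda_{\min}}$ and sums, yielding $4nR^2q^2/\lambda_{\min}$; your version is a factor of $n$ tighter in that (lower-order) term, which only makes the final arithmetic with the constant $204$ easier. Same decomposition, same lemmas, slightly sharper bookkeeping.
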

\begin{proof}
	Theorem \ref{thm:MixingTimeSC} ensures that the distributions of the samples $Y^{(1)}, ..., Y^{(N)}$ all have a total variation distance to the Boltzmann distribution with parameter $\theta_1$ of at most $q$. By Lemma \ref{lemma:NearIndependenceHitAndRunSC}, the samples are pairwise $6q$-independent. It therefore remains to be shown that $N$ pairwise $6q$-independent samples with total variation distance to the Boltzmann distribution with parameter $\theta_1$ of at most $q$ are enough to guarantee \eqref{eq:SampleMeanNormStatement}.
	
	We start by investigating an expression resembling the variance of $\widehat{Y}$ in the norm induced by $\Sigma(\theta_1)^{-1}$:
	\begin{align}
		\begin{aligned}
		\mathbb{E} [\| \widehat{Y} - \mathbb{E}_{\theta_1}[X] \|_{\Sigma(\theta_1)^{-1}}^2]
		&= \frac{1}{N^2} \sum_{j=1}^N \mathbb{E} \left[ (Y^{(j)} - \mathbb{E}_{\theta_1}[X] )^\top \Sigma(\theta_1)^{-1} (Y^{(j)} - \mathbb{E}_{\theta_1}[X]) \right]\\
		&+ \frac{1}{N^2} \sum_{j=1}^N \sum_{k\neq j} \mathbb{E} \left[ (Y^{(j)} - \mathbb{E}_{\theta_1}[X] )^\top \Sigma(\theta_1)^{-1} (Y^{(k)} - \mathbb{E}_{\theta_1}[X]) \right].
		\end{aligned}
		\label{eq:ProofSampleMeanNorm1}
	\end{align}
	The first term of \eqref{eq:ProofSampleMeanNorm1} can be bounded if one notes that Lemma \ref{lemma:DivineIntervention} guarantees that for each $Y^{(j)}$ there exists a $Z^{(j)}$ with Boltzmann distribution with parameter $\theta_1$ and $\mathbb{P}\{Y^{(j)} = Z^{(j)}\} \geq 1-q$.
	Using \eqref{eq:DivineInterventionExpectation}, we have for all $j \in \{1, ..., N\}$,
	\begin{align*}
		&\mathbb{E} \left[ (Y^{(j)} - \mathbb{E}_{\theta_1}[X] )^\top \Sigma(\theta_1)^{-1} (Y^{(j)} - \mathbb{E}_{\theta_1}[X]) \right] \\
		&\leq \mathbb{E} \left[ (Z^{(j)} - \mathbb{E}_{\theta_1}[X] )^\top \Sigma(\theta_1)^{-1} (Z^{(j)} - \mathbb{E}_{\theta_1}[X]) \right]  + q \lambda_{\max}(\Sigma(\theta_1)^{-1}) \| Y^{(j)} - \mathbb{E}_{\theta_1}[X] \|^2\\
		&= \tr \left(\mathbb{E} \left[ (Z^{(j)} - \mathbb{E}_{\theta_1}[X])(Z^{(j)} - \mathbb{E}_{\theta_1}[X] )^\top \right] \Sigma(\theta_1)^{-1} \right)  + q \frac{\| Y^{(j)} - \mathbb{E}_{\theta_1}[X] \|^2}{\lambda_{\min}(\Sigma(\theta_1))} \\
		&\leq n + q \frac{(2R)^2}{\lambda_{\min}(\Sigma(\theta_1))}.
	\end{align*}
	To bound the second term of \eqref{eq:ProofSampleMeanNorm1}, note that since $Y^{(j)}$ and $Y^{(k)}$ are $6q$-independent, so are $\Sigma(\theta_1)^{-1/2} (Y^{(j)} - \mathbb{E}_{\theta_1}[X])$ and $\Sigma(\theta_1)^{-1/2} (Y^{(k)} - \mathbb{E}_{\theta_1}[X])$. By Lemma \ref{lemma:NearIndependentProduct} and Lemma \ref{lemma:NearIndependenceExpectation}, we have for all $j \neq k$,
	\begin{align*}
		&\mathbb{E} \left[ (Y^{(j)} - \mathbb{E}_{\theta_1}[X] )^\top \Sigma(\theta_1)^{-1} (Y^{(k)} - \mathbb{E}_{\theta_1}[X]) \right] \\
		&= \sum_{i=1}^n \mathbb{E} \left[ \left( \Sigma(\theta_1)^{-1/2}(Y^{(j)} - \mathbb{E}_{\theta_1}[X] ) \right)_i \left(\Sigma(\theta_1)^{-1/2} (Y^{(k)} - \mathbb{E}_{\theta_1}[X]) \right)_i \right]\\
		&\leq \sum_{i=1}^n \left[ \left( \mathbb{E} \left[  \Sigma(\theta_1)^{-1/2}(Y^{(j)} - \mathbb{E}_{\theta_1}[X] ) \right] \right)_{i}
		\left( \mathbb{E} \left[ \Sigma(\theta_1)^{-1/2} (Y^{(k)} - \mathbb{E}_{\theta_1}[X]) \right] \right)_{i} + 4 (6q) (2R\lambda_{\max}(\Sigma(\theta_1)^{-1/2}))^2 \right].
	\end{align*}
	Using Lemma \ref{lemma:DivineIntervention} and \eqref{eq:DivineInterventionExpectation} in the same manner as before, we get for all $j$,
	\begin{equation*}
		\left| \left( \mathbb{E} \left[ \Sigma(\theta_1)^{-1/2}(Y^{(j)} - \mathbb{E}_{\theta_1}[X] ) \right] \right)_i \right| \leq 0 + q (2R) \lambda_{\max}(\Sigma(\theta_1)^{-1/2}) = \frac{2Rq}{\sqrt{\lambda_{\min}(\Sigma(\theta_1))}}.
	\end{equation*}
	In conclusion,
	\begin{equation*}
		\mathbb{E} \left[ (Y^{(j)} - \mathbb{E}_{\theta_1}[X] )^\top \Sigma(\theta_1)^{-1} (Y^{(j)} - \mathbb{E}_{\theta_1}[X]) \right]
		\leq \frac{n}{N} + \frac{4qR^2}{N \lambda_{\min}(\Sigma(\theta_1))} + \frac{96 q n  R^2}{\lambda_{\min}(\Sigma(\theta_1))} + \frac{4n R^2 q^2}{\lambda_{\min}(\Sigma(\theta_1))}
		\leq p \alpha^2.
	\end{equation*}
	
	The proof is completed by applying Markov's inequality:
	\begin{equation*}
		\mathbb{P} \left\{ \| \widehat{Y} - \mathbb{E}_{\theta_1}[X] \|_{\Sigma(\theta_1)^{-1}}^2 > \alpha^2 \right\} \leq \frac{\mathbb{E} [\| \widehat{Y} - \mathbb{E}_{\theta_1}[X] \|_{\Sigma(\theta_1)^{-1}}^2]}{\alpha^2} \leq p.
	\end{equation*}
\end{proof}

If $Y^{(1)}, ..., Y^{(N)}$ are random variables, we define the associated empirical covariance matrix as
\begin{equation}
	\widehat{\Sigma} := \frac{1}{N} \sum_{j=1}^{N} Y^{(j)} (Y^{(j)})^\top - \left( \frac{1}{N} \sum_{j=1}^{N} Y^{(j)} \right) \left( \frac{1}{N} \sum_{j=1}^{N} Y^{(j)} \right)^\top.
	\label{eq:DefinitionEmpiricalCovariance}
\end{equation}
Before we can prove a result similar to Theorem \ref{thm:SampleMeanNorm} for the empirical covariance, it will be helpful to prove that, without loss of generality, we can assume that the underlying distribution has identity covariance and mean zero.
\begin{lemma}
	\label{lemma:InvarianceAffineTransformation}
	Let $K$ be a convex body and let $\widehat{\Sigma}$ be the empirical covariance matrix of a distribution $\mu$ over $K$ with density $h$ based on samples $X^{(1)}, ..., X^{(N)}$ as in \eqref{eq:DefinitionEmpiricalCovariance}.
	Denote the set $\{ Ax+b : x \in K \}$ by $A K + b$, where $A \in \mathbb{R}^{n \times n}$ is of full rank.
	Let $h'(y) = \det(A^{-1}) h(A^{-1}(y-b))$ be a probability density over $A K + b$ with induced distribution $\mu'$. Let $\widehat{\Sigma}'$ be the empirical covariance matrix of $\mu'$ based on the samples $Y^{(j)} = A X^{(j)} + b$ for $j \in \{1, ..., N\}$ as in \eqref{eq:DefinitionEmpiricalCovariance}.
	Let the true covariance matrix of $\mu$ be $\Sigma$, and the covariance matrix of $\mu'$ be $\Sigma'$. Then, for any $\varepsilon \in [0,1]$,
	\begin{equation}
		(1-\varepsilon) u^\top \widehat{\Sigma} u \leq u^\top \Sigma u \leq (1+\varepsilon) u^\top \widehat{\Sigma} u \qquad \forall u \in \mathbb{R}^n,
		\label{eq:LemmaInvarianceAffineTransformFirst}
	\end{equation}
	if and only if
	\begin{equation}
		(1-\varepsilon) v^\top \widehat{\Sigma}' v \leq v^\top \Sigma' v \leq (1+\varepsilon) v^\top \widehat{\Sigma}' v \qquad \forall v \in \mathbb{R}^n.
		\label{eq:LemmaInvarianceAffineTransformSecond}
	\end{equation}
\end{lemma}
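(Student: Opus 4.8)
The plan is to reduce the stated ``if and only if'' to two purely algebraic facts: that both the empirical covariance and the true covariance transform covariantly under the affine map $x \mapsto Ax+b$, i.e.\ $\widehat{\Sigma}' = A\widehat{\Sigma}A^\top$ and $\Sigma' = A\Sigma A^\top$. Once these are in hand, the equivalence drops out of the single substitution $u = A^\top v$ in \eqref{eq:LemmaInvarianceAffineTransformFirst}, using that $A$ has full rank. Note that the convexity and boundedness of $K$ play no role whatsoever; only invertibility of $A$ is used.

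First I would establish $\widehat{\Sigma}' = A\widehat{\Sigma}A^\top$ directly from the definition \eqref{eq:DefinitionEmpiricalCovariance}. Writing $\bar{X} = \frac{1}{N}\sum_{j=1}^N X^{(j)}$, so that $\frac{1}{N}\sum_{j=1}^N Y^{(j)} = A\bar{X}+b$, I would expand $\frac{1}{N}\sum_{j=1}^N Y^{(j)}(Y^{(j)})^\top$ and $(A\bar{X}+b)(A\bar{X}+b)^\top$. Both contain the same cross terms (of the form $A(\cdot)b^\top$ and $b(\cdot)^\top A^\top$) and the same constant term $bb^\top$; subtracting, all of these cancel and one is left with $A\big(\frac{1}{N}\sum_{j=1}^N X^{(j)}(X^{(j)})^\top - \bar{X}\bar{X}^\top\big)A^\top = A\widehat{\Sigma}A^\top$. (In particular the empirical covariance is translation invariant.) For the true covariances, I would observe that the change-of-variables formula shows $h'$ is exactly the density of the push-forward of $\mu$ under $x \mapsto Ax+b$; hence if $X\sim\mu$ then $AX+b\sim\mu'$, and the standard transformation rule for the covariance of an affine image of a random vector gives $\Sigma' = A\Sigma A^\top$.

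Finally, assuming \eqref{eq:LemmaInvarianceAffineTransformFirst}, for arbitrary $v\in\mathbb{R}^n$ I would set $u = A^\top v$ and substitute into \eqref{eq:LemmaInvarianceAffineTransformFirst}; since $u^\top\widehat{\Sigma}u = v^\top A\widehat{\Sigma}A^\top v = v^\top\widehat{\Sigma}' v$ and likewise $u^\top\Sigma u = v^\top\Sigma' v$, this yields \eqref{eq:LemmaInvarianceAffineTransformSecond}. Because $A$ has full rank, $v\mapsto A^\top v$ is a bijection of $\mathbb{R}^n$, so the two universally quantified statements are genuinely equivalent and the reverse implication follows by symmetry (or, concretely, by applying the forward direction with $A^{-1}$ and $-A^{-1}b$ in place of $A$ and $b$). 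I do not expect any real obstacle here: the only step requiring a little care is the cancellation of the $b$-dependent terms in the empirical covariance, and the remark that full rank of $A$ makes the change of quantifier lossless.
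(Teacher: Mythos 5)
Your proposal is correct and follows essentially the same route as the paper: both establish $\widehat{\Sigma}' = A\widehat{\Sigma}A^\top$ and $\Sigma' = A\Sigma A^\top$ and then conclude via the linear substitution relating the two quadratic-form statements (you expand the uncentered definition and cancel the $b$-terms, while the paper passes immediately to the algebraically equal centered form, a cosmetic difference). One small point in your favor: the paper writes the final substitution as $u = Av$, which is a slip --- the correct substitution is $u = A^\top v$ as you have it, since $v^\top(A\widehat{\Sigma}A^\top)v = (A^\top v)^\top\widehat{\Sigma}(A^\top v)$.
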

\begin{proof}
	Let $\widehat{X} = \frac{1}{N} \sum_{j=1}^N X^{(j)}$ and $\widehat{\Sigma} = \frac{1}{N} \sum_{j=1}^N (X^{(j)} - \widehat{X}) (X^{(j)} - \widehat{X})^\top$. The empirical covariance matrix of $\mu'$ is
	\begin{align*}
		\widehat{\Sigma}' &= \frac{1}{N} \sum_{j=1}^N (A X^{(j)} + b - (A \widehat{X} + b)) (A X^{(j)} + b - (A \widehat{X} + b))^\top\\
		&= A \left(\frac{1}{N} \sum_{j=1}^N ( X^{(j)} - \widehat{X} ) (X^{(j)} - \widehat{X} )^\top \right) A^\top = A \widehat{\Sigma} A^\top.
	\end{align*}
	Similarly, if $X$ has distribution $\mu$ and $Y$ has distribution $\mu'$,
	\begin{align*}
		\Sigma' &= \int_{AK+b} (y - \mathbb{E}[Y] ) (y - \mathbb{E}[Y] )^\top h'(y) \diff y\\
		&= \int_{K} (Ax + b - (A\mathbb{E}[X] + b) ) (Ax + b - (A\mathbb{E}[X] + b) )^\top h(x) \diff x = A \Sigma A^\top.
	\end{align*}
	The equivalence of \eqref{eq:LemmaInvarianceAffineTransformFirst} and \eqref{eq:LemmaInvarianceAffineTransformSecond} follows by taking $u = Av$.
\end{proof}

With this lemma, we are ready to bound the number of samples required to find an approximation of the covariance matrix of $\mu$ satisfying a certain quality criterion.
\begin{theorem}
	\label{thm:CovQuality}	
	Let $K \subseteq \mathbb{R}^n$ be a convex body, and let $\langle \cdot, \cdot \rangle$ be the Euclidean inner product. Suppose $K$ is contained in a Euclidean ball with radius $R > 0$. Let $\varepsilon, p \in (0,1)$, and $\epsilon \in [0, 1)$.
	Let $\theta_0, \theta_1 \in \mathbb{R}^n$ such that $\Delta \theta := \| \theta_1 - \theta_0 \|_{\theta_0} < 1$ and $\Delta x := \| x(\theta_1) - x(\theta_0) \|_{x(\theta_0)}^* < 1$. Suppose we have an invertible matrix $\widehat{\Sigma}(\theta_0)$ such that
	\begin{equation*}
		(1-\epsilon) y^\top \widehat{\Sigma}(\theta_0)^{-1} y \leq y^\top \Sigma(\theta_0)^{-1} y \leq (1+\epsilon) y^\top \widehat{\Sigma}(\theta_0)^{-1} y \qquad \forall y \in \mathbb{R}^n.
	\end{equation*}
	Pick
	\begin{equation}
		\label{eq:CovQualityNumberSamples}
		N \geq \frac{490 n^2}{p \varepsilon^2}, \qquad q \leq \frac{p \varepsilon^2}{49980 n^2 R^4} \lambda_{\min}(\Sigma(\theta_1))^2.
	\end{equation}
	Let $X_0$ be a random starting point drawn from a Boltzmann distribution with parameter $\theta_0$. Let $Y^{(1)}, ..., Y^{(N)}$ be the end points of $N$ hit-and-run random walks applied to the Boltzmann distribution with parameter $\theta_1$ having starting point $X_0$, where the directions are drawn from a $\mathcal{N}(0,\widehat{\Sigma}(\theta_0))$-distribution, and each walk has length $\ell$ given by \eqref{eq:WalkLength}. (Note that $\ell$ depends on $\epsilon$, $n$, $q$, $\Delta\theta$, and $\Delta x$.)
	Then, the empirical covariance matrix $\widehat{\Sigma} \approx \Sigma(\theta_1)$ as defined in \eqref{eq:DefinitionEmpiricalCovariance} satisfies
	\begin{equation}
		\label{eq:CovQualityStatement}
		\mathbb{P} \left\{ (1-\varepsilon) v^\top \widehat{\Sigma} v \leq v^\top \Sigma(\theta_1) v \leq (1+\varepsilon) v^\top \widehat{\Sigma} v \qquad \forall v \in \mathbb{R}^n \right\} \geq 1-p.
	\end{equation}
\end{theorem}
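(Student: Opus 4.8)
The plan is to reduce to an isotropic target distribution, decompose the empirical covariance into a centered second-moment part and a mean-correction part, control the expected squared Frobenius distance of each to the truth via coupling, log-concavity and near-independence, and conclude with Markov's inequality. First I would invoke Lemma~\ref{lemma:InvarianceAffineTransformation} with the invertible affine map $T(y) = \Sigma(\theta_1)^{-1/2}(y - \mathbb{E}_{\theta_1}[X])$ (note $\Sigma(\theta_1) \succ 0$ since $K$ is full-dimensional). The push-forward $\mu'$ of the Boltzmann distribution with parameter $\theta_1$ is then log-concave with mean $0$ and covariance $I$, supported on a convex body of Euclidean diameter at most $D := 2R/\sqrt{\lambda_{\min}(\Sigma(\theta_1))}$; since the transformed samples $Z^{(j)} := T(Y^{(j)})$ and the mean $0$ all lie in that body, $\|Z^{(j)}\| \le D$. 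Total variation distance and $q$-independence are invariant under the bijective map $T$, so Theorem~\ref{thm:MixingTimeSC} and Lemma~\ref{lemma:NearIndependenceHitAndRunSC} still give that each $Z^{(j)}$ is within total variation distance $q$ of $\mu'$ and that the $Z^{(j)}$ are pairwise $6q$-independent. By Lemma~\ref{lemma:InvarianceAffineTransformation}, \eqref{eq:CovQualityStatement} is equivalent to the same two-sided inequality with $\Sigma(\theta_1)$ replaced by $I$ and $\widehat\Sigma$ replaced by the empirical covariance $\widehat\Sigma'$ of $Z^{(1)},\dots,Z^{(N)}$; and a one-line computation using $\varepsilon < 1$ shows that $\|\widehat\Sigma' - I\|_F \le \tfrac{\varepsilon}{2}$ already forces every eigenvalue of $\widehat\Sigma'$ into $[\tfrac{1}{1+\varepsilon},\tfrac{1}{1-\varepsilon}]$, which is precisely that inequality. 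So it suffices to bound $\|\widehat\Sigma' - I\|_F$ by $\tfrac{\varepsilon}{2}$ with probability at least $1-p$.

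Writing $\widehat M := \tfrac1N\sum_j Z^{(j)}(Z^{(j)})^\top$ and $\bar Z := \tfrac1N\sum_j Z^{(j)}$, one has $\widehat\Sigma' = \widehat M - \bar Z\bar Z^\top$ and hence $\|\widehat\Sigma' - I\|_F \le \|\widehat M - I\|_F + \|\bar Z\|^2$. For the mean-correction term, observe that $\|\bar Z\| = \|\widehat Y - \mathbb{E}_{\theta_1}[X]\|_{\Sigma(\theta_1)^{-1}}$ for $\widehat Y = \tfrac1N\sum_j Y^{(j)}$, so Theorem~\ref{thm:SampleMeanNorm}, applied with a sufficiently small $\alpha$ and a fraction of $p$ as failure probability, makes $\|\bar Z\|^2$ negligible relative to $\varepsilon$ with high probability, its hypotheses on $N$ and $q$ being implied by \eqref{eq:CovQualityNumberSamples} (using $\lambda_{\min}(\Sigma(\theta_1)) \le (2R)^2$ from \eqref{eq:UpperBoundSigma}); alternatively one bounds $\mathbb{E}[\|\bar Z\|^4]$ directly, its leading contribution being $O(n^2/N^2)$ and so lower order. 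Either way the $\bar Z\bar Z^\top$ term can be discarded into a small slack, and the remaining task is the $\widehat M$ estimate.

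The core of the argument is $\mathbb{E}[\|\widehat M - I\|_F^2] = \sum_{i,k=1}^n \mathbb{E}[(\widehat M_{ik} - \delta_{ik})^2]$, handled entrywise as squared bias plus variance. For the bias, Lemma~\ref{lemma:DivineIntervention} produces for each $j$ a coupled sample $W^{(j)} \sim \mu'$ with $\mathbb{P}\{Z^{(j)} = W^{(j)}\} \ge 1-q$; since $\mathbb{E}_{\mu'}[W_iW_k] = \delta_{ik}$ and $|Z_iZ_k| \le D^2$, identity \eqref{eq:DivineInterventionExpectation} gives $|\mathbb{E}[\widehat M_{ik}] - \delta_{ik}| \le 2qD^2$. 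For the variance of $\widehat M_{ik}$, the $N$ diagonal contributions are bounded through the same coupling by $\mathbb{E}_{\mu'}[W_i^2W_k^2] + qD^4$; summing $\mathbb{E}_{\mu'}[W_i^2 W_k^2]$ over $i$ and $k$ yields $\mathbb{E}_{\mu'}[\|W\|^4] \le c\,n^2$ for an absolute constant $c$, which is exactly where Lemma~\ref{lemma:LogConcaveTail} enters (with $\sigma^2 = \mathbb{E}_{\mu'}[\|W\|^2] = n$), so this term is $O(n^2/N)$ with no dependence on the geometry of $K$. The $N(N-1)$ off-diagonal contributions are killed by near-independence: Lemma~\ref{lemma:NearIndependentProduct} makes $Z^{(j)}_iZ^{(j)}_k$ and $Z^{(j')}_iZ^{(j')}_k$ $6q$-independent, and Lemma~\ref{lemma:NearIndependenceExpectation} (with magnitude bound $D^2$) bounds their covariance by $24qD^4$. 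Collecting terms gives $\mathbb{E}[\|\widehat M - I\|_F^2] \le \tfrac{c\,n^2}{N} + C\,n^2qD^4$ for absolute constants $c, C$; since $D^4 = 16R^4/\lambda_{\min}(\Sigma(\theta_1))^2$, the prescribed $N \ge \tfrac{490\,n^2}{p\varepsilon^2}$ makes the first term and $q \le \tfrac{p\varepsilon^2\lambda_{\min}(\Sigma(\theta_1))^2}{49980\,n^2R^4}$ makes the second term a small enough fraction of $p\varepsilon^2$, so Markov's inequality delivers $\|\widehat M - I\|_F \le \tfrac{\varepsilon}{2}$ (minus the tiny mean-correction slack) with the required probability, and combining with the mean-correction bound completes the proof.

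I expect the only genuinely delicate point to be keeping the dependence on $R$ and $\lambda_{\min}(\Sigma(\theta_1))$ out of the leading $1/N$ term of $\mathbb{E}[\|\widehat M - I\|_F^2]$, so that the sample size $N$ in \eqref{eq:CovQualityNumberSamples} has no geometric factors. This works precisely because the ``typical'' part of each estimate---the part coupled to a genuine $\mu'$-sample---is controlled by log-concavity alone, via $\mathbb{E}_{\mu'}[\|W\|^4] = O(n^2)$, so every appearance of the geometry of $K$ is confined to the lower-order bias and near-independence error terms, which are then absorbed by the choice of $q$. Tracking the numerical constants so as to land on the stated thresholds $490$ and $49980$ is tedious but routine.
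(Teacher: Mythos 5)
Your proposal matches the paper's proof essentially step for step: reduction to the isotropic case via Lemma~\ref{lemma:InvarianceAffineTransformation}, control of $\mathbb{E}\big[\|\tfrac1N\sum_j Z^{(j)}(Z^{(j)})^\top - I\|_F^2\big]$ via coupling (Lemma~\ref{lemma:DivineIntervention}), log-concavity ($\mathbb{E}[\|W\|^4]=O(n^2)$ from Lemma~\ref{lemma:LogConcaveTail}) and near-independence (Lemmas~\ref{lemma:NearIndependentProduct} and~\ref{lemma:NearIndependenceExpectation}), a separate mean estimate via Theorem~\ref{thm:SampleMeanNorm}, and Markov's inequality. The only cosmetic difference is that you reduce to $\|\widehat\Sigma'-I\|_F\le\varepsilon/2$ via a triangle inequality while the paper verifies the two-sided eigenvalue inequality directly (using a slightly looser spectral-radius target), which only affects the final numerical constants you have already flagged as routine bookkeeping.
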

\begin{proof}
	By the same argument as in Theorem \ref{thm:SampleMeanNorm}, $Y^{(1)}, ..., Y^{(N)}$ are pairwise $6q$-independent samples, each with a distribution that has total variation distance to the Boltzmann distribution with parameter $\theta_1$ of at most $q$.

	The remainder of the proof uses an approach similar to Theorem 5.11 from Kannan, Lov\'asz and Simonovits \cite{kannan1997random}, although their result only applies to the uniform distribution.
	As in Theorem \ref{thm:SampleMeanNorm}, define $\widehat{Y} = \frac{1}{N} \sum_{j=1}^N Y^{(j)}$.
	Lemma \ref{lemma:InvarianceAffineTransformation} shows that applying an affine transformation to $K$ does not affect the statement. We can therefore assume that the Boltzmann distribution with parameter $\theta_1$ is isotropic, i.e. has identity covariance, and that the mean of this distribution is the origin. However, the support of this distribution is now contained in a ball of radius $\lambda_{\max}(\Sigma(\theta_1)^{-1/2})R$.
	
	We want to prove that with probability at least $1 - p$, for every $v \in \mathbb{R}^n$,
	\begin{equation*}
		(1-\varepsilon) v^\top \widehat{\Sigma} v \leq \| v \|^2 \leq (1+\varepsilon) v^\top \widehat{\Sigma} v,
	\end{equation*}
	or equivalently,
	\begin{equation}
		\frac{1}{1+\varepsilon} \leq \frac{v^\top \widehat{\Sigma} v}{\|v\|^2} \leq \frac{1}{1-\varepsilon}.
		\label{eq:ProofCovQualityCondition1}
	\end{equation}
	We may therefore assume in the remainder that $\|v\| = 1$. Letting
		$S := \frac{1}{N} \sum_{j=1}^N Y^{(j)} (Y^{(j)})^\top$,
	\eqref{eq:ProofCovQualityCondition1} is equivalent to
	\begin{equation}
		\frac{1}{1+\varepsilon} + (v^\top \widehat{Y})^2 \leq v^\top S v \leq \frac{1}{1-\varepsilon} + (v^\top \widehat{Y})^2.
		\label{eq:ProofCovQualityCondition2}
	\end{equation}
	We will use that
	\begin{equation*}
		v^\top S v = v^\top v + v^\top (S - I) v = 1 + v^\top (S - I) v.
	\end{equation*}
	
	We continue by showing that $\mathbb{P}\{\rho(S - I) > \frac{34}{35} \varepsilon/(1+\varepsilon) \}$ is small, where $\rho(S-I)$ is the spectral radius of $S-I$. It is known that
	\begin{equation}
		\rho(S-I) = \sqrt{\lambda_{\max}([S-I]^2)} \leq \sqrt{\tr([S-I]^2)}.
		\label{eq:ProofCovQualitySpectralRadius}
	\end{equation}
	To apply Markov's inequality, we will bound $\mathbb{E} [\tr([S-I]^2)] = \mathbb{E} [\tr(S^2 - 2S)] + n$. By Lemma \ref{lemma:DivineIntervention}, for each $Y^{(j)}$ we can find a $Z^{(j)}$ following a Boltzmann distribution with parameter $\theta_1$ such that $\mathbb{P}\{ Z^{(j)} \neq Y^{(j)} \} \leq q$. Then, using \eqref{eq:DivineInterventionExpectation},
	\begin{align}
		\mathbb{E}[ \tr(S)] &= \mathbb{E} \Bigg[ \sum_{r=1}^n \frac{1}{N} \sum_{j=1}^N \left(Y^{(j)}_r \right)^2 \Bigg] \nonumber\\
		&\geq \mathbb{E} \Bigg[ \sum_{r=1}^n \frac{1}{N} \sum_{j=1}^N \left(Z^{(j)}_r \right)^2 \Bigg] - qn \left(2R\lambda_{\max}(\Sigma(\theta_1)^{-1/2}) \right)^2 \nonumber\\
		&= n - qn \left(2R\lambda_{\max}(\Sigma(\theta_1)^{-1/2}) \right)^2,
		\label{eq:ProofCovQualityTrace}
	\end{align}
	where the last inequality uses that the origin is contained in the support of the Boltzmann distribution with parameter $\theta_1$, since we assumed the origin is the mean of this distribution.
	
	Finding a bound on $\mathbb{E}[\tr (S^2)]$ requires more work. Note that
	\begin{equation*}
		\mathbb{E} [\tr (S^2)] = \frac{1}{N^2} \mathbb{E} \Bigg[ \sum_{j=1}^N \sum_{k=1}^N \left( (Y^{(j)})^\top Y^{(k)} \right)^2 \Bigg]
		= \frac{1}{N^2} \sum_{j=1}^N \mathbb{E} \left[\| Y^{(j)} \|^4 \right] + \frac{1}{N^2} \mathbb{E} \Bigg[ \sum_{j=1}^N \sum_{k\neq j} \left( (Y^{(j)})^\top Y^{(k)} \right)^2 \Bigg].
	\end{equation*}
	By another application of Lemma \ref{lemma:DivineIntervention} and \eqref{eq:DivineInterventionExpectation}, $\mathbb{E} [\| Y^{(j)} \|^4 ]$ can be bounded as 
	\begin{equation*}
		\mathbb{E}\left[\| Y^{(j)} \|^4 \right] 
		\leq \mathbb{E}\left[\| Z^{(j)} \|^4 \right] + q \left(2\lambda_{\max}(\Sigma(\theta_1)^{-1/2})R \right)^4.
	\end{equation*}
	Observe that since the Boltzmann distribution with parameter $\theta_1$ is isotropic, $\mathbb{E}[\| Z^{(j)} \|^2 ] = n$ for all $j$. Hence, by Lemma \ref{lemma:LogConcaveTail}, we have $\mathbb{P}\{ \| Z^{(j)} \| > t \sqrt{n} \} \leq e^{1-t}$ for all $t > 1$. By a change of variables $s = n^2 t^4$,
	\begin{align*}
		\mathbb{E} [ \| Z^{(j)} \|^4 ] &= \int_0^\infty \mathbb{P} \{ \| Z^{(j)} \|^4 > s \} \diff s\\
		&= \int_0^{\infty} \mathbb{P} \{ \| Z^{(j)} \| > t \sqrt{n} \} 4 t^3 n^2 \diff t\\
		&\leq \int_0^1 4 t^3 n^2 \diff t + \int_1^\infty 4 t^3 n^2 e^{1-t} \diff t = 65 n^2.
	\end{align*}
	
	It remains to bound $\mathbb{E}[ ( (Y^{(j)})^\top Y^{(k)})^2]$ for $k \neq j$. If follows from the $6q$-independence of $Y^{(j)}$ and $Y^{(k)}$, combined with Lemma \ref{lemma:NearIndependentProduct} and Lemma \ref{lemma:NearIndependenceExpectation}, that
	\begin{align*}
		\mathbb{E}\left[ \left( (Y^{(j)})^\top Y^{(k)} \right)^2 \right] &= \mathbb{E} \left[ \sum_{r=1}^n \sum_{s=1}^n \left( Y^{(j)}_r Y^{(k)}_r \right) \left( Y^{(j)}_s Y^{(k)}_s \right) \right]\\
		&\leq \sum_{r=1}^n \sum_{s=1}^n \left( \mathbb{E} \left[ Y^{(j)}_r Y^{(j)}_s \right] \mathbb{E} \left[ Y^{(k)}_r Y^{(k)}_s \right] + 4(6q)\left(2\lambda_{\max}(\Sigma(\theta_1)^{-1/2})R \right)^2 \right).
	\end{align*}
	Applying the now familiar Lemma \ref{lemma:DivineIntervention} and \eqref{eq:DivineInterventionExpectation} once more, we find for all $j$ that
	\begin{equation*}
		\mathbb{E} \left[ Y^{(j)}_r Y^{(j)}_s \right] \leq \mathbb{E} \left[ Z^{(j)}_r Z^{(j)}_s \right] + q \left(2\lambda_{\max}(\Sigma(\theta_1)^{-1/2})R \right)^2.
	\end{equation*}
	Note that because $Z^{(j)}$ is isotropic, $\mathbb{E} [ Z^{(j)}_r Z^{(j)}_s ]$ is one if $r=s$ and zero otherwise. Therefore,
	\begin{align*}
		&\sum_{r=1}^n \sum_{s=1}^n \mathbb{E} \left[ Y^{(j)}_r Y^{(j)}_s \right] \mathbb{E} \left[ Y^{(k)}_r Y^{(k)}_s \right]\\
		&\leq n \left( 1 + q \left(2\lambda_{\max}(\Sigma(\theta_1)^{-1/2})R \right)^2 \right)^2 + n(n-1) \left( q \left(2\lambda_{\max}(\Sigma(\theta_1)^{-1/2})R \right)^2 \right)^2.
	\end{align*}
	
	In summary, the values of $N$ and $q$ in \eqref{eq:CovQualityNumberSamples} give us
	\begin{align*}
		\mathbb{E} [\tr (S^2)]
		&\leq \frac{1}{N} \left( 65n^2 + q \left(2\lambda_{\max}(\Sigma(\theta_1)^{-1/2})R \right)^4 \right)\\
		&+ \frac{N(N-1)}{N^2} \Bigg[ n \left( 1 + q \left(2\lambda_{\max}(\Sigma(\theta_1)^{-1/2})R \right)^2 \right)^2\\
		&+ n(n-1) \left( q \left(2\lambda_{\max}(\Sigma(\theta_1)^{-1/2})R \right)^2 \right)^2 + 4 n^2(6q)\left(2\lambda_{\max}(\Sigma(\theta_1)^{-1/2})R \right)^2 \Bigg]\\
		&\leq n + 0.14 p \varepsilon^2.
	\end{align*}
	Combined with \eqref{eq:ProofCovQualityTrace}, the above yields the following upper bound on $\mathbb{E} [\tr([S-I]^2)] = \mathbb{E} [\tr(S^2 - 2S)] + n$:
	\begin{equation*}
		\mathbb{E} [\tr([S-I]^2)] \leq n + 0.14 p \varepsilon^2 - 2n + 2qn \left(2R\lambda_{\max}(\Sigma(\theta_1)^{-1/2}) \right)^2 + n
		\leq \frac{ p \varepsilon^2}{7}.
	\end{equation*}
	
	By \eqref{eq:ProofCovQualitySpectralRadius} and Markov's inequality,
	\begin{equation*}
		\mathbb{P}\left\{ \rho(S-I) > \frac{34}{35} \frac{\varepsilon}{1+\varepsilon} \right\}
		\leq \frac{\mathbb{E} [\tr([S-I]^2)]}{\left(\frac{34}{35} \frac{\varepsilon}{1+\varepsilon}\right)^2}
		\leq \frac{p \varepsilon^2/7}{ \left(\frac{34}{35} \frac{\varepsilon}{1+\varepsilon}\right)^2 } = p(1+\varepsilon)^2 \frac{175}{1156}.
	\end{equation*}
	Using Theorem \ref{thm:SampleMeanNorm} for $\alpha^2 = \frac{\varepsilon}{35(1+\varepsilon)}$, we find that the $N$ and $q$ in \eqref{eq:CovQualityNumberSamples} yield the quality guarantee
	\begin{equation*}
		\mathbb{P}\left\{ \| \widehat{Y} \| > \sqrt{\frac{\varepsilon}{35(1+\varepsilon)}} \right\} \leq \frac{2 p\varepsilon^2}{490 n \frac{\varepsilon}{35(1+\varepsilon)}} \leq \frac{p\varepsilon(1+\varepsilon)}{7}.
	\end{equation*}
	Thus, we have for all $\varepsilon \in (0,1)$,
	\begin{equation*}
		\mathbb{P}\left\{ \rho(S-I) \leq \frac{34}{35} \frac{\varepsilon}{1+\varepsilon} \wedge \| \widehat{Y} \| \leq \sqrt{\frac{\varepsilon}{35(1+\varepsilon)}} \right\} \geq 1 - p(1+\varepsilon)^2 \frac{175}{1156} - p\frac{\varepsilon(1+\varepsilon)}{7} \geq 1-p.
	\end{equation*}
	
	We can now verify the inequalities in \eqref{eq:ProofCovQualityCondition2} to complete the proof. With probability at least $1-p$,
	\begin{equation*}
		\frac{1}{1+\varepsilon} + \| \widehat{Y} \|^2 \leq \frac{1 + \varepsilon/35}{1+\varepsilon} = 1 - \frac{34}{35} \frac{\varepsilon}{1+\varepsilon} \leq 1 - \rho(S-I),
	\end{equation*}
	thereby verifying the first inequality in \eqref{eq:ProofCovQualityCondition2} for all unit vectors $v$. The second inequality in \eqref{eq:ProofCovQualityCondition2} can be shown by noting that with probability at least $1-p$,
	\begin{equation*}
		1 + \rho(S-I) \leq 1 + \frac{\varepsilon}{1 + \varepsilon} \leq \frac{1}{1-\varepsilon} \leq \frac{1}{1-\varepsilon} + (v^\top \widehat{Y})^2,
	\end{equation*}
	for all $v \in \mathbb{R}^n$.
\end{proof}

One might expect that if all eigenvalue-eigenvector pairs are approximated well enough for \eqref{eq:CovQualityStatement} to hold, this also implies a similar statement relating $\widehat{\Sigma}^{-1}$ and $\Sigma(\theta_1)^{-1}$. This is confirmed by the following `folklore' result from linear algebra.
\begin{lemma}
	\label{lemma:InverseQuality}
	Let $\Sigma$ and $\widehat{\Sigma}$ be two symmetric invertible matrices such that
	\begin{equation*}
		(1-\varepsilon) v^\top \widehat{\Sigma} v \leq v^\top \Sigma v \leq (1+\varepsilon) v^\top \widehat{\Sigma} v \qquad \forall v \in \mathbb{R}^n,
	\end{equation*}
	for some constant $0 \leq \varepsilon < \frac{1}{2}(\sqrt{5} - 1) \approx 0.618$. Then, for all $u \in \mathbb{R}^n$,
	\begin{equation*}
		\left(1 - \frac{2\varepsilon^2 + \varepsilon}{1 + \varepsilon^2 + \varepsilon} \right) u^\top \widehat{\Sigma}^{-1} u \leq u^\top \Sigma^{-1} u \leq \left(1 + \frac{2\varepsilon^2 + 3\varepsilon}{1 - \varepsilon^2 - \varepsilon} \right) u^\top \widehat{\Sigma}^{-1} u.
	\end{equation*}
\end{lemma}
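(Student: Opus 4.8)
The plan is to reduce the statement, via a congruence transformation, to an elementary fact about the spectrum of a single matrix, and then to verify two one-line polynomial inequalities in $\varepsilon$. First I would observe that the hypothesis forces $\widehat{\Sigma}$ and $\Sigma$ to be positive definite: if $\widehat{\Sigma}$ had a non-positive eigenvalue on a unit eigenvector $v$, then for $\varepsilon>0$ the chain $(1-\varepsilon)v^\top\widehat{\Sigma}v\le v^\top\Sigma v\le(1+\varepsilon)v^\top\widehat{\Sigma}v$ is contradictory, while $v^\top\widehat{\Sigma}v=0$ contradicts invertibility; positive definiteness of $\Sigma$ then follows from the lower bound (and the case $\varepsilon=0$ is trivial, as both stated constants vanish). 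Hence $\widehat{\Sigma}^{1/2}$ is well defined, and I would set $A:=\widehat{\Sigma}^{-1/2}\Sigma\widehat{\Sigma}^{-1/2}$, which is symmetric positive definite. Writing $v=\widehat{\Sigma}^{-1/2}x$, the hypothesis is equivalent to $(1-\varepsilon)\|x\|^2\le x^\top A x\le(1+\varepsilon)\|x\|^2$ for all $x$, i.e.\ every eigenvalue of $A$ lies in $[1-\varepsilon,1+\varepsilon]$, so every eigenvalue of $A^{-1}$ lies in $[\tfrac{1}{1+\varepsilon},\tfrac{1}{1-\varepsilon}]$. Since $\Sigma^{-1}=\widehat{\Sigma}^{-1/2}A^{-1}\widehat{\Sigma}^{-1/2}$ and $\widehat{\Sigma}^{-1}=\widehat{\Sigma}^{-1/2}\widehat{\Sigma}^{-1/2}$, substituting $u=\widehat{\Sigma}^{1/2}w$ gives $u^\top\Sigma^{-1}u=w^\top A^{-1}w$ and $u^\top\widehat{\Sigma}^{-1}u=\|w\|^2$, so the eigenvalue bounds on $A^{-1}$ translate to
\[
\frac{1}{1+\varepsilon}\, u^\top \widehat{\Sigma}^{-1} u \;\le\; u^\top \Sigma^{-1} u \;\le\; \frac{1}{1-\varepsilon}\, u^\top \widehat{\Sigma}^{-1} u \qquad \forall u \in \mathbb{R}^n .
\]
(Equivalently, one may simply invoke operator antitonicity of $X\mapsto X^{-1}$ on the positive-definite cone: $(1-\varepsilon)\widehat{\Sigma}\preceq\Sigma\preceq(1+\varepsilon)\widehat{\Sigma}$ yields $\tfrac{1}{1+\varepsilon}\widehat{\Sigma}^{-1}\preceq\Sigma^{-1}\preceq\tfrac{1}{1-\varepsilon}\widehat{\Sigma}^{-1}$.)

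It then remains only to check that the two constants in the statement are no tighter than $\tfrac{1}{1\pm\varepsilon}$, i.e.\ that
\[
1 - \frac{2\varepsilon^2+\varepsilon}{1+\varepsilon^2+\varepsilon} \;\le\; \frac{1}{1+\varepsilon} , \qquad \frac{1}{1-\varepsilon} \;\le\; 1 + \frac{2\varepsilon^2+3\varepsilon}{1-\varepsilon^2-\varepsilon} .
\]
The left inequality rewrites as $\tfrac{1-\varepsilon^2}{1+\varepsilon+\varepsilon^2}\le\tfrac{1}{1+\varepsilon}$, and clearing the (positive) denominators reduces it to $0\le 2\varepsilon^2+\varepsilon^3$, which holds. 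For the right inequality I would first note that $1+\tfrac{2\varepsilon^2+3\varepsilon}{1-\varepsilon^2-\varepsilon}=\tfrac{(1+\varepsilon)^2}{1-\varepsilon-\varepsilon^2}$, and that the hypothesis $\varepsilon<\tfrac12(\sqrt5-1)$ is used precisely here: it is exactly the condition $\varepsilon^2+\varepsilon<1$ that keeps this constant finite and positive, so clearing denominators is legitimate and turns the inequality into $1-\varepsilon-\varepsilon^2\le(1+\varepsilon)^2(1-\varepsilon)=1+\varepsilon-\varepsilon^2-\varepsilon^3$, i.e.\ $0\le\varepsilon(2-\varepsilon^2)$, which holds for $\varepsilon\in[0,1)$. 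Combining the displayed sandwich with these two inequalities and with $u^\top\widehat{\Sigma}^{-1}u\ge 0$ gives the claimed two-sided bound.

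I do not expect a genuine obstacle: the entire content is the congruence reduction plus two short polynomial estimates. The only points that need a little care are (i) establishing that $\widehat{\Sigma},\Sigma\succ 0$ so that $\widehat{\Sigma}^{1/2}$ and the inverses make sense (the hypothesis only says ``symmetric invertible''), and (ii) locating where the bound on $\varepsilon$ enters, namely in keeping $1-\varepsilon-\varepsilon^2>0$. In fact this argument proves the sharper statement with constants $\tfrac{1}{1+\varepsilon}$ and $\tfrac{1}{1-\varepsilon}$; the weaker constants in the lemma are presumably retained for convenience in the subsequent analysis, and a direct perturbation-series estimate of $\widehat{\Sigma}^{-1/2}\Sigma^{-1}\widehat{\Sigma}^{-1/2}-I$ would produce them without optimisation.
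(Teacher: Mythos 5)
The paper calls this a ``folklore'' linear algebra fact and supplies no proof, so there is no paper argument to compare against. Your proof is correct and complete. The congruence to $A=\widehat{\Sigma}^{-1/2}\Sigma\widehat{\Sigma}^{-1/2}$ converts the hypothesis into the spectral containment $\operatorname{spec}(A)\subseteq[1-\varepsilon,1+\varepsilon]$, inverting gives $\operatorname{spec}(A^{-1})\subseteq[\tfrac{1}{1+\varepsilon},\tfrac{1}{1-\varepsilon}]$, and substituting back yields the sharper sandwich with constants $\tfrac{1}{1\pm\varepsilon}$. The two polynomial checks are right: the left one reduces to $0\le 2\varepsilon^2+\varepsilon^3$, and the right one (after noting $1+\tfrac{2\varepsilon^2+3\varepsilon}{1-\varepsilon-\varepsilon^2}=\tfrac{(1+\varepsilon)^2}{1-\varepsilon-\varepsilon^2}$) reduces to $0\le\varepsilon(2-\varepsilon^2)$, with the hypothesis $\varepsilon<\tfrac12(\sqrt5-1)$ being precisely the condition $1-\varepsilon-\varepsilon^2>0$ that legitimises clearing the denominator. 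You also correctly flag and repair the one genuine wrinkle in the statement: ``symmetric invertible'' does not by itself license $\widehat{\Sigma}^{1/2}$, but for $\varepsilon>0$ the two-sided bound forces every eigenvalue of $\widehat{\Sigma}$ to be positive, and the $\varepsilon=0$ case is vacuous since both displayed constants collapse to $1$ and $\Sigma=\widehat{\Sigma}$. Your closing observation that the lemma's constants are strictly weaker than $\tfrac{1}{1\pm\varepsilon}$ is also correct; they are what a direct perturbative estimate produces, and retaining them costs nothing downstream.
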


We can combine this lemma with Theorem \ref{thm:CovQuality} to bound the number of samples required to approximate both $\Sigma(\theta_1)$ and $\Sigma(\theta_1)^{-1}$ to a desired accuracy.
\begin{corollary}
	\label{cor:CovAndInverseQuality}
	Let $K \subseteq \mathbb{R}^n$ be a convex body, and let $\langle \cdot, \cdot \rangle$ be the Euclidean inner product. Suppose $K$ is contained in a Euclidean ball with radius $R > 0$. Let $p \in (0,1)$, $\epsilon \in [0, 1)$, and $0 < \epsilon_1 \leq \sqrt{13} - 3 \approx 0.606$.
	Let $\theta_0, \theta_1 \in \mathbb{R}^n$ such that $\Delta \theta := \| \theta_1 - \theta_0 \|_{\theta_0} < 1$ and $\Delta x := \| x(\theta_1) - x(\theta_0) \|_{x(\theta_0)}^* < 1$. Suppose we have an invertible matrix $\widehat{\Sigma}(\theta_0)$ such that
	\begin{equation*}
		(1-\epsilon) y^\top \widehat{\Sigma}(\theta_0)^{-1} y \leq y^\top \Sigma(\theta_0)^{-1} y \leq (1+\epsilon) y^\top \widehat{\Sigma}(\theta_0)^{-1} y \qquad \forall y \in \mathbb{R}^n.
	\end{equation*}
	Pick
	\begin{equation*}
		N \geq \frac{7840 n^2}{p \epsilon_1^2}, \qquad q \leq \frac{p \epsilon_1^2}{799680 n^2 R^4} \lambda_{\min}(\Sigma(\theta_1))^2.
	\end{equation*}
	Let $X_0$ be a random starting point drawn from a Boltzmann distribution with parameter $\theta_0$. Let $Y^{(1)}, ..., Y^{(N)}$ be the end points of $N$ hit-and-run random walks applied to the Boltzmann distribution with parameter $\theta_1$ having starting point $X_0$, where the directions are drawn from a $\mathcal{N}(0,\widehat{\Sigma}(\theta_0))$-distribution, and each walk has length $\ell$ given by \eqref{eq:WalkLength}. (Note that $\ell$ depends on $\epsilon$, $n$, $q$, $\Delta\theta$, and $\Delta x$.)
	Then, with probability $1-p$, the empirical covariance matrix $\widehat{\Sigma} \approx \Sigma(\theta_1)$ as defined in \eqref{eq:DefinitionEmpiricalCovariance} satisfies
	\begin{align}
		(1-\epsilon_1) v^\top \widehat{\Sigma} v &\leq v^\top \Sigma(\theta_1) v \leq (1+\epsilon_1) v^\top \widehat{\Sigma} v &&\forall v \in \mathbb{R}^n, \label{eq:CorCovRegular}\\
		(1-\epsilon_1) v^\top \widehat{\Sigma}^{-1} v &\leq v^\top \Sigma(\theta_1)^{-1} v \leq (1+\epsilon_1) v^\top \widehat{\Sigma}^{-1} v &&\forall v \in \mathbb{R}^n.
		\label{eq:CorCovInverse}
	\end{align}
\end{corollary}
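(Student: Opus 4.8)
The plan is to obtain the corollary as a bookkeeping composition of Theorem \ref{thm:CovQuality} with the linear-algebra estimate in Lemma \ref{lemma:InverseQuality}, run at an intermediate accuracy parameter $\varepsilon' := \epsilon_1/4$. The reason for shrinking the accuracy by a factor $4$ is that Theorem \ref{thm:CovQuality} already produces a two-sided $(1\pm\varepsilon')$ sandwich of $\Sigma(\theta_1)$ by $\widehat{\Sigma}$, but passing to inverses via Lemma \ref{lemma:InverseQuality} inflates the relative error slightly; choosing $\varepsilon'$ a constant factor below $\epsilon_1$ leaves room to absorb this inflation, and the factor $4$ turns out to be exactly compatible with the cutoff $\epsilon_1 \leq \sqrt{13}-3$ in the hypothesis.

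First I would set $\varepsilon' = \epsilon_1/4$ and check that the quantities prescribed in the corollary are precisely those required by Theorem \ref{thm:CovQuality} at accuracy $\varepsilon'$: since $\varepsilon'^2 = \epsilon_1^2/16$, the bound $N \geq 7840 n^2/(p\epsilon_1^2)$ equals $N \geq 490 n^2/(p\varepsilon'^2)$ (as $490\cdot 16 = 7840$), and $q \leq p\epsilon_1^2 \lambda_{\min}(\Sigma(\theta_1))^2/(799680 n^2 R^4)$ equals $q \leq p\varepsilon'^2 \lambda_{\min}(\Sigma(\theta_1))^2/(49980 n^2 R^4)$ (as $49980 \cdot 16 = 799680$). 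The hypotheses on $\widehat{\Sigma}(\theta_0)$, on $\theta_0,\theta_1$, and on the sampling scheme are identical to those of Theorem \ref{thm:CovQuality}, so that theorem applies and yields an event $E$ of probability at least $1-p$ on which
\[
  (1-\varepsilon')\, v^\top \widehat{\Sigma} v \leq v^\top \Sigma(\theta_1) v \leq (1+\varepsilon')\, v^\top \widehat{\Sigma} v \qquad \forall v \in \mathbb{R}^n.
\]
Since $\varepsilon' = \epsilon_1/4 \leq \epsilon_1$, inequality \eqref{eq:CorCovRegular} holds on $E$ directly.

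Next, on the same event $E$, I would invoke Lemma \ref{lemma:InverseQuality} with this $\varepsilon'$. Its hypothesis $\varepsilon' < \tfrac12(\sqrt5-1) \approx 0.618$ is met because $\varepsilon' = \epsilon_1/4 \leq (\sqrt{13}-3)/4 < 0.16$. The lemma then gives
\[
  \Bigl(1 - \tfrac{2\varepsilon'^2+\varepsilon'}{1+\varepsilon'^2+\varepsilon'}\Bigr)\, u^\top \widehat{\Sigma}^{-1} u \leq u^\top \Sigma(\theta_1)^{-1} u \leq \Bigl(1 + \tfrac{2\varepsilon'^2+3\varepsilon'}{1-\varepsilon'^2-\varepsilon'}\Bigr)\, u^\top \widehat{\Sigma}^{-1} u \qquad \forall u \in \mathbb{R}^n,
\]
so to conclude \eqref{eq:CorCovInverse} it suffices to verify $\tfrac{2\varepsilon'^2+\varepsilon'}{1+\varepsilon'^2+\varepsilon'} \leq \epsilon_1$ and $\tfrac{2\varepsilon'^2+3\varepsilon'}{1-\varepsilon'^2-\varepsilon'} \leq \epsilon_1$. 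The first quantity is smaller than the second (smaller numerator, larger and still positive denominator, using $\varepsilon'^2+\varepsilon' < 1$), so only the second needs checking; substituting $\varepsilon' = \epsilon_1/4$, clearing denominators, dividing through by $\epsilon_1$ and simplifying turns it into the quadratic inequality $\epsilon_1^2 + 6\epsilon_1 - 4 \leq 0$, whose positive root is $\sqrt{13}-3$. Hence the standing assumption $\epsilon_1 \leq \sqrt{13}-3$ is exactly what is needed, and \eqref{eq:CorCovInverse} follows on $E$. Both \eqref{eq:CorCovRegular} and \eqref{eq:CorCovInverse} therefore hold simultaneously on $E$, which has probability at least $1-p$.

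I do not expect a genuine obstacle: the content is entirely in the choice $\varepsilon' = \epsilon_1/4$, in the verification that it reproduces the stated constants $7840$ and $799680$, and in checking that the inverse-inflation factor $\tfrac{2\varepsilon'^2+3\varepsilon'}{1-\varepsilon'^2-\varepsilon'}$ stays at or below $\epsilon_1$ — the latter computation being precisely what pins down the threshold $\sqrt{13}-3$. The only real care needed is to keep both conclusions tied to the single event $E$ furnished by Theorem \ref{thm:CovQuality}, so that the failure probabilities are not summed.
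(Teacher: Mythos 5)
Your proposal is correct and follows essentially the same route as the paper: both apply Theorem \ref{thm:CovQuality} at accuracy $\varepsilon = \epsilon_1/4$ and then pass to inverses via Lemma \ref{lemma:InverseQuality}, with the threshold $\sqrt{13}-3$ arising from the same quadratic $(2+\epsilon_1)\varepsilon^2 + (3+\epsilon_1)\varepsilon - \epsilon_1 \leq 0$ evaluated at $\varepsilon = \epsilon_1/4$. The only difference is presentational — you substitute $\varepsilon' = \epsilon_1/4$ into the dominant bound and solve $\epsilon_1^2 + 6\epsilon_1 - 4 \leq 0$ directly, while the paper states the equivalent condition as a bound on $\epsilon_1/4$ by the root of the quadratic in $\varepsilon$; you also make explicit the constant bookkeeping ($490\cdot 16 = 7840$, $49980\cdot 16 = 799680$) and the observation that the lower-bound factor in Lemma \ref{lemma:InverseQuality} is dominated by the upper-bound factor, both of which the paper leaves implicit.
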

\begin{proof}
	We apply Theorem \ref{thm:CovQuality} with $\varepsilon = \frac{1}{4} \epsilon_1 < \epsilon_1$. Thus, \eqref{eq:CorCovRegular} holds. For $\epsilon_1 \leq \sqrt{13} - 3$,
	\begin{equation*}
		\tfrac{1}{4} \epsilon_1 \leq \frac{\sqrt{(3+\epsilon_1)^2 + 4 \epsilon_1(2+\epsilon_1)} - (3 + \epsilon_1)}{2(2+\epsilon_1)},
	\end{equation*}
	where the right hand side is chosen such that Lemma \ref{lemma:InverseQuality} shows that \eqref{eq:CorCovInverse} holds.
\end{proof}

\section{Short-Step IPM Using the Entropic Barrier}
\label{sec:ShortStepIPM}
From now on, let the reference inner product $\langle \cdot, \cdot \rangle$ be the Euclidean dot product.
Before we show how the results from the previous sections may be applied to interior point methods, we have to fix some notation. With $c$ as in \eqref{eq:MainProblem}, define
\begin{equation*}
	f^{*,\eta}(x) = \eta \langle c, x \rangle + f^*(x),
\end{equation*}
and let $z(\eta)$ be the minimizer of $f^{*,\eta}$, that is, $g^*(z(\eta)) = -\eta c$.
Moreover, it follows from Lemma \ref{lemma:PropertiesDerivativesConjugate} that $H^*(z(\eta)) = [H(g^*(z(\eta)))]^{-1} = \Sigma(-\eta c)^{-1}$.

\begin{algorithm}[!ht]
	\caption{Short-step IPM using the entropic barrier from de Klerk, Glineur and Taylor \cite{deklerk2017worst}}
	\label{alg:ShortStepIPM}
	\begin{algorithmic}[1]
		\Input Tolerances $\widetilde{\epsilon}, \bar{\epsilon} > 0$ and $\epsilon \in (0, \sqrt{13}-3)$; entropic barrier parameter $\vartheta \leq n + o(n)$; objective $c \in \mathbb{R}^n$; radius $r > 0$ of Euclidean ball contained in $K$; radius $R > 0$ of Euclidean ball containing $K$; central path proximity parameter $\delta > 0$; an $x_0 \in K$ and $\eta_0 > 0$ such that $\| x_0 - z(\eta_0)\|_{z(\eta_0)}^* \leq \frac{1}{2} \delta$; approximation $\widehat{\Sigma}(-\eta_0 c)$ of $\Sigma(-\eta_0 c)$ satisfying \eqref{eq:ApproximateCovCondition} and \eqref{eq:ApproximateInverseCovCondition}; $X_0 \in K$ drawn randomly from a Boltzmann distribution with parameter $-\eta_0 c$;
		growth rate $\beta > 0$.
		\Output $x_k$ such that $\langle c, x_k \rangle - \min_{x \in K} \langle c, x \rangle \leq \bar{\epsilon}$ at terminal iteration $k$.
		\Statex
		
		\State $\widehat{\epsilon} \gets \widetilde{\epsilon} \sqrt{\frac{1+\epsilon}{1-\epsilon}} + \sqrt{\frac{2 \epsilon}{1 - \epsilon}}$
		\State $d \gets \left(\frac{1}{2} \delta + \beta + \frac{3 \beta^2}{(1-\beta)^3} \right) \Big/ \left(1 - \left( \beta + \frac{3 \beta^2}{(1-\beta)^3} \right)\right)$
		\State $\gamma \gets \frac{2(1-d)^4 - \widehat{\epsilon}(1+(1-d)^4)}{(1 - \widehat{\epsilon})(1-d)^2((1-d)^4 + 1)}$
		\State $m \gets \left\lceil \log\left( \frac{\vartheta(1 + \delta/2)}{\eta_0 \bar{\epsilon}} \right) \big/ \log \left( 1 + \frac{\beta}{\sqrt{\vartheta}} \right) \right\rceil$
		\State $N \gets \left\lceil \frac{15680 n^2 m}{p \epsilon^2} + 2\right\rceil$
		\State $q \gets \min \left\{ \frac{p}{m N}, \frac{N-2}{N}\frac{p \epsilon^2}{1599360 m n^2 R^4} \frac{1}{4096} \left( \frac{r}{n+1} \right)^4 \left( \frac{\bar{\epsilon}}{4R\vartheta(1+\delta/2)(1+\beta/\sqrt{\vartheta})} \right)^{8 \sqrt{\vartheta} + 4} \right\}$
		\State $k \gets 0$
		\While{$\vartheta(1+\frac{1}{2}\delta) / \eta_k > \bar{\epsilon}$} \label{line:IPMWhile}
			\State \algmultiline{Generate hit-and-run samples $Y^{(1)}, ..., Y^{(N)}$, and $X_{k+1}$ from the Boltzmann distribution with parameter $-\eta_{k+1} c$, starting from $X_k$, drawing directions from $\mathcal{N}(0, \widehat{\Sigma}(-\eta_k c))$, using $\ell$ steps as in \eqref{eq:WalkLength} with bounds on $\Delta x$ and $\Delta\theta$ given in \eqref{eq:WalkLengthMainDeltaX} and \eqref{eq:WalkLengthMainDeltaTheta}} \label{line:IPMSample}
			\State \algmultiline{Find approximation $\widehat{\Sigma}(-\eta_{k+1} c)$ of $\Sigma(-\eta_{k+1} c)$ satisfying \eqref{eq:ApproximateCovCondition} and \eqref{eq:ApproximateInverseCovCondition} using samples $Y^{(1)}, ..., Y^{(N)}$}
			\State Find approximation $\widehat{\theta}(x_k)$ of $\theta(x_k)$ satisfying \eqref{eq:ThetaCondition} using $X_{k+1}$ \label{line:IPMTheta}
			\State $x_{k+1} \gets x_{k} - \gamma \widehat{\Sigma}(-\eta_k c) [\eta_{k+1} c + \widehat{\theta}(x_{k})]$ \Comment{$x_{k+1} \approx z(\eta_{k+1})$}
			\State $\eta_{k+1} \gets (1 + \frac{\beta}{\sqrt{\vartheta}}) \eta_k$
			\State $k \gets k+1$
		\EndWhile
		\State \Return $x_k$
	\end{algorithmic}
\end{algorithm}

A more detailed description of the algorithm by de Klerk, Glineur and Taylor \cite{deklerk2017worst} is given in Algorithm \ref{alg:ShortStepIPM}. We note that an approximation of $\Sigma(-\eta_0 c)$ can be obtained by hit-and-run sampling using the algorithm by Kalai and Vempala \cite{kalai2006simulated}. This algorithm also generates an $X_0$ following the Boltzmann distribution with parameter $-\eta_0 c$.

The assumptions in de Klerk, Glineur and Taylor \cite{deklerk2017worst} include that one can find an estimate
$\widehat{\Sigma}(-\eta_{k+1} c)$ of $\Sigma(-\eta_{k+1} c)$ such that
\begin{align}
	(1-\epsilon) y^\top \widehat{\Sigma}(-\eta_{k+1} c) y &\leq y^\top \Sigma(-\eta_{k+1} c) y \leq (1+\epsilon) y^\top \widehat{\Sigma}(-\eta_{k+1} c) y && \forall y \in \mathbb{R}^n, \label{eq:ApproximateCovCondition}\\
	(1-\epsilon) y^\top \widehat{\Sigma}(-\eta_{k+1} c)^{-1} y &\leq y^\top \Sigma(-\eta_{k+1} c)^{-1} y \leq (1+\epsilon) y^\top \widehat{\Sigma}(-\eta_{k+1} c)^{-1} y && \forall y \in \mathbb{R}^n \label{eq:ApproximateInverseCovCondition}.
\end{align}
Note that the number of samples required to find such a $\widehat{\Sigma}(-\eta_{k+1} c)$ is given by Corollary \ref{cor:CovAndInverseQuality}.
Moreover, at some point $x_k \in \interior K$ we want to find an approximate gradient $\widehat{g}_{z(\eta_{k+1})}^{*,\eta_{k+1}}(x_k)$ of $f^{*,\eta_{k+1}}$ in the sense that
\begin{equation*}
	\| \widehat{g}_{z(\eta_{k+1})}^{*,\eta_{k+1}}(x_k) - g_{z(\eta_{k+1})}^{*,\eta_{k+1}}(x_k) \|^*_{z(\eta_{k+1})} \leq \widetilde{\epsilon} \| g_{z(\eta_{k+1})}^{*,\eta_{k+1}}(x_k) \|^*_{z(\eta_{k+1})},
\end{equation*}
for some $\widetilde{\epsilon} > 0$. Equivalently, we are looking for an approximation $\widehat{\theta}(x_k)$ of $\theta(x_k)$ such that
\begin{equation*}
	\left\langle \widehat{\theta}(x_k) - \theta(x_k), H^*(z(\eta_{k+1}))^{-1} \left[\widehat{\theta}(x_k) - \theta(x_k) \right] \right\rangle \leq (\widetilde{\epsilon})^2 \left\langle \eta_{k+1} c + \theta(x_k), H^*(z(\eta_{k+1}))^{-1} [\eta_{k+1} c + \theta(x_k)] \right\rangle.
\end{equation*}
Since $\theta(z(\eta_{k+1})) = -\eta_{k+1} c$, it will suffice to find a $\widehat{\theta}(x_k)$ such that
\begin{equation}
	\label{eq:ThetaCondition}
	\left\| \widehat{\theta}(x_k) - \theta(x_k) \right\|_{\Sigma(-\eta_{k+1} c)} \leq \widetilde{\epsilon} \left\| \theta(z(\eta_{k+1})) - \theta(x_k) \right\|_{\Sigma(-\eta_{k+1} c)}.
\end{equation}
%


\subsection{Gradient Approximation}
One immediate problem with finding a $\widehat{\theta}(x_k)$ satisfying \eqref{eq:ThetaCondition} is that the right hand side of \eqref{eq:ThetaCondition} is unknown: it depends on $\theta(x_k)$, the very vector we are trying to approximate. Hence, we will first present a gradient descent algorithm to find a $\widehat{\theta}(x_k)$ such that $\| \widehat{\theta}(x_k) - \theta(x_k) \|_{\Sigma(-\eta_{k+1} c)} \leq b$ for any given $b > 0$. In the next section, we will then lower bound the right hand side of \eqref{eq:ThetaCondition} by known values.

To approximate $\theta(x_k)$ in Algorithm \ref{alg:ShortStepIPM}, we will use the approach proposed by Abernethy and Hazan \cite{abernethy2015faster}, which was also discussed in Section \ref{subsec:AbernethyHazan}. Let $x \in \interior K$, and consider the unconstrained minimization problem
\begin{equation*}
	\min_{\theta \in \mathbb{R}^n} \Psi(\theta), \quad \text{where} \quad \Psi(\theta) = f(\theta) - \langle \theta, x \rangle.
\end{equation*}
Since the Fr\'echet derivative of $\Psi$ is $D \Psi(\theta) = \mathbb{E}_\theta[X] - x$,
the gradient of $\Psi$ is zero at the $\theta \in \mathbb{R}^n$ such that $g(\theta) = \mathbb{E}_\theta[X] = x$, which is by definition $\theta(x)$.
Moreover, $D^2 \Psi(\theta) = \Sigma(\theta)$, and thus $\Psi$ is strictly convex, meaning we can use unconstrained minimization techniques to approximate $\theta(x)$. Note that because the Hessians of $f$ and $\Psi$ are the same, $\Psi$ is self-concordant. Although the gradient and Hessian of $\Psi$ are not readily available, they can be estimated through sampling. We can then again apply the inexact gradient descent results from de Klerk, Glineur and Taylor \cite{deklerk2017worst}. This is formalized in Algorithm \ref{alg:ThetaSubroutine}. To distinguish parameters in Algorithm \ref{alg:ThetaSubroutine} from those with a similar function in Algorithm \ref{alg:ShortStepIPM}, we add a prime to those parameters in Algorithm \ref{alg:ThetaSubroutine} (e.g. we use $\epsilon'$ in stead of $\epsilon$).
Among other things, Algorithm \ref{alg:ThetaSubroutine} depends on two vectors $\theta_0$ and $\overline{\theta}$. Intuitively, $\theta_0$ is our initial estimate for $\theta(x)$, while $\overline{\theta}$ is used to fix an inner product. (As we will see in Section \ref{subsec:Complexity}, we cannot take $\overline{\theta} = \theta_0$ to analyze Algorithm \ref{alg:ShortStepIPM}.)

\begin{algorithm}[!ht]
	\caption{Approximation routine for $\theta(x)$ 
	}
	\label{alg:ThetaSubroutine}
	\begin{algorithmic}[1]
		\Input $x \in \interior K$; $0 < b < B$; $\theta_0, \overline{\theta} \in \mathbb{R}^n$ such that $\| \theta_0 - \theta(x) \|_{\overline{\theta}} \leq B$; $d' < \frac{1}{3}$ such that $2 B + \| \theta_0 - \overline{\theta} \|_{\overline{\theta}} \leq d'$; approximation $\widehat{\Sigma}(\overline{\theta})^{-1}$ of $\Sigma(\overline{\theta})^{-1}$ and $\epsilon' \in (0,1)$ satisfying \eqref{eq:ApproximateCovCondition} and \eqref{eq:ApproximateInverseCovCondition}; $\overline{X} \in K$ drawn randomly from a Boltzmann distribution with parameter $\overline{\theta}$; radius $r > 0$ of Euclidean ball contained in $K$; radius $R > 0$ of Euclidean ball containing $K$; relative gradient error $\widetilde{\epsilon}' > 0$ such that $\widetilde{\epsilon}'\sqrt{\frac{1+\epsilon'}{1-\epsilon'}} + \sqrt{\frac{2\epsilon'}{1-\epsilon'}} < \frac{2(1-d')^4}{1 + (1-d')^4}$; $C \in (0,1)$ such that $\| x - x(\overline{\theta}) \|_{\Sigma(\overline{\theta})^{-1}} \leq C$; failure probability $p' \in (0,1)$.
		\Output $\theta_i$ such that $\| \theta_i - \theta(x) \|_{\overline{\theta}} \leq b$ at terminal iteration $i$.
		\Statex
		
		\State $\widehat{\epsilon}' \gets \widetilde{\epsilon}'\sqrt{\frac{1+\epsilon'}{1-\epsilon'}} + \sqrt{\frac{2\epsilon'}{1-\epsilon'}}$
		\State $\gamma' \gets \frac{2(1-d')^4 - \widehat{\epsilon}'(1+(1-d')^4)}{(1 - \widehat{\epsilon}')(1-d')^2((1-d')^4 + 1)}$
		\State \label{line:DefM'} $m' \gets \left\lceil \log( b / B ) \big/ \log \left( \frac{1 - (1-d')^4}{1 + (1-d')^4} + \widehat{\epsilon}' \right) \right\rceil$
		\State \label{line:DefN'} $N' \gets \left\lceil 2n \frac{m'}{p'} \left(\frac{1 + b(1-C)}{b (1-C)^2} \left( 1 + \frac{1+ \widetilde{\epsilon}'}{\widetilde{\epsilon}'} \sqrt{\frac{1+\epsilon'}{1-\epsilon'}} \right) \frac{1-2d'}{1-3d'} \right)^2 \right\rceil$
		\State \label{line:DefQ'}
		\algmultiline{$q' \gets \frac{p'}{204 m' n R^2} \left(\frac{b (1-C)^2}{1 + b(1-C)} \frac{\widetilde{\epsilon}' \sqrt{1 - \epsilon'}}{(1+ \widetilde{\epsilon}')\sqrt{1+\epsilon'} + \widetilde{\epsilon}'\sqrt{1-\epsilon'}} \frac{1-3d'}{1-2d'}\right)^2 \times$\\
		$\frac{1}{64} \left( \frac{r}{n+1} \right)^2 \min \left\{ 1, \left( 4R \| \overline{\theta} \| + \frac{32 R d' (n+1)}{r} \max\{ 1, (4R \| \overline{\theta} \|)^{2 \sqrt{\vartheta} + 1} \} \right)^{-4 \sqrt{\vartheta} - 2} \right\}$}
		\State $i \gets 0$
		\While{$ B\left( \frac{1 - (1-d')^4}{1 + (1-d')^4} + \widehat{\epsilon}' \right)^i > b$}
			\State \algmultiline{Generate hit-and-run samples $Y^{(1)}, ..., Y^{(N')}$ from the Boltzmann distribution with parameter $\theta_i$, starting from $\overline{X}$, drawing directions from $\mathcal{N}(0, \widehat{\Sigma}(\overline{\theta}))$, using $\ell'$ steps as in \eqref{eq:WalkLengthTheta}}
			\State Compute sample mean: $\widehat{x}(\theta_i) \gets \frac{1}{N'} \sum_{j=1}^{N'} Y^{(j)}$
			\If{$\| \widehat{x}(\theta_i) - x \|_{\widehat{\Sigma}(\overline{\theta})^{-1}} \leq \frac{b (1-C)^2}{1 + b(1-C)} \frac{1+ \widetilde{\epsilon}' }{(1+ \widetilde{\epsilon}')\sqrt{1+\epsilon'} + \widetilde{\epsilon}'\sqrt{1-\epsilon'}}$}
			\label{line:TestIf}
				\State \Return $\theta_i$ \Comment{$\| \theta_i - \theta(x) \|_{\overline{\theta}} \leq b$}
			\Else \label{line:TestElse}
				\State $\theta_{i+1} \gets \theta_i - \gamma' \widehat{\Sigma}(\overline{\theta})^{-1} (\widehat{x}(\theta_i) - x)$
				\State $i \gets i + 1$
			\EndIf
		\EndWhile
		\State \Return $\theta_i$
	\end{algorithmic}
\end{algorithm}

The following theorem shows that Algorithm \ref{alg:ThetaSubroutine} produces the desired result with high probability.
\begin{theorem}
	\label{thm:ThetaRoutineAnalysis}
	Let $p' \in (0,1)$.
	With probability at least $1-p'$, Algorithm \ref{alg:ThetaSubroutine} produces a $\widehat{\theta}(x)$ such that $\| \widehat{\theta}(x) - \theta(x) \|_{\overline{\theta}} \leq b$. The number of required samples hit-and-run steps is $m' N' \ell'$, if
	\begin{equation}
		\ell' = \left\lceil \frac{1+\epsilon'}{1-\epsilon'}\frac{64 e^2 n^3 10^{30}}{(1 - 2d')^2} \ln^2 \left( \sqrt{\frac{1+\epsilon'}{1-\epsilon'}} \frac{16 e n\sqrt{n} \exp(-2d')}{(q')^2 (1-d')^2(1-2d')} \right) \ln^3 \left( \frac{2 \exp(-2d')}{(q')^2  (1-d')^2} \right) \right\rceil,
		\label{eq:WalkLengthTheta}
	\end{equation}
	and $m'$, $N'$, $\epsilon'$, $q'$, and $d'$ are defined in Algorithm \ref{alg:ThetaSubroutine}, where $m'$, $q'$, and $N'$ depend on $p'$ and $b$ as in lines \ref{line:DefM'} -\ref{line:DefQ'} of Algorithm \ref{alg:ThetaSubroutine}.
\end{theorem}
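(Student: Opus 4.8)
The plan is to recognize Algorithm~\ref{alg:ThetaSubroutine} as an instance of the inexact Newton-type scheme of de Klerk, Glineur and Taylor~\cite{deklerk2017worst}, applied to the self-concordant function $\Psi(\theta) = f(\theta) - \langle \theta, x\rangle$, whose gradient $D\Psi(\theta) = g(\theta) - x$ and Hessian $D^2\Psi(\theta) = \Sigma(\theta)$ are exactly the objects estimated by sampling and whose unique minimizer is $\theta(x)$. The update $\theta_{i+1} = \theta_i - \gamma'\widehat{\Sigma}(\overline{\theta})^{-1}(\widehat{x}(\theta_i) - x)$ is a damped Newton step in which the \emph{fixed, inexact} Hessian $\widehat{\Sigma}(\overline{\theta})$ replaces $\Sigma(\theta_i)$. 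The input conditions $\|\theta_0 - \theta(x)\|_{\overline{\theta}} \le B$ and $2B + \|\theta_0 - \overline{\theta}\|_{\overline{\theta}} \le d' < \tfrac13$ guarantee, via the self-concordance inequalities~\eqref{eq:SCf}, that every iterate within $\overline{\theta}$-distance $B$ of $\theta(x)$ lies within $\overline{\theta}$-distance $d'$ of $\overline{\theta}$, so $\Sigma(\theta_i)$ and $\Sigma(\overline{\theta})$ differ only by a factor $(1-d')^{\pm 2}$; combined with the spectral approximation~\eqref{eq:ApproximateCovCondition}--\eqref{eq:ApproximateInverseCovCondition} of $\Sigma(\overline{\theta})$ by $\widehat{\Sigma}(\overline{\theta})$, this puts the step squarely in the inexact-Hessian, inexact-gradient regime of~\cite{deklerk2017worst}. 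Provided each gradient estimate has relative error at most $\widetilde{\epsilon}'$ in the local dual norm $\|\cdot\|_{\Sigma(\overline{\theta})^{-1}}$ and the combined error parameter $\widehat{\epsilon}'$ obeys the imposed bound $\widehat{\epsilon}' < \frac{2(1-d')^4}{1+(1-d')^4}$, the distance $\|\theta_i - \theta(x)\|_{\overline{\theta}}$ contracts by the factor $\frac{1-(1-d')^4}{1+(1-d')^4} + \widehat{\epsilon}'$ per step while the iterates remain in this region; the choice of $m'$ in line~\ref{line:DefM'} is precisely what makes $B$ times this contraction to the power $m'$ drop below $b$.

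Second, I would supply the sampling accuracy. By Theorem~\ref{thm:SampleMeanNorm}, with $\theta_i$ in the role of $\theta_1$ and $\overline{\theta}$ in the role of $\theta_0$, the $N'$ hit-and-run samples of length $\ell'$ from~\eqref{eq:WalkLengthTheta}, started from $\overline{X}$ with directions $\mathcal{N}(0,\widehat{\Sigma}(\overline{\theta}))$, produce $\widehat{x}(\theta_i)$ with $\|\widehat{x}(\theta_i) - g(\theta_i)\|_{\Sigma(\theta_i)^{-1}} \le \alpha$ with probability $1-p'/m'$, where $N' \ge \tfrac{2n}{(p'/m')\,\alpha^2}$ fixes $\alpha$. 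Here one checks the hypotheses of Theorems~\ref{thm:MixingTimeSC} and~\ref{thm:SampleMeanNorm} by bounding $\Delta\theta = \|\theta_i - \overline{\theta}\|_{\overline{\theta}} \le 2B + \|\theta_0 - \overline{\theta}\|_{\overline{\theta}} \le d'$ and, via Corollary~\ref{cor:ProximityThetaTwoPoints}, $\Delta x \le 2d'$, which is exactly what is hard-coded in~\eqref{eq:WalkLengthTheta}. The smallness requirement on $q'$ needs a lower bound on $\lambda_{\min}(\Sigma(\theta_i))$ from Theorem~\ref{thm:LowerBoundSigma}, applied with the Euclidean norm $\|\theta_i\| \le \|\overline{\theta}\| + \|\theta_i - \overline{\theta}\|$ and $\|\theta_i - \overline{\theta}\|$ controlled from $\|\theta_i - \overline{\theta}\|_{\overline{\theta}} \le d'$ and the spectral lower bound on $\Sigma(\overline{\theta})$ from Lemma~\ref{lemma:SpectrumUniform}; this is the origin of the explicit factor $\left(4R\|\overline{\theta}\| + \tfrac{32Rd'(n+1)}{r}\max\{1,(4R\|\overline{\theta}\|)^{2\sqrt{\vartheta}+1}\}\right)^{-4\sqrt{\vartheta}-2}$ in the definition of $q'$ in line~\ref{line:DefQ'}.

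Third, I would couple the absolute accuracy $\alpha$ to the relative-error hypothesis through the stopping test. As long as the test in line~\ref{line:TestIf} fails at $\theta_i$, one has $\|\widehat{x}(\theta_i) - x\|_{\widehat{\Sigma}(\overline{\theta})^{-1}}$ above the explicit threshold, so after peeling off $\alpha$ and converting norms through~\eqref{eq:ApproximateCovCondition}--\eqref{eq:ApproximateInverseCovCondition} and~\eqref{eq:SCf}, $\|g(\theta_i) - x\|$ in the relevant local norm is bounded below by a quantity comparable to $b$; choosing $\alpha$ (i.e. $N'$ and $q'$) small enough relative to that lower bound forces $\|\widehat{x}(\theta_i) - g(\theta_i)\| \le \widetilde{\epsilon}'\|g(\theta_i) - x\|$, so the contraction of the first step applies at every iterate the algorithm actually visits, and, since the loop guard permits at most $m'$ iterations, $\|\theta_{m'} - \theta(x)\|_{\overline{\theta}} \le B\cdot(\text{contraction})^{m'} \le b$ if the test never fires. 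Conversely, when the test passes at some $\theta_i$, then $\|x(\theta_i) - x\| = \|g(\theta_i) - x\|$ is at most the threshold plus $\alpha$; Corollary~\ref{cor:ProximityThetaTwoPoints} turns this into a bound on $\|\theta(x) - \theta_i\|_{\theta_i}$, and~\eqref{eq:SCf} (using the input bound $C$ on $\|x - x(\overline{\theta})\|_{\Sigma(\overline{\theta})^{-1}}$ to control the passage between the metrics at $x(\overline{\theta})$, $x(\theta_i)$ and $\overline{\theta}$) yields $\|\theta_i - \theta(x)\|_{\overline{\theta}} \le b$, as required. A union bound over the at most $m'$ iterations, each contributing failure probability at most $p'/m'$, gives overall success probability at least $1-p'$, and the total work is $m'$ iterations times $N'$ samples times $\ell'$ hit-and-run steps, i.e. $m'N'\ell'$.

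The main obstacle is exactly the bootstrap in the third step: one must pick the stopping threshold, $\alpha$, $N'$ and $q'$ so that a \emph{single} absolute accuracy $\alpha$ simultaneously (i) certifies the \emph{relative} gradient-error hypothesis $\widetilde{\epsilon}'$ of the de Klerk--Glineur--Taylor contraction whenever the algorithm has not stopped --- where the lower bound on $\|g(\theta_i) - x\|$ is available only \emph{because} the test failed --- and (ii) certifies the target bound $\|\theta_i - \theta(x)\|_{\overline{\theta}} \le b$ the instant the test fires, all while threading the single estimate through the chain of conversions between the $\widehat{\Sigma}(\overline{\theta})^{-1}$-norm, the local norms $\|\cdot\|_{\theta_i}$ and $\|\cdot\|_{\overline{\theta}}$, and the dual local norms $\|\cdot\|_{x(\theta_i)}^*$, $\|\cdot\|_{x(\overline{\theta})}^*$, each conversion costing a factor governed by $d'$, $\epsilon'$ or $C$. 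Verifying that the constants close is precisely the content of the opaque expressions for $N'$ and $q'$ in lines~\ref{line:DefN'} and~\ref{line:DefQ'}; once these are checked, the remaining arguments are the routine bookkeeping sketched above.
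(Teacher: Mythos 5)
Your proposal follows the paper's proof essentially step for step: the inductive containment $\|\theta_i - \overline{\theta}\|_{\overline{\theta}} \le d'$, the application of Theorem~\ref{thm:SampleMeanNorm} with $(\theta_0,\theta_1) = (\overline{\theta},\theta_i)$ and $p = p'/m'$, the two-case analysis around the stopping test in line~\ref{line:TestIf} (passing certifies $\|\theta_i - \theta(x)\|_{\overline{\theta}}\le b$ via Lemma~\ref{lemma:ProximityThetaGeneral}; failing certifies the relative gradient-error hypothesis so the de~Klerk--Glineur--Taylor contraction applies), and the union bound over $m'$ iterations are exactly the paper's argument. Two small slips that do not affect the structure: the bound hard-coded into~\eqref{eq:WalkLengthTheta} is $\Delta x = d'/(1-d')$ from Corollary~\ref{cor:ProximityThetaTwoPoints}, not $2d'$ (substituting $\Delta x = 2d'$ into~\eqref{eq:WalkLength} would yield $(1-d')^2(1-2d')^2$ in the denominator rather than the $(1-2d')^2$ that appears), and the lower bound on $\lambda_{\min}(\Sigma(\overline{\theta}))$ used to convert $\|\theta_i-\overline{\theta}\|_{\overline{\theta}}$ into $\|\theta_i-\overline{\theta}\|$ comes from Theorem~\ref{thm:LowerBoundSigma}, not Lemma~\ref{lemma:SpectrumUniform}.
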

\begin{proof}
	First, we want to show that all iterates $\theta_i$ lie close to $\overline{\theta}$ in the sense that $\| \theta_i - \overline{\theta} \|_{\overline{\theta}} \leq d'$ for all $i$. The argument is inductive: if $\| \theta_i - \overline{\theta} \|_{\overline{\theta}} \leq d'$, then the remainder of this proof will show that $\| \theta_{i+1} - \theta(x) \|_{\overline{\theta}} \leq \| \theta_i - \theta(x) \|_{\overline{\theta}}$, i.e. we are making progress to the minimizer, and therefore
	\begin{align}
		\| \theta_{i+1} - \overline{\theta} \|_{\overline{\theta}} &\leq \| \theta_{i+1} - \theta(x) \|_{\overline{\theta}} + \| \theta(x) - \theta_0 \|_{\overline{\theta}} + \| \theta_0 - \overline{\theta} \|_{\overline{\theta}} \nonumber\\
		&\leq 2 \| \theta(x) - \theta_0 \|_{\overline{\theta}} + \| \theta_0 - \overline{\theta} \|_{\overline{\theta}} \leq 2B + \| \theta_0 - \overline{\theta} \|_{\overline{\theta}} \leq d'.
		\label{eq:ProofThetaRoutineAnalysisProximityTheta}
	\end{align}
	
	We want to invoke Theorem \ref{thm:SampleMeanNorm} for $\theta_0 = \overline{\theta}$ and $\theta_1 = \theta_i$ to show that we can approximate the mean of the Boltzmann distribution with parameter $\theta_i$. Before we can do that, we need appropriate bounds on $\Delta \theta = \| \theta_i - \overline{\theta} \|_{\overline{\theta}}$, $\Delta x = \| x(\theta_i) - x(\overline{\theta}) \|^*_{x(\overline{\theta})}$ and $\lambda_{\min}(\Sigma(\theta_i))$.
	Note that $\Delta \theta = \| \theta_i - \overline{\theta} \|_{\overline{\theta}} \leq d' < 1$ by assumption, and Corollary \ref{cor:ProximityThetaTwoPoints} shows
	\begin{equation*}
		\Delta x = \| x(\theta_i) - x(\overline{\theta}) \|^*_{x(\overline{\theta})} \leq \frac{\| \theta_i - \overline{\theta} \|_{\overline{\theta}}}{1 - \| \theta_i - \overline{\theta} \|_{\overline{\theta}}} \leq \frac{d'}{1-d'} < 1.
	\end{equation*}
	By Theorem \ref{thm:LowerBoundSigma}, for all $i$,
	\begin{equation}
		\| \theta_i \| \leq \| \overline{\theta} \| + \| \theta_i - \overline{\theta} \| \leq \| \overline{\theta} \| + \frac{\| \theta_i - \overline{\theta} \|_{\overline{\theta}} }{\sqrt{\lambda_{\min}(\Sigma(\overline{\theta}))}} \leq \| \overline{\theta} \| + \frac{8 d' (n+1)}{r} \max\{ 1, (4R \| \overline{\theta} \|)^{2 \sqrt{\vartheta} + 1} \},
		\label{eq:ProofThetaRoutineAnalysis0}
	\end{equation}
	and therefore another application of Theorem \ref{thm:LowerBoundSigma} yields
	\begin{align*}
		\lambda_{\min}(\Sigma(\theta_i))
		&\geq \frac{1}{64} \left( \frac{r}{n+1} \right)^2 \min \left\{ 1, \left( 4R \| \theta_i \| \right)^{-4 \sqrt{\vartheta} - 2} \right\}\\
		&\geq \frac{1}{64} \left( \frac{r}{n+1} \right)^2 \min \left\{ 1, \left( 4R \| \overline{\theta} \| + \frac{32 R d' (n+1)}{r} \max\{ 1, (4R \| \overline{\theta} \|)^{2 \sqrt{\vartheta} + 1} \} \right)^{-4 \sqrt{\vartheta} - 2} \right\},
	\end{align*}
	where the second inequality is from \eqref{eq:ProofThetaRoutineAnalysis0}.
	We will now apply Theorem \ref{thm:SampleMeanNorm} with $p = p'/m'$ and
	\begin{equation*}
		\alpha = \left(\frac{b (1-C)^2}{1 + b(1-C)} \right) \left(\frac{\widetilde{\epsilon}' \sqrt{1 - \epsilon'}}{(1+ \widetilde{\epsilon}')\sqrt{1+\epsilon'} + \widetilde{\epsilon}'\sqrt{1-\epsilon'}} \right) \left(\frac{1-3d'}{1-2d'}\right).
	\end{equation*}
	Note that the values of $N'$ and $q'$ in Algorithm \ref{alg:ThetaSubroutine} are chosen such that Theorem \ref{thm:SampleMeanNorm} now shows that in each iteration $i$, it holds with probability at least $1-p'/m'$ that the sample mean $\widehat{x}(\theta_i)$ satisfies
	\begin{equation}
		\left\| \widehat{x}(\theta_i) - x(\theta_i) \right\|_{x(\theta_i)}^* \leq \left(\frac{b (1-C)^2}{1 + b(1-C)} \right) \left(\frac{\widetilde{\epsilon}' \sqrt{1 - \epsilon'}}{(1+ \widetilde{\epsilon}')\sqrt{1+\epsilon'} + \widetilde{\epsilon}'\sqrt{1-\epsilon'}} \right) \left(\frac{1-3d'}{1-2d'}\right),
		\label{eq:ProofThetaRoutineAnalysisEXi}
	\end{equation}
	where we recall that Lemma \ref{lemma:PropertiesDerivativesConjugate} demonstrates $\left\| \widehat{x}(\theta_i) - x(\theta_i) \right\|_{\Sigma(\theta_i)^{-1}} = \left\| \widehat{x}(\theta_i) - x(\theta_i) \right\|_{x(\theta_i)}^*$. We want to show that it follows from \eqref{eq:ProofThetaRoutineAnalysisEXi} that the $\| \cdot \|^*_{x(\overline{\theta})}$-norm of $\widehat{x}(\theta_i) - x(\theta_i)$ is also small.
	Since $\| \overline{\theta}- \theta_i \|_{\overline{\theta}} \leq d' < \frac{1}{3}$ by \eqref{eq:ProofThetaRoutineAnalysisProximityTheta}, the second inequality in \eqref{eq:SCf} shows
	\begin{equation*}
		\| \overline{\theta}- \theta_i \|_{\theta_i} \leq \frac{\| \overline{\theta}- \theta_i \|_{\overline{\theta}}}{1 - \| \overline{\theta}- \theta_i \|_{\overline{\theta}}} \leq \frac{d'}{1-d'} < \tfrac{1}{2}.
	\end{equation*}
	Therefore, the first inequality in \eqref{eq:ProximityThetaTwoPointsX} implies that
	\begin{equation}
		\| x(\overline{\theta}) - x(\theta_i) \|_{x(\theta_i)}^*
		\leq \frac{\| \overline{\theta}- \theta_i \|_{\theta_i} }{1 - \| \overline{\theta}- \theta_i \|_{\theta_i}}
		\leq \frac{\frac{d'}{1-d'}}{1 - \frac{d'}{1-d'}}
		= \frac{d'}{1 - 2 d'} < 1.
		\label{eq:ProofThetaRoutineAnalysisThetaToTheta}
	\end{equation}
	Consequently, we can now use the second inequality in \eqref{eq:SCf*}, along with \eqref{eq:ProofThetaRoutineAnalysisEXi} and \eqref{eq:ProofThetaRoutineAnalysisThetaToTheta} to see that
	\begin{align}
		\left\| \widehat{x}(\theta_i) - x(\theta_i) \right\|_{x(\overline{\theta})}^*
		&\leq \frac{\left\| \widehat{x}(\theta_i) - x(\theta_i) \right\|_{x(\theta_i)}^*}{1 - \left\| {x}(\overline{\theta}) - x(\theta_i) \right\|_{x(\theta_i)}^*} \nonumber\\
		&\leq \frac{\left(\frac{b (1-C)^2}{1 + b(1-C)} \right) \left(\frac{\widetilde{\epsilon}' \sqrt{1 - \epsilon'}}{(1+ \widetilde{\epsilon}')\sqrt{1+\epsilon'} + \widetilde{\epsilon}'\sqrt{1-\epsilon'}} \right) \left(\frac{1-3d'}{1-2d'}\right) }{1 - \frac{d'}{1 - 2 d'}} \nonumber \\
		&= \left(\frac{b (1-C)^2}{1 + b(1-C)} \right)  \left(\frac{\widetilde{\epsilon}' \sqrt{1 - \epsilon'}}{(1+ \widetilde{\epsilon}')\sqrt{1+\epsilon'} + \widetilde{\epsilon}'\sqrt{1-\epsilon'}} \right).
		\label{eq:ProofThetaRoutineAnalysisEXbar}
	\end{align}
	
	Algorithm \ref{alg:ThetaSubroutine} distinguishes the following two cases in lines \ref{line:TestIf} and \ref{line:TestElse}:
	\begin{enumerate}[(a)]
		\item \label{case:Less}
		$\| \widehat{x}(\theta_i) - x \|_{\widehat{\Sigma}(\overline{\theta})^{-1}} \leq \left(\frac{b (1-C)^2}{1 + b(1-C)} \right) \left(\frac{1+ \widetilde{\epsilon}' }{(1+ \widetilde{\epsilon}')\sqrt{1+\epsilon'} + \widetilde{\epsilon}'\sqrt{1-\epsilon'}} \right)$;
		\item \label{case:More}
		$\| \widehat{x}(\theta_i) - x \|_{\widehat{\Sigma}(\overline{\theta})^{-1}} > \left(\frac{b (1-C)^2}{1 + b(1-C)} \right) \left( \frac{1+ \widetilde{\epsilon}' }{(1+ \widetilde{\epsilon}')\sqrt{1+\epsilon'} + \widetilde{\epsilon}'\sqrt{1-\epsilon'}} \right)$.
	\end{enumerate}
	In Case \eqref{case:Less}, we have
	\begin{align*}
		\| x(\theta_i) - x \|_{\Sigma(\overline{\theta})^{-1}}
		&\leq \| \widehat{x}(\theta_i) -  x(\theta_i) \|_{\Sigma(\overline{\theta})^{-1}} + \| \widehat{x}(\theta_i) - x \|_{\Sigma(\overline{\theta})^{-1}} && \text{(triangle inequality)}\\
		&\leq \| \widehat{x}(\theta_i) -  x(\theta_i) \|_{\Sigma(\overline{\theta})^{-1}} + \sqrt{1+\epsilon'} \| \widehat{x}(\theta_i) - x \|_{\widehat{\Sigma}(\overline{\theta})^{-1}} && \text{(by \eqref{eq:ApproximateInverseCovCondition})}\\
		&\leq \frac{b (1-C)^2}{1 + b(1-C)}. && \text{(by \eqref{eq:ProofThetaRoutineAnalysisEXbar} and Case \eqref{case:Less})}
	\end{align*}
	Since $\| x - x(\overline{\theta}) \|_{x(\overline{\theta})}^* \leq C < 1$ by assumption, it follows that
	\begin{equation*}
		\| x - x(\theta_i) \|_{x(\overline{\theta})}^* + \| x - x(\overline{\theta}) \|_{x(\overline{\theta})}^* \leq \frac{b (1-C)^2}{1 + b(1-C)} + C = \frac{C + b(1-C)}{1 + b(1 - C)} < 1.
	\end{equation*}
	Thus, we may apply Lemma \ref{lemma:ProximityThetaGeneral} to see that
	\begin{align*}
		\| \theta_i - \theta(x) \|_{\overline{\theta}} &\leq \frac{1}{1 - \| x - x(\overline{\theta}) \|_{x(\overline{\theta})}^*} \frac{\| x - x(\theta_i) \|_{x(\overline{\theta})}^*}{1 - \| x - x(\theta_i) \|_{x(\overline{\theta})}^* - \| x - x(\overline{\theta}) \|_{x(\overline{\theta})}^*}\\
		&\leq \frac{1}{1-C} \frac{ \frac{b(1-C)^2}{1 + b(1-C)} }{1 - \frac{b(1-C)^2}{1 + b(1-C)} - C} = b.
	\end{align*}
	In conclusion, the stopping criterion for Algorithm \ref{alg:ThetaSubroutine} is met in Case \eqref{case:Less}.
	In Case \eqref{case:More}, we can expand \eqref{eq:ProofThetaRoutineAnalysisEXbar} to
	\begin{align}
		\left\| \widehat{x}(\theta_i) - x(\theta_i) \right\|_{x(\overline{\theta})}^*
		&\leq \left(\frac{b (1-C)^2}{1 + b(1-C)} \right)  \left(\frac{\widetilde{\epsilon}' \sqrt{1 - \epsilon'}}{(1+ \widetilde{\epsilon}')\sqrt{1+\epsilon'} + \widetilde{\epsilon}'\sqrt{1-\epsilon'}} \right) \nonumber\\
		&= \frac{\widetilde{\epsilon}'}{1+\widetilde{\epsilon}'} \sqrt{1-\epsilon'} \left(\frac{b (1-C)^2}{1 + b(1-C)} \right) \left( \frac{1+ \widetilde{\epsilon}' }{(1+ \widetilde{\epsilon}')\sqrt{1+\epsilon'} + \widetilde{\epsilon}'\sqrt{1-\epsilon'}} \right) \nonumber\\
		&< \frac{\widetilde{\epsilon}'}{1+\widetilde{\epsilon}'} \sqrt{1-\epsilon'} \| \widehat{x}(\theta_i) - x \|_{\widehat{\Sigma}(\overline{\theta})^{-1}} && \text{(Case \eqref{case:More})} \nonumber\\
		&\leq \frac{\widetilde{\epsilon}'}{1+\widetilde{\epsilon}'} \| \widehat{x}(\theta_i) - x \|^*_{x(\overline{\theta})} && \text{(by \eqref{eq:ApproximateInverseCovCondition})} \nonumber\\
		&\leq \frac{\widetilde{\epsilon}'}{1+\widetilde{\epsilon}'} \left(\| x(\theta_i) - x \|^*_{x(\overline{\theta})} + \| \widehat{x}(\theta_i) - x(\theta_i) \|^*_{x(\overline{\theta})} \right) && \text{(triangle inequality)},
		\label{eq:ProofThetaRoutineAnalysis1}
	\end{align}
	where we have used $\Sigma(\overline{\theta})^{-1} = H^*(x(\overline{\theta}))$. Again using $\Sigma(\overline{\theta})^{-1} = H^*(x(\overline{\theta}))$, the outermost inequality in \eqref{eq:ProofThetaRoutineAnalysis1} can be rewritten as
	\begin{equation*}
		\left\|  \widehat{x}(\theta_i) - x(\theta_i) \right\|_{\Sigma(\overline{\theta})^{-1}}
		\leq \widetilde{\epsilon}' \left\| x(\theta_i) - x \right\|_{\Sigma(\overline{\theta})^{-1}}.
	\end{equation*}
	Since we also have an approximation $\widehat{\Sigma}(\overline{\theta}) \approx \Sigma(\overline{\theta})$ satisfying \eqref{eq:ApproximateCovCondition} and \eqref{eq:ApproximateInverseCovCondition} for some $\epsilon' > 0$, one can show in a manner analogous to Corollary 7.5 in de Klerk, Glineur and Taylor \cite{deklerk2017worst} that
	\begin{equation*}
		\left\| \widehat{\Sigma}(\overline{\theta})^{-1} (\widehat{x}(\theta_i) - x) - \Sigma(\overline{\theta})^{-1} (x(\theta_i) - x) \right\|_{\Sigma(\overline{\theta})}
		\leq \widehat{\epsilon}' \left\| \Sigma(\overline{\theta})^{-1} (x(\theta_i) - x) \right\|_{\Sigma(\overline{\theta})},
	\end{equation*}
	where $\widehat{\epsilon}' = \widetilde{\epsilon}'\sqrt{\frac{1+\epsilon'}{1-\epsilon'}} + \sqrt{\frac{2\epsilon'}{1-\epsilon'}}$. Hence, we have an approximate gradient with respect to $\langle \cdot, \cdot \rangle_{\overline{\theta}}$, and we can apply approximate gradient descent. Recall that $\Psi$ is self-concordant with minimizer $\theta(x)$.
	Note that Theorem 3.6 in \cite{deklerk2017worst} shows that the spectrum of $H_{\overline{\theta}}(\theta_i)$ is contained in $[(1-d')^2, \frac{1}{(1-d')^2}]$ for all $i$. Theorem 5.3 in \cite{deklerk2017worst} shows that if $\widehat{\epsilon}' \leq \frac{2(1-d')^4}{1 + (1-d')^4}$ and $\gamma'$ takes the value it is given in Algorithm \ref{alg:ThetaSubroutine}, then it holds for $\theta_{i+1} = \theta_i - \gamma' \widehat{\Sigma}(\overline{\theta})^{-1} (\widehat{x}(\theta_i) - x)$ that
	\begin{equation*}
		\| \theta_{i+1} - \theta(x) \|_{\overline{\theta}} \leq \left( \frac{1 - (1-d')^4}{1 + (1-d')^4} + \widehat{\epsilon}' \right) \| \theta_i - \theta(x) \|_{\overline{\theta}}.
	\end{equation*}
	Hence, if our starting point $\theta_0$ satisfies $\| \theta_0 - \theta(x) \|_{\overline{\theta}} \leq B$, then after
	\begin{equation*}
		m' = \left\lceil \frac{\log( b / B )}{\log \left( \frac{1 - (1-d')^4}{1 + (1-d')^4} + \widehat{\epsilon}' \right)} \right\rceil
	\end{equation*}
	iterations, we have $\| \theta_{m'} - \theta(x) \|_{\overline{\theta}} \leq b$. Recall that the probability of failure, where not finding a good enough approximation of $x(\theta_i) = \mathbb{E}_{\theta_i}[X]$ constitutes a failure of the algorithm, is at most $p'/m'$ in each iteration. By the union bound, the probability of success after $m'$ iterations is therefore at least $1-p'$.
%
%
%
\end{proof}


\subsection{Parameter Values}
\label{subsec:ParameterValues}
The algorithm in the previous section allows us to approximate $\theta(x_k)$ up to any desired accuracy $b > 0$. However, the requirement \eqref{eq:ThetaCondition} is stated in terms of the unknown expression $\| \theta(z(\eta_{k+1})) - \theta(x_k) \|_{\Sigma(-\eta_{k+1} c)}$. The purpose of this section is to provide a lower bound on this expression, which may then be used in the role of $b$ in Algorithm \ref{alg:ThetaSubroutine}.

The triangle inequality shows that
\begin{align}
	\left\| \theta(z(\eta_{k+1})) - \theta(x_k) \right\|_{\Sigma(-\eta_{k+1} c)}
	&\geq \left\| \theta(z(\eta_{k+1})) - \theta(z(\eta_k)) \right\|_{\Sigma(-\eta_{k+1} c)} - \left\| \theta(z(\eta_k)) - \theta(x_k) \right\|_{\Sigma(-\eta_{k+1} c)} \nonumber\\
	&\geq (\eta_{k+1} - \eta_k) \left\| c \right\|_{\Sigma(-\eta_{k+1} c)} - \frac{\left\| \theta(z(\eta_k)) - \theta(x_k) \right\|_{\Sigma(-\eta_k c)}}{1 - (\eta_{k+1} - \eta_k) \left\| c \right\|_{\Sigma(-\eta_k c)}},
	\label{eq:LowerBoundThetaCriterionRHS}
\end{align}
where the second inequality uses \eqref{eq:SCf}.
%
%
%
%
%
%
It follows from Corollary \ref{cor:ProximityThetaTwoPoints} that if $\| x_k - z(\eta_k) \|_{z(\eta_k)}^* < 1$, we can further bound \eqref{eq:LowerBoundThetaCriterionRHS} by
\begin{equation}
	\left\| \theta(z(\eta_{k+1})) - \theta(x_k) \right\|_{\Sigma(-\eta_{k+1} c)}
	\geq (\eta_{k+1} - \eta_k) \left\| c \right\|_{-\eta_{k+1} c} - \frac{\frac{\left\| x_k - z(\eta_k) \right\|_{z(\eta_k)}^*}{ 1 - \left\| x_k - z(\eta_k) \right\|^*_{z(\eta_k)}}}{1 - (\eta_{k+1} - \eta_k) \left\| c \right\|_{-\eta_k c}}.
	\label{eq:LowerBoundThetaCriterionRHS2}
\end{equation}

For our main algorithm, we can configure how closely we want to follow the central path, i.e. how small $\| x_k - z(\eta_k) \|_{z(\eta_k)}^*$ should be in each iteration $k$. Moreover, with Lemma \ref{lemma:LowerBoundTheta} and \eqref{eq:UpperBoundSigma}, we have lower and upper bounds on $\| c \|_{-\eta_{k+1} c}$ and $\| c \|_{-\eta_k c}$, respectively. The only remaining ingredient to finding an explicit lower bound on $\| \theta(z(\eta_{k+1})) - \theta(x_k) \|_{\Sigma(-\eta_{k+1} c)}$ is therefore to set explicit parameter values.

The parameter values in Algorithm \ref{alg:ShortStepIPM} we still have to determine are the approximation error $\widehat{\epsilon}$, the growth factor $\beta$ and the central path proximity $\delta$. For most short-step interior point methods, this is a matter of keeping $\beta$ relatively small and $\delta$ large enough to guarantee that after one takes a step from the current iterate, the resulting point will lie within distance $\frac{1}{2} \delta$ of the next minimizer on the central path. More formally, suppose $\| x_k - z(\eta_k) \|^*_{z(\eta_k)} \leq \frac{1}{2} \delta$, and we want to apply an approximate gradient descent step to estimate $z(\eta_{k+1})$. First note that, as in the proof of Theorem 7.6 in de Klerk, Glineur and Taylor \cite{deklerk2017worst},
\begin{equation}
	\| z(\eta_{k+1}) - z(\eta_k) \|_{z(\eta_k)}^* \leq \beta + \frac{3 \beta^2}{(1-\beta)^3},
	\label{eq:DistanceMinimizers}
\end{equation}
and therefore, by the second inequality in \eqref{eq:SCf*},
\begin{equation}
	\| x_k - z(\eta_{k+1}) \|_{z(\eta_{k+1})}^* \leq \frac{\| x_k - z(\eta_k) \|_{z(\eta_k)}^* + \| z(\eta_{k+1}) - z(\eta_k) \|_{z(\eta_k)}^*}{1 - \| z(\eta_{k+1}) - z(\eta_k) \|_{z(\eta_k)}^*}
	\leq \frac{\frac{1}{2} \delta + \beta + \frac{3 \beta^2}{(1-\beta)^3}}{1 - \left( \beta + \frac{3 \beta^2}{(1-\beta)^3} \right)}. \label{eq:DistanceXToNextMinimizer}
\end{equation}
Supposing we have a $\widehat{g}^{*,\eta_{k+1}}(x_k)$ and $\widehat{\Sigma}(-\eta_{k+1} c)$ that satisfy
\begin{equation*}
	\left\| \widehat{\Sigma}(-\eta_{k+1}c) \widehat{g}^{*,\eta_{k+1}}(x_k) - g_{z(\eta_{k+1})}^{*,\eta_{k+1}}(x_k) \right\|_{z(\eta_{k+1})}^* \leq \widehat{\epsilon} \left\| g_{z(\eta_{k+1})}^{*,\eta_{k+1}}(x_k) \right\|_{z(\eta_{k+1})}^*,
\end{equation*}
we can now apply Corollary 6.2 in \cite{deklerk2017worst}. After a damped gradient descent step, we have
\begin{equation}
	\| x_{k+1} - z(\eta_{k+1}) \|_{z(\eta_{k+1})}^* \leq \left( \frac{1 - \kappa}{1 + \kappa} + \widehat{\epsilon} \right) \| x_k - z(\eta_{k+1}) \|_{z(\eta_{k+1})}^*
	\leq \left( \frac{1 - \kappa}{1 + \kappa} + \widehat{\epsilon} \right) \frac{\frac{1}{2} \delta + \beta + \frac{3 \beta^2}{(1-\beta)^3}}{1 - \left( \beta + \frac{3 \beta^2}{(1-\beta)^3} \right)},
	\label{eq:DistanceToNewMinimizer}
\end{equation}
where
\begin{equation*}
	\kappa = \left( 1 - \frac{\delta + 2\beta + \frac{6 \beta^2}{(1-\beta)^3}}{1 - \left( \beta + \frac{3 \beta^2}{(1-\beta)^3} \right)} \right)^4.
\end{equation*}
Thus, \eqref{eq:DistanceToNewMinimizer} should be at most $\frac{1}{2} \delta$ to maintain a distance of at most $\frac{1}{2} \delta$ to the central path.

However, in our algorithm, we are also looking for a $\widehat{\theta}(x_k)$ that satisfies \eqref{eq:ThetaCondition}. The right hand side of this condition depends on the unknown quantity $\| \theta(z(\eta_{k+1})) - \theta(x_k) \|_{\Sigma(-\eta_{k+1} c)}$, and therefore we would like to bound it from below by some known, fixed quantity $Q > 0$. If we can find a $\widehat{\theta}(x_k)$ such that  $\| \widehat{\theta}(x_k) - \theta(x_k) \|_{\Sigma(-\eta_{k+1} c)} \leq \widetilde{\epsilon} Q$, then \eqref{eq:ThetaCondition} holds as well. Recall that we already made some progress in bounding $\| \theta(z(\eta_{k+1})) - \theta(x_k) \|_{\Sigma(-\eta_{k+1} c)}$ from below in \eqref{eq:LowerBoundThetaCriterionRHS2}.
To make further developing this bound easier, one would like to increase $\beta$ as much as possible (such that $\eta_{k+1}$ and $\eta_k$ are further from each other), and decrease $\delta$ (such that $x_k$ lies closer to $z(\eta_k)$). This desire thus conflicts with our wish to make \eqref{eq:DistanceToNewMinimizer} smaller than or equal to $\frac{1}{2} \delta$.

The following lemma gives parameter values where both demands are met. These values depend on a number $S$ with $1/S = O(\frac{n \sqrt{\vartheta}}{r \eta_0}) = O(\frac{n \sqrt{n}}{r \eta_0})$. We first show that \eqref{eq:DistanceToNewMinimizer} holds, which is a purely algebraic property.
\begin{lemma}
	\label{lemma:ParameterValues}
	Let $r, R > 0$ such that $R \geq r$.
	Let $n, \vartheta \geq 1$, $\eta_0 > 0$, and $\bar{\epsilon} \geq 0$.
	Define the parameter
	\begin{equation*}
		S := \frac{1}{4} \frac{1}{\sqrt{\vartheta} + \frac{7}{176}} \frac{r \eta_0}{2(n+ 1) + r \eta_0},
	\end{equation*}
	and set
	\begin{equation*}
		\widehat{\epsilon} = \frac{S}{32 + 7 S}, \qquad
		\beta = \min\left\{ \frac{21S}{640 + 332S}, \frac{7}{176}\frac{32+5S}{32+7S}, \frac{\bar{\epsilon}}{4R \sqrt{\vartheta}(1+\frac{7}{1408})} \right\}, \qquad
		\delta = S \beta.
	\end{equation*}
	Then,
	\begin{equation*}
		\left( \frac{1 - \left(1- \frac{\delta + 2 \overline{\beta}}{1-\overline{\beta}} \right)^4}{1 + \left(1 - \frac{\delta + 2 \overline{\beta}}{1-\overline{\beta}} \right)^4} + \widehat{\epsilon} \right)
		\frac{\overline{\beta} + \frac{1}{2} \delta }{1 - \overline{\beta} } \leq \tfrac{1}{2} \delta,
	\end{equation*}
	where $\overline{\beta} := \beta + \frac{3\beta^2}{(1-\beta)^3}$.
\end{lemma}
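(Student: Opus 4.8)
The plan is to treat the claim as a self-contained algebraic inequality: every quantity occurring in it ($\beta$, $\overline\beta$, $\delta$, $\widehat\epsilon$, and the auxiliary ratio $t := \frac{\delta + 2\overline\beta}{1-\overline\beta}$) can be bounded explicitly in terms of the single scalar $S$, which itself ranges over a bounded interval. So the first step is to pin down the range of $S$: since $\vartheta \ge 1$ we have $\sqrt\vartheta + \frac{7}{176} > 1$, and $\frac{r\eta_0}{2(n+1)+r\eta_0} < 1$ because $2(n+1) > 0$, hence $0 < S < \frac14$. (If $\bar\epsilon = 0$ then $\beta = \delta = 0$ and both sides of the target inequality vanish, so from now on one may assume $\beta > 0$.)

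Next I would extract a crude upper bound on $\beta$ from the first term of the defining minimum, namely $\beta \le \frac{21S}{640+332S} < \frac{21S}{640} < \frac{1}{100}$; the other two terms of the $\min$ play no role in this lemma (they will be needed only in the later lemmas controlling $\Delta\theta$, $\Delta x$ and the stopping tolerance). Because $\beta$ is this small, $\frac{3\beta}{(1-\beta)^3}$ is tiny, so $\overline\beta = \beta + \frac{3\beta^2}{(1-\beta)^3} \le c\,\beta$ with an explicit constant $c$ barely above $1$ (one can take $c = \frac{22}{21}$, since $63\beta \le (1-\beta)^3$ follows from $\beta < \frac{1}{100}$). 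In particular $\overline\beta < 1$, so $t$ is well defined, and $t$ is of order $\beta$: $t = \frac{S\beta + 2\overline\beta}{1-\overline\beta} \le \frac{(S+2c)\beta}{1-c\beta} < \frac{1}{30}$ given the bounds on $S$ and $\beta$.

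With these bounds in hand, I would substitute $\delta = S\beta$ into the target inequality and divide by $\beta > 0$, reducing the claim to
\begin{equation*}
	\left( \frac{1-(1-t)^4}{1+(1-t)^4} + \widehat\epsilon \right)\frac{\overline\beta/\beta + S/2}{1-\overline\beta} \le \frac{S}{2}.
\end{equation*}
Now I would bound the left-hand side by elementary estimates: Bernoulli's inequality gives $(1-t)^4 \ge 1-4t$, hence $\frac{1-(1-t)^4}{1+(1-t)^4} \le \frac{4t}{2-4t}$, which for $t < \frac{1}{30}$ is at most $2t$ times a factor very close to $1$; combining this with $\widehat\epsilon = \frac{S}{32+7S} \le \frac{S}{32}$, $\overline\beta/\beta \le c$, $1-\overline\beta \ge 1 - c\beta$, and the bound on $t$, the whole left-hand side becomes a concrete rational function of $S$ on $(0,\frac14)$. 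Clearing denominators leaves a polynomial inequality in $S$ which I would verify on $(0,\frac14)$; the asymptotics already indicate ample room — as $S\to 0$ the inequality collapses to $\beta \lesssim \frac{S(16-c)}{128c^2} \approx 0.1\,S$, comfortably implied by $\beta \le \frac{21S}{640} \approx 0.033\,S$, and that slack absorbs the higher-order corrections (the $\frac{3\beta^2}{(1-\beta)^3}$ term in $\overline\beta$ and the $O(t^2)$ correction in $\frac{1-(1-t)^4}{1+(1-t)^4}$).

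The main obstacle is purely bookkeeping: one must choose the intermediate constants ($c$ for $\overline\beta/\beta$; a linear over-estimate of $\frac{1-(1-t)^4}{1+(1-t)^4}$ valid on the relevant range of $t$; the bound on $\widehat\epsilon$) tightly enough that they are simultaneously compatible with the specific numerical constants $640$, $332$, $32$, $7$ in the statement, and then carefully verify the resulting polynomial inequality in $S$ over $(0,\frac14)$ — most conveniently by checking monotonicity in $S$ and evaluating at the endpoint $S = \frac14$, or by a direct sign analysis after clearing denominators.
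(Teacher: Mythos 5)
Your strategy---bound everything crudely in terms of the single scalar $S$, divide out $\beta$, and verify a resulting rational inequality on $(0,\tfrac14)$---is a genuinely different route from the paper's. The paper's proof is structural: it observes that with $\widehat\epsilon = \frac{S}{32+7S}$ one has the exact algebraic identity
$\frac{21S}{640+332S} = \frac{7\bigl(7S(\widehat\epsilon-1)+32\widehat\epsilon\bigr)}{8\bigl(7S(\widehat\epsilon-12)+32(\widehat\epsilon-5)\bigr)}$
(indeed $\widehat\epsilon(7S+32)=S$ makes both the numerator and denominator collapse), which places $\beta$ below the positive intersection of $t\mapsto tS$ with a certain rational function and hence gives $\delta = S\beta \ge \frac{32\beta(8(5-\widehat\epsilon)\beta+7\widehat\epsilon)}{49-49\widehat\epsilon-56\beta(12-\widehat\epsilon)}$. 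Combining this with $\overline\beta\le\frac87\beta$ and the convexity bound $b-\sqrt{b^2-t}\le t/b$ shows that $\delta$ lies between the two roots of the quadratic $\bigl(5\frac{\overline\beta+\delta/2}{1-\overline\beta}+\widehat\epsilon\bigr)\frac{\overline\beta+\delta/2}{1-\overline\beta}-\tfrac12\delta=0$, and the elementary estimate $\frac{1-(1-t)^4}{1+(1-t)^4}\le\frac52 t$ (for all $t\ge 0$) turns that into the stated inequality. The constants $640,332,32,7$ exist precisely so that this quadratic machinery closes exactly; the paper's argument is essentially tight by construction rather than slack-based.

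Your intermediate estimates are all correct ($0<S<\tfrac14$, $\beta<\tfrac1{100}$, $\overline\beta\le\frac{22}{21}\beta$, $t<\tfrac1{30}$, $\widehat\epsilon\le S/32$, the Bernoulli bound), and spot checks confirm that the target inequality does hold with comfortable margin across $(0,\tfrac14)$, so the approach is viable in principle. But as written it is a plan, not a proof: the ``polynomial inequality in $S$ on $(0,\tfrac14)$'' that everything reduces to is never actually written down, let alone verified, and you acknowledge as much in the final paragraph. The fallback suggestion to ``check monotonicity in $S$ and evaluate at the endpoint'' is also unsubstantiated---after clearing denominators the reduced LHS minus RHS is not obviously monotone in $S$, and establishing that (or doing an actual sign analysis of the cleared polynomial) is precisely the content of the lemma. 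To turn this into a proof you would need to fix concrete values for all the over-estimating constants, write out the resulting rational inequality in $S$, and prove its sign on the whole interval---that final verification, not the surrounding estimates, is the missing step.
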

\begin{proof}
	See Appendix \ref{app:Proofs}.
\end{proof}

Next, we show that the right hand side \eqref{eq:LowerBoundThetaCriterionRHS2} can be bounded away from zero for the parameter values in Lemma \ref{lemma:ParameterValues}.

\begin{lemma}
	\label{lemma:ParameterValuesBound}
	Let $K \subseteq \mathbb{R}^n$ be a convex body that contains a Euclidean ball of radius $r > 0$ and is contained in a Euclidean ball of radius $R \geq r$.
	Define $f$ as the log partition function $f(\theta) = \ln \int_{K} e^{\langle \theta, x \rangle} \diff x$, where $\langle \cdot, \cdot \rangle$ is the Euclidean inner product, and denote its Hessian by $\Sigma(\theta)$. Let $f^*$ be the entropic barrier for $K$ with complexity parameter $\vartheta \geq 1$, and let $\eta_0, \bar{\epsilon} > 0$.
	Let the values of $S$, $\widehat{\epsilon}$, $\beta$ and $\delta$ be as in Lemma \ref{lemma:ParameterValues}.
	For all positive integers $k \leq \left\lceil \log(\frac{\vartheta (1 + \delta / 2)}{\eta_0 \bar{\epsilon}}) / \log(1 + \beta / \sqrt{\vartheta}) \right\rceil$, define $\eta_k = (1 + \beta/\sqrt{\vartheta})^k \eta_0$. Then, for all $k \leq \left\lceil \log(\frac{\vartheta (1 + \delta / 2)}{\eta_0 \bar{\epsilon}}) / \log(1 + \beta / \sqrt{\vartheta}) \right\rceil - 1$,
	\begin{equation*}
	(\eta_{k+1} - \eta_k) \left\| c \right\|_{-\eta_{k+1} c} - \frac{\frac{\delta/2}{ 1 - \delta/2}}{1 - (\eta_{k+1} - \eta_k) \left\| c \right\|_{-\eta_k c}}
	\geq \frac{1}{2} \frac{\beta}{\sqrt{\vartheta} + \beta} \frac{r \eta_{k+1}}{2(n+ 1) + r \eta_{k+1}}.
	\end{equation*}
\end{lemma}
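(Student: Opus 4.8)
The plan is to handle the two terms on the left-hand side separately: the first, "positive'' term will already exceed the right-hand side by a factor two, while the subtracted term will be shown to be negligible because $\delta = S\beta$ is extremely small — indeed, $S$ was defined precisely so that this works.

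First I would rewrite the step size using $\eta_{k+1} = (1+\beta/\sqrt\vartheta)\eta_k$, namely $\eta_{k+1}-\eta_k = \tfrac{\beta}{\sqrt\vartheta}\eta_k = \tfrac{\beta}{\sqrt\vartheta+\beta}\eta_{k+1}$. For the first term, note $\|c\|_{-\eta_{k+1}c} = \eta_{k+1}^{-1}\|{-\eta_{k+1}c}\|_{-\eta_{k+1}c}$; applying Lemma \ref{lemma:LowerBoundTheta} with $\theta = -\eta_{k+1}c$ (so $\|\theta\| = \eta_{k+1}$ since $\|c\|=1$) gives $\|c\|_{-\eta_{k+1}c} \geq \tfrac{r}{2(n+1)+r\eta_{k+1}}$, hence
\[
(\eta_{k+1}-\eta_k)\|c\|_{-\eta_{k+1}c} \geq \frac{\beta}{\sqrt\vartheta+\beta}\,\frac{r\eta_{k+1}}{2(n+1)+r\eta_{k+1}},
\]
which is exactly twice the claimed right-hand side. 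Writing $R^\ast$ for that right-hand side, it thus suffices to show the subtracted term is at most $R^\ast$. For the denominator of the subtracted term I would bound $(\eta_{k+1}-\eta_k)\|c\|_{-\eta_k c} = \tfrac{\beta}{\sqrt\vartheta}\|{-\eta_k c}\|_{-\eta_k c} \leq \beta < 1$ using \eqref{eq:UpperBoundLocalTheta}, so the subtracted term is at most $\tfrac{\delta/2}{(1-\delta/2)(1-\beta)}$.

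It then remains to check the purely algebraic inequality $\tfrac{\delta/2}{(1-\delta/2)(1-\beta)} \leq R^\ast$. Since $k\geq 0$ we have $\eta_{k+1}\geq\eta_0$ and $t\mapsto \tfrac{rt}{2(n+1)+rt}$ is increasing, so $R^\ast \geq \tfrac12\tfrac{\beta}{\sqrt\vartheta+\beta}\tfrac{r\eta_0}{2(n+1)+r\eta_0}$. Substituting $\delta = S\beta$ and, from the definition of $S$, $\tfrac{r\eta_0}{2(n+1)+r\eta_0} = 4S(\sqrt\vartheta + \tfrac{7}{176})$, and cancelling the common factor $S\beta$, the inequality reduces to $\tfrac{1}{2(1-S\beta/2)(1-\beta)} \leq 2\,\tfrac{\sqrt\vartheta + 7/176}{\sqrt\vartheta+\beta}$. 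Because $\beta \leq \tfrac{7}{176}\tfrac{32+5S}{32+7S} < \tfrac{7}{176}$ the right fraction is at least $1$, and since $S < \tfrac14$ (from $\tfrac{r\eta_0}{2(n+1)+r\eta_0}<1$ and $\sqrt\vartheta\geq 1$) together with $\beta$ small we get $(1-S\beta/2)(1-\beta) \geq \tfrac14$, which finishes the estimate.

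The only step demanding genuine care is this final one: one must confirm that the explicit constraints placed on $\beta$ in Lemma \ref{lemma:ParameterValues} really do force $S\beta$ small enough and $\beta \leq \tfrac{7}{176}$, so that the chain of inequalities closes with room to spare. Everything else is a direct application of Lemma \ref{lemma:LowerBoundTheta}, the bound \eqref{eq:UpperBoundLocalTheta}, and the telescoped triangle-inequality bound \eqref{eq:LowerBoundThetaCriterionRHS2} established just before the lemma; the hypothesis $k \leq \lceil\cdots\rceil - 1$ is used only to guarantee $\eta_{k+1}$ is among the defined iterates.
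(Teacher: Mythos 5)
Your proof is correct, and it departs from the paper's argument in one genuine way. The paper bounds the subtracted term's denominator by $1-(\eta_{k+1}-\eta_k)\|c\|_{-\eta_k c}\geq 1-2R(\eta_{k+1}-\eta_k)\geq\tfrac12$, where the first step is the crude spectral bound \eqref{eq:UpperBoundSigma} and the second step requires both the third term in the $\min$ defining $\beta$ (the one involving $\bar\epsilon/R$) and the hypothesis $k\leq\lceil\cdots\rceil-1$ to control $\eta_k$. You instead invoke the barrier bound \eqref{eq:UpperBoundLocalTheta}, $\|{-\eta_k c}\|_{-\eta_k c}\leq\sqrt\vartheta$, to get $(\eta_{k+1}-\eta_k)\|c\|_{-\eta_k c}=\tfrac{\beta}{\sqrt\vartheta}\|{-\eta_k c}\|_{-\eta_k c}\leq\beta$ directly, so the denominator is at least $1-\beta$. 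That estimate is sharper (since $\beta\leq\tfrac{7}{176}\ll\tfrac12$), avoids the $\bar\epsilon/R$ constraint on $\beta$ entirely for this lemma, and makes the conclusion valid for all $k\geq 0$ rather than only $k\leq m-1$. Your final reduction to the explicit inequality $\tfrac{1}{2(1-S\beta/2)(1-\beta)}\leq 2\tfrac{\sqrt\vartheta+7/176}{\sqrt\vartheta+\beta}$ and the paper's comparison of $\delta$ against $\tfrac14\tfrac{\beta}{\sqrt\vartheta+\beta}\tfrac{r\eta_{k+1}}{2(n+1)+r\eta_{k+1}}$ are cosmetically different but equally elementary. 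The shared backbone---writing $\eta_{k+1}-\eta_k=\tfrac{\beta}{\sqrt\vartheta+\beta}\eta_{k+1}$, applying Lemma~\ref{lemma:LowerBoundTheta} for the leading term, using monotonicity of $t\mapsto\tfrac{rt}{2(n+1)+rt}$ to pass from $\eta_0$ to $\eta_{k+1}$, and matching to the definition of $S$---is the same in both.
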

\begin{proof}
	Let $k \leq \left\lceil \log(\frac{\vartheta (1 + \delta / 2)}{\eta_0 \bar{\epsilon}}) / \log(1 + \beta / \sqrt{\vartheta}) \right\rceil - 1$.
	We observe that $\frac{\delta/2}{1-\delta/2} \leq \delta = S \beta$, where the inequality holds for $\delta \in [0, 1]$. Using the definition of $S$, we get
	\begin{equation*}
	\frac{\delta/2}{1-\delta/2} \leq \delta = \frac{1}{4} \frac{\beta}{\sqrt{\vartheta} + \frac{7}{176}} \frac{r \eta_0}{2(n+ 1) + r \eta_0} \leq \frac{1}{4} \frac{\beta}{\sqrt{\vartheta} + \beta} \frac{r \eta_{k+1}}{2(n+ 1) + r \eta_{k+1}}.
	\end{equation*}
	Through application of Lemma \ref{lemma:LowerBoundTheta} and \eqref{eq:UpperBoundSigma}, the above thus gives us	
	\begin{equation}
	(\eta_{k+1} - \eta_k) \left\| c \right\|_{-\eta_{k+1} c} - \frac{\frac{\delta/2}{ 1 - \delta/2}}{1 - (\eta_{k+1} - \eta_k) \left\| c \right\|_{-\eta_k c}}
	\geq \frac{r (\eta_{k+1} - \eta_k)}{2(n+ 1) + r \eta_{k+1}} - \frac{\frac{1}{4} \frac{\beta}{\sqrt{\vartheta} + \beta} \frac{r \eta_{k+1}}{2(n+ 1) + r \eta_{k+1}}}{1 - 2R(\eta_{k+1} - \eta_k)}.
	\label{eq:LemmaParameterValues2}
	\end{equation}
	Since the value of $k$ implies $\eta_k \leq \vartheta(1 + \frac{1}{2}\delta) / \bar{\epsilon}$,
	and $\delta = S \beta \leq \frac{1}{4} \frac{7}{176} = \frac{7}{704}$,
	\begin{equation*}
		2R(\eta_{k+1} - \eta_k) = \frac{2R \beta}{\sqrt{\vartheta}} \eta_k \leq \frac{2R \beta}{\sqrt{\vartheta}} \frac{\vartheta (1 + \delta /2)}{\bar{\epsilon}} \leq \tfrac{1}{2},
	\end{equation*}
	by our choice of $\beta$. With this fact, and $\eta_{k+1} \leq (1 + \frac{\beta}{\sqrt{\vartheta}})\vartheta(1 + \frac{1}{2}\delta) / \bar{\epsilon}$,
	we can further bound \eqref{eq:LemmaParameterValues2} by
	\begin{align*}
	&(\eta_{k+1} - \eta_k) \left\| c \right\|_{-\eta_{k+1} c} - \frac{\frac{\delta/2}{ 1 - \delta/2}}{1 - (\eta_{k+1} - \eta_k) \left\| c \right\|_{-\eta_k c}}\\
	&\geq \frac{r \eta_{k+1} (1- \eta_k/\eta_{k+1})}{2(n+ 1) + r \eta_{k+1}} - \frac{1}{2} \frac{\beta}{\sqrt{\vartheta} + \beta} \frac{r \eta_{k+1}}{2(n+ 1) + r \eta_{k+1}} = \frac{1}{2} \frac{\beta}{\sqrt{\vartheta} + \beta} \frac{r \eta_{k+1}}{2(n+ 1) + r \eta_{k+1}}.
	\end{align*}
\end{proof}

It is important to note that the value of $S$ is the same for $K$ as for $\alpha K$, where $\alpha > 0$. To see this, we need to show that the product $r \eta_0$ does not change as we scale $K$.
Recall that we start Algorithm \ref{alg:ShortStepIPM} with an $x_0$ close to $z(\eta_0)$, where $z(\eta_0)$ is the minimizer of $x \mapsto \eta_0 \langle c,x \rangle + f^*(x)$. Let $\bar{f}$ be the log partition function associated with $\alpha K$, and $\bar{f}^*$ the entropic barrier over $\alpha K$. We claim that $\alpha z(\eta_0)$ is the minimizer of $y \mapsto \frac{\eta_0}{\alpha} \langle c, y \rangle + \bar{f}^*(y)$.

By definition, $\bar{f}(\theta) = \ln \int_{\alpha K} e^{\langle \theta, y \rangle} \diff y = \ln \int_{K} e^{\langle \theta, \alpha x \rangle} \alpha^n \diff x$. It follows that
\begin{equation*}
	\bar{g}(\theta) = \frac{\int_{K} \alpha x e^{\langle \theta, \alpha x \rangle} \alpha^n \diff x}{\int_{K} e^{\langle \theta, \alpha x \rangle} \alpha^n \diff x} = \alpha \frac{\int_{K} x e^{\langle \alpha \theta, x \rangle} \diff x}{\int_{K} e^{\langle \alpha \theta, x \rangle} \diff x} = \alpha g(\alpha \theta).
\end{equation*}
Using $g(-\eta_0 c) = z(\eta_0)$, it is easily seen that $\bar{g}(-\frac{\eta_0}{\alpha} c) = \alpha g(-\eta_0 c) = \alpha z(\eta_0)$, which shows that $\alpha z(\eta_0)$ is the minimizer of $y \mapsto \frac{\eta_0}{\alpha} \langle c, y \rangle + \bar{f}^*(y)$. In other words, $\eta_0$ is scaled by a factor $\frac{1}{\alpha}$ if $K$ is scaled by a factor $\alpha$.
Since $\alpha K$ contains a Euclidean ball of radius $\alpha r$, the term $r \eta_0$ is invariant under scaling of $K$, and therefore $S$ is scaling-invariant as well.

\subsection{Complexity Analysis}
\label{subsec:Complexity}
With the parameter values in Lemma \ref{lemma:ParameterValues}, we can show that Algorithm \ref{alg:ShortStepIPM} follows the central path with high probability.
\begin{lemma}
	\label{lemma:FollowCentralPath}
	Let $p \in (0,1)$ and $\eta_0 > 0$, and the values of $S$, $\widehat{\epsilon}$, $\beta$ and $\delta$ as in Lemma \ref{lemma:ParameterValues}. Let $ \widetilde{\epsilon} > 0$ and $\epsilon \in (0, \sqrt{13}-3)$ be such that $\widetilde{\epsilon} \sqrt{\frac{1+\epsilon}{1-\epsilon}} + \sqrt{\frac{2 \epsilon}{1 - \epsilon}} \leq \widehat{\epsilon}$. Set $B$, $b$, $d'$ and $C$ as follows:
	\begin{align*}
		B &= \frac{\delta}{2-\delta} \frac{1 - \beta(4 - \beta(3-\beta)(1-\beta) )}{1 - \beta (5 - \beta(3-\beta)(1 - 2\beta) )}, &
		b &= \frac{\widetilde{\epsilon}}{2}\frac{\beta}{\sqrt{\vartheta} + \beta} \frac{r \eta_0}{2(n+ 1) + r \eta_0},\\
		d' &= 2 B + \frac{\beta + 3 \beta^3 -\beta^4}{1 - \beta(5- \beta(3-\beta)(1-2\beta))}, &
		C &= \frac{\frac{1}{2} \delta + \beta + \frac{3 \beta^2}{(1-\beta)^3}}{1 - \left( \beta + \frac{3 \beta^2}{(1-\beta)^3} \right)},
	\end{align*}
	and pick $\epsilon', \widetilde{\epsilon}' > 0$ such that $\widetilde{\epsilon}'\sqrt{\frac{1+\epsilon'}{1-\epsilon'}} + \sqrt{\frac{2\epsilon'}{1-\epsilon'}} < \frac{2(1-d')^4}{1 + (1-d')^4}$ and $\epsilon' \geq \epsilon$.
	Let $m$ and $N$ be as defined in Algorithm \ref{alg:ShortStepIPM}.
	If $\| x_k - z(\eta_k) \|_{z(\eta_k)}^* \leq \frac{1}{2} \delta$ in some iteration $k$ of Algorithm \ref{alg:ShortStepIPM}, then, with probability at least $1-p/m$, we have $\| x_{k+1} - z(\eta_{k+1}) \|_{z(\eta_{k+1})}^* \leq \frac{1}{2} \delta$.	
\end{lemma}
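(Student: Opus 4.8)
The plan is to isolate the two random objects that Algorithm~\ref{alg:ShortStepIPM} produces in iteration $k$ — the covariance estimate $\widehat{\Sigma}(-\eta_{k+1}c)$ and the gradient estimate $\widehat{\theta}(x_k)$ — to show that each meets its accuracy requirement with failure probability at most $p/(2m)$, and then to feed the resulting deterministic bounds into the one-step analysis of \cite{deklerk2017worst} together with Lemma~\ref{lemma:ParameterValues}. As a warm-up I would record the a~priori estimates valid whenever $\|x_k-z(\eta_k)\|^*_{z(\eta_k)}\le\tfrac12\delta$ and the while-loop on line~\ref{line:IPMWhile} is active (so $\eta_k<\vartheta(1+\tfrac12\delta)/\bar\epsilon$, hence $\eta_{k+1}\le(1+\beta/\sqrt{\vartheta})\vartheta(1+\tfrac12\delta)/\bar\epsilon$): by \eqref{eq:UpperBoundLocalTheta}, $\Delta\theta:=(\eta_{k+1}-\eta_k)\|c\|_{-\eta_kc}\le\beta<1$; by \eqref{eq:DistanceMinimizers}, $\Delta x:=\|z(\eta_{k+1})-z(\eta_k)\|^*_{z(\eta_k)}\le\beta+3\beta^2/(1-\beta)^3<1$; by \eqref{eq:DistanceXToNextMinimizer}, $\|x_k-z(\eta_{k+1})\|^*_{z(\eta_{k+1})}\le C$; and, feeding the bound on $\eta_{k+1}$ into Theorem~\ref{thm:LowerBoundSigma}, $\lambda_{\min}(\Sigma(-\eta_{k+1}c))\ge\tfrac1{64}\bigl(\tfrac{r}{n+1}\bigr)^2\bigl(\tfrac{\bar\epsilon}{4R\vartheta(1+\delta/2)(1+\beta/\sqrt\vartheta)}\bigr)^{4\sqrt\vartheta+2}$ (bounding the minimum in Theorem~\ref{thm:LowerBoundSigma} crudely), which is exactly what the definition of $q$ in Algorithm~\ref{alg:ShortStepIPM} is built to dominate. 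These estimates make Theorem~\ref{thm:MixingTimeSC} applicable to the walks on line~\ref{line:IPMSample} with $\theta_0=-\eta_kc$, $\theta_1=-\eta_{k+1}c$, so each of $Y^{(1)},\dots,Y^{(N)}$ and $X_{k+1}$ is within total variation $q$ of the Boltzmann distribution with parameter $-\eta_{k+1}c$, and by Lemma~\ref{lemma:NearIndependenceHitAndRunSC} any two of them are $6q$-independent.

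Applying Corollary~\ref{cor:CovAndInverseQuality} with $\epsilon_1=\epsilon$ and failure probability $p/(2m)$, the value $N=\lceil 15680\,n^2m/(p\epsilon^2)+2\rceil$ and the second term in the definition of $q$ meet its hypotheses — this is where the $\lambda_{\min}$ bound above is consumed — so with probability at least $1-p/(2m)$ the empirical covariance $\widehat{\Sigma}(-\eta_{k+1}c)$ satisfies \eqref{eq:ApproximateCovCondition} and \eqref{eq:ApproximateInverseCovCondition}.

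For the gradient I would run Algorithm~\ref{alg:ThetaSubroutine} with $x=x_k$, $\overline{\theta}=-\eta_{k+1}c$, $\theta_0=-\eta_kc$, the freshly computed $\widehat{\Sigma}(-\eta_{k+1}c)^{-1}$, $\overline{X}=X_{k+1}$, the $\epsilon',\widetilde{\epsilon}'$ of the statement, failure probability $p/(2m)$, and the closed-form $B,b,d',C$ of the statement. Its input conditions are verified one at a time: $\|\theta_0-\theta(x_k)\|_{\overline\theta}\le B$ follows from Corollary~\ref{cor:ProximityThetaTwoPoints} at $z(\eta_k)$ (giving $\|\theta(x_k)+\eta_kc\|_{-\eta_kc}\le\tfrac12\delta/(1-\tfrac12\delta)=\delta/(2-\delta)$) combined with the self-concordance inequality $\|\cdot\|_{-\eta_{k+1}c}\le(1-\Delta\theta)^{-1}\|\cdot\|_{-\eta_kc}$ and $\Delta\theta\le\beta$; $\|\theta_0-\overline\theta\|_{\overline\theta}=(\eta_{k+1}-\eta_k)\|c\|_{-\eta_{k+1}c}\le\Delta\theta/(1-\Delta\theta)\le\beta/(1-\beta)$, whence $2B+\|\theta_0-\overline\theta\|_{\overline\theta}\le d'$ and, by a short estimate with the explicit $\beta,\delta$, $d'<\tfrac13$; $\|x_k-x(\overline\theta)\|_{\Sigma(\overline\theta)^{-1}}\le C$ is \eqref{eq:DistanceXToNextMinimizer}; $\widehat{\Sigma}(-\eta_{k+1}c)^{-1}$ already satisfies \eqref{eq:ApproximateCovCondition}--\eqref{eq:ApproximateInverseCovCondition} with $\epsilon\le\epsilon'$; and $0<b<B$ reduces to $\widetilde{\epsilon}<\widehat{\epsilon}$. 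Since $X_{k+1}$ is only within total variation $q$ of an exact Boltzmann sample with parameter $\overline\theta$, I would couple it (Lemma~\ref{lemma:DivineIntervention}) with a true sample at the cost of $q$ extra failure probability (negligible as $q\le p/(mN)$). Theorem~\ref{thm:ThetaRoutineAnalysis} then yields, with probability at least $1-p/(2m)$, a $\widehat{\theta}(x_k)$ with $\|\widehat{\theta}(x_k)-\theta(x_k)\|_{-\eta_{k+1}c}\le b$. The only remaining point is that \eqref{eq:ThetaCondition} involves the unknown $\|\theta(z(\eta_{k+1}))-\theta(x_k)\|_{\Sigma(-\eta_{k+1}c)}$; by \eqref{eq:LowerBoundThetaCriterionRHS2} (valid since $\|x_k-z(\eta_k)\|^*_{z(\eta_k)}\le\tfrac12\delta<1$) and Lemma~\ref{lemma:ParameterValuesBound}, together with $\eta_{k+1}\ge\eta_0$ and the monotonicity of $t\mapsto t/(a+t)$, this quantity is at least $\tfrac12\tfrac{\beta}{\sqrt\vartheta+\beta}\tfrac{r\eta_0}{2(n+1)+r\eta_0}=b/\widetilde{\epsilon}$, so $b\le\widetilde{\epsilon}\,\|\theta(z(\eta_{k+1}))-\theta(x_k)\|_{\Sigma(-\eta_{k+1}c)}$ and \eqref{eq:ThetaCondition} holds.

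On the intersection of the two success events, of probability at least $1-p/m$, conditions \eqref{eq:ApproximateCovCondition}, \eqref{eq:ApproximateInverseCovCondition}, \eqref{eq:ThetaCondition} all hold, so — exactly as in the derivation preceding \eqref{eq:DistanceToNewMinimizer}: first combining them into an approximate gradient of $f^{*,\eta_{k+1}}$ at $x_k$ with relative error $\widehat{\epsilon}=\widetilde{\epsilon}\sqrt{(1+\epsilon)/(1-\epsilon)}+\sqrt{2\epsilon/(1-\epsilon)}$ (the analogue of Corollary~7.5 in \cite{deklerk2017worst}), then applying Corollary~6.2 in \cite{deklerk2017worst} with $\|x_k-z(\eta_{k+1})\|^*_{z(\eta_{k+1})}\le C$ — one gets $\|x_{k+1}-z(\eta_{k+1})\|^*_{z(\eta_{k+1})}\le\bigl(\tfrac{1-\kappa}{1+\kappa}+\widehat{\epsilon}\bigr)\tfrac{\tfrac12\delta+\overline\beta}{1-\overline\beta}$, where $\overline\beta=\beta+3\beta^2/(1-\beta)^3$ and $\kappa=\bigl(1-\tfrac{\delta+2\overline\beta}{1-\overline\beta}\bigr)^4$. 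Lemma~\ref{lemma:ParameterValues} states precisely that the right-hand side is at most $\tfrac12\delta$, which is the claim. I expect the main obstacle to be the bookkeeping in the third step: matching the closed-form values of $B,b,d',C,N,q$ against the hypotheses of Corollary~\ref{cor:CovAndInverseQuality}, Algorithm~\ref{alg:ThetaSubroutine}, and Theorem~\ref{thm:ThetaRoutineAnalysis}, and in particular carrying the (exponential in $\sqrt\vartheta$) lower bound on $\lambda_{\min}(\Sigma(-\eta_{k+1}c))$ from Theorem~\ref{thm:LowerBoundSigma} through the smallness requirement on $q$; everything else is a union bound over the two failure events plus direct citation.
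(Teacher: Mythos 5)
Your argument follows the same route as the paper: establish a priori bounds on $\Delta\theta$, $\Delta x$, $\|x_k-z(\eta_{k+1})\|^*_{z(\eta_{k+1})}$ and $\lambda_{\min}(\Sigma(-\eta_{k+1}c))$; apply Corollary \ref{cor:CovAndInverseQuality} for the covariance estimate; verify the input conditions of Algorithm \ref{alg:ThetaSubroutine} with $\overline{\theta}=-\eta_{k+1}c$ and $\theta_0=-\eta_k c$; lower bound the right side of \eqref{eq:ThetaCondition} through \eqref{eq:LowerBoundThetaCriterionRHS2} and Lemma \ref{lemma:ParameterValuesBound}; and finish via \eqref{eq:DistanceToNewMinimizer} and Lemma \ref{lemma:ParameterValues}. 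All of that matches.

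The one place where the argument as written does not close is the probability bookkeeping, and you even flag it yourself (``negligible as $q\le p/(mN)$''). There are three independent bad events: the covariance estimate fails \eqref{eq:ApproximateCovCondition}--\eqref{eq:ApproximateInverseCovCondition}, Algorithm \ref{alg:ThetaSubroutine} fails, and $X_{k+1}$ is not coupled to a true Boltzmann sample. You allocate $p/(2m)$ to each of the first two and then pay an additional $q\le p/(mN)$ for the third, so the union bound gives failure probability at most $p/m+p/(mN)$, strictly exceeding $p/m$. Calling $p/(mN)$ negligible is not enough because the claim is the exact threshold $1-p/m$. The paper resolves this by allocating only $\tfrac{p}{m}\bigl(\tfrac12-\tfrac1N\bigr)$ to the covariance event --- which is why $N$ is defined with the ``$+2$'' and $q$ carries the $\tfrac{N-2}{N}$ factor in Algorithm \ref{alg:ShortStepIPM} --- thereby freeing up exactly $\tfrac{p}{mN}$ for the $X_{k+1}$ coupling, so the three probabilities sum to exactly $p/m$. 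Your proof would be correct once you adopt that split; as stated, it establishes a marginally weaker bound.

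One minor cosmetic point: your bound $\Delta\theta\le\beta$ comes directly from \eqref{eq:UpperBoundLocalTheta}, whereas the paper routes it through Corollary \ref{cor:ProximityThetaTwoPoints} to get $\Delta\theta\le\Delta x/(1-\Delta x)$; your version is tighter and also valid, so that difference is harmless.
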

\begin{proof}
	
	First, we need to find an approximation $\widehat{\Sigma}(-\eta_{k+1} c)$ of $\Sigma(-\eta_{k+1} c)$ satisfying \eqref{eq:ApproximateCovCondition} and \eqref{eq:ApproximateInverseCovCondition}.
	Note that by \eqref{eq:DistanceMinimizers} and the fact that $\beta \leq \frac{7}{176}$,
	\begin{equation}
		\label{eq:WalkLengthMainDeltaX}
		\Delta x = \| z(\eta_{k+1}) - z(\eta_k) \|_{z(\eta_k)}^* \leq \beta + \frac{3 \beta^2}{(1-\beta)^3} < 0.0452.
	\end{equation}
	Then, the first inequality in \eqref{eq:ProximityThetaTwoPointsTheta} from Corollary \ref{cor:ProximityThetaTwoPoints} shows that
	\begin{equation}
		\label{eq:WalkLengthMainDeltaTheta}
		\Delta \theta = \| -\eta_{k+1} c + \eta_k c \|_{-\eta_k c} \leq \frac{\Delta x}{1 - \Delta x} < 0.0473.
	\end{equation}
	Since
	\begin{equation*}
		\|-\eta_{k+1} c\| = \eta_{k+1} \leq \eta_m = \eta_0 \left(  1+\frac{\beta}{\sqrt{\vartheta}}\right)^{\left\lceil \log\left( \frac{\vartheta(1 + \delta/2)}{\eta_0 \bar{\epsilon}} \right) \big/ \log \left( 1 + \frac{\beta}{\sqrt{\vartheta}} \right) \right\rceil} \leq \frac{\vartheta(1 + \delta/2) (1+\beta/\sqrt{\vartheta})}{ \bar{\epsilon}},
	\end{equation*}
	by the value of $m$, Theorem \ref{thm:LowerBoundSigma} shows
	\begin{align*}
		\lambda_{\min}(\Sigma(-\eta_{k+1} c))
		&\geq \frac{1}{64} \left( \frac{r}{n+1} \right)^2 \min \left\{ 1, \left( 4R \eta_{k+1} \right)^{-4 \sqrt{\vartheta} - 2} \right\}\\
		&\geq \frac{1}{64} \left( \frac{r}{n+1} \right)^2 \min \left\{ 1, \left( \frac{\bar{\epsilon}}{4R \vartheta(1 + \delta/2)(1+\beta/\sqrt{\vartheta})} \right)^{4 \sqrt{\vartheta} + 2} \right\}.
	\end{align*}
	We can now apply Corollary \ref{cor:CovAndInverseQuality} for $\theta_0 = -\eta_k c$, $\theta_1 = -\eta_{k+1} c$, and $\epsilon_1 = \epsilon$. It can be shown that values of $N$ and $q$ given in Algorithm \ref{alg:ShortStepIPM} satisfy
	\begin{equation*}
		N \geq \frac{7840 n^2}{\epsilon^2 \frac{p}{m}\left( \frac{1}{2} - \frac{1}{N} \right)}, \qquad q \leq \left( \frac{\epsilon^2 \lambda_{\min}(\Sigma(-\eta_{k+1} c))^2}{799680 n^2 R^4} \right) \left(\frac{p}{m} \left(\frac{1}{2}-\frac{1}{N}\right) \right).
	\end{equation*}
	Therefore, Corollary \ref{cor:CovAndInverseQuality} guarantees that $\widehat{\Sigma}(-\eta_{k+1} c)$ satisfies \eqref{eq:ApproximateCovCondition} and \eqref{eq:ApproximateInverseCovCondition} with probability at least
	\begin{equation}
		1 - \frac{p}{m} \left(\frac{1}{2}-\frac{1}{N}\right).
		\label{eq:ProofMainProbCov}
	\end{equation}
	Note that Algorithm \ref{alg:ShortStepIPM} moreover sets $q \leq p/(mN)$. Lemma \ref{lemma:DivineIntervention} thus guarantees that with probability $1 - q \geq 1 - p/(mN)$, the sample $X_{k+1}$ follows the Boltzmann distribution with parameter $-\eta_{k+1} c$.
	
	Second, we need an approximation $\widehat{\theta}(x_k)$ of $\theta(x_k)$ satisfying \eqref{eq:ThetaCondition}. It will suffice to show that the chosen parameters satisfy the requirements for Algorithm \ref{alg:ThetaSubroutine}, and that this algorithm yields the desired result.	Thus, we will call Algorithm \ref{alg:ThetaSubroutine} with $\overline{\theta} = -\eta_{k+1} c$. Regarding the first condition of Algorithm \ref{alg:ThetaSubroutine}, using the second inequality in \eqref{eq:SCf}, Corollary \ref{cor:ProximityThetaTwoPoints} and \eqref{eq:DistanceMinimizers}, we find
	\begin{align*}
		\| \theta(x) - \theta_0 \|_{\overline{\theta}}
		&\leq \frac{\| \theta(x) - \theta_0  \|_{\theta_0}}{1 - \| \overline{\theta} - \theta_0 \|_{\theta_0}}
		\leq \frac{ \frac{\| x_k - z(\eta_k) \|_{z(\eta_k)}^*}{1 - \| x_k - z(\eta_k) \|_{z(\eta_k)}^*} }{1 - \frac{\| z(\eta_k) - z(\eta_{k+1} ) \|_{z(\eta_k)}^*}{1 - \| z(\eta_k) - z(\eta_{k+1} ) \|_{z(\eta_k)}^*}}
		\leq \frac{ \frac{\delta/2}{1 - \delta/2} }{1 - \frac{\beta + \frac{3 \beta^2}{(1-\beta)^3}}{1 - \beta - \frac{3 \beta^2}{(1-\beta)^3}}} \\
		&= \frac{\delta}{2-\delta} \frac{1 - \beta(4 - \beta(3-\beta)(1-\beta) )}{1 - \beta (5 - \beta(3-\beta)(1 - 2\beta) )} = B.
	\end{align*}
	Furthermore, we need to show that $2B + \| \theta_0 - \overline{\theta} \|_{\overline{\theta}} \leq d'$. Again using \eqref{eq:SCf}, Corollary \ref{cor:ProximityThetaTwoPoints} and \eqref{eq:DistanceMinimizers}, we find
	\begin{align*}
		2B + \| \theta_0 - \overline{\theta} \|_{\overline{\theta}}
		&\leq 2B + \frac{\| \theta_0 - \overline{\theta} \|_{\theta_0}}{1 - \| \theta_0 - \overline{\theta} \|_{\theta_0}} \leq 2B + \frac{\frac{\| z(\eta_k) - z(\eta_{k+1} ) \|_{z(\eta_k)}^*}{1 - \| z(\eta_k) - z(\eta_{k+1} ) \|_{z(\eta_k)}^*}}{1 - \frac{\| z(\eta_k) - z(\eta_{k+1} ) \|_{z(\eta_k)}^*}{1 - \| z(\eta_k) - z(\eta_{k+1} ) \|_{z(\eta_k)}^*}}\\
		&\leq 2B +  \frac{ \frac{\beta + \frac{3 \beta^2}{(1-\beta)^3}}{1 - \beta - \frac{3 \beta^2}{(1-\beta)^3}} }{1 - \frac{\beta + \frac{3 \beta^2}{(1-\beta)^3}}{1 - \beta - \frac{3 \beta^2}{(1-\beta)^3}}} = 2B+ \frac{\beta + 3 \beta^3 -\beta^4}{1 - \beta(5- \beta(3-\beta)(1-2\beta))} = d'.
	\end{align*}
	
	Since $\epsilon \leq \epsilon'$, we can use the approximation $\widehat{\Sigma}(-\eta_{k+1} c)$ of $\Sigma(-\eta_{k+1}c)$ from the main algorithm in Algorithm \ref{alg:ThetaSubroutine}. Similarly, we already have a sample $\overline{X} = X_{k+1}$ from the main algorithm.
	
	Finally, as was shown in \eqref{eq:DistanceXToNextMinimizer},
	\begin{equation*}
		\| x - x(\overline{\theta}) \|_{\Sigma(\overline{\theta})^{-1}} = \| x_k - z(\eta_{k+1}) \|_{z(\eta_{k+1})}^* \leq \frac{\frac{1}{2} \delta + \beta + \frac{3 \beta^2}{(1-\beta)^3}}{1 - \left( \beta + \frac{3 \beta^2}{(1-\beta)^3} \right)} = C.
	\end{equation*}
	
	Hence, Theorem \ref{thm:ThetaRoutineAnalysis} shows that Algorithm \ref{alg:ThetaSubroutine} returns an approximation $\widehat{\theta}(x_k)$ of $\theta(x_k)$ such that, with probability $1-p' = 1-p/(2m)$, $\| \widehat{\theta}(x_k) - \theta(x_k) \|_{\Sigma(-\eta_{k+1} c)} \leq b$.
	Note that by \eqref{eq:LowerBoundThetaCriterionRHS2} and Lemma \ref{lemma:ParameterValuesBound}, the right hand side of \eqref{eq:ThetaCondition} is lower bounded by our value for $b$. We conclude $\widehat{\theta}(x_k)$ satisfies \eqref{eq:ThetaCondition}.
	Since $\widehat{\Sigma}(-\eta_k c)$ satisfies \eqref{eq:ApproximateCovCondition} and \eqref{eq:ApproximateInverseCovCondition}, Corollary 7.5 in de Klerk, Glineur and Taylor \cite{deklerk2017worst} shows
	\begin{equation*}
		\left\| \widehat{\Sigma}(-\eta_k c) [\eta_{k+1} c + \widehat{\theta}(x_{k})] -  g_{z(\eta_{k+1})}^{*,\eta_{k+1}}(x_k) \right\|_{z(\eta_{k+1})}^* \leq \widehat{\epsilon} \left\|  g_{z(\eta_{k+1})}^{*,\eta_{k+1}}(x_k) \right\|_{z(\eta_{k+1})}^*,
	\end{equation*}
	which means the distance of $x_{k+1}$ to $z(\eta_{k+1})$ is bounded by \eqref{eq:DistanceToNewMinimizer}. By Lemma \ref{lemma:ParameterValues}, we have $\| x_{k+1} - z(\eta_{k+1}) \|_{z(\eta_{k+1})}^* \leq \frac{1}{2} \delta$.
	
	The probability that $\widehat{\Sigma}(-\eta_{k+1} c)$ satisfies \eqref{eq:ApproximateCovCondition} and \eqref{eq:ApproximateInverseCovCondition} was given by \eqref{eq:ProofMainProbCov}. Moreover, the sample $X_{k+1}$ follows the Boltzmann distribution with parameter $-\eta_{k+1} c$ with probability at least $1-p/(mN)$. Finally, $\| \widehat{\theta}(x_k) - \theta(x_k) \|_{\Sigma(-\eta_{k+1} c)} \leq b$ with probability at least $1-p/(2m)$. Hence, the probability that these three events occur simultaneously is at least $1-p/m$ by the union bound.
\end{proof}
	
Using this result, it is not hard to see that Algorithm \ref{alg:ShortStepIPM} converges with high probability. By the union bound, the probability that $\| x_k - z(\eta_k) \|_{z(\eta_k)}^* \leq \frac{1}{2} \delta$ for all $k \in \{1, ..., m\}$ is at least $1 - p$. After $m$ iterations, it follows from  Renegar \cite[relation (2.14)]{renegar2001mathematical} that $\langle c, x_m \rangle - \min_{x \in K} \langle c, x \rangle \leq \bar{\epsilon}$.

We now turn to a run-time analysis of Algorithm \ref{alg:ShortStepIPM} by breaking down the complexity of its most crucial steps. The complexities of the other lines are straightforward, and not relevant for the algorithm's total complexity.
\begin{description}
	\item[Line \ref{line:IPMWhile} in Algorithm \ref{alg:ShortStepIPM}] By construction, the main loop in Algorithm \ref{alg:ShortStepIPM} has $m$ iterations. Since $\vartheta = n + o(n)$,
	\begin{equation*}
		m = O \left( \frac{\sqrt{\vartheta}}{\beta} \log \left(\frac{\sqrt{\vartheta}}{\eta_0 \bar{\epsilon}} \right) \right) = O \left( \frac{\sqrt{n}}{\beta} \log \left( \frac{\sqrt{n}}{\eta_0 \bar{\epsilon}} \right)\right).
	\end{equation*}
	Note that if $\beta$ were a constant, this would essentially be the usual worst-case iteration complexity for interior point methods. However, we now have $1/\beta = \max\{ O(1/S), O(R\sqrt{n}/\bar{\epsilon}) \}$, where $1/S = O(\frac{n \sqrt{\vartheta}}{r \eta_0}) = O(\frac{n \sqrt{n}}{r \eta_0})$. Therefore,
	\begin{equation*}
		m = O \left( \sqrt{n} \max\left\{ \frac{n \sqrt{n}}{r \eta_0}, \frac{R\sqrt{n}}{\bar{\epsilon}} \right\} \log \left( \frac{\sqrt{n}}{\eta_0 \bar{\epsilon}} \right) \right).
	\end{equation*}
	
	\item[Line \ref{line:IPMSample} in Algorithm \ref{alg:ShortStepIPM}] Since $1/\epsilon = O((1/\widehat{\epsilon})^2) = O(1/S^2)$, the number of hit-and-run samples $N$ in every iteration satisfies
	\begin{equation*}
		\qquad N = O \left( \frac{n^2 m}{p \epsilon^2} \right) = O \left( \frac{n^2 m}{p S^4} \right)
		= O \left( \frac{n^{8.5}}{p (r\eta_0)^4} \max\left\{ \frac{n \sqrt{n}}{r \eta_0}, \frac{R\sqrt{n}}{\bar{\epsilon}} \right\} \log \left( \frac{\sqrt{n}}{\eta_0 \bar{\epsilon}} \right) \right).
	\end{equation*}
	Each of these samples originates from a random walk with $\ell$ steps, where
	\begin{equation*}
		\ell = O\left( n^3 \log^2\left( \frac{n\sqrt{n}}{q^2} \right) \log^3
		(1/q^2)
		\right), \quad \text{where} \quad \log(1/q^2) = O\left( \sqrt{n} \log\left( \frac{R n}{\bar{\epsilon}} \right) + \log\left( \frac{m^2n^{12}R^8}{p^2 r^8 S^8} \right) \right).
	\end{equation*}
	
	\item[Line \ref{line:IPMTheta} in Algorithm \ref{alg:ShortStepIPM}] The number of hit-and-run steps required in every iteration to find a suitable approximation of $\theta(x_k)$ by Algorithm \ref{alg:ThetaSubroutine} was proven in Theorem \ref{thm:ThetaRoutineAnalysis} to be $m' N' \ell'$.
	Note that $B = \Theta(\delta) = \Theta(S\beta)$ and $b = o(\widehat{\epsilon} \beta (S\sqrt{n})) = o(S^2 \beta \sqrt{n})$. Therefore, we have $m' = O(\log(\frac{1}{S\sqrt{n}}))$. It also follows that
	\begin{equation*}
		N' = O\left( \frac{nm'}{p' b^2} \right) = O\left( \frac{m' }{p S^4 \beta^2} \right)
		= O\left( \frac{n^6}{p (r\eta_0)^4} \max\left\{ \frac{n^3}{(r \eta_0)^2}, \frac{R^2 n}{\bar{\epsilon}^2} \right\} \log\left(\frac{1}{S\sqrt{n}}\right) \right).
	\end{equation*}
	The complexity of $\ell'$ is the same as that for $\ell$, but with $q$ replaced by $q'$. Since each time Algorithm \ref{alg:ThetaSubroutine} is called it holds that $\| \overline{\theta} \| = \| -\eta_{k+1} c \| \leq \eta_m = O(\vartheta/\bar{\epsilon})$,
	\begin{equation*}
		\log(1/(q')^2) = O\left( \log\left( \frac{m^2 (m')^2 R^4 n^4 }{p^2 b^4 (\widetilde{\epsilon}')^4 r^4} \right) + n \log\left( \frac{Rn}{\bar{\epsilon}} + \frac{R^2 d' n^2}{r \bar{\epsilon}}\right) \right).
	\end{equation*}
\end{description}

Let the $O^*$ notation suppresses polylogarithmic terms in the problem parameters.
It follows from the above that the quantity $m m' N' \ell'$ determines the number of hit-and-run steps, and this quantity has complexity
\begin{equation*}
	m m' N' \ell' = O^*\left( \sqrt{n} \frac{n^6}{p (r\eta_0)^4} n^3 n^5 \max\left\{ \frac{n^{4.5}}{(r \eta_0)^3}, \frac{R^3 n^{1.5}}{\bar{\epsilon}^3} \right\} \right) = O^*\left(  \frac{n^{14.5}}{p(r\eta_0)^4} \max\left\{ \frac{n^{4.5}}{(r\eta_0)^3}, \frac{R^3 n^{1.5}}{\bar{\epsilon}^3} \right\} \right).
\end{equation*}

The number of hit-and-run steps also determines the complexity of the total number of operations in Algorithm \ref{alg:ShortStepIPM}. The following theorem summarizes the preceding discussion.

\begin{theorem}
	Let $p \in (0,1)$ and $\eta_0 > 0$, and the values of $S$, $\widehat{\epsilon}$, $\beta$ and $\delta$ as in Lemma \ref{lemma:ParameterValues}. Let $ \widetilde{\epsilon} > 0$ and $\epsilon \in (0, \sqrt{13}-3)$ be such that $\widetilde{\epsilon} \sqrt{\frac{1+\epsilon}{1-\epsilon}} + \sqrt{\frac{2 \epsilon}{1 - \epsilon}} \leq \widehat{\epsilon}$.
	Set $B$, $b$, $d'$ and $C$ as in Lemma \ref{lemma:FollowCentralPath}, and pick $\epsilon', \widetilde{\epsilon}' > 0$ such that $\widetilde{\epsilon}'\sqrt{\frac{1+\epsilon'}{1-\epsilon'}} + \sqrt{\frac{2\epsilon'}{1-\epsilon'}} < \frac{2(1-d')^4}{1 + (1-d')^4}$ and $\epsilon' \geq \epsilon$.
	Let $m$ and $N$ be as defined in Algorithm \ref{alg:ShortStepIPM}.
	
	Then, with probability at least $1-p$, Algorithm \ref{alg:ShortStepIPM} returns an $x_m \in K$ such that $\langle c, x_m \rangle - \min_{x \in K} \langle c, x \rangle \leq \bar{\epsilon}$ after $m$ iterations.	
	If $\bar{\epsilon}/R$ is fixed, the total complexity of Algorithm \ref{alg:ShortStepIPM} is determined by
	\begin{equation}
		\label{eq:AsymptoticComplexityFinal}
		O^*\left( \frac{n^{19}}{p(r\eta_0)^7} \right),
	\end{equation}
	hit-and-run steps.
\end{theorem}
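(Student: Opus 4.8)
The plan is to split the argument into a correctness part and a running-time part. Because the theorem essentially collects what has been established above, the proof is mostly an assembly argument together with careful bookkeeping of asymptotic orders rather than a source of new ideas.

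For correctness I would argue by induction on the iteration index $k$ that $\| x_k - z(\eta_k) \|_{z(\eta_k)}^{*} \le \tfrac{1}{2}\delta$ holds for every $k \in \{0,1,\dots,m\}$, with a controlled failure probability. The base case $k=0$ is precisely the proximity hypothesis imposed on the input $x_0,\eta_0$ of Algorithm \ref{alg:ShortStepIPM}. For the inductive step one first checks that the parameter settings $S,\widehat\epsilon,\beta,\delta,B,b,d',C,\epsilon',\widetilde\epsilon'$ of the present theorem are exactly those demanded by Lemma \ref{lemma:FollowCentralPath}; that lemma then gives, conditionally on $\| x_k - z(\eta_k) \|_{z(\eta_k)}^{*} \le \tfrac{1}{2}\delta$, that $\| x_{k+1} - z(\eta_{k+1}) \|_{z(\eta_{k+1})}^{*} \le \tfrac{1}{2}\delta$ with probability at least $1-p/m$. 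Since the while-loop of Algorithm \ref{alg:ShortStepIPM} performs exactly $m$ iterations (the count $m$ is chosen so that $\eta_m \ge \vartheta(1+\tfrac{1}{2}\delta)/\bar\epsilon$), a union bound over those iterations shows that with probability at least $1-p$ all iterates stay within distance $\tfrac{1}{2}\delta$ of the central path. On that event I would invoke Renegar's duality-gap estimate for self-concordant barriers \cite[relation (2.14)]{renegar2001mathematical}, which together with $\| x_m - z(\eta_m) \|_{z(\eta_m)}^{*} \le \tfrac{1}{2}\delta$ gives $\langle c, x_m\rangle - \min_{x\in K}\langle c,x\rangle \le \vartheta(1+\tfrac{1}{2}\delta)/\eta_m \le \bar\epsilon$, which is the first assertion.

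For the running time I would count hit-and-run steps iteration by iteration. Each of the $m$ iterations uses $N$ walks of length $\ell$ in Line \ref{line:IPMSample} and, by Theorem \ref{thm:ThetaRoutineAnalysis}, at most $m'N'\ell'$ hit-and-run steps in Line \ref{line:IPMTheta}; all remaining work (the damped Newton update, the matrix--vector products, the stopping tests) is dominated by this sampling cost. Substituting the closed forms of $\widehat\epsilon,\beta,\delta$ from Lemma \ref{lemma:ParameterValues} --- using $\vartheta = n+o(n)$, $1/\epsilon = O((1/\widehat\epsilon)^2) = O(1/S^2)$ and $1/\beta = \max\{O(1/S),\,O(R\sqrt n/\bar\epsilon)\}$ with $1/S = O(n\sqrt\vartheta/(r\eta_0)) = O(n^{1.5}/(r\eta_0))$ --- together with the lower bound $\lambda_{\min}(\Sigma(-\eta_{k+1}c)) \ge \tfrac{1}{64}\big(\tfrac{r}{n+1}\big)^{2}\min\{1,(4R\eta_{k+1})^{-4\sqrt\vartheta-2}\}$ from Theorem \ref{thm:LowerBoundSigma} that governs the admissible values of $q$ and $q'$, one recovers the asymptotic orders of $m,N,\ell,m',N',\ell'$ recorded in the bullet-point breakdown preceding the theorem. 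Collecting these, the total hit-and-run count is $O^{*}\big(m(N\ell + m'N'\ell')\big) = O^{*}(m\,m'N'\ell') = O^{*}\!\big(\tfrac{n^{14.5}}{p(r\eta_0)^{4}}\max\{\tfrac{n^{4.5}}{(r\eta_0)^{3}},\,\tfrac{R^{3}n^{1.5}}{\bar\epsilon^{3}}\}\big)$; imposing that $\bar\epsilon/R$ be fixed makes the second argument of the maximum equal to $O^{*}(n^{1.5})$, hence dominated by the first, and the bound collapses to $O^{*}(n^{19}/(p(r\eta_0)^{7}))$. Since the number of hit-and-run steps also controls the total number of arithmetic operations, this gives the overall complexity.

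I expect the main obstacle to be the bookkeeping, in three interlocking parts: (i) verifying that the parameter assignments of Algorithms \ref{alg:ShortStepIPM} and \ref{alg:ThetaSubroutine} really do satisfy \emph{all} the hypotheses of Lemma \ref{lemma:FollowCentralPath} and Theorem \ref{thm:ThetaRoutineAnalysis}, and in particular that the value of $b$ passed to Algorithm \ref{alg:ThetaSubroutine} is a valid lower bound for the unknown right-hand side of \eqref{eq:ThetaCondition} --- which is exactly what \eqref{eq:LowerBoundThetaCriterionRHS2} together with Lemma \ref{lemma:ParameterValuesBound} supplies; (ii) confirming that the per-iteration failure probability $p/m$ composes to $p$ over all $m$ iterations; and (iii) propagating the nested definitions of $S,\beta,\delta,b,q,q'$ through the products $mN\ell$ and $mm'N'\ell'$ to isolate the dominant monomial in $n$, $1/(r\eta_0)$, $R/\bar\epsilon$ and $1/p$, while being careful that $O^{*}$ suppresses only genuinely polylogarithmic factors --- so that the polynomial factors contributed by $\log(1/q)$ and $\log(1/q')$ (with $q,q'$ exponentially small in $\sqrt\vartheta$) inside $\ell$ and $\ell'$ are retained.
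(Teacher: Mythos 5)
Your proposal is correct and follows essentially the same route as the paper. The paper proves this theorem in the discussion immediately preceding it: a union bound over the $m$ iterations using Lemma \ref{lemma:FollowCentralPath} gives $\| x_k - z(\eta_k) \|_{z(\eta_k)}^{*} \le \tfrac12\delta$ for all $k\le m$ with probability at least $1-p$, Renegar's relation (2.14) then closes the correctness claim, and the run-time is assembled line by line from the asymptotic orders of $m$, $N$, $\ell$, $m'$, $N'$, $\ell'$ with the substitutions $1/S = O(n\sqrt\vartheta/(r\eta_0))$, $1/\beta = \max\{O(1/S),\,O(R\sqrt n/\bar\epsilon)\}$, $1/\epsilon = O(1/S^2)$, arriving at $mm'N'\ell' = O^*\big(\tfrac{n^{14.5}}{p(r\eta_0)^4}\max\{\tfrac{n^{4.5}}{(r\eta_0)^3},\tfrac{R^3n^{1.5}}{\bar\epsilon^3}\}\big)$ and then to $O^*(n^{19}/(p(r\eta_0)^7))$ once $\bar\epsilon/R$ is fixed --- exactly your chain of estimates, including the observation that $m'N'\ell'$ dominates $N\ell$ and the caution that $\log(1/q)$ and $\log(1/q')$ contribute genuine polynomial, not polylogarithmic, factors to $\ell$ and $\ell'$. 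One small note: your item (i), verifying that $b$ lower-bounds the right-hand side of \eqref{eq:ThetaCondition} via \eqref{eq:LowerBoundThetaCriterionRHS2} and Lemma \ref{lemma:ParameterValuesBound}, is already discharged inside the proof of Lemma \ref{lemma:FollowCentralPath}, so it need not be re-examined at the theorem level --- but including it does no harm.
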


%

\section{Concluding Remarks}
Our algorithm uses the starting conditions that a point $x_0$ is known with $\| x_0 - z(\eta_0) \|^*_{z(\eta_0)}$ small, and an approximation $\widehat{\Sigma}(-\eta_0 c)$ of $\Sigma(-\eta_0 c)$ is known.
A natural question is then how to find such $x_0$ and $\widehat{\Sigma}(-\eta_0 c)$. As was shown in Lov\'asz and Vempala \cite{lovasz2006hit}, it is possible to generate samples from the uniform distribution under some mild assumptions. Thus, one can approximate the mean and the covariance matrix of the uniform distribution. If $\eta_0$ is sufficiently small, self-concordance ensures that these approximations can serve as $x_0$ and $\widehat{\Sigma}(-\eta_0 c)$ to start the algorithm.

Alternatively, if $\eta_0$ is too large to use this approach, we could use the algorithm by Kalai and Vempala \cite{kalai2006simulated} to generate approximations of $\Sigma(-\eta c)$ for increasing $\eta > 0$. Once $\eta$ is large enough, the results in this paper show that one can apply hit-and-run sampling to the Boltzmann distribution with parameter $-\eta_0 c$, and thereby approximate its mean to find a point $x_0$ with $\| x_0 - z(\eta_0) \|^*_{z(\eta_0)}$ as small as desired.

The complexity bound \eqref{eq:AsymptoticComplexityFinal} is much worse than the bounds normally associated with interior point methods. The main cause is the high number of hit-and-run samples required to approximate the covariance matrices to sufficient accuracy.
The practical applicability of sampling-based interior point methods will thus largely depend on the number of samples needed. In practice, possible approaches to reduce this workload could include mixing acceleration (see e.g. Kaufman and Smith \cite{kaufman1998direction}), or long-step and predictor-corrector methods (to reduce the number of interior point iterations). We hope the results in this paper may further motivate the development of such improved sampling methods.


\section*{Acknowledgments}
The authors would like to thank Nikolaus Schweizer for many insightful discussions and for proofreading parts of this manuscript. We also thank Daniel Dadush for sharing his expertise.

\appendix
\section{Remaining Proofs}
\label{app:Proofs}

\begin{proof}[Proof of Lemma \ref{lemma:ParameterValues}]
	Since by definition,
	\begin{equation*}
		\beta \leq \frac{21S}{640 + 332S} = \frac{7 (7 S (\widehat{\epsilon} - 1) + 32 \widehat{\epsilon})}{8 (7 S (\widehat{\epsilon} - 12) + 32 (\widehat{\epsilon} - 5))},
	\end{equation*}
	$\beta$ is less than or equal to the $t > 0$ at which the line $t \mapsto t S$ and the function $t \mapsto \frac{32 t(8(5 - \widehat{\epsilon})t + 7\widehat{\epsilon})}{49 - 49 \widehat{\epsilon} - 56t (12 - \widehat{\epsilon})}$
	intersect, and thus $\delta \geq \frac{32 \beta(8(5 - \widehat{\epsilon})\beta + 7\widehat{\epsilon})}{49 - 49 \widehat{\epsilon} - 56\beta (12 - \widehat{\epsilon})} $. Note that
	\begin{equation}
		\delta
		\geq \frac{32 \beta(8(5 - \widehat{\epsilon})\beta + 7\widehat{\epsilon})}{49 - 49 \widehat{\epsilon} - 56\beta (12 - \widehat{\epsilon})}
		= \frac{4 (\frac{8}{7}\beta)((5 - \widehat{\epsilon}) (\frac{8}{7}\beta) + \widehat{\epsilon})}{1 - \widehat{\epsilon} - (\frac{8}{7}\beta) (12 - \widehat{\epsilon})}
		\geq \frac{4 (\beta + \frac{3\beta^2}{(1-\beta)^3})((5 - \widehat{\epsilon}) (\beta + \frac{3\beta^2}{(1-\beta)^3}) + \widehat{\epsilon})}{1 - \widehat{\epsilon} - (\beta + \frac{3\beta^2}{(1-\beta)^3}) (12 - (\beta + \frac{3\beta^2}{(1-\beta)^3}) - \widehat{\epsilon})},
		\label{eq:LemmaParameterValues1}
	\end{equation}
	where the second inequality holds for all $\beta \in [0, \frac{1}{24}]$.
	
	Because the function $\phi_b(t) = b - \sqrt{b^2 - t}$ is convex for all $b \in \mathbb{R}$, we can upper bound $\phi_b$ on $[0, b^2]$ by a line segment connecting the points $(0,0)$ and $(b^2, b)$. Thus, for all $t \in [0,b^2]$,
	\begin{equation*}
		b - \sqrt{b^2 - t} \leq \frac{t}{b}.
	\end{equation*}
	Applying the inequality above to \eqref{eq:LemmaParameterValues1}, where we write $\overline{\beta} := \beta + \frac{3\beta^2}{(1-\beta)^3}$, one finds
	\begin{align*}
		\delta &\geq \frac{1}{5} \frac{20 \overline{\beta}((5 - \widehat{\epsilon}) \overline{\beta} + \widehat{\epsilon})}{1 - \widehat{\epsilon} - \overline{\beta} (12 - \overline{\beta} - \widehat{\epsilon})}\\
		&\geq \frac{1}{5} \left( 1 - \widehat{\epsilon} - \overline{\beta} (12 - \overline{\beta} - \widehat{\epsilon}) - \sqrt{(1 - \widehat{\epsilon} - 12 \overline{\beta} + \overline{\beta} (\overline{\beta} + \widehat{\epsilon}))^2 - 20 \overline{\beta}((5 - \widehat{\epsilon}) \overline{\beta} + \widehat{\epsilon})} \right),
	\end{align*}
	and moreover, $\overline{\beta} \leq \frac{8}{7} \beta \leq \frac{8}{7} \frac{7}{176}(1 - 2 \widehat{\epsilon}) = \frac{1 - 2 \widehat{\epsilon}}{22} \leq 11-\widehat{\epsilon}-\sqrt{20}\sqrt{6 - \widehat{\epsilon}}$.
	Our assumptions imply that $S \leq \frac{1}{4}$, and therefore $\delta \leq \frac{1}{4} \beta \leq \frac{1}{4}\frac{7}{176} = \frac{7}{704}$. Hence, it can be shown that
	\begin{equation*}
		\delta \leq \frac{1}{5} \left( 1 - \widehat{\epsilon} - \overline{\beta} (12 - \overline{\beta} - \widehat{\epsilon}) + \sqrt{(1 - \widehat{\epsilon} - \overline{\beta} (12 - \overline{\beta} - \widehat{\epsilon}))^2 - 20 \overline{\beta}((5 - \widehat{\epsilon}) \overline{\beta} + \widehat{\epsilon})} \right),
	\end{equation*}
	and we can therefore conclude that $\delta$ is a solution to the quadratic equation
	\begin{equation*}
		\left(5\frac{\overline{\beta} + \frac{1}{2} \delta }{1 - \overline{\beta} } + \widehat{\epsilon}\right) \frac{\overline{\beta} + \frac{1}{2} \delta }{1 - \overline{\beta} } \leq \tfrac{1}{2} \delta.
	\end{equation*}
	Since for all $t \geq 0$, one has
	\begin{equation*}
		\frac{1 - (1-t)^4}{1 + (1-t)^4} \leq \tfrac{5}{2} t,
	\end{equation*}
	it follows that the chosen values for $\widehat{\epsilon}$, $\beta$ and $\delta$ satisfy
	\begin{equation*}
		\left( \frac{1 - \left(1- \frac{\delta + 2 \overline{\beta}}{1-\overline{\beta}} \right)^4}{1 + \left(1 - \frac{\delta + 2 \overline{\beta}}{1-\overline{\beta}} \right)^4} + \widehat{\epsilon} \right)
		\frac{\overline{\beta} + \frac{1}{2} \delta }{1 - \overline{\beta} }
		\leq \left(5\frac{\overline{\beta} + \frac{1}{2} \delta }{1 - \overline{\beta} } + \widehat{\epsilon}\right) \frac{\overline{\beta} + \frac{1}{2} \delta }{1 - \overline{\beta} } \leq \tfrac{1}{2} \delta.
	\end{equation*}
\end{proof}

{\small
\bibliographystyle{abbrv}

}

\end{document}